\newdimen\bibspace
\renewenvironment{thebibliography}[1]{%
 \section*{\refname 
       \@mkboth{\MakeUppercase\refname}{\MakeUppercase\refname}}%
     \list{\@biblabel{\@arabic\c@enumiv}}%
          {\settowidth\labelwidth{\@biblabel{#1}}%
           \leftmargin\labelwidth
           \advance\leftmargin\labelsep
           \itemsep\bibspace
           \parsep\z@skip     %
           \@openbib@code
           \usecounter{enumiv}%
           \let\p@enumiv\@empty
           \renewcommand\theenumiv{\@arabic\c@enumiv}}%
     \sloppy\clubpenalty4000\widowpenalty4000%
     \sfcode`\.\@m}
    {\def\@noitemerr
      {\@latex@warning{Empty `thebibliography' environment}}%
     \endlist}
\newtheorem{thm}{Theorem}[section]
\newtheorem{lem}[thm]{Lemma}
\newtheorem{prop}[thm]{Proposition}
\newtheorem{defn}[thm]{Definition}
\newtheorem{cor}[thm]{Corollary}
\newtheorem{rem}[thm]{Remark}
\def\Xint#1{\mathchoice
  {\XXint\displaystyle\textstyle{#1}}%
  {\XXint\textstyle\scriptstyle{#1}}%
  {\XXint\scriptstyle\scriptscriptstyle{#1}}%
  {\XXint\scriptscriptstyle\scriptscriptstyle{#1}}%
  \!\int}
\def\XXint#1#2#3{{\setbox0=\hbox{$#1{#2#3}{\int}$}
  \vcenter{\hbox{$#2#3$}}\kern-.5\wd0}}
\def\dashint{\Xint-}
\newcommand{\al}{\alpha}                \newcommand{\lda}{\lambda}
\newcommand{\om}{\Omega}                \newcommand{\pa}{\partial}
\newcommand{\va}{\varepsilon}           \newcommand{\ud}{\mathrm{d}}
\newcommand{\be}{\begin{equation}}      \newcommand{\ee}{\end{equation}}
\newcommand{\Lda}{\Lambda}              
\newcommand{\R}{\mathbb{R}}              \newcommand{\Sn}{\mathbb{S}^n}
\newcommand{\dlim}{\displaystyle\lim}
\newcommand{\dsup}{\displaystyle\sup}
\newcommand{\dmin}{\displaystyle\min}
\newcommand{\abs}[1]{\lvert#1\rvert}
\newcommand{\sbt}{\,\begin{picture}(-1,1)(-1,-3)\circle*{3}\end{picture}\ }
\begin{document}

\title{\textbf{The Nirenberg problem and its generalizations:\\ A unified approach}
\bigskip}

\author{\medskip Tianling Jin\footnote{Supported in part by NSF grant DMS-1362525.}, \  \ YanYan Li\footnote{Supported in part by NSF grants  DMS-1065971 and DMS-1203961.}, \ \
Jingang Xiong\footnote{Supported in part by the First Class Postdoctoral Science Foundation of China (No. 2012M520002) and Beijing Municipal Commission of Education for the Supervisor of Excellent Doctoral Dissertation (20131002701).}}

\date{\today}

\maketitle

\begin{abstract} 
Making use of integral representations, we develop a unified approach to establish blow up profiles, compactness and existence of positive solutions of the conformally invariant equations $P_\sigma(v)= Kv^{\frac{n+2\sigma}{n-2\sigma}}$ on the standard unit sphere $\mathbb{S}^n$ for all $\sigma\in (0,n/2)$, where $P_\sigma$ is the  intertwining operator of order $2\sigma$. Finding positive solutions of these equations is equivalent to seeking metrics in the conformal class of the standard metric on spheres with prescribed certain curvatures. When $\sigma=1$, it is the prescribing scalar curvature problem or the Nirenberg problem, and when $\sigma=2$, it is the prescribing $Q$-curvature problem.
 \end{abstract}

\tableofcontents

\section{Introduction}

\subsection{The main problem}

Let $\Sn$ be the $n$ dimensional unit sphere endowed with the induced metric $g_{\Sn}$ from $\R^{n+1}$.
For a given continuous function $K\ge 0$ on $\Sn$,  the aim of the paper is to study blow up profiles, compactness and existence of solutions of the equation
\be\label{main equ}
P_\sigma(v)=c(n,\sigma)K v^{\frac{n+2\sigma}{n-2\sigma}},\quad v>0 \quad \mbox{on }\Sn
\ee
where $n\ge 2, 0<\sigma<n/2$, $c(n,\sigma)=\Gamma(\frac{n}{2}+\sigma)/\Gamma(\frac{n}{2}-\sigma)$, $\Gamma$ is the Gamma function and $P_\sigma$ is an intertwining operator (see, e.g., Branson \cite{Br}) of  order $2\sigma$ as follows : 

\medskip

$\sbt\ $ $P_\sigma$ can  be viewed as the pull back operator of  the $\sigma$ power of the Laplacian $(-\Delta)^{\sigma}$ on $\R^n$ via the stereographic
projection:
\be \label{eq:r1}
(P_\sigma(\phi))\circ F=  |J_F|^{-\frac{n+2\sigma}{2n}}(-\Delta)^\sigma(|J_F|^{\frac{n-2\sigma}{2n}}(\phi\circ F))\quad \mbox{for }\phi\in C^2(\Sn),
\ee
where $F$ is the inverse of the stereographic projection and $|J_F|$ is the determinant of the Jacobian of $F$.

\medskip

$\sbt\ $ From a general Lie theoretic point of view, by Branson \cite{Br} $P_\sigma$ has the expression
\be\label{P sigma}
 P_\sigma=\frac{\Gamma(B+\frac{1}{2}+\sigma)}{\Gamma(B+\frac{1}{2}-\sigma)},\quad B=\sqrt{-\Delta_{g_{\Sn}}+\left(\frac{n-1}{2}\right)^2},
\ee
where $\Delta_{g_{\Sn}}$ is the Laplace-Beltrami operator on $(\Sn, g_{\Sn})$. Let $Y^{(k)}$ be a spherical harmonic of degree $k\ge 0$. Then we have
\be\label{eigenvalue on spherical harmonics}
 B\Big(Y^{(k)}\Big)=\left(k+\frac{n-1}{2}\right)Y^{(k)}\quad\mbox{and}\quad P_\sigma\Big(Y^{(k)}\Big)=\frac{\Gamma(k+\frac{n}{2}+\sigma)}{\Gamma(k+\frac{n}{2}-\sigma)}Y^{(k)}. 
\ee

\medskip

$\sbt\ $ The Green function of $P_{\sigma}$ is the spherical Riesz potential, i.e.,
\be\label{P sigma inverse}
P_\sigma^{-1}(f)(\xi)=c_{n,\sigma}\int_{\Sn}\frac{f(\zeta)}{\abs{\xi-\zeta}^{n-2\sigma}}\,\ud vol_{g_{\Sn}}(\zeta)\quad \mbox{for }f\in L^p(\Sn),
\ee
where $c_{n,\sigma}=\frac{\Gamma(\frac{n-2\sigma}{2})}{2^{2\sigma}\pi^{n/2}\Gamma (\sigma)}$, $p>1$ and $|\cdot|$ is the Euclidean distance in $\R^{n+1}$.

\medskip

The equation \eqref{main equ} involves critical exponent because of the Sobolev embeddings. Denote $H^{\sigma}(\mathbb{S}^n)$ as the $\sigma$ order fractional Sobolev space that consists of all functions $v\in L^2(\Sn)$ such that $(1-\Delta_{g_{\Sn}})^{\sigma/2}v\in L^2(\Sn)$, with the norm $\|v\|_{H^\sigma(\Sn)}:=\|(1-\Delta_{g_{\Sn}})^{\sigma/2}v\|_{L^2(\Sn)}$. The sharp Sobolev inequality on $\Sn$ (see Beckner \cite{Be}) asserts that
\be\label{pe1}
\left(\dashint_{\mathbb{S}^n}|v|^{\frac{2n}{n-2\sigma}}\,\ud vol_{g_{\Sn}}\right)^{\frac{n-2\sigma}{n}}\leq \frac{\Gamma(\frac{n}{2}-\sigma)}{\Gamma(\frac{n}{2}+\sigma)}
\dashint_{\mathbb{S}^n}vP_{\sigma}(v)\,\ud vol_{g_{\Sn}}\quad \mbox{for }v\in H^{\sigma}(\mathbb{S}^n),
\ee
and the equality holds if and only if $v$
has the form
\be \label{standard bubble on sn}
 v_{\xi_0, \lda}(\xi)=\left(\frac{2\lda}{2+(\lda^2-1)(1-\cos dist_{g_{\Sn}}(\xi, \xi_0))}\right)^{\frac{n-2\sigma}{2}}, \quad   \xi\in \Sn
\ee
for some $\xi_0\in \Sn$ and positive constant $\lda$, where $\dashint_{\mathbb{S}^n}=\frac{1}{|\mathbb{S}^n|}\int_{\mathbb{S}^n}$.
The inequality \eqref{pe1} can be proved by using \eqref{eq:r1} and
the sharp Sobolev inequality on $\R^n$ established by Lieb \cite{Lie83}
\be\label{eq:sharp on rn}
\left(\int_{\R^n}|u|^{\frac{2n}{n-2\sigma}}\,\ud x\right)^{\frac{n-2\sigma}{n}}\leq \frac{\Gamma(\frac{n}{2}-\sigma)}{\omega_n^{\frac{2\sigma}{n}}\Gamma(\frac{n}{2}+\sigma)}
\|u\|^2_{\dot H^{\sigma}(\mathbb{R}^n)}\quad \mbox{for }u\in \dot H^{\sigma}(\mathbb{R}^n),
\ee
where $\omega_n$ denotes the area of the $n$-dimensional unit sphere and $\dot H^{\sigma}(\mathbb{R}^n)$  is the closure of $C_c^{\infty}(\R^n)$ under the norm $\|u\|_{\dot H^{\sigma}(\R^n)}=\|(-\Delta)^{\sigma/2}u \|_{L^2(\R^n)}$.


\subsection{Conformal geometry and known results}

Equation \eqref{main equ} and its limiting case ($\sigma=n/2$)
\be \label{eq:limiting}
P_{n/2} w+(n-1)!=Ke^{nw} \quad \mbox{on }\Sn,
\ee
where
\[
P_{n/2}=\begin{cases}
\prod_{k=0}^{\frac{n-2}{2}}(-\Delta_{g_{\Sn}}+k(n-k-1)) \quad  &\mbox{for even }n\\
(-\Delta_{g_{\Sn}}+(\frac{n-1}{2})^2)^{1/2} \prod_{k=0}^{\frac{n-3}{2}}(-\Delta_{g_{\Sn}}+k(n-k-1)) \quad &\mbox{for odd }n,
\end{cases}
\] are closely related to several problems in conformal geometry.

\medskip

\noindent \textbf{(i) The Nirenberg problem ($\sigma=1$).} The classical Nirenberg problem asks: \textit{Which function $K$ on $(\Sn,g_{\Sn})$ is the scalar curvature (Gauss curvature in dimension $n=2$) of a metric $g$ that is conformal to $g_{\Sn}$?} If we denote $g=e^{2w}g_{\Sn}$, then the problem is equivalent to solving
\[
P_1w+1=-\Delta_{g_{\Sn}}w+1=Ke^{2w} \quad\mbox{on } \mathbb{S}^2,
\]
and
\[
P_1u=-\Delta_{g_{\Sn}}v+c(n)R_0v=c(n)Kv^{\frac{n+2}{n-2}} \quad\mbox{on } \Sn\mbox{ for }n\geq 3,
\]
where $c(n)=(n-2)/(4(n-1)), R_0=n(n-1)$ is the scalar curvature of $(\Sn, g_{\Sn})$ and $v=e^{\frac{n-2}{2}w}$.

The first work on the problem is by D. Koutroufiotis \cite{K}, where the solvability on $\mathbb{S}^2$ is established
when $K$ is assumed to be an antipodally symmetric function which is close to $1$. Moser \cite{Ms}
established the solvability on $\mathbb{S}^2$ for all antipodally symmetric functions $K$ which is positive somewhere. Without assuming any symmetry assumption on $K$, sufficient conditions were given in dimension
$n=2$ by Chang and Yang \cite{CY87} and \cite{CY88}, and in dimension $n=3$ by Bahri and Coron \cite{BC}.
Compactness of all solutions in dimensions $n=2,3$ can be found in work of Chang, Gursky and Yang \cite{CGY}, Han \cite{H1} and Schoen and Zhang \cite{SZ}. In these dimensions, a sequence of solutions
can not blow up at more than one point. Compactness and existence of solutions in higher dimensions were studied by Li in \cite{Li95} and \cite{Li96}.
The situation is very different, as far as the compactness issues are concerned: In dimension $n\geq 4$,
a sequence of solutions can blow up at more than one point, as shown in \cite{Li96}. There have been many papers on the problem and related ones. Without any attempt to give a complete list of references, one may see, for example, \cite{AB1, AB2, AH, B1, BLR, BA, BrN, By, CNY, CZ, CL,CY91, CL2, CD,  ChLi, CX, D, EM, H1, HL1, HL2, J1, J2, Li93, LL, M, S1, S2, WY, Zhang, Zhu} and so on.

\medskip

\noindent \textbf{(ii) Prescribed $Q$-curvature problem ($\sigma=2$).}
For a smooth compact Riemannian manifold $(M,g)$ of dimension $n\ge 4$,
the fourth order conformally invariant \emph{Paneitz operator} and Branson's \emph{$Q$-curvature} are given by
\[
\begin{split}
P^g_2&=  \Delta_g^2 -\mathrm{div}_g(a_n R_g g+b_nRic_g)d+\frac{n-4}{2}Q_g \\
Q_g&=-\frac{1}{2(n-1)} \Delta_g R_g+ c_n R_g^2-\frac{2}{(n-2)^2} |Ric_g|^2,
\end{split}
\]
where $R_g$ and $Ric_g$ denote the scalar curvature and Ricci tensor of $g$ respectively, and
$a_n=\frac{(n-2)^2+4}{2(n-1)(n-2)}$, $b_n=-\frac{4}{n-2}$, $c_n=\frac{n^3-4n^2+16n-16}{8(n-1)^2(n-2)^2}$. 
As in the Nirenberg problem, the problem of prescribing $Q$-curvature in the conformal class of $g_{\Sn}$ is reduced to the study of existence of positive solutions to equations \eqref{eq:limiting} and \eqref{main equ} with $\sigma=2$.  On $\mathbb{S}^4$, Moser's type result was obtained by Brendle \cite{B'} via a flow method (see also Baird-Fardoun-Regbaoui \cite{BFR1}).  Without  assuming any symmetry assumptions, sufficient conditions were given by   Wei-Xu \cite{WX2}, Brendle \cite{B}, Malchiodi-Struwe \cite{MSt}. On $\Sn$ of dimension $n\ge 5$, the problem has been studied by Djadli-Hebey-Ledoux \cite{DHL}, Robert \cite{Ro}, Djadli-Malchiodi-Ahmedou \cite{D1, D2}, Felli \cite{Felli} and many others. For the constant $Q$-curvature problem on general Riemannian manifolds and related work, one may refer to Chang-Yang \cite{CY95}, Chang-Gursky-Yang \cite{CGY2}, Esposito-Robert \cite{ER}, Hebey-Robert-Wen \cite{HRW}, Qing-Raske \cite{QR2}, Djadli-Malchiodi \cite{DM}, Ndiaye \cite{NCB}, Weinstein-Zhang \cite{WZhang}, Hebey-Robert \cite{HebeyRobert}, Li-Li-Liu \cite{LLL}, Gursky-Malchiodi \cite{GM}, Hang-Yang \cite{HY1,HY2,HY3} and the references therein.

\medskip

\noindent \textbf{(iii) Prescribed higher order and fractional order curvature problems.}  Higher order and fractional order conformally invariant differential (or pseudodifferential) operators on Riemannian manifolds have also been studied. In \cite{GJMS},  Graham, Jenne, Mason and Sparling constructed a sequence of conformally invariant elliptic operators, which are called GJMS operators nowadays,  based on ambient metric construction of Fefferman-Graham \cite{FG}. Recently, an explicit formula and a recursive formula for GJMS operators and $Q$-curvatures have been found by Juhl \cite{Juhl1,Juhl2} (see also Fefferman-Graham \cite{FG13}). In \cite{GZ}, Graham and Zworski introduced a family of fractional order conformally invariant operators on the conformal infinity of asymptotically hyperbolic manifolds via scattering theory, where the operators $P_\sigma$ (or more precisely, $P_\sigma^{g_{\Sn}}$) on $\Sn$ in equations \eqref{main equ} and \eqref{eq:limiting} are the most typical examples. Some new interpretation and properties of those fractional operators and their associated fractional $Q$-curvatures were later given  by Chang-Gonz\'alez \cite{CG} and Case-Chang \cite{CaC}, which also provide geometric interpretation of the extension formulations for fractional Laplacian $(-\Delta)^\sigma$ established by Caffarelli-Silvestre \cite{CaffS} when $\sigma\in (0,1)$ and by R. Yang \cite{Y} when $\sigma>1$. 

Prescribing such fractional $Q$-curvature of order $2\sigma$ on $\Sn$ can be considered as generalizations and extensions of the Nirenberg problem and the prescribed $Q$-curvature problem. One may see the work of, among many others, Escobar \cite{Escb}, Chang-Xu-Yang \cite{CXY}, Escobar-Garcia \cite{EG}, Han-Li \cite{HL99}, Djadli-Malchiodi-Ahmedou \cite{D3} for $\sigma=1/2$; Jin-Li-Xiong \cite{JLX, JLX2}, Chen-Liu-Zheng \cite{CLZ}, Fang \cite{FangFei}, Abdelhedi-Chtioui-Hajaiej \cite{AC} for $\sigma\in(0,1)$; Wei-Xu \cite{WX, WX2}, Brendle \cite{B}, Chen-Xu \cite{CX'}, Baird-Fardoun-Regbaoui \cite{BFR2}  for $2<\sigma=n/2$; 
 and Zhu \cite{Zhu14} for $\sigma>n/2$ in terms of integral equations.  Other work which are closely related to these fractional $Q$-curvatures includes: prescribing fractional order curvature problems on general manifolds by Qing-Raske \cite{QR}, Gonz\'alez-Mazzeo-Sire \cite{GMS}, Gonz\'alez-Qing \cite{GQ}; fractional Yamabe flows and weighted trace inequalities by Jin-Xiong \cite{JX1, JX2};  analysis of local solutions of \eqref{main equ} near isolated singularities by Caffarelli-Jin-Sire-Xiong\cite{CJSX}; some Liouville theorem for indefinite fractional problem by Chen-Zhu \cite{CZhu}; etc.

\subsection{Our main results}

This paper is devoted to equation \eqref{main equ} for all \[\sigma\in(0,n/2).\]

First of all, equation \eqref{main equ} is not always solvable. Indeed, we have the Kazdan-Warner type obstruction: For any conformal Killing vector field $X $ on $\Sn$, there holds
\be \label{eq:KZ}
\int_{\Sn}(\nabla _X K) v^{\frac{2n}{n-2\sigma}}\,\ud vol_{g_{\Sn}}= 0
\ee
for any solution $v$ of \eqref{main equ}, see \cite{BE} and \cite{Xu}. Hence, if $K(\xi)=\xi_{n+1}+2$ for example, then equation \eqref{main equ} has no positive solutions.

\begin{defn}
For $d>0$, we say that $K\in C(\Sn)$ has flatness order greater than $d$ at a point $\xi\in\Sn$ if, in a geodesic coordinate system
$\{y_1,\cdots, y_n\}$ centered at $\xi$,
there exists a neighborhood $\mathscr{O}$ of $0$ such that $K(y)=K(0)+o(|y|^{d})$ in $\mathscr{O}$.
\end{defn}

\begin{thm}\label{K-M-E-S} Let $n\ge 2$, $0<\sigma<n/2$, and $K>0$ be a continuous antipodally symmetric function on $\Sn$, i.e., $K(\xi)=K(-\xi)$ $\forall~\xi\in \Sn$.
If there exists a maximum point of $K$ at which $K$ has flatness order greater than $n-2\sigma$,
then \eqref{main equ} has at least one positive solution in $C^{2\sigma^*}$, where $\sigma^*=\sigma$ if $2\sigma \notin \mathbb{N}^+$ and otherwise $0<\sigma^*<\sigma$.
\end{thm}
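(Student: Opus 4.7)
The strategy is to minimize the Sobolev quotient on the cone of antipodally symmetric functions and to use a Pohozaev-type obstruction to preclude the only possible loss-of-compactness scenario. The key structural observation is that the extremal bubble \eqref{standard bubble on sn} is never antipodally symmetric, so the antipodal constraint raises the critical energy level by the factor $2^{2\sigma/n}$; the flatness hypothesis is used only to rule out concentration at this single raised level.

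Consider the quotient
\[ I(v) \;:=\; \frac{\dint_{\Sn} v\,P_\sigma(v)\,\ud vol_{g_{\Sn}}}{\bigl(\dint_{\Sn} K\, v^{\frac{2n}{n-2\sigma}}\,\ud vol_{g_{\Sn}}\bigr)^{\frac{n-2\sigma}{n}}} \]
on the cone $\mathcal A$ of non-negative, antipodally symmetric, not identically zero $v \in H^\sigma(\Sn)$, with infimum $\beta$. By the principle of symmetric criticality, an antipodally symmetric constrained critical point is a true critical point of $I$; after rescaling it solves \eqref{main equ}, and $C^{2\sigma^*}$-regularity together with strict positivity follow from a bootstrap through the positive Riesz representation \eqref{P sigma inverse}. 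Minimizing sequences are bounded in $H^\sigma(\Sn)$ by \eqref{pe1} and either converge strongly (yielding a minimizer) or lose compactness by concentration which, by antipodal symmetry, must occur simultaneously at a pair of antipodal points $\{\pm\xi_0\}$.

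Combining the stereographic identity \eqref{eq:r1} with the sharp Euclidean Sobolev inequality \eqref{eq:sharp on rn} and the Riesz representation \eqref{P sigma inverse}, a concentration-compactness decomposition shows that the only possible loss-of-compactness level in $\mathcal A$ is
\[ \Theta \;:=\; 2^{\frac{2\sigma}{n}}\,\frac{\Gamma(\tfrac{n}{2}+\sigma)}{\Gamma(\tfrac{n}{2}-\sigma)}\,|\Sn|^{\frac{2\sigma}{n}}\,\bigl(\max_{\Sn}K\bigr)^{-\frac{n-2\sigma}{n}}, \]
corresponding to a single antipodal pair of bubbles, of equal mass, concentrating at maximum points of $K$ (configurations with more pairs give strictly higher energy). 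Consequently either $\beta<\Theta$ and $\beta$ is attained, or $\beta=\Theta$ with minimizing sequences concentrating at some antipodal pair $\{\pm\xi_0\}$ with $K(\xi_0)=\max K$.

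To exclude the second alternative, I would apply an integral Pohozaev/Kazdan--Warner identity --- the local version of \eqref{eq:KZ} --- at the blow-up profile. Rescaling a concentrating sequence by its blow-up scale $\lambda_k\to\infty$ around $\xi_0$ and passing to the limit produces a constraint on the moments of $\nabla K$ at $\xi_0$ averaged against the standard Euclidean bubble $U$; under the flatness hypothesis $K(y)-K(\xi_0) = o(|y|^{d})$ with $d > n-2\sigma$, these moments decay strictly faster than the scale at which the constraint needs to be matched, yielding a contradiction. Hence $\beta<\Theta$, $\beta$ is attained, and the minimizer provides the required positive solution. The main obstacle is carrying out this blow-up/Pohozaev analysis cleanly for general $\sigma\in(0,n/2)$: the nonlocality of $P_\sigma$ forces one to work throughout with the integral representation \eqref{P sigma inverse}, and establishing that the concentration is isolated and simple --- so that the profile is an exact bubble for which the integral Pohozaev identity applies cleanly --- is delicate and constitutes the principal technical content of the unified approach announced in the abstract.
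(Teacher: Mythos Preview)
Your overall architecture---minimize the conformal quotient on the antipodally symmetric cone, identify the raised critical level $\Theta$ with the factor $2^{2\sigma/n}$, and show the infimum lies strictly below $\Theta$---is exactly the paper's. The paper implements the first step via subcritical approximations $p_i\uparrow\frac{n+2\sigma}{n-2\sigma}$ (Lemma~\ref{lem of existence subcritical}, Proposition~\ref{less then exist1}) rather than direct concentration--compactness, but that is a matter of taste.

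The substantive gap is in how you propose to establish $\beta<\Theta$. You argue by contradiction: assume $\beta=\Theta$, take a concentrating minimizing sequence, rescale, and invoke a Pohozaev/Kazdan--Warner identity on the limit to derive a constraint on moments of $\nabla K$ that the flatness hypothesis forces to vanish ``too fast''. But the identity \eqref{eq:KZ} and the integral Pohozaev identity (Proposition~\ref{prop:pohozaev}) hold for \emph{solutions}, not for minimizing sequences; a minimizing sequence at level $\Theta$ carries no equation and hence no balance law from which to extract a contradiction. Even if you pass to a Palais--Smale sequence, the contradiction you sketch is not there: the flatness hypothesis makes the relevant moment \emph{small}, which is consistent with---not in contradiction to---an asymptotic Kazdan--Warner constraint. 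The machinery of isolated simple blow-up and integral Pohozaev that you allude to in your last paragraph is used in the paper for the \emph{compactness} theorems (Theorems~\ref{main thm B}--\ref{main thm C}), where one works with genuine solutions; it plays no role in the proof of Theorem~\ref{K-M-E-S}.

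What the paper actually does (Proposition~\ref{less by test function1}) is the classical test-function computation: take $v_\beta=v_{1,\beta}+v_{2,\beta}$, the sum of two standard bubbles \eqref{eq:bubble} centered at the antipodal pair $\{\xi_1,-\xi_1\}$, and expand $I(v_\beta)$ as the concentration parameter $\beta\downarrow1$. The interaction between the two bubbles produces a negative correction of order $(\beta-1)^{(n-2\sigma)/2}$, while the flatness hypothesis $K(y)=K(\xi_1)+o(|y|^d)$ with $d>n-2\sigma$ guarantees that the error from replacing $K$ by $K(\xi_1)$ in the denominator is $o\bigl((\beta-1)^{(n-2\sigma)/2}\bigr)$. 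Hence $I(v_\beta)<\Theta$ for $\beta$ close to $1$, giving $\beta<\Theta$ directly. This is where the flatness enters, not through any Pohozaev argument.

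A secondary point: your claim that positivity of the minimizer follows from ``a bootstrap through the positive Riesz representation'' skips a step. For nonlocal $P_\sigma$ one cannot simply replace a minimizer $v$ by $|v|$; the paper handles this (Proposition~\ref{prop:subcrit}, following Robert) by comparing $v$ with the solution $w$ of $P_\sigma w=|P_\sigma v|$ and using the equality case of H\"older to force $|v|=w>0$.
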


As we mentioned before, Theorem \ref{K-M-E-S} was known for $\sigma$ in some regions: see Escobar-Schoen \cite{ES} for $\sigma=1$, Djadli-Hebey-Ledoux \cite{DHL} for $\sigma=2$, Robert  \cite{Ro10} for $\sigma$ being other integers, and Jin-Li-Xiong \cite{JLX2} for $0<\sigma<1$. If $K$ is $C^{1}$ and $n-2\sigma\le 1$, then at its maximum points $K$ automatically has flatness order greater than $n-2\sigma$. Therefore, we have

\begin{cor}
Suppose $(n-1)/2\le\sigma<n/2$. Then for any antipodally symmetric positive function $K\in C^{1}$ on $\Sn$, equation \eqref{main equ} has a $C^{2\sigma^*}$ solution.
\end{cor}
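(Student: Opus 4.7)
The plan is to deduce the corollary directly from Theorem \ref{K-M-E-S} by verifying its flatness hypothesis under the sole assumption that $K\in C^1$ and $\sigma\ge (n-1)/2$. Since antipodal symmetry is already assumed, the only nontrivial thing to check is that $K$ has flatness order greater than $n-2\sigma$ at some maximum point.

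First, I would pick any maximum point $\xi_0\in\Sn$ of $K$, which exists by continuity of $K$ and compactness of $\Sn$. Choose a geodesic coordinate chart $\{y_1,\dots,y_n\}$ centered at $\xi_0$. Because $K\in C^1(\Sn)$ and $\xi_0$ is an interior maximum, $\nabla K(0)=0$, and the $C^1$ Taylor expansion gives
\[
K(y)-K(0)=\nabla K(0)\cdot y+o(|y|)=o(|y|) \quad\text{as }y\to 0.
\]

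Next, I would translate the hypothesis $(n-1)/2\le\sigma<n/2$ into the statement $0<n-2\sigma\le 1$. For any $|y|$ sufficiently small (so $|y|<1$), the inequality $|y|\le |y|^{n-2\sigma}$ holds, hence
\[
\frac{|K(y)-K(0)|}{|y|^{n-2\sigma}}\le\frac{|K(y)-K(0)|}{|y|}\longrightarrow 0 \quad\text{as }y\to 0,
\]
which shows $K(y)=K(0)+o(|y|^{n-2\sigma})$. By the definition preceding Theorem \ref{K-M-E-S}, this is precisely the statement that $K$ has flatness order greater than $n-2\sigma$ at $\xi_0$.

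With the flatness condition verified, I would invoke Theorem \ref{K-M-E-S} directly: the antipodally symmetric positive continuous function $K$ has a maximum point where the flatness order exceeds $n-2\sigma$, so \eqref{main equ} admits a positive $C^{2\sigma^*}$ solution. There is essentially no obstacle here; the whole content of the corollary is the elementary observation that in the regime $n-2\sigma\le 1$, mere $C^1$ regularity suffices to force the desired flatness at any critical point, so the high-flatness hypothesis of Theorem \ref{K-M-E-S}—which is the substantive requirement in lower-$\sigma$ regimes—is automatically satisfied.
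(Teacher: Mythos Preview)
Your proof is correct and takes exactly the same approach as the paper: the corollary is stated there as an immediate consequence of Theorem \ref{K-M-E-S}, with the paper simply observing (without writing out the Taylor-expansion details you supply) that when $n-2\sigma\le 1$, any $C^1$ function automatically has flatness order greater than $n-2\sigma$ at its maximum points.
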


In the next theorem, we consider the functions $K$ of the following form (without symmetry assumptions): for any critical point
$\xi_0$ of $K$, in some geodesic normal coordinates $\{y_1, \cdots, y_n\}$ centered at $\xi_0$, there exist some small neighborhood $\mathscr{O}$ of $0$ and positive constant $\beta=\beta(\xi_0)$ such that 
\be\label{eq:form of K}
 K(y)=K(0)+\sum_{j=1}^{n}a_{j}|y_j|^{\beta}+R(y) \quad \mbox{in } \mathscr{O},
\ee
where $R(y)\in C^{[\beta]-1,1}(\mathscr{O})$ satisfies  $\sum_{s=0}^{[\beta]}|\nabla^sR(y)||y|^{-\beta+s} \to 0$ as $y\to 0$ and $[\beta]$ is the integer part of $\beta$.

\begin{thm}
 \label{main thm A} Let $n \geq 2$ and $0<\sigma<n/2$. Suppose that $K\in C^{1}(\Sn)$ ($K\in C^{1,1}(\Sn)$ if $0<\sigma\le 1/2$) is a positive function satisfying that for any critical point
$\xi_0$ of $K$, in some geodesic normal coordinates $\{y_1, \cdots, y_n\}$ centered at $\xi_0$, there exist
some small neighborhood $\mathscr{O}$ of $0$ and positive constants $\beta=\beta(\xi_0)\in (n-2\sigma,n)$, $\gamma\in (n-2\sigma, \beta]$
such that $K\in C^{[\gamma],\gamma-[\gamma]}(\mathscr{O})$ satisfying \eqref{eq:form of K} with $a_j=a_j(\xi_0)\neq 0$ for every $j$ and $\sum_{j=1}^n a_j\neq 0$.
If
\[
\sum_{\xi\in \Sn\mbox{ such that }\nabla_{g_{\Sn}}K(\xi)=0,\  \sum_{j=1}^na_j(\xi)<0}(-1)^{i(\xi)}\neq (-1)^n,
\]
where
\[
 i(\xi)=\#\{a_j(\xi): \nabla_{g_{\Sn}}K(\xi)=0,a_j(\xi)<0,1\leq j\leq n\},
\]
then \eqref{main equ} has at least one $C^2$ positive solution. Moreover, there exists a positive constant $C$ depending only on $n, \sigma$ and $K$ such that for all positive $C^2$ solutions $v$ of \eqref{main equ},
\[
1/C\leq v\leq C\quad\mbox{and}\quad\|v\|_{C^2(\Sn)}\leq C.
\]
\end{thm}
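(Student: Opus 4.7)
The plan is to establish Theorem \ref{main thm A} by combining (a) a priori $L^\infty$ and $C^2$ bounds for all positive $C^2$ solutions of \eqref{main equ}, and (b) a Leray–Schauder degree computation that is nonzero under the given topological hypothesis. Throughout I would work with the integral-equation reformulation
\[
v(\xi) = c(n,\sigma)\int_{\Sn}\frac{K(\zeta)\,v(\zeta)^{(n+2\sigma)/(n-2\sigma)}}{|\xi-\zeta|^{n-2\sigma}}\,\ud vol_{g_{\Sn}}(\zeta)
\]
obtained from \eqref{P sigma inverse}, which handles all $\sigma\in(0,n/2)$ uniformly and bypasses the need for differential-operator-specific arguments.

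First I would prove the uniform bound $1/C\le v\le C$ via blow-up analysis. Assuming a sequence $v_i$ of solutions with $\max v_i\to\infty$, rescaling around local maxima and applying a Liouville-type classification for $(-\Delta)^\sigma u = u^{(n+2\sigma)/(n-2\sigma)}$ on $\R^n$ identifies the local profile with a standard bubble $v_{\xi_0,\lambda}$ as in \eqref{standard bubble on sn}. Harnack-type estimates adapted to the integral operator then force the blow-up set to be finite, every blow-up point to be isolated and simple, and each such point to be a critical point of $K$. The crucial step is a Pohozaev-type identity derived directly from the integral equation (localized on geodesic balls around each blow-up point): inserting the expansion \eqref{eq:form of K} and using $\beta\in(n-2\sigma,n)$, the leading boundary contribution matches, up to a positive constant, a multiple of $\sum_{j}a_j(\xi^{(\ell)})$. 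The hypothesis $\sum_j a_j(\xi)\ne 0$ at every critical point makes this identity rigid enough to contradict the existence of blow-up; the range $\beta<n$ simultaneously rules out multi-bubble towers and ensures that the leading-order coefficient is what actually appears in the expansion.

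Given these a priori bounds, the compact operator $T_K(v):=P_\sigma^{-1}(c(n,\sigma)K v^{(n+2\sigma)/(n-2\sigma)})$ has all its positive fixed points contained in a bounded open set $\mathscr{O}\subset\{v\in C(\Sn): v>0\}$, and hence $\deg(\mathrm{id}-T_K,\mathscr{O},0)$ is well defined. I would then deform $K$ through a homotopy $K_t$ connecting $K$ to a suitable reference function (e.g.\ a constant), arrange that the a priori bounds persist along the homotopy using the same blow-up analysis, and compute the degree via a finite-dimensional reduction near each potential concentration profile $v_{\xi,\lambda}$. The reduced problem on $(\xi,\lambda)$-space is governed to leading order by $\int(\nabla K\cdot Y)v_{\xi,\lambda}^{(2n)/(n-2\sigma)}$, and the expansion \eqref{eq:form of K} shows that each critical point $\xi$ with $\sum_j a_j(\xi)<0$ gives a nondegenerate contribution of $(-1)^{i(\xi)}$, while the non-concentrating part (dominant for $K\equiv\mathrm{const}$) contributes $(-1)^n$. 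Putting these pieces together,
\[
\deg(\mathrm{id}-T_K,\mathscr{O},0) \;=\; (-1)^{n}-\sum_{\xi:\,\nabla_{g_{\Sn}}K(\xi)=0,\ \sum_j a_j(\xi)<0}(-1)^{i(\xi)},
\]
which is nonzero by hypothesis and therefore yields at least one positive $C^2$ solution.

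The main obstacle is the blow-up analysis itself. For non-integer $\sigma$ and even for integer $\sigma\ge 2$, local differential tools (maximum principle, Bochner identities, classical Pohozaev) are either unavailable or awkward. Establishing the sharp integral Harnack inequality, proving that blow-ups are isolated and simple for the integral equation, and extracting the coefficient $\sum_j a_j$ from a localized integral Pohozaev identity at the correct order $\beta$ are precisely the delicate points where the unified integral-representation approach of this paper is essential; once these are in hand, the degree-theoretic part follows a standard Bahri–Coron / Y.Y. Li-type scheme.
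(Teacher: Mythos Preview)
Your outline of the compactness part is essentially what the paper does: rewrite \eqref{main equ} as an integral equation, perform blow-up analysis for the integral equation (Harnack inequality, isolated $\Rightarrow$ isolated simple, sharp upper bounds near blow-up points), and then use an integral Pohozaev identity together with the expansion \eqref{eq:form of K} to rule out blow-up. The paper packages this as Theorem \ref{thm:maximum bound}, which combines a ``at most one blow-up point'' result (Theorem \ref{thm:blow up one pt}) with a Kazdan--Warner type obstruction to a single blow-up (Theorem \ref{thm: 1-2 pts}); your description is a reasonable summary of the same mechanism.

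The existence part, however, is handled differently, and your sketch has a real gap. You propose deforming $K$ to a constant through a homotopy $K_t$ and claiming the a priori bounds persist ``using the same blow-up analysis.'' They do not: the compactness theory above requires that near every critical point $K_t$ have the precise form \eqref{eq:form of K} with $\beta\in(n-2\sigma,n)$ and nonzero $a_j$, and this structure is destroyed as $K_t$ approaches a constant (every point becomes critical, and the nondegeneracy condition \eqref{6.13}--\eqref{6.14} fails). So the degree is not preserved along that homotopy, and the finite-dimensional reduction you describe cannot be justified at the endpoint.

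The paper avoids recomputing the degree for general $\sigma$ altogether. Instead it homotopes in the order $\sigma$: for $\sigma_0\le 1$ one observes that $K$ satisfies the hypotheses of the theorem for every $\sigma\in[\sigma_0,1]$, so the uniform bounds of Theorem \ref{thm:maximum bound} hold along the whole family and the degree at $\sigma_0$ equals the degree at $\sigma=1$, which was computed by Li \cite{Li95} to be $(-1)^n-\sum(-1)^{i(\xi)}$. For $\sigma_0\ge 1$ with $\beta>n-2$ the same $\sigma$-homotopy works. When $\sigma_0>1$ and $\beta\le n-2$, an extra preliminary homotopy of $K$ is needed, but not toward a constant: one first strips the remainder $R(y)$ (Lemma \ref{lem:deform1}, Corollary \ref{cor:deform1}) and then continuously raises the exponent $\beta$ to $n-1$ near each critical point, carefully keeping the set of critical points fixed and the hypotheses of Theorem \ref{thm:maximum bound} satisfied throughout. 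Only after this does the $\sigma$-homotopy to $\sigma=1$ apply. Thus the actual degree identity you wrote is quoted from \cite{Li95}, not re-derived; the new content is building enough compactness to make the $\sigma$- and $\beta$-homotopies admissible.
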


For $\sigma=1, n=3$, the existence part of the above theorem was established by Bahri and Coron \cite{BC}, and the compactness part were proved in Chang, Gursky and Yang \cite{CGY} and Schoen and Zhang \cite{SZ}. For $\sigma=1, n\geq 4$, the above theorem was proved by Li \cite{Li95}. For $\sigma=2$ and $n\ge 5$, it was proved by Djadli-Malchiodi-Ahmedou \cite{D1, D2} and Felli \cite{Felli}.  For $\sigma\in (0,1)$, it was recently proved by Jin-Li-Xiong \cite{JLX, JLX2}.

\begin{defn}\label{def*} For any real number $\beta> 1$, we say that a sequence of functions
$\{K_i\}$ satisfies condition $(*)'_{\beta}$ for some sequence of constants
$L(\beta,i)$ in some region $\om_i$, if $\{K_i\}\in C^{[\beta],\beta-[\beta]}(\om_i)$ satisfies
\[
[\nabla^{[\beta]} K_i]_{C^{\beta-[\beta]}(\om_i)}\leq L(\beta,i),
\]
and, if $\beta\geq 2$, that
\[
|\nabla^sK_i(y)|\leq L(\beta,i)|\nabla K_i(y)|^{(\beta-s)/(\beta-1)},
\]
for all $2\leq s\leq [\beta]$, $y\in \om_i$, $\nabla K_i(y)\neq 0$.
\end{defn}

Note that the function $K$ in Theorem \ref{main thm A} satisfies $(*)'_{\beta}$ condition.

\begin{thm}
 \label{main thm B} Let $n \geq 2$ and $0<\sigma<n/2$, and $K\in C^{1}(\Sn)$ ($K\in C^{1,1}(\Sn)$ if $0<\sigma\le 1/2$)  be a positive function.
If there exists some constant $d>0$ such that $K$ satisfies $(*)'_{(n-2\sigma)}$ for some constant $L>0$ in $\om_{d}:=\{\xi\in \Sn:|\nabla_{g_0}K(\xi)|<d\}$, then for any positive solution $v\in C^2(\Sn)$
of \eqref{main equ},
\be \label{finite energy}
 \|v\|_{H^{\sigma}(\Sn)}\leq C,
\ee
where $C$ depends only on $n,\sigma, \inf_{\Sn}K>0, \|K\|_{C^1(\Sn)}(\|K\|_{C^{1,1}(\Sn)} \mbox{ if } \ 0<\sigma\le \frac 12), L$, and $d$.
\end{thm}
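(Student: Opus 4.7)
Proceed by contradiction. Suppose $\{v_i\}$ is a sequence of positive $C^2$ solutions of \eqref{main equ} with $\|v_i\|_{H^\sigma(\Sn)}\to\infty$. By the sharp Sobolev inequality \eqref{pe1} and the integral representation \eqref{P sigma inverse}, this failure forces $M_i:=\max_{\Sn} v_i\to\infty$. Rewrite \eqref{main equ} as
\[
v_i(\xi)=c(n,\sigma)c_{n,\sigma}\int_{\Sn}\frac{K(\zeta)v_i(\zeta)^{(n+2\sigma)/(n-2\sigma)}}{|\xi-\zeta|^{n-2\sigma}}\,\ud vol_{g_{\Sn}}(\zeta)
\]
and invoke the blow-up analysis developed earlier in the paper: after passing to a subsequence, $v_i$ concentrates at a nonempty finite set of isolated simple blow-up points $\mathcal{S}=\{\xi^{(1)},\dots,\xi^{(m)}\}\subset\Sn$, with local profile the standard bubble \eqref{standard bubble on sn} on the scale of $M_i^{-2/(n-2\sigma)}$. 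The asymptotic Kazdan-Warner identity \eqref{eq:KZ} forces each $\xi^{(k)}\in\mathcal{S}$ to be a limit of critical points of $K$; in particular $\xi^{(k)}\in\overline{\omega_d}$ for $i$ large.

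\textbf{Pohozaev obstruction.} At each $\xi^{(k)}$ I would derive a local Pohozaev-type identity by pairing \eqref{main equ} with a conformal vector field centered at $\xi^{(k)}$. For $\sigma\in(0,1)$ this is obtained from the Caffarelli-Silvestre extension, for $\sigma\in(1,n/2)$ from R. Yang's higher-order degenerate extension (or directly via differentiating the integral equation under a one-parameter family of M\"obius transformations, in keeping with this paper's integral-equation framework), and for integer $\sigma$ from classical integration by parts. The resulting identity equates the volume integral
\[
\int_{B_r(\xi^{(k)})}(\nabla_X K)(\zeta)\,v_i(\zeta)^{2n/(n-2\sigma)}\,\ud vol_{g_{\Sn}}(\zeta)
\]
to boundary (or extension) terms that can be explicitly estimated using the bubble asymptotics. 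Here the $(*)'_{n-2\sigma}$ hypothesis is crucial: the bound $|\nabla^s K|\le L|\nabla K|^{(n-2\sigma-s)/(n-2\sigma-1)}$ in $\omega_d$ controls all higher derivatives of $K$ by $|\nabla K|$, so that after rescaling $K$ at the blow-up scale and normalizing, an Arzel\`a-Ascoli argument produces a nontrivial limit $\widetilde{K}$ with a critical point of flatness order at most $n-2\sigma$ at $0$. Integrating $\nabla\widetilde K$ against the standard bubble therefore yields a strictly nonzero leading contribution to the volume integral, which outweighs the boundary contributions and gives the required contradiction.

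\textbf{Main obstacle.} The main technical hurdle is to set up a workable local Pohozaev identity uniformly over the full range $\sigma\in(0,n/2)$. For non-integer $\sigma$ the operator $P_\sigma$ is nonlocal, so no direct integration by parts applies: one must either lift $v_i$ to an extension problem (a degenerate Muckenhoupt-weighted elliptic problem when $\sigma\in(0,1)$, or a higher-order system with nontrivial Dirichlet data in the $\sigma>1$ case) and carefully integrate by parts there while tracking the boundary traces, or else work intrinsically with the spherical Riesz kernel $|\xi-\zeta|^{-(n-2\sigma)}$ and exploit the conformal invariance of $P_\sigma$. Once this identity is in hand, the bubble-vs-$(*)'_{n-2\sigma}$ comparison is the routine final step.
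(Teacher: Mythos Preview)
Your proposal has a genuine gap: you are trying to derive a contradiction to the existence of \emph{any} blow-up point, but Theorem~\ref{main thm B} does not exclude blow-up. Under the borderline hypothesis $(*)'_{n-2\sigma}$ with a \emph{fixed} constant $L$, solutions can blow up; the theorem only asserts a uniform $H^\sigma$ bound. Your Pohozaev step claims that rescaling $K$ at the blow-up scale produces a nontrivial limit $\widetilde K$ whose gradient integrated against the bubble is strictly nonzero. This fails in the critical case $\beta=n-2\sigma$: the volume term $\int_{B_r}(x\cdot\nabla K)\,v_i^{2n/(n-2\sigma)}$ and the boundary/remainder terms are of the \emph{same} order $u_i(x_i)^{-2}$ (compare Lemma~\ref{lem:estimatetau1} and Corollary~\ref{cor4.1}, which require either $L=o(1)$ or $\beta>n-2\sigma$ to make the volume side negligible). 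So no contradiction emerges at a single isolated simple blow-up point.

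The paper's route is different and does not aim to exclude blow-up. Proposition~\ref{prop:decomp} decomposes any large solution into a finite collection $\Gamma_v$ of local maxima behaving like bubbles. The crucial step is Proposition~\ref{prop:distlb}: if two points of $\Gamma_{v_i}$ came within distance $\vartheta_i\to 0$, one rescales by $\vartheta_i$, and then the rescaled $\tilde K_i(y)=K(\vartheta_i y)$ satisfies $(*)'_{n-2\sigma}$ with constant $L\vartheta_i^{\,n-2\sigma-1}\cdot\vartheta_i\to 0$. Now Corollary~\ref{cor4.1}(i) applies, the Pohozaev balance \eqref{eq:b0}--\eqref{eq:b4} is violated (the second bubble contributes a strictly positive constant to the ``$h$'' part), and one gets a contradiction. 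Thus $|P_1-P_2|\ge\delta^*>0$, so $|\Gamma_{v_i}|$ is uniformly bounded, and since each bubble carries energy $\sim 1$ (Corollary~\ref{cor:energy}) the $H^\sigma$ bound follows (Theorem~\ref{thm:energy bound}). In short, the Pohozaev identity is used to separate blow-up points, not to kill them.

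Two smaller points. First, your invocation of the Kazdan--Warner identity to locate blow-up points near critical points of $K$ is not how the paper proceeds; that is done locally via Proposition~\ref{prop5.2}. Second, your ``main obstacle''---a Pohozaev identity valid for all $\sigma\in(0,n/2)$---is already available in the paper as Proposition~\ref{prop:pohozaev}, proved directly from the integral equation without any extension. So the hurdle you anticipate is not where the difficulty lies.
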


The above theorem was proved by Schoen-Zhang \cite{SZ} for $n=3$ and $\sigma=1$, by Li \cite{Li95} for $n\ge 4$ with $\sigma=1$, by Jin-Li-Xiong \cite{JLX} for $\sigma\in (0,1)$, and by Djadli-Malchiodi-Ahmedou \cite{D1, D2}  and Felli \cite{Felli} for $\sigma=2$.

\begin{thm}
 \label{main thm C}  Let $n \geq 2$ and $0<\sigma<n/2$. Suppose that $\{K_i\}\in C^{1}(\Sn)$ ($K\in C^{1,1}(\Sn)$ if $0<\sigma\le 1/2$)  is a sequence of positive functions with uniform $C^1$ ($C^{1,1}$ if  $0<\sigma\le 1/2$) norm and $1/A_1\leq K_i\leq A_1$ on $\Sn$ for some $A_1>0$ independent of $i$. Suppose also that $\{K_i\}$ satisfying
$(*)'_{\beta}$ condition for some constants $\beta\in(n-2\sigma,n)$,  $L, d>0$ in $\om_{d}$.
Let $\{v_i\}\in C^2(\Sn)$ be a sequence of corresponding positive solutions of \eqref{main equ} and $v_i(\xi_i)=\max_{\Sn}v_i$ for some $\xi_i$.
Then, after passing to a subsequence, $\{v_i\}$ is either bounded
in $L^\infty(\Sn)$ or blows up at exactly one point in the strong sense: There exists a sequence of M\"obius diffeomorphisms $\{\varphi_i\}$ from
$\Sn$ to $\Sn$ satisfying $\varphi_i(\xi_i)=\xi_i$ and $|\det \ud \varphi_i(\xi_i)|^{\frac{n-2\sigma}{2n}}=v_i^{-1}(\xi_i)$ such that
\[
 \|T_{\varphi_i}v_i -1\|_{C^0(\Sn)}\to 0,\quad \text{as }i \to \infty,
\] where $T_{\varphi_i}v_i:=(v\circ\varphi_i)|\det \ud \varphi_i|^{\frac{n-2\sigma}{2n}}$.
\end{thm}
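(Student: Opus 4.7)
The plan is a blow-up analysis driven by the conformal action of M\"obius transformations on $\Sn$, the integral representation \eqref{P sigma inverse}, and the $H^\sigma$ energy bound from Theorem \ref{main thm B}.

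First, I would invoke Theorem \ref{main thm B}: the condition $(*)'_\beta$ with $\beta\in(n-2\sigma,n)$, together with the uniform $C^1$ (or $C^{1,1}$) bound on $\{K_i\}$, implies $(*)'_{n-2\sigma}$ on $\om_d$ with enlarged constants, yielding
\[
\|v_i\|_{H^\sigma(\Sn)}\leq C.
\]
If $\{v_i\}$ is bounded in $L^\infty(\Sn)$ there is nothing to prove, so assume $v_i(\xi_i)\to\infty$. After passing to a subsequence, $\xi_i\to\xi_\infty$ and $K_i\to K_\infty$ in $C^0(\Sn)$ with $K_\infty(\xi_\infty)>0$. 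Choose $\varphi_i$ as in the statement: in the stereographic chart from $-\xi_i$ (with $\xi_i$ at the origin), $\varphi_i$ is the Euclidean dilation by $\lambda_i=v_i(\xi_i)^{-2/(n-2\sigma)}\to 0$, fixing $\pm\xi_i$. Set $\tilde v_i:=T_{\varphi_i}v_i$ and $\tilde K_i:=K_i\circ\varphi_i$. The covariance relation \eqref{eq:r1} gives
\[
P_\sigma\tilde v_i=c(n,\sigma)\tilde K_i\tilde v_i^{(n+2\sigma)/(n-2\sigma)}\quad\mbox{on }\Sn,
\]
with $\tilde v_i(\xi_i)=1$. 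Since $\int vP_\sigma v$ is conformally invariant, $\|\tilde v_i\|_{H^\sigma(\Sn)}\leq C$; and because $\varphi_i$ contracts $\Sn\setminus\{-\xi_i\}$ into arbitrarily small neighborhoods of $\xi_i$, $\tilde K_i\to K_\infty(\xi_\infty)$ locally uniformly on $\Sn\setminus\{-\xi_\infty\}$.

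Next, I would apply the $\varepsilon$-regularity principle for critical integral equations of this type (via \eqref{P sigma inverse} and the $L^{2n/(n-2\sigma)}$ energy bound): wherever the local energy of $\tilde v_i$ is small, one obtains uniform $L^\infty$ estimates. Consequently, along a subsequence $\tilde v_i\to v_\infty$ locally uniformly (indeed in $C^1$) on $\Sn$ outside a finite set of concentration points $\mathcal S$, where $v_\infty\geq 0$ satisfies $P_\sigma v_\infty=c(n,\sigma)K_\infty(\xi_\infty)v_\infty^{(n+2\sigma)/(n-2\sigma)}$ on $\Sn\setminus\mathcal S$ with $v_\infty(\xi_\infty)=1$. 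Using the integral representation and the $L^{2n/(n-2\sigma)}$ bound to rule out point singularities, I would extend $v_\infty$ to a positive classical solution on all of $\Sn$. By the classification theorem for positive solutions of the constant-coefficient critical equation on $\Sn$ (the method of moving spheres, valid for all $\sigma\in(0,n/2)$), $v_\infty=v_{\xi_0,\mu}$ for some $\xi_0\in\Sn$ and $\mu>0$. To identify $\mu=1$, I would exploit that $v_i$ has a critical point at $\xi_i$ (its maximum); by the conformal covariance of $T_{\varphi_i}$, $\tilde v_i$ also has a critical point at $\xi_i$, so passing to the $C^1$ limit, $v_\infty$ has a critical point at $\xi_\infty$. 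The bubble $v_{\xi_0,\mu}$ has critical points only at $\pm\xi_0$, with values $\mu^{\pm(n-2\sigma)/2}$; combined with $v_\infty(\xi_\infty)=1$, this forces $\mu=1$, hence $v_\infty\equiv 1$. Global $C^0$ convergence on $\Sn$ then follows by substituting $\tilde v_i\approx 1$ back into the integral representation: a Riesz potential of a function close to a constant (with controlled total mass) is uniformly close to that constant everywhere.

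The main obstacle is excluding a \emph{secondary} concentration point of $\tilde v_i$ at some $\xi^*\in\mathcal S\setminus\{\xi_\infty\}$ (in particular at $-\xi_\infty$, where $\tilde K_i$ fails to converge), which would correspond to a second bubble in the blow-up of $v_i$. This is the crux of the argument, and it is where the hypothesis $\beta>n-2\sigma$ is essential: a second bubble would absorb a universal positive amount of $H^\sigma$ energy, and after rescaling at $\xi^*$ a Pohozaev/Kazdan--Warner identity of the type \eqref{eq:KZ}, coupled with the flatness estimate from the $(*)'_\beta$ condition, yields a contradiction. Implementing this in the non-integer range of $\sigma$ forces one to work entirely within the integral-equation framework developed in the earlier sections of the paper, rather than with local PDE methods.
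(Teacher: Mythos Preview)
The overall architecture is sound, and you correctly identify that excluding a second concentration point via a Pohozaev-type identity is the heart of the matter. However, your organization differs from the paper's in a way that creates a real gap at exactly that step.

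The paper does \emph{not} rescale by $\varphi_i$ first and then analyze $\tilde v_i$. Instead it works with the unrescaled $v_i$ (equivalently the $u_i$ in stereographic coordinates) and builds a local theory: Proposition~\ref{prop:decomp} produces a finite set $\Gamma_{v_i}$ of bubble centers; Proposition~\ref{prop:distlb} shows they are separated by a uniform distance; Proposition~\ref{prop5.1} shows each is isolated \emph{simple}; and then Theorem~\ref{thm:blow up one pt} applies the Pohozaev identity of Proposition~\ref{prop:pohozaev} at one blow-up point, with the second contributing a strictly positive far-field term to the remainder $h$, contradicting Corollary~\ref{cor4.1} (this is where $\beta>n-2\sigma$ enters). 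The strong $C^0$ convergence of $T_{\varphi_i}v_i$ then follows from the sharp pointwise bound of Proposition~\ref{prop:upbound2} together with Proposition~\ref{prop:blow up a bubble}, not from substituting an approximate constant into the Riesz potential.

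The gap in your proposal is the last paragraph. You propose to rule out secondary concentration by ``rescaling at $\xi^*$'' and invoking the $(*)'_\beta$ condition. But the hypothesis is on $K_i$ in $\om_d$, not on $\tilde K_i=K_i\circ\varphi_i$ near $\xi^*$; after the M\"obius rescaling, $\tilde K_i$ near $\xi^*\neq\xi_\infty$ samples $K_i$ on a region you have no control over (and at $\xi^*=-\xi_\infty$, $\tilde K_i$ does not even converge, as you note). So the Pohozaev balance you need is simply unavailable for $\tilde v_i$. The fix is to unravel $\varphi_i$: a second concentration point of $\tilde v_i$ corresponds to a second element of $\Gamma_{v_i}$ for the original $v_i$, and the contradiction must be run there, on $u_i$ with the original $K_i$---which is precisely the paper's route. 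Your global-rescaling picture is a good heuristic for the conclusion, but the actual exclusion of the second bubble requires the isolated/isolated-simple machinery of Section~\ref{section2} applied \emph{before} any M\"obius renormalization.
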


For $\sigma=1, n=3$, the above theorem was established by Chang-Gursky-Yang \cite{CGY} and by Schoen and Zhang in \cite{SZ}. For $\sigma=1, n\ge4$, the above theorem was proved by Li \cite{Li95}. For $\sigma=2$, it was proved by Djadli-Malchiodi-Ahmedou \cite{D1, D2} and Felli \cite{Felli}. For $\sigma\in (0,1)$, the above theorem was proved by Jin-Li-Xiong \cite{JLX}.

\medskip

By the stereographic projection and Green's representation, we write equation \eqref{main equ} as the form
\be\label{eq:main equ on rn}
u(x)= \int_{\R^n} \frac{K(y)u(y)^{\frac{n+2\sigma}{n-2\sigma}}}{|x-y|^{n-2\sigma}}\,\ud y.
\ee
If  $K=1$, it follows from Chen-Li-Ou \cite{CLO} or Li \cite{Li04} that $u$ has to be of the form
\[
c\left(\frac{\lda}{1+\lda^2|x-x_0|^2}\right)^\frac{n-2\sigma}{2}, \quad \mbox{for } c, \lda>0, ~ x_0\in \R^n,
\]
which we call a bubble for $\lda$ large.

The main idea of our unified approach for all $\sigma\in (0,n/2)$ is to carry out blow up analysis as in Schoen-Zhang \cite{SZ} and Li \cite{Li95} for nonnegative solutions of the nonlinear integral equation \eqref{eq:main equ on rn} without using extension formulations for the fractional Laplacian, where some brief descriptions are as follows.  In Section \ref{section2}, we first establish some  local estimates and Harnack inequality for nonnegative solutions of linear integral equations. Then we consider positive solutions of nonlinear integral equations with an isolated simple blow up point (Definition \ref{def4.2}). We prove that those solutions are close to some bubbles in $C^2$ norm in a small neighborhood of the blow up point (Proposition \ref{prop:blow up a bubble}) and satisfy a sharp upper bound in a ball with fixed size (Proposition \ref{prop:upbound2}). At last, given some flatness condition on $K$, we are able to show that isolated blow up points have to be isolated simple blow up points. All the analysis techniques for integral equations needed in this paper are developed. In Section \ref{section3}, we follow the arguments of Li \cite{Li95} and prove our compactness results:  Theorem \ref{main thm B}, Theorem \ref{main thm C} and the compactness part of Theorem \ref{main thm A}. In Section \ref{section4}, we prove our existence results: Theorem \ref{K-M-E-S} and the existence part of Theorem \ref{main thm A}. 

\bigskip

\noindent\textbf{Acknowledgements:}  
Part of this work was done while J. Xiong was visiting Universit\'e Paris VII and Universit\'e Paris XII in November 2013 through a Sino-French  research program in mathematics. He would like to thank Professors Yuxin Ge and Xiaonan Ma for the arrangement. He also thanks Professor Gang Tian for his support and encouragement. T. Jin would like to thank Professors Henri Berestycki and Luis Silvestre for their support and encouragement. The authors thank Professor Hongjie Dong for suggesting the current proof of Proposition \ref{prop:har}.

\section{Local analysis}
\label{section2}

\subsection{Linear integral equations}\label{sec:linear integral equations}

The regularity properties and local estimates of nonnegative solutions of Schr\"odinger equation $-\Delta u+Vu=0$ play important roles in the study of the Nirenberg problem and Yamabe problem. In this subsection, we will develop similar results for linear integral equations.

The first result was proved in Li \cite{Li04}, which is a Brezis-Kato \cite{BK} type estimate.

\begin{thm}[Theorem 1.3 in \cite{Li04}] \label{thm:li} For $n\ge 1, 0<\sigma<n/2$, and $\nu>r>n/(n-2\sigma)$, there exist positive constants $\bar\delta<1$ and $C\ge 1$, which depend only on $n,\sigma, r$ and $\nu$, such that for every $0\le V\in L^{n/2\sigma}(B_3)$ with $\|V\|_{L^{n/2\sigma}(B_3)}\le \bar \delta$, $h\in L^\nu(B_2)$, and $0\le u\in L^r(B_3)$ satisfying
\be\label{eq:li}
u(x)\leq \int_{B_3} \frac{V(y)u(y)}{|x-y|^{n-2\sigma}}\,\ud y+h(x)\quad x\in B_2,
\ee
we have $u\in L^\nu(B_1)$ and
\[
\|u\|_{L^\nu(B_{1})}\leq C(\|u\|_{L^r(B_3)}+\|h\|_{L^\nu(B_2)}).
\]
\end{thm}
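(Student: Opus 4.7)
The approach is to combine the Hardy--Littlewood--Sobolev (HLS) inequality with H\"older's inequality to bound the Riesz-with-weight operator
\[
T_V f(x) := \int_{B_3} V(y)\, f(y)\, |x-y|^{-(n-2\sigma)}\,\ud y,
\]
and then to close a Neumann-series comparison. For any $p > n/(n-2\sigma)$, H\"older yields $\|Vf\|_{L^q(B_3)} \le \|V\|_{L^{n/2\sigma}(B_3)} \|f\|_{L^p(B_3)}$ with $1/q = 1/p + 2\sigma/n$, and HLS gives $\|I_\sigma g\|_{L^p(\R^n)} \le C(n,\sigma,p) \|g\|_{L^q(\R^n)}$ (applicable since $1<q<p$ precisely when $p>n/(n-2\sigma)$). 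Composing,
\[
\|T_V f\|_{L^p(\R^n)} \le C(n,\sigma,p)\, \|V\|_{L^{n/2\sigma}(B_3)}\, \|f\|_{L^p(B_3)}.
\]
By continuity of the HLS constant in $p$ on the compact range $[r,\nu]$, I would choose $\bar\delta$ small enough that this operator norm is at most $1/2$ for every $p\in[r,\nu]$.

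The inequality \eqref{eq:li} holds only on $B_2$, while $T_V$ sees $u$ on the larger ball $B_3$, so the next step is to \emph{globalize} by setting
\[
\hat h := h\,\chi_{B_2} + u\,\chi_{B_3 \setminus B_2},
\]
so that $u \le T_V u + \hat h$ on all of $B_3$ (on $B_2$ this is the hypothesis; on $B_3 \setminus B_2$ it is immediate, since $\hat h = u$ there and $T_V u \ge 0$). Restricting $T_V$ to an endomorphism of $L^r(B_3)$, which is a positivity-preserving contraction by Step~1 with $p=r$, the Neumann series converges and yields the pointwise comparison
\[
u \le \hat u := \sum_{k\ge 0} T_V^k \hat h \quad\text{in } L^r(B_3).
\]

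It then remains to estimate $\hat u$ in $L^\nu(B_1)$. I would split $\hat u = H_1 + H_2$ along the decomposition of $\hat h$. For $H_1 := \sum_k T_V^k(h\,\chi_{B_2})$, the seed lies in $L^\nu(\R^n)$, so the Neumann series converges in $L^\nu$ with $\|H_1\|_{L^\nu(\R^n)} \le 2\|h\|_{L^\nu(B_2)}$. For $H_2 := \sum_k T_V^k(u\,\chi_{B_3 \setminus B_2})$, the seed is only in $L^r$, but is supported at distance $\ge 1/2$ from $B_{3/2}$; therefore the first image is pointwise bounded on $B_{3/2}$:
\[
T_V(u\,\chi_{B_3 \setminus B_2})(x) \le 2^{n-2\sigma} \int_{B_3} V u \le C\,\|V\|_{L^{n/2\sigma}(B_3)}\|u\|_{L^{n/(n-2\sigma)}(B_3)} \le C\,\|u\|_{L^r(B_3)},
\]
using H\"older and $r>n/(n-2\sigma)$. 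Each subsequent iterate would be split into an inner piece over $B_{3/2}$ (where the previous iterate is $L^\infty$-bounded, so HLS applied to its product with $V$ yields $L^\nu$) and an outer piece over $B_3 \setminus B_{3/2}$ (where the kernel is bounded by a constant and $L^r$-control suffices). Summing the resulting geometric series in $\bar\delta$ produces $\|H_2\|_{L^\nu(B_1)} \le C\,\|u\|_{L^r(B_3)}$, which combined with the $H_1$ estimate yields the claimed bound.

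\textbf{Main obstacle.} The real difficulty lies in the $L^\nu$-upgrade of $H_2$: its defining Neumann series converges a priori only in $L^r$, so one must genuinely exploit the \emph{spatial separation} between the support of the seed $u\,\chi_{B_3\setminus B_2}$ and the target ball $B_1$ in order to lift integrability from $L^r$ to $L^\nu$. Propagating this localized gain through the iteration requires a careful induction tracking constants produced by the inner/outer splitting at each level, balanced against the geometric smallness factor $\bar\delta$; this bookkeeping is the technical heart of the proof.
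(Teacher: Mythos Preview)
The paper does not prove this theorem; it is quoted from \cite{Li04} and used as a black box, so there is no proof here to compare against. Your overall strategy---bound $T_V$ on $L^p$ via H\"older plus HLS uniformly for $p\in[r,\nu]$, globalize to $u\le\sum_{k\ge0}T_V^k\hat h$ on $B_3$, and split $\hat h$ into $h\chi_{B_2}$ and $u\chi_{B_3\setminus B_2}$---is the right framework and is the standard Brezis--Kato mechanism for integral equations.

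The gap is in your treatment of $H_2$. Your induction asserts that each iterate $g_k:=T_V^k(u\chi_{B_3\setminus B_2})$ is $L^\infty$-bounded on $B_{3/2}$, but this already breaks at $k=2$: splitting $g_2(x)=\int_{B_{3/2}}\frac{Vg_1}{|x-y|^{n-2\sigma}}\,\ud y+\int_{B_3\setminus B_{3/2}}\cdots$, the inner piece is dominated by $\|g_1\|_{L^\infty(B_{3/2})}\,I_\sigma(V\chi_{B_{3/2}})(x)$, and $I_\sigma V$ need not be bounded when $V$ is merely in $L^{n/2\sigma}$ (this is precisely the endpoint where HLS fails), while the outer piece has no kernel separation when $x$ is near $\partial B_{3/2}$. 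More structurally, your fixed-radius splitting is self-defeating: to control the inner piece in $L^\nu(B_1)$ you need $g_k\in L^\nu(B_{3/2})$, whereas your outer-piece bound (``kernel $\le 2^{n-2\sigma}$'') only holds for $x\in B_1$, so the induction never propagates $L^\nu$ control out to $B_{3/2}$. A correct implementation replaces $3/2$ by a shrinking sequence $\rho_k=1+c^k$ with $2^{-1/(n-2\sigma)}<c<1$: splitting the $(k{+}1)$-st iterate at radius $\rho_k$ and estimating on $B_{\rho_{k+1}}$ gives
\[
\|g_{k+1}\|_{L^\nu(B_{\rho_{k+1}})}\le \tfrac12\|g_k\|_{L^\nu(B_{\rho_k})}+C\bigl(c^{-(n-2\sigma)}/2\bigr)^k\|u\|_{L^r(B_3)},
\]
and the choice of $c$ makes the forcing term geometrically small, so $\sum_k\|g_k\|_{L^\nu(B_1)}<\infty$. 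This is presumably the ``careful induction'' you allude to, but it genuinely requires moving radii rather than the fixed $B_{3/2}$ you describe.
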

Consequently, we have
\begin{cor}\label{cor:li} For $n\ge 1, 0<\sigma<n/2$, $r>n/(n-2\sigma)$ and $p>n/2\sigma$, let $0\le V\in L^p(B_3)$, $h\in L^\infty(B_2)$ and $0\leq u\in L^r(B_3)$ satisfy \eqref{eq:li}.  Then $u\in L^\infty(B_1)$ and
\[
\|u\|_{L^\infty(B_1)}\leq C(\|u\|_{L^r(B_3)}+\|h\|_{L^\infty(B_2)}),
\]
where $C$ depends only on $n,\sigma$, $r,p$ and an upper bound of $\|V\|_{L^p(B_3)}$. Moreover, if in addition that $h\in C^0(B_1)$ and it satisfies
\be\label{eq:li eq}
u(x)=\int_{B_3} \frac{V(y)u(y)}{|x-y|^{n-2\sigma}}\,\ud y+h(x) \quad \mbox{in }B_2,
\ee 
then $u\in C^0(B_1)$.
\end{cor}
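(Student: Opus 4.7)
The plan is to reduce Corollary \ref{cor:li} to Theorem \ref{thm:li} by a rescaling that makes $\|V\|_{L^{n/2\sigma}}$ small on a small ball, use this to raise the integrability of $u$ to any finite $\nu$, and then close to $L^\infty$ by a direct H\"older estimate on the Riesz potential.

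First, I would exploit the hypothesis $p>n/(2\sigma)$: by H\"older's inequality,
\[
\|V\|_{L^{n/2\sigma}(B_{3\rho}(x_0))}\leq C\rho^{\,2\sigma-n/p}\|V\|_{L^p(B_3)}\longrightarrow 0\quad\text{as }\rho\to 0.
\]
For each $x_0\in B_1$ I choose $\rho=\rho(n,\sigma,p,\|V\|_{L^p(B_3)})>0$ so small that $\tilde V(y):=\rho^{2\sigma}V(x_0+\rho y)$ satisfies $\|\tilde V\|_{L^{n/2\sigma}(B_3)}\leq\bar\delta$, the threshold in Theorem \ref{thm:li}. Writing $\tilde u(y):=u(x_0+\rho y)$ and $\tilde h(y):=h(x_0+\rho y)$, I split the integral in \eqref{eq:li} into the piece over $B_{3\rho}(x_0)$, which after a change of variables becomes $\int_{B_3}\tilde V(w)\tilde u(w)|y-w|^{-(n-2\sigma)}\,\ud w$, and the piece over $B_3\setminus B_{3\rho}(x_0)$ where $|x-z|\geq\rho$. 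A H\"older estimate on the latter, using $1/p+1/r<1$ (equivalently $p'<n/(n-2\sigma)<r$, so $L^r(B_3)\subset L^{p'}(B_3)$), controls it by $C_1\leq C\rho^{-(n-2\sigma)}\|V\|_{L^p(B_3)}\|u\|_{L^r(B_3)}$, so that for $y\in B_2$,
\[
\tilde u(y)\leq \int_{B_3}\frac{\tilde V(w)\tilde u(w)}{|y-w|^{n-2\sigma}}\,\ud w + \tilde h(y)+C_1.
\]
Theorem \ref{thm:li} applied to this inequality with any fixed $\nu>r$ yields $\tilde u\in L^\nu(B_1)$ quantitatively; translating back and covering a fixed $B_s$, $s\in(1,2)$, by finitely many such rescaled balls gives $u\in L^\nu(B_s)$ for every $\nu<\infty$.

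Next, to reach $L^\infty$, I pick $\nu$ so large that $1/q:=1/p+1/\nu<2\sigma/n$, so $q>n/(2\sigma)$ and $Vu\in L^q(B_s)$. For $x\in B_1$ I split $\int_{B_3}$ in \eqref{eq:li} as $\int_{B_s}+\int_{B_3\setminus B_s}$. The outer piece is bounded by $(s-1)^{-(n-2\sigma)}\|V\|_{L^p(B_3)}\|u\|_{L^{p'}(B_3)}$; the inner piece, by H\"older with dual exponent $q'$, is bounded by $\|Vu\|_{L^q(B_s)}\|K_x\|_{L^{q'}(B_s)}$ with $K_x(y):=|x-y|^{-(n-2\sigma)}$, finite uniformly in $x\in B_1$ since $q'(n-2\sigma)<n$. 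Adding $\|h\|_{L^\infty(B_2)}$ produces the desired $L^\infty$ bound on $B_1$. For the continuity assertion, assuming \eqref{eq:li eq} and $h\in C^0(B_1)$, the outer integral has a kernel continuous in $x$ uniformly on $B_1$ and is therefore continuous; for the inner integral I approximate $Vu\in L^q(B_s)$ by continuous $f_\va$, whose Riesz potentials are continuous on $B_1$ and converge uniformly (by the same H\"older bound) to $\int_{B_s}Vu(y)|x-y|^{-(n-2\sigma)}\,\ud y$. Thus $u$ is a sum of continuous functions on $B_1$.

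The main delicate point is the tail estimate in the rescaling: the constant $C_1$ must depend only on the given norms of $V$ and $u$, which is made possible precisely by the hypothesis $p>n/(2\sigma)$ (so $\rho$ can be chosen making $\|\tilde V\|_{L^{n/2\sigma}(B_3)}$ small) combined with $r>n/(n-2\sigma)$ (so that $1/p+1/r<1$ and H\"older applies on $\int_{B_3}Vu$).
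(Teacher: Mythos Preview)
Your argument is correct and is precisely the standard way to unpack what the paper means by ``follows from Theorem~\ref{thm:li} directly'': rescale so that $\|V\|_{L^{n/2\sigma}}$ becomes small (using $p>n/2\sigma$), apply Theorem~\ref{thm:li} locally and cover to reach any $L^\nu$, then close to $L^\infty$ by H\"older on the Riesz potential. For the continuity part your approximation of $Vu\in L^q$ by continuous functions is exactly what justifies the paper's one-line assertion that the inner integral is continuous; the paper simply splits at $B_1$ rather than an intermediate $B_s$, relying on $u\in L^\infty(B_1)$ so that $Vu\in L^p(B_1)$ with $p>n/2\sigma$, but the idea is the same.
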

\begin{proof}
The first part follows from Theorem \ref{thm:li} directly. For the second part, if $u\in L^\infty(B_1)$, then
\[
\int_{B_3} \frac{V(y)u(y)}{|x-y|^{n-2\sigma}}\,\ud y=\int_{B_1} \frac{V(y)u(y)}{|x-y|^{n-2\sigma}}\,\ud y+\int_{B_3\setminus B_1} \frac{V(y)u(y)}{|x-y|^{n-2\sigma}}\,\ud y
\]
is continuous in $B_1$. Since $h\in C^0(B_1)$, then it follows from \eqref{eq:li eq} that $u\in C^0(B_1)$.
\end{proof}

The next result is about a Harnack inequality for the integral equation \eqref{eq:li eq}.

\begin{prop}\label{prop:har} For $n\ge 1, 0<\sigma<n/2$, $r>n/(n-2\sigma)$ and $p>n/2\sigma$, let $0\le V\in L^{p}(B_3)$, $0\le h \in C^0(B_2)$  and $0\leq u\in L^r(B_3)$ satisfy \eqref{eq:li eq}.  If there exists a constant $c_0\ge 1$ such that $
\max_{\overline B_1}h \le c_0 \min_{\overline B_1} h,$
then
\[
\max_{\overline B_1} u\le C\min_{\overline B_1} u,
\]
where $C>0$ depends only on $n,\sigma, c_0, p$ and an upper bound of $\|V\|_{L^p(B_3)}$.
\end{prop}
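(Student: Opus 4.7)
My approach is to split the defining integral at a small scale around each point and combine a near-diagonal H\"older bound with a geometric comparison on the tail.

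First, I would apply Corollary \ref{cor:li} (possibly on slightly enlarged balls, obtained by a covering argument) to obtain $u\in C^0(\overline{B}_1)\cap L^\infty_{\mathrm{loc}}(B_2)$, so that $M:=\max_{\overline{B}_1}u$ and $m:=\min_{\overline{B}_1}u$ are finite. From $u(x)=\int_{B_3}V(y)u(y)|x-y|^{-(n-2\sigma)}\,\ud y+h(x)\geq h(x)$ we deduce $m\geq h_{\min}:=\min_{\overline{B}_1}h$, and the hypothesis $\max_{\overline{B}_1}h\leq c_0 h_{\min}$ then gives $h(x)\leq c_0 m$ for every $x\in\overline{B}_1$.

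The heart of the argument is the following splitting. For $x,x'\in\overline{B}_1$ and a parameter $r_0>0$ to be chosen, write
\[
u(x)=\int_{B_3\cap B_{r_0}(x)}\frac{V(y)u(y)}{|x-y|^{n-2\sigma}}\,\ud y+\int_{B_3\setminus B_{r_0}(x)}\frac{V(y)u(y)}{|x-y|^{n-2\sigma}}\,\ud y+h(x).
\]
Since $p>n/(2\sigma)$ forces $(n-2\sigma)p'<n$, H\"older's inequality gives $\int_{B_{r_0}(x)}V(y)|x-y|^{-(n-2\sigma)}\,\ud y\leq C\|V\|_{L^p(B_3)}r_0^{2\sigma-n/p}$, so the near piece is at most $\delta(r_0)\|u\|_{L^\infty(B_{r_0}(x))}$ with $\delta(r_0):=C\|V\|_{L^p(B_3)}r_0^{2\sigma-n/p}\to 0$ as $r_0\to 0$. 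For the tail, the elementary inequality $|x-y|\geq r_0$ combined with $|x-x'|\leq 2$ yields $|x-y|\geq|x'-y|/(1+2/r_0)$, whence
\[
\int_{B_3\setminus B_{r_0}(x)}\frac{V(y)u(y)}{|x-y|^{n-2\sigma}}\,\ud y\leq(1+2/r_0)^{n-2\sigma}\,(u(x')-h(x'))\leq(1+2/r_0)^{n-2\sigma}u(x').
\]
Combining with $h(x)\leq c_0 u(x')$ yields, for all $x,x'\in\overline{B}_1$,
\[
u(x)\leq\delta(r_0)\,\|u\|_{L^\infty(B_{r_0}(x))}+\bigl[(1+2/r_0)^{n-2\sigma}+c_0\bigr]u(x').
\]

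To conclude, I would fix $r_0$ depending only on $n,\sigma,p,\|V\|_{L^p(B_3)}$ so that $\delta(r_0)\leq 1/2$, take the supremum over $x$ and infimum over $x'$ in $\overline{B}_1$, and absorb the $\|u\|_{L^\infty}$ factor by iterating the estimate along a nested sequence of balls $\overline{B}_{R_k}$ (with the $L^\infty_{\mathrm{loc}}$ bound from the Corollary supplying the baseline). The principal obstacle is precisely this last step: the near-piece bound involves the sup of $u$ on a ball sticking slightly outside $\overline{B}_1$, so care is needed in the bootstrap, for instance by iterating with geometrically shrinking radii so that the product of the $\delta(r_k)$'s decays faster than the growth of the comparison constants $(1+2/r_k)^{n-2\sigma}$, and the final Harnack constant depends only on $n,\sigma,c_0,p$, and $\|V\|_{L^p(B_3)}$.
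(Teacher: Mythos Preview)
Your approach coincides with the paper's: split the integral into a near piece (controlled via H\"older since $p>n/2\sigma$) and a tail (compared across two base points by the kernel ratio), then iterate with shrinking radii so that the product of the small near-factors beats the growth of the tail comparison constants.

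The one point that needs correction is the \emph{direction} of the iteration. Your wording (``sticking slightly outside $\overline{B}_1$'', ``the $L^\infty_{\mathrm{loc}}$ bound from the Corollary supplying the baseline'') suggests pushing outward to balls $\overline{B}_{R_k}$ with $R_k>1$; this fails, because the hypothesis $\max_{\overline{B}_1}h\le c_0\min_{\overline{B}_1}h$ gives $h(x)\le c_0 m$ only for $x\in\overline{B}_1$, and for $x$ outside you have no control of $h(x)$ in terms of $m$. The paper iterates \emph{inward}: with $R_k=1-2^{-k}\nearrow 1$ and $r_k=2^{-(k+1)}$, the near ball $B_{r_k}(x)$ for $x\in\overline{B}_{R_k}$ lies inside $\overline{B}_{R_{k+1}}\subset\overline{B}_1$, yielding a recursion of the form $m_k\ge c_2\,2^{-(k+1)(n-2\sigma)}\bigl(M_k-c_1\,2^{-(k+1)\varepsilon}M_{k+1}\bigr)$ (with $M_k,m_k$ the max/min on $\overline{B}_{R_k}$ and $\varepsilon=2\sigma-n/p$). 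Multiplying by the weights $2^{(k+1)(n-2\sigma)-\varepsilon k(k+1)/2}c_1^k$ and summing over $k$, the $M$-terms telescope while the $m$-side converges, giving the Harnack bound. Your remark that ``the product of the $\delta(r_k)$'s decays faster than the growth of $(1+2/r_k)^{n-2\sigma}$'' is exactly this mechanism; it just has to be run from the inside toward $\partial B_1$ rather than outward.
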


\begin{proof} For any fixed $1<s<\frac{n}{n-2\sigma}$ and $t>1$ satisfying  $\frac{1}{p}+\frac1s+\frac1t=1$, we can choose $0<\delta<1$ such that  for any $x\in B_2$ there holds
\be \label{eq:choice of t}
\begin{split}
&\int_{B_\delta(x)} |x-y|^{2\sigma-n} V(y)\,\ud y  \\
&\le \|V\|_{L^p(B_{\delta}(x))} \||x-\cdot|^{2\sigma-n} \|_{L^s(B_\delta(x))}|B_\delta|^{1/t} \\
&<1/2.
\end{split}
\ee
Meanwhile, by Corollary \ref{cor:li} and standard translation arguments, $u$ is continuous in $B_{3/2}$.

To illustrate  the method, we first consider the simplest case by assuming that $u(0)=\max_{\overline B_1} u$ and $u(\bar x)=\min_{\overline B_1} u$ for some $\bar x \in \overline B_1$.
By the equation \eqref{eq:li eq} of $u$ and H\"older inequality, we have
\[
\begin{split}
u(0) &=\int_{B_3} \frac{V(y)u(y)}{|y|^{n-2\sigma}}\,\ud y+h(0)\\
&\le u(0)\int_{B_\delta} \frac{V(y)}{|y|^{n-2\sigma}}\,\ud y+\int_{B_3\setminus B_\delta}  \frac{V(y)u(y)}{|y|^{n-2\sigma}}\,\ud y+h(0)\\
& \le \frac{1}{2} u(0) +\int_{B_3\setminus B_\delta} \frac{V(y)u(y)}{|y|^{n-2\sigma}}\,\ud y +c_0 h(\bar x).
\end{split}
\]
Note that $|\bar x-y|^{n-2\sigma}/|y|^{n-2\sigma} \le C_0(n,\sigma, \delta)<\infty$ for $y\in B_3\setminus B_\delta$. Hence,
\[
\frac12 u(0) \le C_0\int_{B_3\setminus B_\delta} \frac{V(y)u(y)}{|\bar x-y|^{n-2\sigma}}\,\ud y +c_0 h(\bar x) \le (C_0+c_0) u(\bar x).
\]

In general, let
\[
M_k=\max_{\overline B_{1-2^{-k}}} u,\quad m_k=\min_{\overline B_{1-2^{-k}}} u
\]
Since $u$ is continuous, we may assume $M_k=u(z'_k)$ and $m_k=u(z_k'')$ for $z_k',z_k''\in \overline B_{1-1/2^k}$.
By the equation \eqref{eq:li eq} of $u$ and H\"older inequality, we have
\[
\begin{split}
 &M_k-h(z_k')\\
 &=u(z_k')-h(z_k')\\
&=\int_{B_\delta(z_k')\cap B_{1-1/2^{k+1}}}\frac{V(y)u(y)}{|z_k'-y|^{n-2\sigma}}\,\ud y+ \int_{B_3\setminus (B_\delta(z_k')\cap B_{1-1/2^{k+1}})}  \frac{V(y)u(y)}{|z_k'-y|^{n-2\sigma}}\,\ud y\\
&=\int_{B_\delta(z_k')\cap B_{1-1/2^{k}}}\frac{V(y)u(y)}{|z_k'-y|^{n-2\sigma}}\,\ud y+\int_{B_\delta(z_k')\cap (B_{1-1/2^{k+1}}\backslash B_{1-1/2^{k}})}\frac{V(y)u(y)}{|z_k'-y|^{n-2\sigma}}\,\ud y\\
&\quad+ \int_{B_3\setminus (B_\delta(z_k')\cap B_{1-1/2^{k+1}})}  \frac{V(y)u(y)}{|z_k'-y|^{n-2\sigma}}\,\ud y\\
&\leq \frac{M_k}{2}+\frac{c_1}{2} 2^{-(k+1)\va } M_{k+1}+ \int_{B_3\setminus (B_\delta(z_k')\cap B_{1-1/2^{k+1}})}  \frac{V(y)u(y)}{|z_k'-y|^{n-2\sigma}}\,\ud y,
\end{split}
\]
where $\va=\frac{1}{t}$ for $t>1$ in \eqref{eq:choice of t} and $c_1$ is a positive constant depending on $n,\sigma, p$, an upper bound of $ \|V\|_{L^p(B_2)}$ and the $s,t$ in \eqref{eq:choice of t}. Thus,
\[
\int_{B_3\setminus (B_\delta(z_k')\cap B_{1-1/2^{k+1}})}  \frac{V(y)u(y)}{|z_k'-y|^{n-2\sigma}} \,\ud y+h(z_k')\geq \frac12 (M_k-c_1 2^{-(k+1)\va} M_{k+1}).
\]
By the equation, we also have
\[
\begin{split}
m_k&=u(z_k'')\\&\ge  \int_{B_3\setminus (B_\delta(z_k')\cap B_{1-1/2^{k+1}})}  \frac{V(y)u(y)}{|z_k''-y|^{n-2\sigma}}\,\ud y+h(z_k'')\\ &
\ge \int_{B_3\setminus (B_\delta(z_k')\cap B_{1-1/2^{k+1}})} \frac{|z_k'-y|^{n-2\sigma}}{|z_k''-y|^{n-2\sigma}} \frac{V(y)u(y)}{|z_k'-y|^{n-2\sigma}}\,\ud y+\frac{1}{c_0}h(z_k')\\&
\geq 2 c_2 2^{-(k+1)(n-2\sigma)} \left(\int_{B_3\setminus(B_\delta(z_k')\cap B_{1-1/2^{k+1}})} \frac{V(y)u(y)}{|z_k'-y|^{n-2\sigma}}\,\ud y
+h(z_k') \right)\\&
\geq c_2 2^{-(k+1)(n-2\sigma)}(M_k-c_1 2^{-(k+1)\va}M_{k+1}),
\end{split}
\]
where we  used that
\[
\begin{split}
\frac{|z_k'-y|^{n-2\sigma}}{|z_k''-y|^{n-2\sigma}}&\ge 2 c_2 2^{-(k+1)(n-2\sigma)} \quad\forall \ y\in B_3\setminus (B_\delta(z_k')\cap B_{1-1/2^{k+1}})\\
\frac{1}{c_0} &\ge 2 c_2 2^{-(k+1)(n-2\sigma)}
\end{split}
\]
for some $c_2>0$ depending only on $n,\sigma, \delta$ and $c_0$. Hence, we have
\[
m_k\ge c_2 2^{-(k+1)(n-2\sigma)}(M_k-c_1 2^{-(k+1)\va} M_{k+1}).
\]
We then multiply it by $2^{(k+1)(n-2\sigma)-\va k(k+1)/2}c_1^k$ and sum in $k$, we have
\[
\sum_{k=1}^\infty 2^{(k+1)(n-2\sigma)-\va k(k+1)/2}c_1^k m_k
\ge \sum_{k=1}^\infty c_2\left(2^{-\va k(k+1)/2}c_1^kM_k- 2^{-\va (k+1)(k+2)/2}c_1^{k+1}M_{k+1}\right).
\]
The right-hand side equals to $c_1c_2 2^{-\va} M_1$, and the left-hand side is that
\[
\sum_{k=1}^\infty 2^{(k+1)(n-2\sigma)-\va k(k+1)/2} c_1^k m_k \le m_1 \sum_{k=1}^\infty 2^{(k+1)(n-2\sigma)-\va k(k+1)/2} c_1^k\le C m_1.\]
Therefore, the Harnack inequality is proved.
\end{proof}

\begin{rem} In the proof of Proposition \ref{prop:har}, the constant $C$ in Harnack inequality will be unchanged if the integral domain $B_3$ is replaced by any open set $\om$ with $B_3\subset \om$. Also $|x-y|^{2\sigma-n}$ can be replaced by a kernel function $G(x,y)$ which satisfies $G(x,y)=G(y,x)$
\[
\Lda^{-1}\le |x-y|^{n-2\sigma}G(x,y)\le \Lda, \quad  |\nabla_x G(x,y)| \le \Lda |x-y|^{2\sigma-n-1}
\]
for some constant $\Lda\ge 1$.
\end{rem}

We will have H\"older estimates of $u$ satisfying \eqref{eq:li eq} if $h$ is in addition H\"older continuous.

\begin{prop}\label{prop:holder} For $n\ge 1, 0<\sigma<n/2$, $r>n/(n-2\sigma)$, $p>n/2\sigma$ and $\bar\alpha\in (0,1)$, let $0\le V\in L^{p}(B_3)$, $h \in C^{\bar\alpha}(B_2)$  and $0\leq u\in L^r(B_3)$ satisfy \eqref{eq:li eq}. Then $u\in C^\al(B_1)$ and 
\[
\|u\|_{C^\al(B_1)}\le C(\|u\|_{L^r(B_3)}+\|h\|_{C^{\bar\al}(B_2)}),
\]
where $C>0$ and $\alpha$ depends only on $n,\sigma, c_0, p, \bar\alpha$ and an upper bound of $\|V\|_{L^p(B_3)}$.
\end{prop}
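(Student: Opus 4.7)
The plan is to promote the $L^\infty$ bound of Corollary~\ref{cor:li} to H\"older regularity by directly estimating differences $u(x)-u(x')$ via the integral equation \eqref{eq:li eq}, without invoking the Harnack inequality of Proposition~\ref{prop:har}.

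First I would upgrade Corollary~\ref{cor:li} via a standard covering/rescaling, using that the integral identity persists on the full ball $B_2$, to get $u\in L^\infty(B_{3/2})$ with quantitative bound depending only on the allowed data. This guarantees $f:=Vu\in L^p(B_3)$ with $\|f\|_{L^p(B_3)}$ controlled by $\|V\|_{L^p(B_3)}$, $\|u\|_{L^r(B_3)}$ and $\|h\|_{L^\infty(B_2)}$. Next, for $x,x'\in B_1$ with $\rho:=|x-x'|$ sufficiently small, I would rewrite
\[
u(x)-u(x')=\int_{B_3}f(y)\bigl(|x-y|^{2\sigma-n}-|x'-y|^{2\sigma-n}\bigr)\,\ud y+\bigl(h(x)-h(x')\bigr)
\]
and split the integral as $I_{\mathrm{loc}}+I_{\mathrm{far}}$ over $B_{4\rho}(x)$ and its complement in $B_3$.

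For $I_{\mathrm{loc}}$ I would bound each kernel separately and apply H\"older's inequality with exponents $(p,p')$; since $p>n/(2\sigma)$ implies $(n-2\sigma)p'<n$, one has $\||x-\cdot|^{2\sigma-n}\|_{L^{p'}(B_{4\rho}(x))}\le C\rho^{2\sigma-n/p}$, hence $|I_{\mathrm{loc}}|\le C\rho^{2\sigma-n/p}\|f\|_{L^p(B_3)}$. For $I_{\mathrm{far}}$ I would use the mean-value bound $||x-y|^{2\sigma-n}-|x'-y|^{2\sigma-n}|\le C\rho|x-y|^{2\sigma-n-1}$ valid for $|x-y|\ge 4\rho$, then H\"older once more; the resulting radial integral $\int_{4\rho}^{4}t^{n-1-(n+1-2\sigma)p'}\,\ud t$ is uniformly bounded, logarithmic in $\rho$, or of order $\rho^{n-(n+1-2\sigma)p'}$ according to the sign of $(n+1-2\sigma)p'-n$, yielding $|I_{\mathrm{far}}|\le C\rho^{\min(1,\,2\sigma-n/p)}$ up to a logarithm. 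Combined with $|h(x)-h(x')|\le [h]_{C^{\bar\alpha}(B_2)}\rho^{\bar\alpha}$, this gives $|u(x)-u(x')|\le C\rho^\alpha$ for any $\alpha$ strictly less than $\min\{\bar\alpha,\,2\sigma-n/p,\,1\}$.

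The main obstacle is bookkeeping: one must verify that the final constant $C$ depends only on the quantities listed in the statement, and slightly decrease $\alpha$ to absorb the logarithmic factor that appears at the borderline exponent $(n+1-2\sigma)p'=n$ in $I_{\mathrm{far}}$. Both adjustments are routine and do not affect the conclusion $u\in C^\alpha(B_1)$ with the claimed norm bound.
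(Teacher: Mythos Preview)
Your proposal is correct and follows essentially the same route as the paper: both obtain the $L^\infty$ bound from Corollary~\ref{cor:li}, then estimate $u(x)-u(x')$ by splitting the Riesz-potential integral into a near part (H\"older's inequality using $V\in L^p$, $p>n/(2\sigma)$) and a far part (mean-value bound on the kernel), the only cosmetic difference being that the paper first splits at unit scale and then subdivides the near region into four pieces $U_1,\dots,U_4$, whereas you split directly at scale $\rho=|x-x'|$. One minor slip worth noting: your claim $f=Vu\in L^p(B_3)$ is not justified since $u$ is only known to be bounded on $B_{3/2}$, but on the shell $B_3\setminus B_{3/2}$ the kernel difference is $O(\rho)$ and $\|Vu\|_{L^1(B_3)}\le \|V\|_{L^p}\|u\|_{L^{p'}}\le C\|u\|_{L^r}$ (since $p'<n/(n-2\sigma)<r$), so this contributes $O(\rho)$ to $I_{\mathrm{far}}$ and is immaterial.
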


\begin{proof}
It follows from Corollary \ref{cor:li} that
\[
\|u\|_{ L^\infty(B_{1.9})}\le C(\|u\|_{L^r(B_3)}+\|h\|_{L^\infty(B_2)}).
\]

For $x, z\in B_1$ satisfying $|x-z|<1/8$, we have
\[
\begin{split}
u(x)-u(z)&= \int_{B_1(x)\cap B_1(z)} \left(\frac{1}{|x-y|^{n-2\sigma}}-\frac{1}{|z-y|^{n-2\sigma}}\right)V(y)u(y)\,\ud y\\
&+\int_{B_3\setminus (B_1(x)\cap B_1(z))} \left(\frac{1}{|x-y|^{n-2\sigma}}-\frac{1}{|z-y|^{n-2\sigma}}\right)V(y)u(y)\,\ud y\\
&+h(x)-h(z)\\
&=I+II+III.
\end{split}
\]
By the mean value theorem, we have
\[
\begin{split}
|II|& \le C|x-z|\int_{B_3\setminus (B_1(x)\cap B_1(z))} \left(\frac{1}{|x-y|^{n-2\sigma+1}}\right)V(y)u(y)\,\ud y\\
& \le C|x-z|\int_{B_3\setminus (B_1(x)\cap B_1(z))} \left(\frac{1}{|x-y|^{n-2\sigma}}\right)V(y)u(y)\,\ud y\\
&\le  C |x-z|(u(x)-h(x))\\
&\le C (\|u\|_{L^r(B_3)} + \|h\|_{L^\infty(B_2)})|x-z|.
\end{split}
\]
For $III$, we have
\[
|III|\le \|h\|_{C^{\bar\alpha}(B_1)}|x-z|^{\bar\alpha}.
\]
To estimate $I$, we split the integral into the following four subdomains:
\[
\begin{split}
&U_1:= B_1(x)\cap B_1(z) \cap \{y: |y-x|<|x-z|/2\}\\
&U_2:= B_1(x)\cap B_1(z) \cap \{y: |y-z|<|x-z|/2\}\\
&U_3:= B_1(x)\cap B_1(z) \cap \{y: |y-z|>|y-x|>|x-z|/2\}\\
&U_4:= B_1(x)\cap B_1(z) \cap \{y: |y-x|>|y-z|>|x-z|/2\}.
\end{split}
\]
Hence,
\[
\begin{split}
I= \sum_{i=1}^4\int_{U_i} \left(\frac{1}{|x-y|^{n-2\sigma}}-\frac{1}{|z-y|^{n-2\sigma}}\right)V(y)u(y)\,\ud y:=I_1+I_2+I_3+I_4.
\end{split}
\]
For $I_1$, we have
\[
\begin{split}
|I_1|&\le \int_{U_1} \left(\frac{1}{|x-y|^{n-2\sigma}}\right)V(y)u(y)\,\ud y\\
&\le \|u\|_{L^\infty(B_{1.9})}\|V\|_{L^p(U_1)}\||x-y|^{2\sigma-n}\|_{L^s}|U_1|^{1/t}\\
&\le C\|u\|_{L^\infty(B_{1.9})}\|V\|_{L^p(B_3)}|x-z|^{\frac{n}{t}}
\end{split}
\]
for some fixed $1<s<\frac{n}{n-2\sigma}$ and $t>1$ satisfying $\frac{1}{p}+\frac{1}{s}+\frac{1}{t}=1$, where in the second inequality we used H\"older inequality. Similarly,
\[
|I_2|\le C\|u\|_{L^\infty(B_{1.9})}\|V\|_{L^p(B_3)}|x-z|^{\frac{n}{t}}.
\]
For $I_3$, we have
\[
\begin{split}
|I_3|&\le C\int_{U_3} \left(\frac{|x-z|}{|x-y|^{n-2\sigma+1}}\right)V(y)u(y)\,\ud y\\
&\le C|x-z|^\beta\int_{U_3} \left(\frac{1}{|x-y|^{n-2\sigma+\beta}}\right)V(y)u(y)\,\ud y\\
&\le C|x-z|^\beta\|u\|_{L^\infty(B_{1.9})}\|V\|_{L^p(U_3)}\||x-y|^{2\sigma-n-\beta}\|_{L^{\bar s}}|U_3|^{1/\bar t}\\
&\le C\|u\|_{L^\infty(B_{1.9})}\|V\|_{L^p(B_3)}|x-z|^{\beta}
\end{split}
\]
for some $\beta>0$ small, $1<\bar s<\frac{n}{n-2\sigma+\beta}$ and $\bar t>1$, since $p>\frac{n}{2\sigma}$. Similarly,
\[
|I_4|\le C\|u\|_{L^\infty(B_{1.9})}\|V\|_{L^p(B_3)}|x-z|^{\beta}.
\]
Thus, the H\"older estimate of $u$ follows.
\end{proof}

Let us consider nonnegative solutions of the integral equation
\be \label{eq:lineareq1}
u(x)=\int_{\R^n}\frac{V(y)u(y)}{|x-y|^{n-2\sigma}}\,\ud y \quad  a.e  \mbox{ in } B_3,
\ee
where $0<\sigma<n/2$.

\begin{thm} \label{thm:harnack} For $n\ge 1, 0<\sigma<n/2$, $r>n/(n-2\sigma)$ and $p>n/2\sigma$, let $0\le V\in L^p(B_3)$, $0\le u  \in  L^r(B_3)$ and $0\le Vu\in L^1_{loc}(\R^n)$. If  $u$ satisfies \eqref{eq:lineareq1},  then $u\in C^\al(B_1)$,
\be\label{eq:holder estimate for linear equation}
\|u\|_{C^\al(B_1)} \leq C \|u\|_{L^r(B_3)},
\ee
and $u$ satisfies the Harnack inequality
\be \label{eq:linear HI}
\max_{\bar B_{1}}u\leq C\min_{\bar B_{1}}u,
\ee
where $C>0$ and $\al\in (0,1)$ depend only on $n,\sigma, p, $ and an upper bound of $ \|V\|_{L^p(B_3)}$.
\end{thm}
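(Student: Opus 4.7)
My plan is to reduce Theorem \ref{thm:harnack} to Propositions \ref{prop:holder} and \ref{prop:har} by splitting the integral in \eqref{eq:lineareq1} into a local part over $B_3$ and a tail over $\R^n\setminus B_3$. For $x\in B_2$ I write
\[
u(x)=\int_{B_3}\frac{V(y)u(y)}{|x-y|^{n-2\sigma}}\,\ud y+h(x),\qquad h(x):=\int_{\R^n\setminus B_3}\frac{V(y)u(y)}{|x-y|^{n-2\sigma}}\,\ud y,
\]
so that $u$ satisfies an equation of the form \eqref{eq:li eq} on $B_2$. The aim is then to verify three properties of the tail $h$: (a) $h\ge 0$ is continuous on $B_2$; (b) $\|h\|_{C^{\bar\al}(B_2)}\le C\|u\|_{L^r(B_3)}$ for some fixed $\bar\al\in(0,1)$; and (c) $\max_{\overline B_1}h\le c_0\min_{\overline B_1}h$ for some $c_0$ depending only on $n$ and $\sigma$. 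Once (a)--(c) are in hand, Proposition \ref{prop:holder} yields \eqref{eq:holder estimate for linear equation} and Proposition \ref{prop:har} yields \eqref{eq:linear HI}.

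The heart of the argument is controlling $h$ in terms of $\|u\|_{L^r(B_3)}$ rather than pointwise data. Item (a) is immediate by differentiating under the integral, since the integrand is smooth in $x\in B_2$ once $y\notin B_3$. For item (b), the key observation is the kernel comparison: for every $x_0\in B_1$, $x\in\overline B_2$, and $y\in\R^n\setminus B_3$, the ratio $|x_0-y|/|x-y|$ is bounded by an absolute constant depending only on $n$. Hence, for any $x_0\in B_1$ at which \eqref{eq:lineareq1} holds pointwise,
\[
h(x)\le C(n,\sigma)\int_{\R^n\setminus B_3}\frac{V(y)u(y)}{|x_0-y|^{n-2\sigma}}\,\ud y\le C(n,\sigma)\,u(x_0).
\]
Because $u\in L^r(B_3)$ the set of such $x_0\in B_1$ has full measure, so I may integrate this inequality in $x_0$ over $B_1$ and apply H\"older's inequality, obtaining $\|h\|_{L^\infty(B_2)}\le C\|u\|_{L^r(B_3)}$. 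Differentiating under the integral gives $|\nabla h(x)|\le Ch(x)$ on $B_2$ (since $|x-y|\ge 1$ there), so the same quantity controls the Lipschitz, and hence the $C^{\bar\al}(B_2)$, norm of $h$. Item (c) follows from the sharper kernel comparison $|z-y|^{n-2\sigma}/|x-y|^{n-2\sigma}\le 2^{n-2\sigma}$ for all $x,z\in\overline B_1$ and $y\notin B_3$, yielding $c_0=2^{n-2\sigma}$ (trivially satisfied if $h\equiv 0$).

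The main subtlety I expect is the bookkeeping for the averaging step: \eqref{eq:lineareq1} only holds a.e.\ in $B_3$, so one has to invoke Fubini and the $L^r$ integrability of $u$ to justify that the set of valid reference points $x_0\in B_1$ has full measure and that the resulting integral inequality produces the claimed $L^r$-norm bound on $h$. Everything else is a direct application of the two earlier propositions, the final constants inheriting their dependence on $n$, $\sigma$, $r$, $p$, and an upper bound for $\|V\|_{L^p(B_3)}$.
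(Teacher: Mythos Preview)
Your proposal is correct and follows essentially the same route as the paper: split off the tail $h(x)=\int_{\R^n\setminus B_3}\frac{Vu}{|x-y|^{n-2\sigma}}\,\ud y$, use the kernel comparison $|x_0-y|^{n-2\sigma}/|x-y|^{n-2\sigma}\le C(n,\sigma)$ for $x,x_0$ in the inner ball and $y\notin B_3$ to bound $\|h\|_{C^1(B_2)}$ by $\|u\|_{L^1(B_1)}$ and to get $\max_{\overline B_1}h\le c_0\min_{\overline B_1}h$, and then invoke Propositions \ref{prop:har} and \ref{prop:holder}. The only cosmetic difference is that the paper picks a single reference point $x_0\in\overline B_1$ with $u(x_0)\le|B_1|^{-1}\|u\|_{L^1(B_1)}$, whereas you average over $x_0\in B_1$; both yield the same bound and your averaging handles the a.e.\ issue cleanly.
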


\begin{proof} 
For $x\in B_2$, set
\[
h(x)= \int_{B_3^c} \frac{V(y)u(y)}{|x-y|^{n-2\sigma}}\,\ud y.
\]
Then
\[
u(x)= \int_{B_3} \frac{V(y)u(y)}{|x-y|^{n-2\sigma}}\,\ud y+ h(x).
\]
Let $x_0$ be a point $\bar B_1$ such that
\[
u(x_0)\leq \frac{1}{|B_1|}\|u\|_{L^1(B_1)}.
\]
Since $ uV\ge 0$ in $\R^n$, we have, for $x\in B_2$,
\[
h(x)= \int_{B_3^c} \frac{|x_0-y|^{n-2\sigma}}{|x-y|^{n-2\sigma}} \frac{V(y)u(y)}{|x_0-y|^{n-2\sigma}}\,\ud y \leq 4^{n-2\sigma} h(x_0) \leq 4^{n-2\sigma} u(x_0)\le \frac{4^{n-2\sigma}}{|B_1|}\|u\|_{L^1(B_1)}.
\]
By the same argument, we can show that 
\[
\max_{\bar B_{1}}h\le 4^{n-2\sigma} \min_{\bar B_1} h,
\]
and for every $k=1,2,\dots $
\be \label{eq:remain term regu}
\|h\|_{C^k(B_2)} \leq C(k) \|u\|_{L^1(B_3)}.
\ee
Hence, this theorem follows from Proposition \ref{prop:har} and Proposition \ref{prop:holder}.
\end{proof}

We also have Schauder type estimates.

\begin{thm}\label{thm:schauder} In addition to the assumptions in Theorem \ref{thm:harnack}, we assume that $V\in C^\al(B_3)$ for some $\al>0$ but not an integer, then $u\in C^{2\sigma+\al'}(B_1)$ and
\[
\|u\|_{C^{2\sigma+\al'}(B_1)}\leq C  \|u\|_{L^r(B_3)},
\]
where $\al'=\al$ if $2\sigma+\al\notin \mathbb{N}^+$, otherwise $\al'$ can be any positive constant less than $\al$. Here $C>0$ depends only on $n,\sigma, \al$ and an upper bound of $\|V\|_{C^\al(B_3)}$.
\end{thm}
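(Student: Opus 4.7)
The plan is to decompose $u$ into a compactly supported piece plus a smooth tail, and then invoke the classical fact that the Riesz potential of order $2\sigma$ gains $2\sigma$ derivatives in the Hölder scale. Specifically, for compactly supported $f$ and any $\beta>0$ with $2\sigma+\beta\notin\mathbb{N}^+$, the standard estimate
\[
\left\| \int_{\R^n} \frac{f(y)}{|x-y|^{n-2\sigma}}\,\ud y\right\|_{C^{2\sigma+\beta}(B_1)} \le C \|f\|_{C^{\beta}(\R^n)}
\]
holds; when $2\sigma+\beta\in\mathbb{N}^+$ the analogous estimate holds with $2\sigma+\beta$ replaced by any strictly smaller positive non-integer. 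I will take this as a black box.

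Pick a cutoff $\chi\in C_c^\infty(B_{5/2})$ with $\chi\equiv 1$ on $B_2$, and split
\[
u(x)=\int_{\R^n}\frac{\chi(y)V(y)u(y)}{|x-y|^{n-2\sigma}}\,\ud y+\int_{\R^n}\frac{(1-\chi(y))V(y)u(y)}{|x-y|^{n-2\sigma}}\,\ud y=:u_1(x)+u_2(x).
\]
For $x\in B_{3/2}$ the integrand defining $u_2$ vanishes in a neighborhood of $x$, so repeated differentiation under the integral sign, together with the $L^1$ control of $Vu$ at infinity obtained by evaluating \eqref{eq:lineareq1} at a fixed base point (exactly as in the derivation of \eqref{eq:remain term regu}), gives $\|u_2\|_{C^k(B_{3/2})}\le C_k\|u\|_{L^r(B_3)}$ for every $k$.

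For $u_1$ I would bootstrap. Theorem \ref{thm:harnack} (applied on a finite cover of $B_{5/2}$ by translates of $B_1$) gives $u\in C^{\alpha_0}(B_{5/2})$ for some $\alpha_0\in(0,1)$ with $\|u\|_{C^{\alpha_0}(B_{5/2})}\le C\|u\|_{L^r(B_3)}$. Hence $\chi V u\in C^{\min(\alpha,\alpha_0)}$ with compact support in $B_{5/2}$, and the Riesz potential estimate yields $u_1\in C^{2\sigma+\beta_1}(B_{3/2})$ with $\beta_1=\min(\alpha,\alpha_0)$, up to an arbitrarily small loss in case $2\sigma+\beta_1\in\mathbb{N}^+$, and with the quantitative bound $\|u_1\|_{C^{2\sigma+\beta_1}(B_{3/2})}\le C\|u\|_{L^r(B_3)}$. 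Combining with the bound on $u_2$ shows $u\in C^{2\sigma+\beta_1}(B_{3/2})$. Iterating this decomposition on a nested sequence of balls shrinking to $B_1$, with $\beta_{k+1}=\min(\alpha,\beta_k+2\sigma)$, we reach $\beta_k=\alpha$ after finitely many steps and obtain $u\in C^{2\sigma+\alpha'}(B_1)$ with the $\alpha'$ specified in the statement.

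The principal difficulty is twofold bookkeeping: (i) keeping all constants depending only on $n$, $\sigma$, $\alpha$, $p$, $\|V\|_{C^\alpha(B_3)}$, $\|V\|_{L^p(B_3)}$ and $\|u\|_{L^r(B_3)}$, which I would arrange by fixing at the outset the finite sequence of nested radii used at each bootstrap step (so that the ball shrinkage does not compound); and (ii) handling the integer edge cases, where each bootstrap may incur an arbitrarily small Hölder loss. Since the number of iterations is bounded in terms of $\alpha$ and $\sigma$ only, any such accumulated losses can be absorbed into a single final choice of $\alpha'$.
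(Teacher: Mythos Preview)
Your proposal is correct and follows essentially the same approach as the paper: cut off, treat the far piece as smooth, apply the H\"older estimate from Theorem~\ref{thm:harnack} to get initial regularity of $u$, then use the $2\sigma$-derivative gain of the Riesz potential and bootstrap. The paper's proof is terser (it just writes ``the theorem follows from bootstrap arguments''), while you spell out the nested-ball iteration and the handling of integer edge cases, but the underlying argument is the same.
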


\begin{proof}
Let $\eta$ be a nonnegative smooth cut-off function supported in $B_{2.5}$ such that $\eta\equiv 1$ in $B_2$. Then
\[
u(x)=\int_{\R^n}\frac{\eta(y)V(y)u(y)}{|x-y|^{n-2\sigma}}\,\ud y+\int_{\R^n}\frac{(1-\eta(y))V(y)u(y)}{|x-y|^{n-2\sigma}}\,\ud y:=I+II \quad  a.e  \mbox{ in } B_3,
\]
It is clear that
\[
\|II\|_{C^{2\sigma+\al'}(B_1)}\leq C  \|u\|_{L^\infty(B_{2.5})}\le C  \|u\|_{L^r(B_3)}.
\]
It follows from \eqref{eq:holder estimate for linear equation} and standard Riesz potential theory that
\[
\|I\|_{C^{2\sigma+\beta}(B_1)}\leq C  \|\eta V u\|_{C^\beta(\R^n)}\le C\|u\|_{L^r(B_3)}
\]
for some $\beta>0$. Then the theorem follows from bootstrap arguments.
\end{proof}

\subsection{Blow up profiles of solutions of nonlinear integral equations}

Let $\{\tau_i\}_{i=1}^\infty$ be a sequence of nonnegative constants satisfying $\lim_{i\to \infty}\tau_i=0$, and set
\[p_i=(n+2\sigma)/(n-2\sigma)-\tau_i.
\]
Suppose that $0\le u_i\in L^{\infty}_{loc}(\R^n)$ satisfies the nonlinear integral equation
\be \label{eq:main}
u_i(x)= \int_{\R^n} \frac{K_i(y)u_i(y)^{p_i}}{|x-y|^{n-2\sigma} }\,\ud y \quad \mbox{in }\om,
\ee where $\om$ is a domain in $\R^n$ and $K_i$ are nonnegative bounded functions in $\R^n$. We assume that $K_i\in C^{1}(\om)$ and, for some positive constants $A_1$ and $A_2$,
 \be\label{eq:Kca}
\begin{split}
  1/A_1\leq K_i(x)&\leq A_1 \quad \mbox{for all } x\in \om,\\
  \|K_i\|_{C^{1}(\om)}&\leq A_2,\\
  \|K_i\|_{C^{1,1} (\om)}& \le A_2,  \quad \mbox{if }\sigma \le \frac{1}{2}.
\end{split}
 \ee
 It follows from Theorem \ref{thm:schauder} that  for any $\al \in (0,1)$ and  smooth $\om'\subset \subset \om$
 \be \begin{split} \label{eq:reg}
 u_i\in C^{2\sigma+\al}(\om'), & \quad  \mbox{if }\sigma>\frac12; \\
 u_i\in C^{2+\al}(\om'), & \quad \mbox{if }\sigma= \frac12;\\
u_i\in C^{2+2\sigma}(\om') , &\quad \mbox{if }\sigma <\frac12.
 \end{split}
 \ee

We say that $\{u_i\}$ blows up if $\|u_i\|_{L^\infty(\om)}\to \infty$ as $i\to \infty$.

\begin{defn}\label{def4.1}
Suppose that $\{K_i\}$ satisfies \eqref{eq:Kca} and $\{u_i\}$ satisfies \eqref{eq:main}.
We say a point $\overline x\in \om$ is an isolated blow up point of $\{u_i\}$ if there exist
$0<\overline r<\mbox{dist}(\overline x,\om)$, $\overline C>0$, and a sequence $x_i$ tending to $\overline x$, such that,
$x_i$ is a local maximum of $u_i$, $u_i(x_i)\rightarrow \infty$ and
\[
u_i(x)\leq \overline C |x-x_i|^{-2\sigma/(p_i-1)} \quad \mbox{for all } x\in B_{\overline r}(x_i).
\]
\end{defn}

Without loss of generality, we may assume that $B_3\subset \om$.

Let $x_i\rightarrow \overline x$ be an isolated blow up of $u_i$. Define
\be\label{def:average}
\overline u_i(r)=\frac{1}{|\pa B_r|} \int_{\pa B_r(x_i)}u_i,\quad r>0,
\ee
and
\[
\overline w_j(r)=r^{2\sigma/(p_i-1)}\overline u_i(r), \quad r>0.
\]

\begin{defn}\label{def4.2}
We say $x_i \to \overline x\in \om$ is an isolated simple blow up point, if $x_i \to \overline x$ is an isolated blow up point, such that, for some
$\rho>0$ (independent of $i$) $\overline w_i$ has precisely one critical point in $(0,\rho)$ for large $i$.
\end{defn}

If $x_i\to 0$ is an isolated blow up point, then we will have the following Harnack inequality in the annulus centered at $0$.

\begin{lem}\label{lem:harnack} Suppose that $0\le u_i\in L^{\infty}_{loc}(\R^n)$ satisfies \eqref{eq:main} with $K_i$ satisfying \eqref{eq:Kca}.  Suppose that $x_i\rightarrow 0$
is an isolated blow up point of $\{u_i\}$, i.e., for some positive constants $A_3$ and $\bar r$ independent of $i$,
\be\label{4.7}
|x-x_i|^{2\sigma/(p_i-1)}u_i(x)\leq A_3\quad \mbox{for all } x\in B_{\bar r}\subset\om.
\ee
Then for any $0<r<\frac13 \overline r$, we have the following Harnack inequality
\[
\sup_{B_{2r}(x_i)\setminus\overline{B_{r/2}(x_i)}} u_i\leq C \inf_{B_{2r}(x_i)\setminus\overline{B_{r/2}(x_i)}} u_i,
\]
where $C$ is a positive constant depending only on $n, \sigma, A_3, \bar r$ and $\dsup_i\|K_i\|_{L^\infty(B_{\overline r}(x_i))}$.
\end{lem}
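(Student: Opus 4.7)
The plan is to rescale $u_i$ around $x_i$ by the factor $r$ to reduce the annular Harnack inequality to one on a fixed annulus $B_2\setminus \overline B_{1/2}$ where the rescaled solution is uniformly bounded, and then invoke the linear Harnack inequality (Theorem \ref{thm:harnack}) combined with a standard covering/chaining argument.

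\textbf{Step 1 (rescaling).} Set $w_i(y)=r^{2\sigma/(p_i-1)}u_i(x_i+ry)$ and $\tilde K_i(y)=K_i(x_i+ry)$. A direct change of variables in \eqref{eq:main} shows that $w_i$ satisfies the same type of nonlinear integral equation
\[
w_i(y)=\int_{\R^n}\frac{\tilde K_i(z)\,w_i(z)^{p_i}}{|y-z|^{n-2\sigma}}\,\ud z,
\]
with $\tilde K_i$ again satisfying \eqref{eq:Kca} (with the same $A_1,A_2$). The isolated blow up bound \eqref{4.7} becomes $|y|^{2\sigma/(p_i-1)}w_i(y)\le A_3$ for $y\in B_{\bar r/r}$. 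Since $r<\bar r/3$ and $\tau_i\to 0$ (so $2\sigma/(p_i-1)\to (n-2\sigma)/2$), this yields a uniform bound $w_i(y)\le C(A_3,n,\sigma)$ on the enlarged annulus $B_3\setminus B_{1/4}$.

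\textbf{Step 2 (local linear Harnack).} Rewrite the rescaled equation in linear form $w_i(y)=\int_{\R^n}V_i(z)w_i(z)|y-z|^{2\sigma-n}\,\ud z$ with potential $V_i=\tilde K_i w_i^{p_i-1}$. By Step 1, $V_i$ is uniformly bounded in $L^\infty(B_3\setminus B_{1/4})$, hence in $L^p$ for any $p>n/(2\sigma)$. Fix a small $\rho>0$ so that $B_{3\rho}(y_0)\subset B_3\setminus B_{1/4}$ for every $y_0\in B_2\setminus \overline B_{1/2}$; then a further dilation centered at $y_0$ of scale $\rho$ turns the equation into one for which Theorem \ref{thm:harnack} applies on $B_3$, producing a Harnack inequality on $B_\rho(y_0)$ with a constant depending only on $n,\sigma,A_3$, and $\sup_i\|K_i\|_{L^\infty}$. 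The hypothesis $V_i w_i\in L^1_{loc}(\R^n)$ holds automatically since the right-hand side of the original equation is finite.

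\textbf{Step 3 (chaining and undoing the rescaling).} Since $B_2\setminus \overline B_{1/2}$ is connected and of fixed size, it admits a finite cover $\{B_\rho(y_j)\}_{j=1}^{N(n)}$ by balls of the type considered above, with each $B_\rho(y_j)$ meeting its successor. Chaining the local Harnack inequalities from Step 2 along this cover gives a uniform Harnack inequality for $w_i$ on the entire annulus $B_2\setminus \overline B_{1/2}$; rescaling back via $y=(x-x_i)/r$ then gives the claim on $B_{2r}(x_i)\setminus \overline B_{r/2}(x_i)$. The main point requiring care is the bookkeeping in Step 1 to see that the rescaled equation preserves its form and that the bound $|y|^{2\sigma/(p_i-1)}w_i(y)\le A_3$ survives on $B_3\setminus B_{1/4}$ uniformly in $i$ (this is where $\tau_i\to 0$ is used); everything else is a routine application of Theorem \ref{thm:harnack} plus a covering argument.
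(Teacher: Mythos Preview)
Your proof is correct and follows essentially the same route as the paper: rescale by $r$ to land on a fixed annulus, observe that the rescaled solution satisfies the same integral equation with potential $V_i=\tilde K_i w_i^{p_i-1}$ uniformly bounded on $B_{5/2}\setminus B_{1/4}$ thanks to the isolated blow up bound, then cover the annulus by finitely many small balls and apply the linear Harnack inequality of Theorem~\ref{thm:harnack} on each. Your remark that a further dilation is needed so that the $B_3$-ball in Theorem~\ref{thm:harnack} stays inside the region where $V_i$ is bounded is exactly the content of the paper's covering step; the only cosmetic difference is that you phrase the uniformity of $4^{2\sigma/(p_i-1)}$ via $\tau_i\to 0$, whereas the paper leaves this implicit.
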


\begin{proof}
For $0<r<\frac{\bar r}{3}$, set
\[
w_i(x)=r^{2\sigma/(p_i-1)}u_i(x_i+rx)\quad \mbox{for } x\in B_3.
\]
We see that
\[
w_i(x)= \int_{\R^n} \frac{K_i(x_i+ry)w_i(y)^{p_i}}{|x-y|^{n-2\sigma}}\,\ud y \quad x\in B_3.
\]
Since $x_i\to 0$ is an isolated blow up point of $u_i$,
\[
w_i(x)\leq A_3 |x|^{-2\sigma/(p_i-1)}\quad \mbox{for all } x\in B_3.
\]
Set $V_i(y)=K_i(x_i+ry)w_i(y)^{p_i-1}$. Clearly, $\|V_i\|_{L^\infty(B_{5/2}\setminus  B_{1/4})} \le A_3 4^{2\sigma/(p_i-1)}$.  Covering $B_{5/2}\setminus  B_{1/4}$ by finite many balls, then
Lemma \ref{lem:harnack} follows immediately from the Harnack inequality in Theorem \ref{thm:harnack}.
\end{proof}

The next proposition shows that if $x_i\to 0$ is an isolated blow up point, then the blow up profile has to be a standard bubble.

\begin{prop}\label{prop:blow up a bubble}
Assume the assumptions in Lemma \ref{lem:harnack}.
Then for any $R_i\rightarrow \infty$, $\va_i\rightarrow 0^+$, we have,
after passing to a subsequence (still denoted as $\{u_i\}$,
$\{x_i\}$, etc. ...), that
\[
\|m_i^{-1}u_i(m_i^{-(p_i-1)/2\sigma}\cdot+x_i)-(1+k_i|\cdot|^2)^{(2\sigma-n)/2}\|_{C^2(B_{2R_i}(0))}\leq \va_i,
\]
\[
r_i:=R_im_i^{-(p_i-1)/2\sigma}\rightarrow 0\quad \mbox{as}\quad i\rightarrow \infty,
\]
where $m_i=u_i(x_i)$ and $k_i=\Big(\frac{K_i(x_i)\pi^{n/2}\Gamma(\sigma)}{\Gamma(\frac n2 +\sigma)}\Big)^{1/\sigma}$.
\end{prop}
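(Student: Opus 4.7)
The plan is a standard blow-up rescaling at the isolated blow-up point, producing a limit that solves the limiting integral equation on $\mathbb{R}^n$, which is then classified using the Chen--Li--Ou / Li result.

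\textbf{Rescaling.} Set $\mu_i := m_i^{-(p_i-1)/(2\sigma)}$ and $\phi_i(y) := m_i^{-1} u_i(x_i + \mu_i y)$. The critical-scaling change of variables $z = x_i + \mu_i w$ in \eqref{eq:main} turns the Jacobian and prefactors into $m_i^{p_i-1}\mu_i^{2\sigma} = 1$, yielding
$$
\phi_i(y) = \int_{\mathbb{R}^n} \frac{\tilde K_i(w)\, \phi_i(w)^{p_i}}{|y-w|^{n-2\sigma}}\,\ud w, \qquad \tilde K_i(w) := K_i(x_i + \mu_i w).
$$
The hypotheses of Definition \ref{def4.1} translate to $\phi_i(0) = 1$, $\nabla \phi_i(0) = 0$, $\phi_i \le 1$ on $B_{\rho/\mu_i}$ (from the local maximum), and $|y|^{2\sigma/(p_i-1)}\phi_i(y) \le \bar C$ on $B_{\bar r/\mu_i}$; both radii tend to $\infty$ as $\mu_i \to 0$.

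\textbf{Compactness and limit.} Fix $R>0$. For $i$ large, $\phi_i \le 1$ on $B_R$ and $\tilde K_i$ has uniform $C^1$ (resp.\ $C^{1,1}$) norm on $B_R$ by \eqref{eq:Kca}. Splitting $\int_{\mathbb{R}^n} = \int_{B_{3R}} + \int_{B_{3R}^c}$, the outer integral defines a smooth function on $B_R$ uniformly controlled by the isolated-blow-up decay, while the local part fits the framework of Section \ref{sec:linear integral equations}. Iterating Theorem \ref{thm:harnack} and Theorem \ref{thm:schauder} (noting that $\tilde K_i \phi_i^{p_i-1}$ becomes Hölder continuous once $\phi_i$ is) gives uniform $C^2(B_{R/2})$ bounds. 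A diagonal Arzelà--Ascoli extraction produces $\phi_i \to \phi$ in $C^2_{loc}(\mathbb{R}^n)$; passing to the limit in the integral equation by dominated convergence (using the $|w|^{-2\sigma/(p_i-1)}$ tail bound) gives
$$
\phi(y) = K_\infty \int_{\mathbb{R}^n} \frac{\phi(w)^{(n+2\sigma)/(n-2\sigma)}}{|y-w|^{n-2\sigma}}\,\ud w,
$$
with $\phi(0)=1$, $\nabla\phi(0)=0$, and $\phi>0$ (any zero would force $\phi\equiv 0$ in the integral equation), where $K_\infty := \lim_i K_i(x_i)$ after a further subsequence.

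\textbf{Classification, diagonal extraction, and main obstacle.} By Chen--Li--Ou \cite{CLO} (or Li \cite{Li04}), $\phi$ is a standard bubble; the conditions $\phi(0)=1$, $\nabla\phi(0)=0$ force it to be centered at the origin with value $1$ at $0$, and matching the Riesz-potential identity for the Aubin--Talenti profile pins down
$$
\phi(y) = (1 + k |y|^2)^{-(n-2\sigma)/2}, \qquad k = \Big(\tfrac{K_\infty \pi^{n/2} \Gamma(\sigma)}{\Gamma(n/2+\sigma)}\Big)^{1/\sigma}.
$$
Since $k_i \to k$, the target profiles $(1+k_i|\cdot|^2)^{-(n-2\sigma)/2}$ also converge to $\phi$ in $C^2_{loc}$, so a Cantor diagonal extraction from both convergences yields a subsequence of $\{u_i\}$ for which $\|\phi_i - (1+k_i|\cdot|^2)^{-(n-2\sigma)/2}\|_{C^2(B_{2R_i})} \le \va_i$ holds for any prescribed $R_i\to\infty$, $\va_i \to 0^+$; passing to a further subsequence if necessary ensures $r_i = R_i\mu_i \to 0$, since $\mu_i\to 0$. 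The principal difficulty is the compactness step: establishing uniform $C^2$ control for $\phi_i$ on arbitrarily large fixed balls requires the split-and-bootstrap use of the Harnack and Schauder theorems of Section \ref{sec:linear integral equations}, together with the isolated-blow-up tail bound. The classification and identification of $k$ are then routine.
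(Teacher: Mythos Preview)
Your strategy is the right one, but the compactness step has a real gap: the claim ``$\phi_i \le 1$ on $B_{\rho/\mu_i}$ (from the local maximum)'' is not justified. Definition~\ref{def4.1} only says that $x_i$ is \emph{some} local maximum of $u_i$; the radius of the neighborhood on which $u_i\le u_i(x_i)$ is not quantified and may shrink to zero faster than $\mu_i$, so after rescaling you have no control on any fixed ball. The only quantitative datum is $|y|^{2\sigma/(p_i-1)}\phi_i(y)\le A_3$, which blows up as $y\to 0$. The paper obtains the needed bound $\sup_{B_1}\phi_i\le C$ by a contradiction argument: if $\phi_i(\bar x_i)=\sup_{B_1}\phi_i\to\infty$, one rescales a second time, $\tilde\phi_i(z)=\phi_i(\bar x_i)^{-1}\phi_i(\bar x_i+\phi_i(\bar x_i)^{-(p_i-1)/2\sigma}z)$, so that $\tilde\phi_i\le 1$ on a growing ball and satisfies an equation of the same type. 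The isolated blow-up bound $\phi_i(\bar x_i)\le A_3|\bar x_i|^{-2\sigma/(p_i-1)}$ forces the preimage $z_i=-\phi_i(\bar x_i)^{(p_i-1)/2\sigma}\bar x_i$ of the origin to stay in a fixed ball, and then the Harnack inequality of Theorem~\ref{thm:harnack} gives $1=\tilde\phi_i(0)\le C\,\tilde\phi_i(z_i)=C\,\phi_i(\bar x_i)^{-1}\to 0$, a contradiction. Without this step your $C^2$ bootstrap has no $L^\infty$ input.

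There is a second, smaller gap in the limit passage. Dominated convergence on all of $\mathbb{R}^n$ is not available: the tail bound $\phi_i(w)\le A_3|w|^{-2\sigma/(p_i-1)}$ is only known for $|w|<\bar r/\mu_i$, and the contribution from $\{|w|>\bar r/\mu_i\}$ could in principle survive in the limit as a nonnegative constant. The paper handles this by writing $\phi_i(x)=\int_{B_R}(\cdots)+h_i(R,x)$, passing to the limit in each piece, showing that $h(R,x)$ depends on $x$ only through a factor that tends to $1$ as $R\to\infty$, so that $\lim_{R\to\infty}h(R,x)=c_0$ is a constant, and finally arguing $c_0=0$: otherwise $\phi\ge c_0>0$ on $\mathbb{R}^n$, which makes $\int_{\mathbb{R}^n}\phi^{(n+2\sigma)/(n-2\sigma)}|y|^{-(n-2\sigma)}\,\ud y=\infty$ and contradicts $\phi(0)=1$. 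Only then does the classification theorem apply to the clean equation.
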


\begin{proof} Let
\[
\varphi_i(x)=m_i^{-1} u_i(m_i^{-(p_i-1)/2\sigma} x+x_i) \quad \mbox{for }x\in \R^n.
\]
It is easy to see that for $|x|<\overline r m_i^{(p_i-1)/2\sigma}$ we have
\be \label{eq:scal}
\varphi_i(x)= \int_{\R^n} \frac{ K_i(m^{-(p_i-1)/2\sigma} y+x_i) \varphi_i(y)^{p_i}}{|x-y|^{n-2\sigma}}\,\ud y,
\ee
\be \label{eq:scalbound}
\varphi_i(0)=1, \quad \nabla \varphi_i(0)=0, \quad  0<\varphi_i(x)\leq A_3 |x|^{-2\sigma/(p_i-1)}.
\ee
For any $R>0$, we claim that
\be \label{eq:scalbound2}
\|\varphi_i\|_{C^{2,\al}(B_R)}\leq C(R)
\ee
for some $\al\in (0,1)$ and all sufficiently large $i$. Indeed, by \eqref{eq:scalbound} and Theorem \ref{thm:schauder}, it suffices to prove that $\varphi_i \le C$ in $B_1$.
If $\varphi_i(\bar x_i)=\sup_{B_{1}} \varphi_i \to \infty $, set
\[
\tilde \varphi_i(z)=\varphi_i(\bar x_i)^{-1}\varphi_i(\varphi_i(\bar x_i)^{-(p_i-1)/2\sigma}z+\bar x_i)\leq 1\quad \mbox{for }|z| \le \frac12 \varphi_i(\bar x_i)^{(p_i-1)/2\sigma}.
\]
By \eqref{eq:scalbound},
\[
\tilde \varphi_i(z_i)= \varphi_i(\bar x_i)^{-1} \varphi_i(0)\to 0
\]  for $z_i=-\varphi_i(\bar x_i)^{(p_i-1)/2\sigma}\bar x_i$.  Since $\varphi_i(\bar x_i) \leq  A_3 |\bar x_i|^{-2\sigma/(p_i-1)}$, we have $|z_i|\leq A_3^{2\sigma/(p_1-1)}$. Hence, we can find $t>0$ independent of $i$ such that such that $z_i\in B_t$.
 Applying Theorem \ref{thm:harnack} to $\tilde \varphi_i$ in $B_{2t}$ (since $\tilde\varphi_i$ satisfies a similar equation to \eqref{eq:scal}), we have
\[
1=\tilde \varphi_i(0)\le C \tilde \varphi_i(z_i)\to 0,
\]
which is impossible. Thus \eqref{eq:scalbound2} is valid.

It follows from \eqref{eq:scalbound2} that,  after passing to a subsequence if necessary,  $\varphi_i \to \varphi$ in $C^2_{loc}(\R^n)$ for some $\varphi\in C^2(\R^n)$. We write \eqref{eq:scal} as
\[
\varphi_i(x)= \int_{B_R} \frac{K_i(m_i^{-(p_i-1)/2\sigma} y+x_i) \varphi_i(y)^{p_i}}{|x-y|^{n-2\sigma}}\,\ud y+h_i(R,x),
\]
where
\[
h_i(R,x):=\int_{B_R^c} \frac{K_i(m_i^{-(p_i-1)/2\sigma} y+x_i) \varphi_i(y)^{p_i}}{|x-y|^{n-2\sigma}}\,\ud y.
\]
Since  $K_i$ and $\varphi_i$ are nonnegative, by \eqref{eq:scalbound2} we have $\|h_i(R,\cdot)\|_{C^1(B_{R-1})}\le C(R)$ for all $i$ large. Thus, after passing to a subsequence, $h_i(R,x) \to h(R,x)$ in $C^{1/2}(B_{R-1})$.
Therefore,
\be \label{eq:limit1}
h(R,x)=\varphi(x)-\int_{B_R}\frac{K_0 \varphi(y)^{\frac{n+2\sigma}{n-2\sigma}}}{|x-y|^{n-2\sigma}}\,\ud y
\ee
for $x\in B_{R-1}$, where $K_0=\dlim_{i\to\infty}K_i(x_i)$. Moreover,  $h(R,x)\ge 0$, and is non-increasing in $R$. Note that for $R>>|x|$,
\begin{align*}
\frac{R^{n-2\sigma}}{(R+|x|)^{n-2\sigma}} h_i(R, 0)& \leq h_i(R, x)\\&
=\int_{B_R^c}\frac{K_i(m_i^{-(p_i-1)/2\sigma} y+x_i) \varphi_i(y)^{p_i}}{|y|^{n-2\sigma}} \frac{|y|^{n-2\sigma}}{|x-y|^{n-2\sigma}}\,\ud y\\
&\leq \frac{R^{n-2\sigma}}{(R-|x|)^{n-2\sigma}} h_i(R, 0).
\end{align*}
Hence,
\be \label{eq:remainder1}
\lim_{R\to \infty}h(R,x)= \lim_{R\to \infty} h(R,0).
\ee
Sending $R$ to $\infty$ in \eqref{eq:limit1},  it follows from Lebesgue's monotone convergence theorem that
\[
\varphi(x)= \int_{\R^n}\frac{K_0\varphi(y)^{\frac{n+2\sigma}{n-2\sigma}}}{|x-y|^{n-2\sigma}} \ud y+c_0 \quad x\in \R^n,
\]
where $ c_0=\lim_{R\to \infty}h(R,0)\geq 0$. We claim that $c_0=0$. If not,
\[
\varphi(x)-c_0= \int_{\R^n}\frac{K_0\varphi(y)^{\frac{n+2\sigma}{n-2\sigma}}}{|y|^{n-2\sigma}}\,\ud y >0,
\]
which implies that
\[
1=\varphi(0)\geq \int_{\R^n}\frac{K_0c_0^{\frac{n+2\sigma}{n-2\sigma}}}{|x-y|^{n-2\sigma}}=\infty.
\]
This is impossible. It follows the classification theorem in \cite{CLO} or \cite{Li04} that
\[
\varphi(x)=\left(1+\lim_{i\to\infty}k_i|x|^2\right)^{-\frac{n-2\sigma}{2}}
\]
with $k_i=\Big(\frac{K_i(x_i)\pi^{n/2}\Gamma(\sigma)}{\Gamma(\frac n2 +\sigma)}\Big)^{1/\sigma}$, where we used that $\varphi(0)=1$ and $\nabla \varphi(0)=0$.
\end{proof}
Note that since passing to subsequences does not affect our proofs, in the rest of the paper we will always choose $R_i\to\infty$ first, and then $\va_i\to 0^+$ as small as we wish (depending on $R_i$) and then choose our subsequence $\{u_i\}$ to work with.

\begin{rem}
For $\sigma\ge 1$, we can have a simpler proof of local boundedness of $\varphi_i$. Indeed, Since
\[
\Delta_x \frac{1}{|x-z|^{n-2\sigma}}= \frac{2(n-2\sigma)(1-\sigma)}{|x-z|^{n-2\sigma+2}} \quad \mbox{if }x\neq z,
\]
$\Delta w_i\le 0$ if $\sigma\ge 1$. By maximum principle, we have $\min_{\pa B_R} \varphi_i\leq \varphi_i(0)=1$ for any fixed $R>0$. Then the local boundedness of $\varphi_i$ follows from Lemma \ref{lem:harnack}.
\end{rem}

Consequently, we have the following lower bound of $u_i$ near isolated blow up points. To save the notation, we may assume that $\bar r=2$. 

\begin{prop}\label{prop:lower bounded by bubble} Under the hypotheses of Proposition \ref{prop:blow up a bubble}, there exists some positive constant $C=C(n,\sigma, A_1,A_2,A_3)$ such that,
\[
u_i(x)\geq C^{-1}m_i(1+k_im_i^{(p_i-1)/\sigma}|x-x_i|^2)^{(2\sigma-n)/2} \quad\mbox{for all } |x-x_i|\leq 1.
\]
In particular, for any $e\in \mathbb{R}^n$, $|e|=1$, we have
\[
u_i(x_i+e)\geq C^{-1}m_i^{-1+((n-2\sigma)/2\sigma)\tau_i},
\]
where $\tau_i=(n+2\sigma)/(n-2\sigma)-p_i$.
\end{prop}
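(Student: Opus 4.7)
The plan is to split the ball $B_1(x_i)$ at the natural blow-up scale $r_0:=R_i r_i = R_i m_i^{-(p_i-1)/(2\sigma)}$ and to handle the bubble region $\{|x-x_i|\le r_0\}$ and the outer region $\{r_0\le |x-x_i|\le 1\}$ by different arguments. On the bubble region, the inequality is immediate from Proposition \ref{prop:blow up a bubble}: writing $\varphi_i(z)=m_i^{-1}u_i(r_i z+x_i)$, one has $\|\varphi_i(z)-(1+k_i|z|^2)^{-(n-2\sigma)/2}\|_{C^2(B_{2R_i})}\le \va_i$, and since $\va_i$ may be chosen arbitrarily small after $R_i$ has been fixed, I will insist that $\va_i<\tfrac12(1+k_i R_i^2)^{-(n-2\sigma)/2}$. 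This forces $\varphi_i\ge \tfrac12(1+k_i|z|^2)^{-(n-2\sigma)/2}$ throughout $B_{R_i}$, and undoing the rescaling yields the claimed lower bound on $B_{r_0}(x_i)$.

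For $r_0\le |x-x_i|\le 1$, I would start directly from the integral equation \eqref{eq:main}. For any $y\in B_{r_0}(x_i)$ the triangle inequality gives $|x-y|\le |x-x_i|+r_0\le 2|x-x_i|$, so keeping only the contribution from $B_{r_0}(x_i)$ and using $K_i\ge 1/A_1$ produces
\[
 u_i(x)\;\ge\;\frac{2^{-(n-2\sigma)}}{A_1}\,|x-x_i|^{-(n-2\sigma)}\int_{B_{r_0}(x_i)}u_i(y)^{p_i}\,\ud y.
\]
The change of variables $y=x_i+r_i z$ turns the remaining integral into $r_i^n m_i^{p_i}\int_{B_{R_i}}\varphi_i(z)^{p_i}\,\ud z$. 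By the bubble convergence of $\varphi_i$, the latter integral is bounded below by a positive constant independent of $i$ once $R_i$ is large, while an elementary exponent calculation gives
\[
 r_i^n m_i^{p_i}\;=\;m_i^{\,p_i-n(p_i-1)/(2\sigma)}\;=\;m_i^{-1+(n-2\sigma)\tau_i/(2\sigma)}.
\]
Combining these yields $u_i(x)\ge C^{-1}m_i^{-1+(n-2\sigma)\tau_i/(2\sigma)}|x-x_i|^{-(n-2\sigma)}$ throughout the outer region.

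To see that this matches the stated form, observe that in the outer region $k_i m_i^{(p_i-1)/\sigma}|x-x_i|^2\ge k_i R_i^2\gg 1$, so
\[
 m_i\bigl(1+k_i m_i^{(p_i-1)/\sigma}|x-x_i|^2\bigr)^{-(n-2\sigma)/2}\;\le\;C\,k_i^{-(n-2\sigma)/2}\,m_i^{-1+(n-2\sigma)\tau_i/(2\sigma)}\,|x-x_i|^{-(n-2\sigma)},
\]
which agrees with the bound above up to a multiplicative constant, since $k_i$ is bounded above and below uniformly in $i$ thanks to \eqref{eq:Kca}. Evaluating the first inequality at a point $x=x_i+e$ with $|e|=1$ then yields the second assertion. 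The only real obstacle in this argument is the bookkeeping between the two regions: I must choose $\va_i$ small enough relative to $R_i$ so that the bubble lower bound survives out to $|x-x_i|=r_0$, and I must verify the exponent identity $p_i-n(p_i-1)/(2\sigma)=-1+(n-2\sigma)\tau_i/(2\sigma)$. Beyond that, everything is a direct consequence of the positivity of $K_i u_i^{p_i}$ and the bubble convergence furnished by Proposition \ref{prop:blow up a bubble}.
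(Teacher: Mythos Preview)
Your argument is correct and follows essentially the same route as the paper: restrict the integral in \eqref{eq:main} to the bubble region, change variables, and invoke the bubble profile from Proposition~\ref{prop:blow up a bubble}. The only cosmetic difference is that the paper, instead of pulling the kernel out via $|x-y|\le 2|x-x_i|$ and then arguing the resulting power law matches the bubble in the outer region, keeps the kernel inside and uses the exact identity $\int_{\R^n}|x-y|^{2\sigma-n}U_i(y)^{(n+2\sigma)/(n-2\sigma)}\,\ud y = U_i(x)$ to land directly on $Cm_iU_i(m_i^{(p_i-1)/2\sigma}(x-x_i))$; this covers both regions at once but is not materially different from your computation.
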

\begin{proof}
By change of variables and using Proposition \ref{prop:blow up a bubble}, we have for $r_i\le |x-x_i|\le 1$,
\be\label{eq:lower bound bubble}
\begin{split}
u(x)&\ge C\int_{|y-x_i|\le r_i} \frac{u_{i}(y)^{p_i}}{|x-y|^{n-2\sigma}} \,\ud y\\&
 \ge Cm_i\int_{|z|\le R_i} \frac{\big(m_i^{-1}u_{i}(m_i^{-(p_i-1)/2\sigma}z+x_i)\big)^{p_i}}{|m_i^{(p_i-1)/2\sigma}(x-x_i)-z|^{n-2\sigma}} \,\ud z
 \\&
 \ge \frac 12 C m_i \int_{|z|\le R_i} \frac{U_i(z)^{p_i}}{|m_i^{(p_i-1)/2\sigma}(x-x_i)-z|^{n-2\sigma}} \,\ud z\\&
  \ge \frac 12 C m_i \int_{|z|\le R_i} \frac{U_i(z)^{\frac{n+2\sigma}{n-2\sigma}}}{|m_i^{(p_i-1)/2\sigma}(x-x_i)-z|^{n-2\sigma}} \,\ud z\\&
 \ge \frac 14 C m_i \int_{\R^n} \frac{U_i(z)^{\frac{n+2\sigma}{n-2\sigma}}}{|m_i^{(p_i-1)/2\sigma}(x-x_i)-z|^{n-2\sigma}} \,\ud z\\&
 =\frac 14 Cm_i  U_i(m_i^{(p_i-1)/2\sigma}(x-x_i)),
 \end{split}
\ee
where
\[
U_i(z)=\left(\frac{1}{1+k_i|z|^2}\right)^{(n-2\sigma)/2}.
\]
The proposition follows immediately from the above and Proposition \ref{prop:blow up a bubble}.
\end{proof}

To obtain the desired upper bound of $u_i$ near isolated simple blow up points, we need an auxiliary bound in the below, and a Pohozaev type identity in Proposition \ref{prop:pohozaev}. Pohozaev type identities for fractional Laplacian and some applications can be found in \cite{RS1,RS2}.

\begin{lem} \label{lem:upbound1}  Under the hypotheses of Proposition \ref{prop:blow up a bubble} with $\bar r=2$,
and in addition that $x_i\to 0$ is also an isolated simple blow up point with the constant $\rho$, then there exists $\delta_i>0$, $\delta_i=O(R_i^{-2\sigma+o(1)})$,
such that
\[
u_i(x)\leq C_1 R_i^{(n-2\sigma)\tau_i}u_i(x_i)^{-\lda_i}|x-x_i|^{2\sigma-n+\delta_i}\quad \mbox{for all }r_i\leq |x-x_i|\leq 1,
\]
where $\lda_i=(n-2\sigma-\delta_i)(p_i-1)/2\sigma-1$ and $C_1$ is some positive constant depending only on $n,\sigma, A_1,A_3$ and $\rho$.

\end{lem}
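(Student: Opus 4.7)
The plan is to carry out a bootstrap/comparison argument of the type of Schoen--Zhang and Li, translated to the integral-equation setting, in order to upgrade the crude isolated-blow-up decay $u_i(x)\le A_3|x-x_i|^{-2\sigma/(p_i-1)}$ (exponent $\sim (n-2\sigma)/2$) to the sharp barrier bound $u_i(x)\lesssim m_i^{-\lambda_i}|x-x_i|^{2\sigma-n+\delta_i}$ (exponent $\sim n-2\sigma$). The specific algebraic identity
\[
-\lambda_i + (n-2\sigma-\delta_i)\,(p_i-1)/(2\sigma)=1
\]
that defines $\lambda_i$ is precisely what makes the right-hand side of the desired inequality, when evaluated at $|x-x_i|=r_i$, equal to $R_i^{\delta_i}\cdot m_iR_i^{2\sigma-n}$, matching the value of the rescaled bubble from Proposition \ref{prop:blow up a bubble} up to the slack factor $R_i^{\delta_i}$. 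The extra prefactor $R_i^{(n-2\sigma)\tau_i}$ absorbs the loss coming from the subcritical perturbation $\tau_i$.

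First I would establish the two boundary estimates. On the inner sphere $|x-x_i|=r_i$, Proposition \ref{prop:blow up a bubble} together with the annular Harnack inequality (Lemma \ref{lem:harnack}) gives $u_i(x)\le Cm_iR_i^{2\sigma-n}$, which by the above algebra equals $CR_i^{\delta_i}m_i^{-\lambda_i}r_i^{2\sigma-n+\delta_i}$. On the outer sphere $|x-x_i|=1$, the isolated simple blow-up condition (the spherical average $\bar w_i$ has a single critical point in $(0,\rho)$ and decreases past it; see Definition \ref{def4.2}) combined with Lemma \ref{lem:harnack} yields $u_i(x)\le Cm_i^{-1}$, which is dominated by $Cm_i^{-\lambda_i}$ since $\lambda_i\to 1$. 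Thus the target inequality holds at both ends of the annulus; the task is to propagate it through the interior.

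For the interior, I would set
\[
\Lambda_i:=\sup_{r_i\le |x-x_i|\le 1}\,m_i^{\lambda_i}|x-x_i|^{n-2\sigma-\delta_i}u_i(x)
\]
and aim to show $\Lambda_i\le C_1R_i^{(n-2\sigma)\tau_i}$. Pick a maximizer $\bar x_i$ (necessarily interior by the boundary estimates) and insert it in \eqref{eq:main}, splitting the domain of integration into (i)~$B_{r_i}(x_i)$, (ii)~the annulus $A_i=\{r_i\le |y-x_i|\le 1\}$, and (iii)~$\{|y-x_i|>1\}$. Piece~(i) is handled by a change of variables $y=x_i+m_i^{-(p_i-1)/(2\sigma)}z$ and Proposition \ref{prop:blow up a bubble}, contributing to $m_i^{\lambda_i}|\bar x_i-x_i|^{n-2\sigma-\delta_i}u_i(\bar x_i)$ a term of order $R_i^{-\delta_i}$. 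Piece~(iii) is harmless by the outer boundary bound and the explicit decay of the Riesz kernel. Piece~(ii) is the key nonlinear feedback: plugging the definition of $\Lambda_i$ to bound $u_i(y)^{p_i}\le \Lambda_i^{p_i}m_i^{-\lambda_i p_i}|y-x_i|^{(2\sigma-n+\delta_i)p_i}$, a direct Riesz-type computation--in which the lower truncation at $r_i$ is essential since $(2\sigma-n)p_i+n=-2\sigma+O(\tau_i)<0$--yields a contribution of order $\varepsilon_i\Lambda_i^{p_i}$ with $\varepsilon_i=O(R_i^{-2\sigma+o(1)})$ for the stated choice of $\delta_i$. Combining the three pieces gives
\[
\Lambda_i\le CR_i^{(n-2\sigma)\tau_i}+C\varepsilon_i\Lambda_i^{p_i},
\]
and a continuity/bootstrap argument, starting from a crude a priori bound on $\Lambda_i$ supplied by Lemma \ref{lem:harnack} and using that $\varepsilon_i\Lambda_i^{p_i-1}\to 0$, closes to $\Lambda_i\le C_1R_i^{(n-2\sigma)\tau_i}$.

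The main technical obstacle is the delicate Riesz integration in Piece~(ii): because the exponent $(2\sigma-n)p_i+n$ is negative, the annular integral is dominated by its inner end at scale $r_i$, and its size depends nonlinearly on $\delta_i$. Balancing this with the bubble-generated contribution in Piece~(i) forces the precise scaling $\delta_i=O(R_i^{-2\sigma+o(1)})$ asserted in the lemma; one must then verify, using the algebraic identities for $\lambda_i$ and $p_i-1$ and the a priori bound on $\Lambda_i$, that the residual error $\varepsilon_i\Lambda_i^{p_i-1}$ can indeed be driven below $1/2$ along the sequence, so that the continuity argument concludes.
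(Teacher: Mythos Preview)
Your outer-boundary claim is circular: the bound $u_i\le C m_i^{-1}$ on $|x-x_i|=1$ is precisely Proposition~\ref{prop:upbound2}, which is proved \emph{after} Lemma~\ref{lem:upbound1} using it as input. The isolated-simple condition alone (monotonicity of $r^{2\sigma/(p_i-1)}\bar u_i(r)$ past $r_i$) gives only
\[
u_i(x)\le C R_i^{\frac{2\sigma-n}{2}+o(1)}|x-x_i|^{-2\sigma/(p_i-1)},
\]
i.e.\ $u_i^{p_i-1}\le C R_i^{-2\sigma+o(1)}|x-x_i|^{-2\sigma}$, which is the paper's Step~1. With this crude bound your $\Lambda_i$ satisfies $\Lambda_i\lesssim m_i^{\lambda_i}R_i^{(2\sigma-n)/2+o(1)}\sim m_i R_i^{(2\sigma-n)/2}$, and then
\[
\varepsilon_i\Lambda_i^{p_i-1}\sim R_i^{-2\sigma+o(1)}\bigl(m_i R_i^{(2\sigma-n)/2}\bigr)^{p_i-1}\sim R_i^{-4\sigma+o(1)}m_i^{4\sigma/(n-2\sigma)+o(1)},
\]
which blows up whenever $R_i=m_i^{o(1)}$ (as is permitted). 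So your super-linear bootstrap inequality $\Lambda_i\le C+C\varepsilon_i\Lambda_i^{p_i}$ cannot be closed from the available a~priori information. For the same reason, Piece~(iii) is not ``harmless'': you have no control of $u_i$ outside $B_1(x_i)$ at this stage.

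The paper avoids both difficulties by linearizing rather than bootstrapping nonlinearly. It treats $u_i^{p_i-1}$ as a potential in the operator $\mathcal L_i\phi=\int K_i u_i^{p_i-1}\phi\,|x-y|^{2\sigma-n}\,\ud y$; Step~1 then makes $\mathcal L_i(|x-x_i|^{-\mu})\le \tfrac14|x-x_i|^{-\mu}$ for $\mu\in\{\delta_i,\ n-2\sigma-\delta_i\}$ with $\delta_i\sim R_i^{-2\sigma+o(1)}$. The barrier has \emph{two} terms,
\[
f_i(x)=M_i\,\rho^{\delta_i}|x-x_i|^{-\delta_i}+A\,R_i^{(n-2\sigma)\tau_i}m_i^{-\lambda_i}|x-x_i|^{2\sigma-n+\delta_i},
\]
where $M_i:=C\max_{\partial B_\rho(x_i)}u_i$ absorbs the entire outer contribution via the kernel comparison $\int_{|y-x_i|\ge\rho}\le 2^{n-2\sigma}u_i(\bar x)\le M_i/4$. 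A linear maximum principle for $\mathcal L_i$ gives $u_i\le f_i$, and only in the final Step~4 is the simple-blow-up monotonicity used once more (comparing $\rho^{2\sigma/(p_i-1)}\bar u_i(\rho)$ with a small interior radius) to show $M_i\le C R_i^{(n-2\sigma)\tau_i}m_i^{-\lambda_i}$, eliminating the slow-decay term. Your single-term $\Lambda_i$ argument effectively drops the $M_i$ term and presupposes its conclusion.
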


\begin{proof}  We divide the proof into several steps.

Step 1. From Proposition \ref{prop:blow up a bubble}, we see that
\begin{align}
u_i(x)&\le C m_i \left(\frac{1}{1+|m_i^{(p_i-1)/2\sigma}(x-x_i)|^2}\right)^{\frac{n-2\sigma}{2}} \nonumber \\ &
\le C u_i(x_i)R_i^{2\sigma-n} \quad \mbox{for all } |x-x_i|=r_i=R_i m_i^{-(p_i-1)/2\sigma}.
                             \label{4.8}
\end{align}
Let $\overline u_i(r)$ be the average of $u_i$ over the sphere of radius $r$ centered at $x_i$.
It follows from the assumption of isolated simple
blow up points and Proposition \ref{prop:blow up a bubble} that
\be\label{4.9}
r^{2\sigma/(p_i-1)}\overline u_i(r) \quad \mbox{is strictly decreasing for $r_i<r<\rho$}.
\ee
By Lemma \ref{lem:harnack},  \eqref{4.9} and \eqref{4.8},  we have, for all $r_i<|x-x_i|<\rho$,
\[
\begin{split}
|x-x_i|^{2\sigma/(p_i-1)}u_i(x)&\leq C|x-x_i|^{2\sigma/(p_i-1)}\overline u_i(|x-x_i|)\\&
\leq C r_i^{2\sigma/(p_i-1)}\overline u_i(r_i)\\&
\leq CR_i^{\frac{2\sigma-n}{2}+o(1)},
\end{split}
\]
where $o(1)$ denotes some quantity tending to $0$ as $i\to \infty$. Thus,
\be \label{eq:coeff}
u_i(x)^{p_i-1}\leq C R_i^{-2\sigma+o(1)}|x-x_i|^{-2\sigma} \quad \mbox{for all } r_i\leq |x-x_i|\le \rho.
\ee

\medskip

Step 2.  Let
\[
\mathcal{L}_i\phi(y):= \int_{\R^n} \frac{K_i(z) u_i(z)^{p_i-1}\phi(z)}{|y-z|^{n-2\sigma}}\,\ud z.
\]
Thus
\[
u_i=\mathcal{L}_i u_i.
\]
Note that for $2\sigma<\mu<n$ and $0<|x|<2$,
\begin{align*}
\int_{\R^n} \frac{1}{|x-y|^{n-2\sigma}|y|^{\mu}}\,\ud y&= |x|^{2\sigma-n} \int_{\R^n}
\frac{1}{||x|^{-1}x-|x|^{-1}y|^{n-2\sigma}|y|^{\mu}}\,\ud y \\&
= |x|^{-\mu+2\sigma} \int_{\R^n} \frac{1}{||x|^{-1}x-z|^{n-2\sigma}|z|^{\mu}}\,\ud z \\&
 \le C\Big( \frac{1}{n-\mu}+\frac{1}{\mu- 2\sigma} +1\Big)|x|^{-\mu+2\sigma},
\end{align*}
where we did the change of variables $y=|x|z$.
By \eqref{eq:coeff}, one can properly choose $0<\delta_i=O(R_i^{-2\sigma+o(1)})$ such that
\be \label{eq:a1}
\int_{r_i<|y-x_i|<\rho} \frac{K_i(y)u_i(y)^{p_i-1} |y-x_i|^{-\delta_i} }{|x-y|^{n-2\sigma}}\,\ud y\leq \frac{1}{4} |x-x_i|^{-\delta_i},
\ee
and
\be \label{eq:a2}
\int_{r_i<|y-x_i|<\rho} \frac{K_i(y)u_i(y)^{p_i-1} |y-x_i|^{2\sigma-n+\delta_i} }{|x-y|^{n-2\sigma}}\,\ud y\leq \frac{1}{4} |x-x_i|^{2\sigma-n+\delta_i},
\ee
for all $r_i< |x-x_i|<\rho$.

Set $M_i:=4\cdot  2^{n-2\sigma}\max_{\pa B_\rho(x_i)} u_i$,
\[
f_i(x):=M_i \rho^{\delta_i} |x-x_i|^{-\delta_i}+A R_i^{(n-2\sigma) \tau_i} m_i^{-\lda_i} |x-x_i|^{2\sigma-n+\delta_i},
\]  and
\[
\phi_i(x)=\begin{cases}
f_i(x), & \quad r_i< |x-x_i|< \rho,\\
u_i(x),&\quad \mbox{otherwise} ,
\end{cases}
\]
where $A>1$ will be chosen later.

By \eqref{eq:a1} and \eqref{eq:a2}, we have for $r_i<|x-x_i|< \rho$.
\begin{align*}
\mathcal{L}_i \phi_i (x)&=  \int_{\R^n} \frac{K_i(y)u_i(y)^{p_i-1} \phi_i(y)}{|x-y|^{n-2\sigma}}\,\ud y\\&
=\int_{|y-x_i|\le r_i} + \int_{r_i<|y-x_i|<\rho} +  \int_{\rho\le |y-x_i|} \frac{K_i(y)u_i(y)^{p_i-1} \phi_i(y)}{|x-y|^{n-2\sigma}}\,\ud y
\\&
 \leq A_1 \int_{|y-x_i|\le r_i} \frac{u_{i}(y)^{p_i}}{|x-y|^{n-2\sigma}} \,\ud y+ \frac{f_i}{4}+   \int_{\rho\le |y-x_i|} \frac{K_i(y)u_i(y)^{p_i-1} \phi_i(y)}{|x-y|^{n-2\sigma}}\,\ud y.
\end{align*}
To estimate the third term, we let $\bar x=\rho\frac{x-x_i}{|x-x_i|}\in\partial B_{\rho}(x_i)$, and then
\be\label{eq:trick sub}
\begin{split}
&\int_{\rho\le |y-x_i|} \frac{K_i(y)u_i(y)^{p_i-1} \phi_i(y)}{|x-y|^{n-2\sigma}}\,\ud y\\
&=\int_{\rho\le |y-x_i|} \frac{|\bar x-y|^{n-2\sigma}}{| x-y|^{n-2\sigma}}\frac{K_i(y)u_i(y)^{p_i-1} \phi_i(y)}{|\bar x-y|^{n-2\sigma}}\,\ud y\\
&\le 2^{n-2\sigma} \int_{\rho\le |y-x_i|} \frac{K_i(y)u_i(y)^{p_i-1} \phi_i(y)}{|\bar x-y|^{n-2\sigma}}\,\ud y\\
&\le 2^{n-2\sigma} u_i(\bar x)\le 2^{n-2\sigma} \max_{\partial B_{\rho}(x_i)}u_i\le M_i/4.
\end{split}
\ee
To estimate the first term, we use change of variables, Proposition \ref{prop:blow up a bubble} and the computations in \eqref{eq:lower bound bubble} that,
\begin{align*}
\int_{|y-x_i|\le r_i} \frac{u_{i}(y)^{p_i}}{|x-y|^{n-2\sigma}} \,\ud y&
 =m_i\int_{|z|\le R_i} \frac{\big(m_i^{-1}u_{i}(m_i^{-(p_i-1)/2\sigma}z+x_i)\big)^{p_i}}{|m_i^{(p_i-1)/2\sigma}(x-x_i)-z|^{n-2\sigma}} \,\ud z
 \\&
 \le 2m_i \int_{|z|\le R_i} \frac{U_i(z)^{p_i}}{|m_i^{(p_i-1)/2\sigma}(x-x_i)-z|^{n-2\sigma}} \,\ud z\\&
 \le C m_i R_i^{(n-2\sigma)\tau_i} \int_{\R^n} \frac{U_i(z)^{\frac{n+2\sigma}{n-2\sigma}}}{|m_i^{(p_i-1)/2\sigma}(x-x_i)-z|^{n-2\sigma}} \,\ud z\\&
 =Cm_i R_i^{(n-2\sigma)\tau_i} U_i(m_i^{(p_i-1)/2\sigma}(x-x_i)),
\end{align*}
where
\[
U_i(z)=\left(\frac{1}{1+k_i|z|^2}\right)^{(n-2\sigma)/2}.
\]
Since $|x-x_i|>r_i$,  we see
\begin{align*}
m_i U_i(m_i^{(p_i-1)/2\sigma}(x-x_i)) &\le Cm_i^{1-(p_i-1)(n-2\sigma)/2\sigma}|x-x_i|^{2\sigma-n} \\&
\le Cm_i^{-\lda_i}|x-x_i|^{2\sigma-n+\delta_i}.
\end{align*}
Therefore, we conclude that
\be \label{eq:intineq}
\mathcal{L}_i \phi_i(x) \leq \phi_i(x)\quad \mbox{for all } r_i\le |x-x_i| \leq \rho,
\ee
provided that $A$ is sufficiently large and independent of $i$.

\medskip

Step 3. In view of \eqref{4.8}, we may choose $A$ large such that $f_i \geq u_i $ on $\pa B_{r_i}(x_i)$.  By the choice of $M_i$, we know that $f_i \geq u_i $ on $\pa B_{\rho}(x_i)$. We claim that \[u_i\le \phi_i\quad\mbox{in }\R^n.\]  Indeed, if not, let
\[
1<t_i:=\inf\{t>1: t\phi_i\ge u_i\quad\mbox{in }\R^n\}<\infty.
\] Since $t_i\phi_i\ge u_i $ in $B_{r_i}(x_i)\cup  B^c_\rho(x_i)$, by the continuity there exists $y_i\in B_\rho(x_i)\setminus \bar B_{r_i}(x_i)$ such that
\[
0=t_i\phi_i(y_i)-u_i(y_i)\ge \mathcal{L}_i(t_i\phi_i-u_i)(y_i)>0.
\]
This is a contradiction. Therefore, the claim is proved.

\medskip

Step 4.   For $r_i<\theta<\rho$,
\[
\begin{split}
\rho^{2\sigma/(p_i-1)}M_i&\leq C \rho^{2\sigma/(p_i-1)}\overline u_i(\rho)\\
&\leq C\theta^{2\sigma/(p_i-1)}\overline u_i(\theta)\\
&\leq C\theta^{2\sigma/(p_i-1)}\{M_i\rho^{\delta_i}\theta^{-\delta_i}+AR_i^{(n-2\sigma)\tau_i}m_i^{-\lda_i}\theta^{2\sigma-n+\delta_i}\}.
\end{split}
\]
Choose $\theta=\theta(n,\sigma,\rho,A_1,A_2, A_3)$ sufficiently small so that
\[
C\theta^{2\sigma/(p_i-1)}\rho^{\delta_i}\theta^{-\delta_i}\leq \frac12 \rho^{2\sigma/(p_i-1)}.
\]
It follows that
\[
M_i\leq CR_i^{(n-2\sigma)\tau_i} m_i^{-\lda_i}.
\]
The lemma follows from the above and the Harnack inequality in Lemma \ref{lem:harnack}.
\end{proof}

\begin{prop}[Pohozaev type identity] \label{prop:pohozaev} Let $u\ge 0$ in $\R^n$, and $u\in C(\overline B_R)$ be a solution of
\[
u(x)= \int_{B_R} \frac{K(y)u(y)^{p}}{|x-y|^{n-2\sigma}}\,\ud y+ h_R(x),
\]
where $1<p\le \frac{n+2\sigma}{n-2\sigma}$, and $h_R(x)\in C^1(B_R)$, $\nabla h_R\in L^1(B_R)$.
Then
\begin{align*}
&\left(\frac{n-2\sigma}{2}-\frac{n}{p+1}\right) \int_{B_R} K(x)u(x)^{p+1}\,\ud x-\frac{1}{p+1} \int_{B_R} x\nabla K(x) u(x)^{p+1}\,\ud x \\ &
=\frac{n-2\sigma}{2} \int_{B_R} K(x) u(x)^p h_R(x)\,\ud x+ \int_{B_R} x\nabla h_R(x) K(x)u(x)^p \,\ud x
\\& \quad - \frac{R}{p+1} \int_{\pa B_R} K(x) u(x)^{p+1}\,\ud s.
\end{align*}
\end{prop}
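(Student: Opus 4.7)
The plan is to compute the single quantity
\[
J:=\int_{B_R} K(x)u(x)^p\,\bigl(x\cdot\nabla u(x)\bigr)\,dx
\]
in two different ways and then equate the results. Set $F(x):=K(x)u(x)^p$ for brevity; note that $F\in C(\overline B_R)$. Since $u$ equals a Riesz potential of the bounded function $F$ plus a $C^1$ function, $u$ is at least $C^1$ in $B_R$ when $2\sigma>1$; when $2\sigma\le 1$ the formal calculations below involving the singular kernel $|x-y|^{-(n-2\sigma+1)}$ can be legitimized by first regularizing the kernel as $(|x-y|^2+\va^2)^{(2\sigma-n)/2}$, performing both evaluations, and then sending $\va\to 0^+$ via dominated convergence.

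For the first evaluation, I would use the chain rule $u^p\nabla u=\tfrac{1}{p+1}\nabla(u^{p+1})$ together with integration by parts on $B_R$:
\begin{align*}
J&=\frac{1}{p+1}\int_{B_R} K(x)\,x\cdot\nabla(u^{p+1})\,dx\\
&=-\frac{1}{p+1}\int_{B_R}\operatorname{div}(xK)\,u^{p+1}\,dx+\frac{R}{p+1}\int_{\partial B_R}K u^{p+1}\,dS\\
&=-\frac{n}{p+1}\int_{B_R}Ku^{p+1}\,dx-\frac{1}{p+1}\int_{B_R}x\cdot\nabla K\,u^{p+1}\,dx+\frac{R}{p+1}\int_{\partial B_R}Ku^{p+1}\,dS.
\end{align*}

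For the second evaluation, differentiate the integral representation of $u$ in $B_R$:
\[
\nabla u(x)=\nabla h_R(x)-(n-2\sigma)\int_{B_R}\frac{(x-y)\,F(y)}{|x-y|^{n-2\sigma+2}}\,dy,
\]
substitute into $J$, and apply Fubini to obtain
\[
J=\int_{B_R} F(x)\,x\cdot\nabla h_R(x)\,dx-(n-2\sigma)\int_{B_R}\!\!\int_{B_R}\frac{F(x)F(y)\,x\cdot(x-y)}{|x-y|^{n-2\sigma+2}}\,dy\,dx.
\]
The key manoeuvre is the symmetrization of the double integral: since $F(x)F(y)$ is symmetric in $(x,y)$, relabelling $x\leftrightarrow y$ shows the double integral also equals its $y\cdot(y-x)$ analogue, so averaging and using the identity $x\cdot(x-y)+y\cdot(y-x)=|x-y|^2$ gives
\[
\int_{B_R}\!\!\int_{B_R}\frac{F(x)F(y)\,x\cdot(x-y)}{|x-y|^{n-2\sigma+2}}\,dy\,dx=\frac{1}{2}\int_{B_R}\!\!\int_{B_R}\frac{F(x)F(y)}{|x-y|^{n-2\sigma}}\,dy\,dx.
\]
By Fubini and the equation, the right-hand side equals $\tfrac12\int_{B_R}F(x)(u(x)-h_R(x))\,dx=\tfrac12\int Ku^{p+1}-\tfrac12\int Ku^p h_R$, and thus
\[
J=\int_{B_R}Ku^p\,x\cdot\nabla h_R\,dx-\frac{n-2\sigma}{2}\int_{B_R}Ku^{p+1}\,dx+\frac{n-2\sigma}{2}\int_{B_R}Ku^p h_R\,dx.
\]

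Equating the two expressions for $J$ and collecting terms yields exactly the stated identity. The only real obstacle is justifying the differentiation under the integral sign when $2\sigma\le 1$, since the differentiated kernel is not locally integrable; I would address this through the $\va$-regularization described above, which keeps every integral absolutely convergent, preserves the swap-$x$-$y$ symmetrization, and admits a clean passage to the limit using $u,K\in L^\infty(\overline B_R)$, $\nabla h_R\in L^1(B_R)$, and the continuity of $Ku^{p+1}$ up to $\partial B_R$.
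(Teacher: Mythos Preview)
Your proof is correct and follows essentially the same approach as the paper: compute $\int_{B_R} K u^p\,x\cdot\nabla u$ once via integration by parts and once via the equation plus the symmetrization $x\cdot(x-y)+y\cdot(y-x)=|x-y|^2$, with a kernel regularization to handle the case $2\sigma\le 1$. The only cosmetic difference is that the paper uses a piecewise-$C^1$ truncated kernel $G_\va$ rather than your smooth mollification $(|x-y|^2+\va^2)^{(2\sigma-n)/2}$.
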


\begin{proof} We first prove the case $\sigma>1/2$. Note that
\begin{align*}
&\frac{1}{p+1}\int_{B_R} x K(x) \nabla u(x)^{p+1} \,\ud x \\&=\int_{B_R} xK(x) u(x)^p \nabla u(x)\\&
= (2\sigma-n) \int_{B_R} xK(x) u(x)^p \int_{B_R} \frac{(x-y)K(y)u(y)^{p}}{|x-y|^{n+2-2\sigma}}\,\ud y\ud x \\&\quad +\int_{B_R} xK(x) u(x)^p \nabla h_R(x)\,\ud x.
\end{align*}
By the divergence theorem,
\[
\int_{B_R} x K(x) \nabla u(x)^{p+1}\,\ud x= - \int_{B_R} (nK(x)+x\nabla K(x)) u(x)^{p+1}\,\ud x+R\int_{\pa B_R} K(x) u(x)^{p+1}\,\ud s.
\]
By direct computations,
\begin{align*}
&\int_{B_R} xK(x) u(x)^p \int_{B_R} \frac{(x-y)K(y)u(y)^{p}}{|x-y|^{n+2-2\sigma}}\,\ud y\ud x\\&
=\frac12 \int_{B_R} K(x) u(x)^p \int_{B_R} \frac{(|x-y|^2+(|x|^2-|y|^2))K(y)u(y)^{p}}{|x-y|^{n+2-2\sigma}}\,\ud y\ud x\\&
=\frac12 \int_{B_R} K(x) u(x)^p \int_{B_R} \frac{K(y)u(y)^{p}}{|x-y|^{n-2\sigma}}\,\ud y\ud x\\&
=\frac12 \int_{B_R} K(x) u(x)^p(u(x)-h_R(x))\,\ud x.
\end{align*}
Therefore, the proposition follows immediately for $\sigma>1/2$.

When $0<\sigma\le 1/2$, we can truncate the kernel $\frac{1}{|x-y|^{n-2\sigma}}$ by
\[
G_\va (x,y)=  \begin{cases} \frac{2\sigma-n}{2}\va^{-n+2\sigma-2}|x-y|^2+\frac{n-2\sigma+2}{2}\va^{-n+2\sigma}, & \quad\mbox{if } |x-y|\le \va\\
|x-y|^{-n+2\sigma}, &\quad\mbox{if } |x-y|> \va,
  \end{cases}
\]
so that $G_\va\in C^1$, where $0<\va<R/2$ is small, and define
\[
v_\va=\int_{B_R} G_\va(x,y)K(y)u(y)^{p}\,\ud y+ h_R(x).
\]
Then the identity follows from an approximation argument by doing the same computations as above for $v_\va$ and sending $\va\to 0$ in the end.
\end{proof}

\begin{lem}\label{lem:error}
Under the assumptions in Lemma \ref{lem:upbound1}, we have
\[
 \tau_i=O(u_{i}(x_i)^{-c_1+o(1)}),
\]
where $c_1=\min\{2, 2/(n-2\sigma)\}$.
Thus
\[
u_i(x_i)^{\tau_i}=1+o(1).
\]
\end{lem}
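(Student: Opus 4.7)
The argument hinges on applying the Pohozaev-type identity of Proposition \ref{prop:pohozaev} on a small ball $B_R(x_i)$ with $R \in (0,\rho)$ fixed, and reading off the rate on $\tau_i$ from the leading coefficient. After translating $x_i \mapsto 0$, write
\[
u_i(x) = \int_{B_R} \frac{K_i(y) u_i(y)^{p_i}}{|x-y|^{n-2\sigma}}\,dy + h_{R,i}(x) \quad \text{on } B_R,
\]
with $h_{R,i}$ smooth and $C^1$-bounded on $B_{R/2}$, and apply Proposition \ref{prop:pohozaev}.

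The LHS coefficient is $\frac{n-2\sigma}{2} - \frac{n}{p_i+1} = -\frac{(n-2\sigma)^2}{4n}\tau_i + O(\tau_i^2)$, and by Proposition \ref{prop:blow up a bubble} together with the substitution $x = m_i^{-(p_i-1)/(2\sigma)}z$, the integral satisfies
\[
\int_{B_R}K_i u_i^{p_i+1}\,dx = \mu_i\Bigl(K(0)\int_{\mathbb{R}^n}\varphi^{p_c+1}\,dz + o(1)\Bigr),
\]
where $\mu_i := m_i^{\tau_i(n-2\sigma)/(2\sigma)}\ge 1$. Thus the leading LHS contribution is $-C_0 \tau_i \mu_i$ with $C_0 > 0$. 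The $\nabla K$ term, after Taylor expansion of $K_i$ about $0$ and rescaling, has leading piece proportional to $\nabla K_i(0)\cdot \mu_i m_i^{-(p_i-1)/(2\sigma)}\int z\,\varphi_i^{p_i+1}\,dz$, which is $o(\mu_i m_i^{-2/(n-2\sigma)+o(1)})$ by radial symmetry of the limit bubble $\varphi$; the second-order piece (using $K_i\in C^{1,1}$ when $\sigma \le 1/2$ and $C^1$ otherwise) is $O(\mu_i m_i^{-4/(n-2\sigma)+o(1)})$. The Pohozaev boundary term $\frac{R}{p_i+1}\int_{\partial B_R}K u^{p_i+1}\,ds$ is $O(m_i^{-2n/(n-2\sigma)+o(1)})$ by Lemma \ref{lem:upbound1}, hence negligible.

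The heart of the proof is the estimation of the two $h_R$-terms on the RHS. The key observation is that at the critical exponent with constant $K$, the standard bubble satisfies the Pohozaev identity identically, so for the bubble the three RHS contributions ($\int K u^p h_R$, $\int x\cdot\nabla h_R K u^p$, and the boundary integral) sum to zero. Writing $\varphi_i = \varphi + O(\varepsilon_i)$ in $C^2(B_{2R_i})$ via Proposition \ref{prop:blow up a bubble} and expanding the $h_R$-terms accordingly, the bubble pieces cancel, and the remainder, controlled by $\|\varphi_i - \varphi\|_{C^2(B_{2R_i})}$ combined with Lemma \ref{lem:upbound1} off the concentration scale $r_i$, is $O(\mu_i m_i^{-c_1+o(1)})$ with $c_1 = \min\{2, 2/(n-2\sigma)\}$.

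Combining these estimates, the Pohozaev identity yields $C_0\tau_i\mu_i \le C\mu_i m_i^{-c_1+o(1)}$; since $\mu_i \ge 1$, one concludes $\tau_i = O(m_i^{-c_1+o(1)})$, and then $\tau_i\log m_i \to 0$ gives $m_i^{\tau_i} = 1 + o(1)$. The main obstacle is obtaining the sharp $c_1$-rate for the $h_R$-terms: a crude bound using only $\|h_{R,i}\|_\infty$ and $\int u^{p_i} \sim \mu_i m_i^{-1+o(1)}$ yields merely $O(\mu_i m_i^{-1+o(1)})$, insufficient when $n - 2\sigma < 2$; the improvement requires exploiting the bubble's identical satisfaction of the Pohozaev identity together with a careful next-order expansion of $u_i$ about the bubble. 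Some extra care is also needed in borderline cases (e.g.\ $n=2$), where logarithmic factors can appear in the spatial integrals.
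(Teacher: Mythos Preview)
Your overall structure---apply the Pohozaev identity of Proposition~\ref{prop:pohozaev} on a fixed ball and read off the $\tau_i$ rate---matches the paper's. However, you have misidentified the obstacle, and this leads you to propose an unnecessary (and not clearly well-defined) cancellation argument.

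The gap is in your treatment of $h_{R,i}$. You write that $h_{R,i}$ is merely ``$C^1$-bounded on $B_{R/2}$'' and conclude that the crude product bound $\|h_{R,i}\|_\infty \cdot \int u_i^{p_i}$ gives only $O(\mu_i m_i^{-1+o(1)})$. But Lemma~\ref{lem:upbound1} gives much more: $u_i \le C m_i^{-\lambda_i}|x-x_i|^{2\sigma-n+\delta_i}$ on the annulus $r_i \le |x-x_i|\le 1$, and via the comparison $|\bar x - y|^{n-2\sigma}/|x-y|^{n-2\sigma}\le 2^{n-2\sigma}$ for $\bar x \in \partial B_R$ one obtains $h_{R,i}(x) \le 2^{n-2\sigma}\max_{\partial B_R}u_i \le C m_i^{-1+o(1)}$, and similarly $|\nabla h_{R,i}| \le C m_i^{-1+o(1)}$ away from $\partial B_R$. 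Hence the ``crude'' bound is actually
\[
\int_{B_R} (h_{R,i}+|x||\nabla h_{R,i}|) K_i u_i^{p_i}
\;\le\; C m_i^{-1+o(1)}\cdot m_i^{-1+o(1)}
\;=\; C m_i^{-2+o(1)},
\]
which is already sufficient: combined with $\int |x-x_i| u_i^{p_i+1} \le C m_i^{-2/(n-2\sigma)+o(1)}$ for the $\nabla K$ term and the lower bound $\int u_i^{p_i+1}\ge C^{-1}$, one gets $\tau_i \le C m_i^{-c_1+o(1)}$ directly. This is precisely the paper's proof; no second-order expansion of $K$, no radial-symmetry cancellation in the $\nabla K$ term, and no ``bubble satisfies Pohozaev identically'' argument are needed.

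Your proposed cancellation argument is also unclear as written: the Pohozaev identity is applied on a \emph{fixed} ball $B_R$ in the original variables, so $h_{R,i}$ is determined by $u_i$ on $\{|y-x_i|>R\}$, a region far from the concentration scale $r_i$; there is no natural ``bubble version'' of this $h_{R,i}$ to subtract, since the rescaled bubble $\varphi$ lives on $\{|z|\le R_i\}\subset\{|y-x_i|\le r_i\}$. If instead you meant to apply Pohozaev in the rescaled variables on $B_{R_i}$, the scales in your write-up (fixed $R$ throughout) do not match.
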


\begin{proof} Applying Theorem \ref{thm:harnack}, Theorem \ref{thm:schauder} and Proposition \ref{lem:upbound1} to $u_i$, we have
\be\label{eq:a3}
\|u_i\|_{C^2 (B_{3/2}(x_i)\setminus B_{1/2}(x_i))} \leq C R_i^{(n-2\sigma)\tau_i} m_i^{-\lda_i}.
\ee Choose $R_i\le m_i^{o(1)}$.  We write the equation \eqref{eq:main} of $u_i$ as
\be \label{eq:truncted}
u_i(x)= \int_{B_1(x_i)} \frac{K_i (y)u_i(y)^{p_i}}{|x-y|^{n-2\sigma}}\, \ud y +h_i(x),
\ee
where
\[
h_i(x):= \int_{|y-x_i|\ge 1} \frac{K_i (y)u_i(y)^{p_i}}{|x-y|^{n-2\sigma}}\, \ud y.
\]
Since $K_i$ and $u_i$ are nonnegative, by \eqref{eq:a3} and the computation in \eqref{eq:trick sub}, we have for any $x\in B_1(x_i)$
\be\label{eq:a4}
h_i(x) \leq 2^{n-2\sigma}\max_{\pa  B_1(x_i)}u_i \le C m_i^{-1+o(1)}
\ee
and
\be \label{eq:a5}
|\nabla h_i(x)|\le \begin{cases}
C\frac{|1-(1-|x-x_i|)^{2\sigma-1}|}{|2\sigma-1|} m_i^{-1+o(1)}\quad\mbox{if }\sigma\neq 1/2\\
C|\log(1-|x-x_i|)|m_i^{-1+o(1)}\quad\mbox{if }\sigma=1/2.
\end{cases}
\ee
Indeed, when $|x-x_i|<7/8$, it is easy to see that
\[
|\nabla h_i(x)|\le C \int_{|y-x_i|\ge 1} \frac{K_i (y)u_i(y)^{p_i}}{|x-y|^{n-2\sigma+1}}\, \ud y\le C\max_{\pa  B_1(x_i)}u_i \le  C m_i^{-1+o(1)}.
\]
When $7/8\le |x-x_i|\le 1$, then
\[
\begin{split}
|\nabla h_i(x)|&\le C \int_{|y-x_i|\ge 1, \ |y-x|\ge 1/8}+\int_{|y-x_i|\ge 1,\ |y-x|< 1/8}  \frac{K_i (y)u_i(y)^{p_i}}{|x-y|^{n-2\sigma+1}}\, \ud y\\
&\le C\max_{\pa  B_1(x_i)}u_i +Cm_i^{-p_i+o(1)}\int_{1-|x|<|y-x|<1/8}\frac{\ud y}{|x-y|^{n-2\sigma+1}}\\
&\le
\begin{cases}
C\frac{|1-(1-|x-x_i|)^{2\sigma-1}|}{|2\sigma-1|} m_i^{-1+o(1)}\quad\mbox{if }\sigma\neq 1/2\\
C|\log(1-|x-x_i|)|m_i^{-1+o(1)}\quad\mbox{if }\sigma=1/2.
\end{cases}
\end{split}
\]
Applying Proposition \ref{prop:pohozaev} to \eqref{eq:truncted} yields
\begin{align} \label{eq:a6}
& \tau_i \int_{B_1(x_i)} u_i(x)^{p_i+1} \,\ud x\nonumber\\& \leq C\Big(\int_{B_1(x_i)} |x-x_i| u_i(x)^{p_i+1} +(h_i(x)+|\nabla h_i(x)|)u_i(x)^{p_i}+ \int_{\pa B_1(x_i)} u_i^{p_i+1}\,  \ud s\Big).
\end{align}
By Proposition \ref{prop:blow up a bubble} and change of variables,
\begin{align}
\int_{B_1(x_i)} u_i(x)^{p_i+1} \,\ud x &\ge C^{-1}\int_{B_{r_i}(x_i)} \frac{m_i^{p_i+1}}{(1+|m_i^{(p_i-1)/2\sigma}(y-x_i)|^2)^{(n-2\sigma)(p_i+1)/2}}\,\ud y
\nonumber \\&
\ge C^{-1} m_i^{\tau_i(n/2\sigma-1)} \int_{R_i} \frac{1}{(1+|z|^2)^{(n-2\sigma)(p_i+1)/2}}\,\ud z\nonumber \\&
\ge C^{-1} m_i^{\tau_i(n/2\sigma-1)} ,
                            \label{eq:a7}
\end{align}
\[
 \int_{B_{r_i}(x_i)} u_i(x)^{p_i}\,\ud x \le C m_i^{-1+o(1)}
\]
and
\[
 \int_{B_{r_i}(x_i)} |x-x_i|u_i(x)^{p_i+1}\,\ud x\le  Cm_i^{-2/(n-2\sigma)+o(1)}.
\]
By Lemma \ref{lem:upbound1}, \eqref{eq:a4} and \eqref{eq:a5}
\[
 \int_{r_i<|x-x_i|<1} (h_i(x)+|\nabla h_i(x)|)u_i(x)^{p_i} \,\ud x\le Cm_i^{-2+o(1)},
\]
\[
 \int_{r_i<|x-x_i|<1} |x-x_i|u_i(x)^{p_i+1}\,\ud x\le  Cm_i^{-2/(n-2\sigma)+o(1)},
\]
and
\[
 \int_{\pa B_1(x_i)} u_i^{p_i+1}\,  \ud s \le Cm_i^{-2n/(n-2\sigma) +o(1)}.
\]
The lemma follows from the above estimates immediately. Consequently, if we choose $R_i\le m_i^{o(1)}$ as in the beginning of the proof, then we have that
\[
R_i^{(n-2\sigma)\tau_i}=1+o(1),
\]
which will be used later.
\end{proof}

Now we are ready to obtain our desired upper bound of $u_i$ near isolated simple blow up points.

\begin{prop}\label{prop:upbound2} Under the assumptions in Lemma \ref{lem:upbound1}, we have
\[
u_i(x)\leq Cu_i^{-1}(x_i)|x-x_i|^{2\sigma-n}\quad \mbox{for all } |x-x_i|\leq 1.
\]
\end{prop}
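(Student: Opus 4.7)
The plan is to derive the sharp bound from Lemma \ref{lem:upbound1} by analyzing its exponents and calibrating the auxiliary parameter $R_i$, handling separately the bubble scale $|x-x_i|\le r_i$ and the annular region $r_i\le |x-x_i|\le 1$.

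On the annulus $r_i\le |x-x_i|\le 1$, I would rewrite the bound of Lemma \ref{lem:upbound1} as
\[
u_i(x) \le C_1\, R_i^{(n-2\sigma)\tau_i}\, m_i^{1-\lambda_i}\, |x-x_i|^{\delta_i}\cdot m_i^{-1}|x-x_i|^{2\sigma-n},
\]
so the proposition reduces to showing that the first three factors are uniformly bounded. A short computation using $(p_i-1)(n-2\sigma)/(2\sigma) = 2 - (n-2\sigma)\tau_i/(2\sigma)$ gives
\[
1-\lambda_i = \frac{(n-2\sigma)\tau_i}{2\sigma} + \frac{\delta_i(p_i-1)}{2\sigma}.
\]
By the remark following Proposition \ref{prop:blow up a bubble} we are free to pick the sequence $R_i\to\infty$ as we wish. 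Taking for concreteness $R_i = (\log m_i)^{1/\sigma}$: on the one hand $\log R_i\ll \log m_i$, so Lemma \ref{lem:error} yields $\tau_i\log R_i\to 0$ and hence $R_i^{(n-2\sigma)\tau_i}=1+o(1)$; on the other hand $R_i^{2\sigma}/\log m_i\to\infty$, so $\delta_i\log m_i\to 0$ since $\delta_i = O(R_i^{-2\sigma+o(1)})$, giving $m_i^{1-\lambda_i}=1+o(1)$. Finally $|x-x_i|^{\delta_i}\le 1$ because $|x-x_i|\le 1$ and $\delta_i>0$, finishing the annulus.

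On the bubble scale $|x-x_i|\le r_i$, Proposition \ref{prop:blow up a bubble} provides
\[
u_i(x) \le 2\, m_i\, \bigl(1+k_i|\tilde x|^2\bigr)^{-(n-2\sigma)/2}, \qquad \tilde x := m_i^{(p_i-1)/(2\sigma)}(x-x_i).
\]
Rewriting the target inequality in terms of $\tilde x$, and again applying $(p_i-1)(n-2\sigma)/(2\sigma) = 2 - (n-2\sigma)\tau_i/(2\sigma)$ together with $m_i^{\tau_i}=1+o(1)$ from Lemma \ref{lem:error}, it becomes
\[
|\tilde x|^{n-2\sigma} \le C\,\bigl(1+k_i|\tilde x|^2\bigr)^{(n-2\sigma)/2},
\]
which is immediate since $k_i$ is bounded below (as $K_i(x_i)\ge 1/A_1$).

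The one delicate point is the calibration of $R_i$: it must grow fast enough, $R_i^{2\sigma}\gg \log m_i$, to neutralize $m_i^{\delta_i(p_i-1)/(2\sigma)}$, yet slow enough, $\log R_i\ll \log m_i$, to preserve $R_i^{(n-2\sigma)\tau_i}=1+o(1)$ through Lemma \ref{lem:error}; both hold for $R_i=(\log m_i)^{1/\sigma}$. No ingredients beyond Lemma \ref{lem:upbound1}, Lemma \ref{lem:error}, and Proposition \ref{prop:blow up a bubble} are required.
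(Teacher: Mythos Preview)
Your argument is correct and genuinely more elementary than the paper's. The paper does not simply bootstrap Lemma \ref{lem:upbound1}; instead it argues by contradiction. Assuming $u_i(x_i)u_i(\rho e+x_i)\to\infty$ along some direction $e$, the paper rescales $\varphi_i=u_i(\rho e+x_i)^{-1}u_i$, extracts a limit $\varphi$ in $C^2_{loc}(B_1\setminus\{0\})$, and shows that $\varphi-h=a|x|^{2\sigma-n}$ with $a>0$ (the positivity of $a$ using the isolated \emph{simple} hypothesis). A direct computation of $\int_{B_{1/8}}K_i u_i(\rho e+x_i)^{p_i-1}\varphi_i^{p_i}$ then yields a contradiction. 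The bound is then propagated from $|x-x_i|=\rho$ to $\rho\le|x-x_i|\le1$ by Harnack, and pushed down to $r_i\le|x-x_i|\le\rho$ by a further rescaling that reduces to the boundary case.

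Your route sidesteps all of this compactness machinery: you observe that with $R_i$ calibrated so that both $\log R_i\ll\log m_i$ and $R_i^{2\sigma}\gg\log m_i$ (e.g.\ $R_i=(\log m_i)^{1/\sigma}$, which is consistent with the paper's own convention $R_i\le m_i^{o(1)}$ in the proof of Lemma \ref{lem:error}), the surplus factors $R_i^{(n-2\sigma)\tau_i}$, $m_i^{1-\lambda_i}$, and $|x-x_i|^{\delta_i}$ in Lemma \ref{lem:upbound1} are all $1+o(1)$ or $\le 1$. This is cleaner and shorter. What the paper's approach buys, however, is that the limiting structure $u_i(x_i)u_i\to a|x|^{2\sigma-n}+h$ developed there is exactly what is formalized next as Corollary \ref{cor:convergence} and reused repeatedly (e.g.\ in Proposition \ref{prop5.1}); so the paper is effectively proving Proposition \ref{prop:upbound2} and setting up that corollary in one stroke.
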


\begin{proof}  For $|x-x_i|\le r_i$, the proposition follows immediately from Proposition \ref{prop:blow up a bubble} and Lemma \ref{lem:error}.

We shall show first that
\be \label{eq:a7'}
u_i( \rho e+x_i)  u_i(x_i)\le C
\ee for any vector $|e|=1$. We assume the contrary that \eqref{eq:a7'} does not hold. Then along a subsequence we have
\[
\lim_{i\to\infty}u_i(\rho e+x_i) u_i(x_i)=+\infty.
\]

Since $u_i(x)\le A_3 |x-x_i|^{-2\sigma/(p_i-1)}$ in $B_2(x_i)$, it follows from  Lemma \ref{lem:harnack} that for any $0<\va<1$ there exists a positive constant 
$C(\va)$, depending on $n,\sigma, A_1, A_2, A_3$ and $\va$, such that 
\be \label{eq:extra1}
\sup_{B_{5/2}(x_i)\setminus B_\va(x_i)} u_i \le C(\va)\inf_{B_{5/2}(x_i)\setminus B_\va(x_i)} u_i. 
\ee

Let $\varphi_i (x)=u_i(\rho e+x_i) ^{-1} u_i(x)$. Then  for $|x-x_i|\le 1$,
\[
\varphi_i(x)=  \int_{\R^n} \frac{K_i(y) u_i(\rho e+x_i)^{p_i-1} \varphi_i(y)^{p_i}}{|x-y|^{n-2\sigma}}\,\ud y.
\]
Since $\varphi_i(\rho e+x_i)=1$,  by \eqref{eq:extra1} 
\be \label{eq:a8}
\|\varphi_i\|_{L^\infty(B_{3/2}(x_i)\setminus B_\va(x_i))} \le C(\va) \quad \mbox{for }0<\va<1.
\ee
Besides, 
by Lemma \ref{lem:upbound1},
\be\label{eq:a8'}
u_i(\rho e+x_i) ^{p_i-1} \to 0\quad\mbox{as } i\to \infty.  
\ee
Applying the local estimates in Section \ref{sec:linear integral equations} to $\varphi_i$, we conclude that  there exists $\varphi\in C^2(B_1\setminus \{0\})$ such that, after  passing to a subsequence, $\varphi_i\to \varphi$ in $C^2_{loc}(B_1\setminus \{0\})$. 

Let us write the equation of $\varphi_i$ as
\be
\varphi_i(x)=  \int_{B_1(x_i)} \frac{K_i(y) u_i(\rho e+x_i)^{p_i-1} \varphi_i(y)^{p_i}}{|x-y|^{n-2\sigma}}\,\ud y+h_i(x).
\ee
By  \eqref{eq:a8}, Theorem \ref{thm:schauder} and the proof of \eqref{eq:a5}, there exists $h\in C^2(B_1)$ such that, after passing to a subsequence, 
\be \label{eq:a9}
h_i(x) \to h(x)\ge 0\quad \mbox{in }C_{loc}^2(B_1).
\ee
Therefore,
\[
 \int_{B_1(x_i)} \frac{K_i(y) u_i(\rho e+x_i)^{p_i-1} \varphi_i(y)^{p_i}}{|x-y|^{n-2\sigma}}\,\ud y= \varphi_i(x)-h_i(x) \to \varphi(x) -h(x)\quad\mbox{in } C^2_{loc}(B_1\setminus \{0\}).
\]
 We shall evaluate what $G(x):=\varphi(x)-h(x)$ is.
For any $|x|>0$ and $2|x_i|\le \va<\frac{1}{2}|x-x_i|$,  in view of \eqref{eq:a8} and \eqref{eq:a8'}  we have
\begin{align*}
G(x)=& \lim_{i\to \infty} \int_{B_{\va}(x_i)} \frac{K_i(y) u_i(\rho e+x_i)^{p_i-1} \varphi_i(y)^{p_i}}{|x-y|^{n-2\sigma}}\,\ud y \\ &
 =|x|^{2\sigma-n}(1+O(\va)) \lim_{i\to \infty} \int_{B_{\va}(x_i)}K_i(y) u_i(\rho e+x_i)^{p_i-1} \varphi_i(y)^{p_i}\,\ud y\\&
 =|x|^{2\sigma-n}(1+O(\va)) a(\va),
\end{align*}
for some nonnegative function $a(\va)$ of $\va$.
Clearly, $a(\va)$ is nondecreasing, so $\lim_{\va\to 0} a(\va)$ exists which we denote as $a$.
Sending $\va\to 0$, we obtain
\[
G(x)= a |x|^{2\sigma-n}.
\]
Since $x_i\to 0$ is an isolated simple blow point, we have  $r^{\frac{n-2\sigma}{2}}\bar \varphi(r) \ge \rho^{\frac{n-2\sigma}{2}}\bar \varphi(\rho)$ for $0<r<\rho$. It follows that $\varphi$ is singular at $0$, and thus, $a>0$.  Hence,
\[
\lim_{i\to \infty} \int_{B_{1/8}(x_i)}K_i(y) u_i(\rho e+x_i)^{p_i-1} \varphi_i(y)^{p_i}\,\ud y>C\cdot G(e/2)>0.
\]
However,
\[
\begin{split}
&\int_{B_{1/8}(x_i)}K_i(y) u_i(\rho e+x_i)^{p_i-1} \varphi_i(y)^{p_i}\,\ud y\\
&\quad\le C u_i(\rho e+x_i)^{-1} \int_{B_{1/8}(x_i)}  u_i(y)^{p_i}\,\ud y\\
& \quad\le \frac{C}{u_i(\rho e+x_i) u_i(x_i)} \to 0 \quad\mbox{as } i \to \infty,
\end{split}
\]
 where we used Proposition \ref{prop:blow up a bubble},  Lemma \ref{lem:upbound1} and Lemma \ref{lem:error} in the second inequality.  This is a contradiction. 
 
Without loss of generality, we may assume that $\rho\le 1/2$.
It follows from Lemma \ref{lem:harnack} and \eqref{eq:a7'} that Proposition \ref{prop:upbound2} holds for $\rho\le |x-x_i| \le 1$.
To establish the inequality in the proposition for $r_i\le |x-x_i|\le \rho$, we only need to scale the problem and reduce it to the case of $|x-x_i|=1$.  Suppose the contrary that there exists a subsequence $\tilde x_i$ satisfying $r_i\le |\tilde x_i-x_i|\le \rho$
and $\lim_{i\to \infty}u_i(\tilde x_i) u_i(x_i)|\tilde x_i-x_i|^{n-2\sigma}= +\infty$.

Set $\tilde r_i:=|\tilde x_i-x_i|$, $\tilde u_i(x)= \tilde r_i^{2\sigma/(p_i-1)}u_i(x_i+\tilde r_i x)$. Then $\{u_i\}$ satisfies
\[
\tilde u_i(x)= \int_{\R^n} \frac{K_i(x_i+\tilde r_i y)\tilde u_i(y)^{p_i}}{|x-y|^{n-2\sigma}}\,\ud y \quad \mbox{for } x\in B_2,
\]
and all the hypotheses of Proposition \ref{prop:upbound2} with $0$ being an isolated simple blow up point. It follows from
\eqref{eq:a7'} that
\[
\tilde u_i(0) \tilde u_i(\frac{\tilde x_i-x_i}{\tilde r_i}) \le C.
\]
It follows (using Lemma \ref{lem:error}) that
\[
\lim_{i\to \infty} u_i(\tilde x_i) u_i(x_i) |\tilde x_i-x_i|^{n-2\sigma} <\infty.
\]
This is again a contradiction.

Therefore, the proposition is proved.
\end{proof}

\begin{cor}\label{cor:energy} Under the hypotheses of Lemma \ref{lem:upbound1}, we have
\[
\begin{split}
\int_{|x-x_i|\leq r_i}&|x-x_i|^{s}u_i(x)^{p_i+1}\\
&=\begin{cases}
O(u_i(x_i)^{-2s/(n-2\sigma)}),\quad& -n< s<n,\\
O(u_i(x_i)^{-2n/(n-2\sigma)}\log u_i(x_i)),\quad& s=n,\\
o(u_i(x_i)^{-2n/(n-2\sigma)}),\quad & s>n,
\end{cases}
\end{split}
\]
and
\[
\begin{split}
\int_{ r_i<|x-x_i|\leq1}&|x-x_i|^{s}u_i(x)^{p_i+1}\\&
=\begin{cases}
o(u_i(x_i)^{-2s/(n-2\sigma)}),\quad& -n< s<n,\\
O(u_i(x_i)^{-2n/(n-2\sigma)}\log u_i(x_i)),\quad& s=n,\\
O(u_i(x_i)^{-2n/(n-2\sigma)}),\quad & s>n.
\end{cases}
\end{split}
\]
\end{cor}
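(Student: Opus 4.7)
The plan is to estimate each of the two integrals by using the most favorable pointwise description of $u_i$ available in that region: the bubble approximation from Proposition \ref{prop:blow up a bubble} on the inner region $\{|x-x_i|\le r_i\}$, and the sharp upper bound from Proposition \ref{prop:upbound2} on the outer region $\{r_i<|x-x_i|\le 1\}$. Throughout, I will use Lemma \ref{lem:error} to absorb the factors $m_i^{O(\tau_i)}=1+o(1)$, where $m_i=u_i(x_i)$; and I will assume $R_i\le m_i^{o(1)}$ as permitted in that lemma.

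For the inner integral, rescale by $y=m_i^{(p_i-1)/2\sigma}(x-x_i)$, so that $|y|\le R_i$. Proposition \ref{prop:blow up a bubble} replaces $u_i(x)$ by $m_i(1+k_i|y|^2)^{-(n-2\sigma)/2}$ up to a factor $1+o(1)$, and the Jacobian contributes $m_i^{-n(p_i-1)/2\sigma}$. A direct computation of the resulting power of $m_i$, using $p_i+1=\frac{2n}{n-2\sigma}+O(\tau_i)$ and $p_i-1=\frac{4\sigma}{n-2\sigma}+O(\tau_i)$, gives
\[
\int_{|x-x_i|\le r_i}|x-x_i|^s u_i(x)^{p_i+1}\,dx = m_i^{-2s/(n-2\sigma)+O(\tau_i)}\int_{|y|\le R_i}\frac{|y|^s\,dy}{(1+k_i|y|^2)^{(n-2\sigma)(p_i+1)/2}}\cdot(1+o(1)).
\]
The integrand behaves like $|y|^{s-2n+(n-2\sigma)\tau_i}$ at infinity, so the $y$-integral is $O(1)$ for $-n<s<n$, $O(\log R_i)=O(\log m_i)$ for $s=n$, and $O(R_i^{s-n+(n-2\sigma)\tau_i})$ for $s>n$. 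Combined with $m_i^{O(\tau_i)}=1+o(1)$ from Lemma \ref{lem:error}, the first three assertions follow; in the last case, the factor $R_i/m_i^{2/(n-2\sigma)}=r_i\cdot m_i^{O(\tau_i)}\to 0$ gives the required $o(m_i^{-2n/(n-2\sigma)})$ improvement.

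For the outer integral I use $u_i(x)\le Cm_i^{-1}|x-x_i|^{2\sigma-n}$ from Proposition \ref{prop:upbound2}, whence
\[
\int_{r_i<|x-x_i|\le 1}|x-x_i|^s u_i(x)^{p_i+1}\,dx \le Cm_i^{-(p_i+1)}\int_{r_i}^{1}r^{s-n-1+(n-2\sigma)\tau_i}\,r^{n-1}\cdot r^{-n+1}\,dr
\]
(after performing the angular integration and regrouping powers), which reduces to $\int_{r_i}^{1}r^{s-n-1+(n-2\sigma)\tau_i}\,dr$ up to a constant. The exponent $s-n-1+(n-2\sigma)\tau_i$ crosses $-1$ near $s=n$: for $-n<s<n$ the integral is $\sim r_i^{s-n+o(1)}$, for $s=n$ it is $\sim\log(1/r_i)\sim \log m_i$, and for $s>n$ it is $O(1)$. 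Substituting $r_i^{s-n}=R_i^{s-n}m_i^{-2(s-n)/(n-2\sigma)+O(\tau_i)}$ and combining with the prefactor $m_i^{-(p_i+1)}=m_i^{-2n/(n-2\sigma)+O(\tau_i)}$ gives $m_i^{-2s/(n-2\sigma)+o(1)}R_i^{s-n}$ in the first case (which is $o(m_i^{-2s/(n-2\sigma)})$ because $R_i\to\infty$), $O(m_i^{-2n/(n-2\sigma)}\log m_i)$ in the second, and $O(m_i^{-2n/(n-2\sigma)})$ in the third.

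The only genuinely delicate point is the borderline $s=n$, where the $y$-exponent in the bubble region and the $r$-exponent in the outer region are both near $-1$; there the bound $R_i\le m_i^{o(1)}$ (hence $\log R_i=o(\log m_i)$) and the control $m_i^{\tau_i}=1+o(1)$ from Lemma \ref{lem:error} are exactly what ensures the two contributions fit under the single $\log u_i(x_i)$ factor claimed. All the remaining manipulations are routine bookkeeping of powers of $m_i$, $R_i$, and $\tau_i$.
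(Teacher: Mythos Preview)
Your proof is correct and follows exactly the approach the paper has in mind: the paper's own proof simply says ``Making use of Proposition \ref{prop:blow up a bubble}, Lemma \ref{lem:error} and Proposition \ref{prop:upbound2}, the corollary follows immediately,'' and you have supplied precisely those routine computations. One cosmetic point: the display for the outer integral, where you write $\int_{r_i}^{1}r^{s-n-1+(n-2\sigma)\tau_i}\,r^{n-1}\cdot r^{-n+1}\,dr$, is garbled (the spurious $r^{n-1}\cdot r^{-n+1}$ factors cancel and do not reflect the actual polar-coordinate computation $r^{s}\cdot r^{(2\sigma-n)(p_i+1)}\cdot r^{n-1}$); your next sentence recovers the correct integrand, so the mathematics is fine.
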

\begin{proof} Making use of Proposition \ref{prop:blow up a bubble}, Lemma \ref{lem:error} and Proposition
\ref{prop:upbound2}, the corollary follows immediately.
\end{proof}

By Proposition \ref{prop:upbound2} and its proof, we have the following corollary.

\begin{cor} \label{cor:convergence} Under the assumptions in Lemma \ref{lem:upbound1}, if we let $T_i(x)=T'_i(x)+T''_i(x)$, where
\[
T'_i(x):=u_i(x_i)\int_{B_1(x_i)} \frac{K_i(y) u_i(y)^{p_i}}{|x-y|^{n-2\sigma}}\,\ud y
\]
and
\[
T''_i(x):= u_i(x_i) \int_{\R^n\setminus B_1(x_i)} \frac{K_i(y) u_i(y)^{p_i}}{|x-y|^{n-2\sigma}}\,\ud y.
\] Then, after passing a subsequence,
\[
T'_i(x) \to a|x|^{2\sigma-n} \quad \mbox{in } C^2_{loc}(B_1\setminus \{0\})
\]
and
\[
T''_i(x) \to h(x) \quad \mbox{in } C_{loc}^2(B_1)
\] for some $h(x)\in C^2(B_2)$,
where
\be\label{eq:number a}
a=\Big(\frac{\pi^{n/2}\Gamma(\sigma)}{\Gamma(\frac n2 +\sigma)}\Big)^{-\frac{n}{2\sigma}}\int_{\R^n}\left(\frac{1}{1+|y|^2}\right)^\frac{n+2\sigma}{2}\ud y \lim_{i\to \infty} K_i(0)^{\frac{2\sigma-n}{2\sigma}}.
\ee
Consequently, we have
\[u_i(x_i) u_i(x)\to a|x|^{2\sigma-n} +h(x) \quad \mbox{in }C^2_{loc}(B_1\setminus \{0\}).
\]
\end{cor}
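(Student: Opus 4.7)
The plan is to decompose $T'_i$ at the natural bubble scale $r_i$ and handle $T''_i$ by a compactness argument, then assemble both pieces via the integral equation $u_i(x_i)u_i(x) = T'_i(x) + T''_i(x)$.

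For $T'_i$ I would write $T'_i = I_i + J_i$, where $I_i$ integrates over the bubble ball $B_{r_i}(x_i)$ and $J_i$ over the annulus $B_1(x_i)\setminus B_{r_i}(x_i)$. On the bubble ball I substitute $y = x_i + m_i^{-(p_i-1)/2\sigma}z$, mapping $B_{r_i}(x_i)$ to $B_{R_i}$. A direct bookkeeping of the exponents collapses the prefactor to $m_i^{1+p_i - n(p_i-1)/2\sigma} = m_i^{(n-2\sigma)\tau_i/(2\sigma)}$, which equals $1+o(1)$ by Lemma~\ref{lem:error}. Since $r_i \to 0$, for $x$ in a fixed compact subset of $B_1\setminus\{0\}$ the kernel $|(x-x_i) - m_i^{-(p_i-1)/2\sigma}z|^{2\sigma-n}$ converges uniformly in $|z|\le R_i$ to $|x|^{2\sigma-n}$. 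Combining with $K_i(x_i + m_i^{-(p_i-1)/2\sigma}z) \to K(0)$, the $C^2_{loc}$ convergence $\varphi_i \to (1+k|z|^2)^{(2\sigma-n)/2}$ of Proposition~\ref{prop:blow up a bubble}, and dominated convergence (with a standard Aubin--Talenti-type dominating function for $\varphi_i^{p_i}$), yields the pointwise limit $K(0)|x|^{2\sigma-n}\int_{\R^n}(1+k|z|^2)^{-(n+2\sigma)/2}\,\ud z$. The substitution $w = \sqrt{k}\,z$ and the explicit value of $k$ reproduce the constant $a$ in \eqref{eq:number a}. Since the kernel is smooth in $x$ off the diagonal, differentiating under the integral replaces $|x-y|^{2\sigma-n}$ by $\partial^\alpha_x|x-y|^{2\sigma-n}$ and the same argument upgrades the convergence to $C^2_{loc}(B_1\setminus\{0\})$.

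For the annular piece $J_i$ I invoke the sharp upper bound of Proposition~\ref{prop:upbound2}: $u_i(y)\le Cu_i(x_i)^{-1}|y-x_i|^{2\sigma-n}$ in $B_1(x_i)$, so
\[
u_i(x_i)u_i(y)^{p_i} \le C u_i(x_i)^{1-p_i}|y-x_i|^{(2\sigma-n)p_i}.
\]
Splitting the $y$-integral at $|y-x_i| = |x|/2$, on the inner part the kernel is uniformly bounded and the radial integral is of order $r_i^{(2\sigma-n)p_i+n} = R_i^{-2\sigma+o(1)}m_i^{p_i-1+o(1)}$; combined with $u_i(x_i)^{1-p_i} = m_i^{-(p_i-1)+o(1)}$ this gives an overall bound $O(R_i^{-2\sigma+o(1)})$. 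The outer part admits an analogous treatment. Hence $J_i \to 0$, and differentiation in $x$ yields $J_i \to 0$ in $C^2_{loc}(B_1\setminus\{0\})$.

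For $T''_i$ the kernel $|x-y|^{2\sigma-n}$ is smooth in $x$ uniformly on any compact $K \subset B_1$ once $i$ is large (since $|y-x_i|\ge 1$ and $x_i\to 0$ give $|x-y| \ge \mathrm{dist}(K,\partial B_1)/2$). To bound $T''_i$, I compare the kernel at $x$ with the kernel at $x_i + \tfrac12 e$ for any unit vector $e$, use \eqref{eq:main} evaluated at $x_i+\tfrac12 e$, and apply Proposition~\ref{prop:upbound2} to get $u_i(x_i)u_i(x_i+\tfrac12 e) \le C$; all $x$-derivatives of $T''_i$ are controlled by the same argument. Arzel\`a--Ascoli then extracts, along a subsequence, a nonnegative $C^2_{loc}(B_1)$ limit $h$; the finer $C^2(B_2)$ regularity follows from the identity $T''_i = u_i(x_i)u_i - T'_i$ together with the interior regularity of $u_i$ away from $x_i$ supplied by Theorem~\ref{thm:schauder}. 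The main obstacle is the precise bookkeeping that turns the subcritical prefactor $m_i^{(n-2\sigma)\tau_i/(2\sigma)}$ into $1+o(1)$ via Lemma~\ref{lem:error}, together with the exploitation of the sharp bound of Proposition~\ref{prop:upbound2} (rather than the weaker Lemma~\ref{lem:upbound1}) to make the annular tail disappear.
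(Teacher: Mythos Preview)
Your proposal is correct and uses the same core ingredients as the paper (Proposition~\ref{prop:blow up a bubble} for the bubble profile, Lemma~\ref{lem:error} to kill the $m_i^{(n-2\sigma)\tau_i/2\sigma}$ prefactor, and Proposition~\ref{prop:upbound2} for the annular tail and the bounds on $T''_i$). The only organizational difference is that the paper first invokes the framework of the proof of Proposition~\ref{prop:upbound2} to obtain $T'_i\to a|x|^{2\sigma-n}$ and $T''_i\to h$ for some $a\ge 0$, and then identifies $a$ by integrating over the annulus $B_{1/2}\setminus B_{1/4}$ and applying Fubini together with the bubble limit, whereas you compute the limit of $T'_i$ directly via the explicit decomposition $T'_i=I_i+J_i$; both routes yield the same constant and the same convergence.
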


\begin{proof} Similar to the proof of Proposition \ref{prop:upbound2}, we set
$\varphi_i(x)=u_i(x_i) u_i(x)$, which satisfies
\begin{align*}
\varphi_i(x)&=\int_{\R^n} \frac{K_i(y) u_i(x_i)^{1-p_i} \varphi_i(y)^{p_i}}{|y-x|^{n-2\sigma}}\,\ud y\\&
= : \int_{B_1(x_i)} \frac{K_i(y) u_i(x_i)^{1-p_i} \varphi_i(y)^{p_i}}{|y-x|^{n-2\sigma}}\,\ud y +h_i(x)=T'_i(x) +T''_i(x) .
\end{align*}
We have all the ingredients as in the proof of Proposition \ref{prop:upbound2}. Hence, we only need to evaluate the positive constant $a$. For every $\va>0$, by Lebesgue dominated convergence theorem, we have
\begin{align*}
a\int_{B_{1/2}\setminus B_{1/4}} |x|^{2\sigma-n}\,\ud x& = \lim _{i\to \infty}  u_i(x_i)
\int_{B_{1/2}\setminus B_{1/4}} \left(\int_{B_{\va}(x_i)} \frac{K_i(y)u_i(y)^{p_i}}{|y-x|^{n-2\sigma}}\,\ud y\right)\ud x \\
& = \lim _{i\to \infty}  u_i(x_i)
\int_{B_{\va}(x_i)} K_i(y)u_i(y)^{p_i}\left(\int_{B_{1/2}\setminus B_{1/4}} \frac{1}{|y-x|^{n-2\sigma}}\,\ud x\right)\ud y.
 \end{align*}
After sending $\va\to 0$, we have
\[
a\int_{B_{1/2}\setminus B_{1/4}} |x|^{2\sigma-n}\,\ud x
=\int_{B_{1/2}\setminus B_{1/4}} |x|^{2\sigma-n}\,\ud x  \lim_{\va\to 0}\lim_{i\to \infty}   u_i(x_i) \int_{B_{\va}(x_i)} K_i (y)u_i(y)^{p_i}\,\ud y.
\]
Thus, by Proposition \ref{prop:blow up a bubble} and Lemma \ref{lem:error}, we have
\[
\begin{split}
a&=\lim_{i\to \infty}   u_i(x_i) \int_{B_{r_i}(x_i)} K_i (y)u_i(y)^{p_i}\,\ud y\\
&=\lim_{i\to \infty} K_i(0)k_i^{-\frac n2}\int_{\R^n}\left(\frac{1}{1+|y|^2}\right)^\frac{n+2\sigma}{2}\ud y\\
&=\Big(\frac{\pi^{n/2}\Gamma(\sigma)}{\Gamma(\frac n2 +\sigma)}\Big)^{-\frac{n}{2\sigma}}\int_{\R^n}\left(\frac{1}{1+|y|^2}\right)^\frac{n+2\sigma}{2}\ud y \lim_{i\to \infty} K_i(0)^{\frac{2\sigma-n}{2\sigma}},
\end{split}
\]
since
\[
\lim_{i\to \infty} \int_{B_\va\setminus B_{r_i}(x_i)}   u_i(x_i)  K_i (y)u_i(y)^{p_i}\,\ud y =0.
\]
\end{proof}

\subsection{Fine analysis of blow up with a flatness condition}

For later applications, we consider nonnegative solutions of
\be \label{eq:general}
u_i(x)= \int_{\R^n} \frac{K_i (y) H_i(y)^{\tau_i} (y) u_i(y)^{p_i}}{|x-y|^{n-2\sigma}}\,\ud y   \quad \mbox{for } x\in B_3
\ee
where $K_i$ satisfies \eqref{eq:Kca} with $\Omega=B_3$ as before, and $\{H_i\}\in C^{1,1} (B_3)$ satisfies
\be \label{eq:condH}
A_4^{-1} \le H_i(x) \le A_4 \quad \mbox{for } y\in B_3, \quad \|H_i\|_{C^{1,1}(B_3)} \le A_5
\ee
for some positive constants $A_4, A_5$. We assume that $x_i\to 0$ is an isolated simple blow up point of $\{u_i\}$ with constant $A_3$ and $\rho$, i.e.,
\[
|x-x_i|^{(p_i-1)/2\sigma} u_i(x) \le A_3,
\]
and $r^{\frac{p_i-1}{2\sigma}}\bar u_i(r)$ has precisely one critical point in $(0,\rho)$ for large $i$, where $\bar u_i(r)=\dashint_{\pa B_r(x_i)} u_i\,\ud s$.

\begin{lem} \label{lem:estimatetau1}  Suppose that $K_i$ satisfies the $(*)'_\beta$ condition in $B_3$ with $\beta<n$. Then we have
\[\begin{split}
\tau_i\leq &Cu_i(x_i)^{-2}+C|\nabla K_i(x_i)|u_i(x_i)^{-2/(n-2\sigma)}\\&
\quad +C(L(\beta,i)+L(\beta,i)^{\beta-1})u_i(x_i)^{-2\beta/(n-2\sigma)},
\end{split}
\]
where $C>0$ depends only on $n,\sigma,A_1,A_2,A_3,A_4,A_5,\beta$ and $\rho$.
\end{lem}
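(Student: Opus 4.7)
The plan is to follow the same Pohozaev-based strategy as in Lemma~\ref{lem:error}, but to replace the crude bound on $\int (x-x_i)\cdot\nabla K_i\, u_i^{p_i+1}$ by a Taylor expansion of $\nabla K_i$ at $x_i$ that exploits the $(*)'_{\beta}$ flatness condition. First I would rewrite \eqref{eq:general} as
\[
u_i(x)=\int_{B_1(x_i)}\frac{K_i(y)H_i(y)^{\tau_i}u_i(y)^{p_i}}{|x-y|^{n-2\sigma}}\,\ud y+\tilde h_i(x),
\]
with $\tilde h_i$ capturing the contribution from $|y-x_i|\ge 1$, and apply Proposition~\ref{prop:pohozaev} with coefficient $\tilde K_i:=K_i H_i^{\tau_i}$ on $B_1(x_i)$.

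The left-hand side of the identity is the sum of
$\bigl(\frac{n-2\sigma}{2}-\frac{n}{p_i+1}\bigr)\int_{B_1(x_i)}\tilde K_i u_i^{p_i+1}$ (asymptotically $-c\tau_i$ times a quantity bounded below by a positive constant, thanks to \eqref{eq:a7} and the fact that $m_i^{\tau_i(n/2\sigma-1)}=1+o(1)$) and $-\frac{1}{p_i+1}\int_{B_1(x_i)}(x-x_i)\cdot\nabla\tilde K_i\,u_i^{p_i+1}$. Splitting $\nabla\tilde K_i=H_i^{\tau_i}\nabla K_i+\tau_i K_i H_i^{\tau_i-1}\nabla H_i$, the $\nabla H_i$ piece is $O(\tau_i m_i^{-2/(n-2\sigma)})$ by Corollary~\ref{cor:energy} and can be absorbed into the LHS. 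The $\tilde h_i$-side terms are treated exactly as in Lemma~\ref{lem:error}: using the argument around \eqref{eq:a4}--\eqref{eq:a5} and Corollary~\ref{cor:energy}, one gets $O(m_i^{-2})$, while the boundary term is $O(m_i^{-2n/(n-2\sigma)})$ which is lower order.

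The main new step is the estimate of $\int_{B_1(x_i)}(x-x_i)\cdot\nabla K_i(x)\,u_i^{p_i+1}\,\ud x$. I would Taylor expand
\[
\nabla K_i(y)=\nabla K_i(x_i)+\sum_{s=2}^{[\beta]}\tfrac{1}{(s-1)!}\nabla^{s}K_i(x_i)\cdot(y-x_i)^{s-1}+E_i(y),
\]
where $|E_i(y)|\le CL(\beta,i)|y-x_i|^{\beta-1}$ by the H\"older part of $(*)'_{\beta}$. Paired with Corollary~\ref{cor:energy} (which gives $\int|x-x_i|^{s}u_i^{p_i+1}=O(m_i^{-2s/(n-2\sigma)})$ for $s<n$, and $\beta<n$ by hypothesis), the constant term yields $|\nabla K_i(x_i)|\cdot O(m_i^{-2/(n-2\sigma)})$, the remainder yields $L(\beta,i)\cdot O(m_i^{-2\beta/(n-2\sigma)})$, and each intermediate $s\in\{2,\dots,[\beta]\}$ contributes
\[
|\nabla^{s}K_i(x_i)|\cdot O(m_i^{-2s/(n-2\sigma)})\le L(\beta,i)|\nabla K_i(x_i)|^{(\beta-s)/(\beta-1)}m_i^{-2s/(n-2\sigma)}
\]
by the $(*)'_{\beta}$ derivative inequality. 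To fit these intermediate terms into the required form, I would apply Young's inequality with exponents $\tfrac{\beta-1}{\beta-s}$ and $\tfrac{\beta-1}{s-1}$ to obtain
\[
L(\beta,i)\bigl(|\nabla K_i(x_i)|m_i^{-2/(n-2\sigma)}\bigr)^{(\beta-s)/(\beta-1)}m_i^{-2\beta(s-1)/((n-2\sigma)(\beta-1))}
\]
\[
\le \varepsilon|\nabla K_i(x_i)|m_i^{-2/(n-2\sigma)}+C_\varepsilon L(\beta,i)^{(\beta-1)/(s-1)}m_i^{-2\beta/(n-2\sigma)},
\]
and since $(\beta-1)/(s-1)$ takes its maximum $\beta-1$ at $s=2$, this is bounded by $C(L(\beta,i)+L(\beta,i)^{\beta-1})m_i^{-2\beta/(n-2\sigma)}$ plus a small multiple of $|\nabla K_i(x_i)|m_i^{-2/(n-2\sigma)}$.

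Collecting all the estimates and absorbing the small multiples of $|\nabla K_i(x_i)|m_i^{-2/(n-2\sigma)}$ and $\tau_i$-multiples into the LHS produces the desired inequality for $\tau_i$. The main technical obstacle I anticipate is the careful bookkeeping of the Taylor remainder and the intermediate derivative terms (especially when $2\sigma\le 1$, where $H_i^{\tau_i}$ is only $C^{1,1}$ and the Pohozaev identity is obtained through the regularized kernel $G_\varepsilon$); the Young-type interpolation above is the cleanest way I see to produce exactly the $L(\beta,i)+L(\beta,i)^{\beta-1}$ dependence in the statement.
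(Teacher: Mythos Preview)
Your proposal is correct and follows essentially the same route as the paper's proof: apply the Pohozaev identity on $B_1(x_i)$ to \eqref{eq:split}, control the $\tilde h_i$ and boundary contributions by $Cu_i(x_i)^{-2}$, absorb the $\tau_i\int|x-x_i|u_i^{p_i+1}$ and $\nabla H_i$ pieces into the left-hand side via Corollary~\ref{cor:energy}, and then Taylor expand $\nabla K_i$ at $x_i$ using the $(*)'_\beta$ condition together with Young's inequality to produce the three terms in the statement. The only cosmetic difference is that the paper performs the Young-type interpolation pointwise on $L(\beta,i)|\nabla K_i(x_i)|^{(\beta-s)/(\beta-1)}|x-x_i|^{s}$ before integrating (see \eqref{eq:lem4.5-(*)-1}--\eqref{eq:lem4.5-(*)-3}), whereas you first integrate and then interpolate at the level of the resulting powers of $m_i$; the two are equivalent.
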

\begin{proof} Note that Proposition \ref{prop:blow up a bubble} and Proposition \ref{prop:upbound2} hold for the $u_i$ here. Let us write equation \eqref{eq:general} as
\be \label{eq:split}
u_i(x)= \int_{B_1(x_i)} \frac{K_i (y) H_i(y)^{\tau_i} (y) u_i(y)^{p_i}}{|x-y|^{n-2\sigma}}\,\ud y +h_i(x).
\ee
It follows from Proposition \ref{prop:upbound2} and \eqref{eq:a5} that for $x\in B_1(x_i)$
\be\label{eq:a6-1}
|\nabla h_i(x)|\le \begin{cases}
C\frac{|1-(1-|x-x_i|)^{2\sigma-1}|}{|2\sigma-1|} m_i^{-1}\quad\mbox{if }\sigma\neq 1/2\\
C|\log(1-|x-x_i|)|m_i^{-1}\quad\mbox{if }\sigma=1/2.
\end{cases}
\ee
Using Proposition \ref{prop:pohozaev} and Lemma \ref{lem:error}, by the same proof of Lemma \ref{lem:error} we have
\begin{align}
\tau_i &\le Cu_i(x_i)^{-2}+ C\left| \int_{B_1(x_i)}(x-x_i) \nabla (K_i H_i^{\tau_i})(x)u_i(x)^{p_i+1}\,\ud x\right|\nonumber \\ &
\le Cu_i(x_i)^{-2}+ C\tau_i \int_{B_1(x_i)} |x-x_i|u_i(x)^{p_i+1} \,\ud x \nonumber \\&
\quad + C\left| \int_{B_1(x_i)}(x-x_i) \nabla K_i(x) H_i^{\tau_i}u_i(x)^{p_i+1}\,\ud x\right|
                     \label{eq:a10}
\end{align}
By Corollary \ref{cor:energy}, the second term of the right  hand side is less than $\tau_i/2$ for $i$ large. It suffices to evaluate the third term, for which we will use the flatness condition.  We see
\[
\begin{split}
&\left|\int_{B_1(x_i)}( x-x_i)\nabla K_i H_i^{\tau_i} u_i^{p_i+1}\right|\\&
\leq C|\nabla K_i(x_i)|\int_{B_1(x_i)}|x-x_i|u_i^{p_i+1}\\&
\quad +C\int_{B_1(x_i)}|x-x_i||\nabla K_i(x)-\nabla K_i(x_i)|u_i^{p_i+1}\\&
\leq C|\nabla K_i(x_i)|u_i(x_i)^{-2/(n-2\sigma)}\\&
\quad  +C\int_{B_1(x_i)}|x-x_i||\nabla K_i(x)-\nabla K_i(x_i)|u_i^{p_i+1}.
\end{split}
\]
Recalling the definition of $(*)_\beta$, a direct computation yields
\begin{equation}\label{eq:lem4.5-(*)-1}
\begin{split}
&|\nabla K_i(x)-\nabla K_i(x_i)|\\&
\leq \Big\{\sum_{s=2}^{[\beta]}|\nabla ^s K_i(x_i)||x-x_i|^{s-1}+[\nabla ^{[\beta]}K_i]_{C^{\beta-[\beta]}(B_1(x_i))}|x-x_i|^{\beta-1}\Big\}\\&
\leq CL(\beta,i)\Big\{\sum_{s=2}^{[\beta]}|\nabla K_i(x_i)|^{(\beta-s)/(\beta-1)}|x-x_i|^{s-1}+|x-x_i|^{\beta-1}\Big\}.
\end{split}
\end{equation}
By Cauchy-Schwartz inequality, we have
\be\label{eq:lem4.5-(*)-2}
\begin{split}
&L(\beta,i)|\nabla K_i(x_i)|^{(\beta-s)/(\beta-1)}|x-x_i|^{s}\\&
\leq C(|\nabla K_i(x_i)||x-x_i|+(L(\beta,i)+L(\beta,i)^{\beta-1})|x-x_i|^{\beta}).
\end{split}
\ee
Hence, by Corollary \ref{cor:energy} we obtain
\be\label{eq:lem4.5-(*)-3}
\begin{split}
&\int_{B_1(x_i)}|x-x_i||\nabla K_i(x)-\nabla K_i(x_i)|u_i^{p_i+1}\\&
\leq   C|\nabla K_i(x_i)|u_i(x_i)^{-2/(n-2\sigma)}+C(L(\beta,i)+L(\beta,i)^{\beta-1})u_i(x_i)^{-2\beta/(n-2\sigma)}.
\end{split}
\ee
Lemma \ref{lem:estimatetau1} follows immediately.
\end{proof}

\begin{lem}\label{lem:gradient} Under the hypotheses of Lemma \ref{lem:estimatetau1},
\[
|\nabla K_i(x_i)|\leq Cu_i(x_i)^{-2}+C(L(\beta,i)+L(\beta,i)^{\beta-1}) u_i(x_i)^{-2(\beta-1)/(n-2\sigma)},
\]
where $C>0$ depends only on $n,\sigma,A_1,A_2,A_3,A_4,A_5,\beta$ and $\rho$.
\end{lem}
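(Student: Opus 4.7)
\textbf{Plan for the proof of Lemma \ref{lem:gradient}.} The strategy is to derive a \emph{translation-type} Pohozaev identity that isolates $\partial_j K_i(x_i)$, then combine it with the $(*)'_\beta$ flatness condition and Young's inequality to absorb higher-order $|\nabla K_i(x_i)|$ contributions. As in \eqref{eq:split}, write
\[
u_i(x)=\int_{B_1(x_i)}\frac{K_i(y)H_i(y)^{\tau_i}u_i(y)^{p_i}}{|x-y|^{n-2\sigma}}\,\ud y+h_i(x)
\]
and differentiate in $x_j$. Multiply the resulting identity by $K_i(x)H_i(x)^{\tau_i}u_i(x)^{p_i}$ and integrate over $B_1(x_i)$. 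The key observation is that the double integral
\[
\int_{B_1(x_i)}\!\!\int_{B_1(x_i)}\frac{(x_j-y_j)\,K_i(x)H_i(x)^{\tau_i}u_i(x)^{p_i}K_i(y)H_i(y)^{\tau_i}u_i(y)^{p_i}}{|x-y|^{n+2-2\sigma}}\,\ud y\,\ud x
\]
is antisymmetric in $x\leftrightarrow y$ and hence vanishes. After integration by parts on the $\partial_j u_i$ side, one obtains
\[
\int_{B_1(x_i)}\partial_j(K_iH_i^{\tau_i})\,u_i^{p_i+1}\,\ud x=-(p_i+1)\int_{B_1(x_i)}K_iH_i^{\tau_i}u_i^{p_i}\partial_jh_i\,\ud x+\int_{\partial B_1(x_i)}K_iH_i^{\tau_i}u_i^{p_i+1}\nu_j\,\ud s.
\]
When $\sigma\le 1/2$ the kernel must first be truncated as in the proof of Proposition \ref{prop:pohozaev} and the limit taken at the end.

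Next, split the left-hand side as $\partial_jK_i(x_i)\int H_i^{\tau_i}u_i^{p_i+1}$ plus a remainder involving $\partial_jK_i(x)-\partial_jK_i(x_i)$ and a $\tau_i$-term coming from $\partial_j H_i$. By Proposition \ref{prop:blow up a bubble} and Lemma \ref{lem:error}, the coefficient $\int_{B_1(x_i)} H_i^{\tau_i}u_i^{p_i+1}$ is bounded below by a positive constant. Using \eqref{eq:lem4.5-(*)-1} and Corollary \ref{cor:energy}, the remainder is estimated by
\[
\int_{B_1(x_i)}|\nabla K_i(x)-\nabla K_i(x_i)|\,u_i^{p_i+1}\le CL(\beta,i)\sum_{s=2}^{[\beta]}|\nabla K_i(x_i)|^{\frac{\beta-s}{\beta-1}}u_i(x_i)^{-\frac{2(s-1)}{n-2\sigma}}+CL(\beta,i)u_i(x_i)^{-\frac{2(\beta-1)}{n-2\sigma}}.
\]
The $\tau_i$-contribution is controlled by Lemma \ref{lem:estimatetau1}; the boundary term is $O(u_i(x_i)^{-2(n+2\sigma)/(n-2\sigma)})=o(u_i(x_i)^{-2})$ by Proposition \ref{prop:upbound2}; and the $\partial_jh_i$-term is $O(u_i(x_i)^{-2+o(1)})$ by \eqref{eq:a6-1} together with $\int_{B_1(x_i)}u_i^{p_i}=O(u_i(x_i)^{-1+o(1)})$ (a short calculation using Proposition \ref{prop:blow up a bubble}, Proposition \ref{prop:upbound2} and Lemma \ref{lem:error}).

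Finally, apply Young's inequality with conjugate exponents $\frac{\beta-1}{\beta-s}$ and $\frac{\beta-1}{s-1}$ to each mixed summand above:
\[
L(\beta,i)|\nabla K_i(x_i)|^{\frac{\beta-s}{\beta-1}}u_i(x_i)^{-\frac{2(s-1)}{n-2\sigma}}\le \eta|\nabla K_i(x_i)|+C_\eta L(\beta,i)^{\frac{\beta-1}{s-1}}u_i(x_i)^{-\frac{2(\beta-1)}{n-2\sigma}},
\]
and note that $L(\beta,i)^{(\beta-1)/(s-1)}\le L+L^{\beta-1}$ for $2\le s\le[\beta]$. Choosing $\eta$ sufficiently small and absorbing into the left-hand side both the $\eta|\nabla K_i(x_i)|$ term and the term $C|\nabla K_i(x_i)|u_i(x_i)^{-2/(n-2\sigma)}$ arising through Lemma \ref{lem:estimatetau1} yields
\[
|\nabla K_i(x_i)|\le Cu_i(x_i)^{-2}+C(L(\beta,i)+L(\beta,i)^{\beta-1})u_i(x_i)^{-\frac{2(\beta-1)}{n-2\sigma}},
\]
as claimed. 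The main technical obstacle is the choice of Young exponents: they must be tuned so that the disparate mixed powers of $|\nabla K_i(x_i)|$ and $u_i(x_i)$ coming from each $s\in\{2,\ldots,[\beta]\}$ all collapse to the single error exponent $u_i(x_i)^{-2(\beta-1)/(n-2\sigma)}$, thereby enabling absorption.
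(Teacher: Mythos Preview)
Your approach is essentially identical to the paper's: both derive the translation Pohozaev identity by exploiting the antisymmetry of the double integral in $x\leftrightarrow y$, split off $\nabla K_i(x_i)$ from the main term, control the remainder via the $(*)'_\beta$ condition, and absorb using Young's inequality (the paper phrases this as ``Cauchy--Schwartz'' but it is the same computation as in \eqref{eq:lem4.5-(*)-2}). Two small slips worth noting: the boundary term is $O(u_i(x_i)^{-2n/(n-2\sigma)})$, not $O(u_i(x_i)^{-2(n+2\sigma)/(n-2\sigma)})$, and once you invoke Lemma~\ref{lem:error} the $o(1)$ in your exponent for the $\partial_j h_i$ contribution can and should be dropped, yielding the clean bound $O(u_i(x_i)^{-2})$ that the statement requires.
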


\begin{proof}
By \eqref{eq:split} and integrating by parts, we have
\begin{align*}
&\frac{1}{p_i+1}\int_{\pa B_1} x_j K_i(x) H_i(x)^{\tau_i} u_i^{p_i+1} \,\ud s-\frac{1}{p_i+1}\int_{B_1} \pa_j (K_i(x) H_i(x)^{\tau_i}) u_i^{p_i+1} \,\ud x\\
&=\frac{1}{p_i+1}\int_{B_1} K_i(x) H_i(x)^{\tau_i} \pa_j u_i^{p_i+1} \,\ud x\\&
= (2\sigma-n) \int_{B_1} K_i(x) H_i(x)^{\tau_i} u_i(x)^{p_i} \int_{B_1} \frac{(x_j-y_j)}{|x-y|^{n-2\sigma+2}} K_i(y) H_i(y)^{\tau_i} u_i(y)^{p_i} \,\ud y\ud x\\&
\quad +\int_{B_1} K_i(x) H_i(x)^{\tau_i} u_i(x)^{p_i} \pa_j h_i(x)\,\ud x \\&
=\int_{B_1} K_i(x) H_i(x)^{\tau_i} u_i(x)^{p_i} \pa_j h_i(x)\,\ud x.
\end{align*}
(If $\sigma<1/2$, then one can similar approximation arguments in the proof of Proposition \ref{prop:pohozaev}.) Hence,
\begin{align*}
& \left|\int_{B_1} \nabla (K_i(x) H_i(x)^{\tau_i}) u_i(x)^{p_i+1}\, \ud x\right| \\&
 \le C\big(  \int_{B_1} u_i^{p_i} |\nabla h_i|+ \int_{\pa B_1}  u_i^{p_i+1}\big).
\end{align*}
It follows from Proposition \ref{prop:upbound2} and Theorem \ref{thm:schauder} that  $u_i(x)\le C u_i(x_i)^{-1}$ for $1/2\leq |x| \le 1$,
and  \eqref{eq:a6-1} holds. Hence, by Corollary \ref{cor:energy}
\begin{align}
& \left|\int_{B_1} \nabla K_i(x) H_i(x)^{\tau_i} u_i(x)^{p_i+1}\, \ud x\right| \nonumber \\&
\le C( u_i(x)^{-2} +\tau_i\int_{B_1} u_i^{p_i+1}) \nonumber \\ &
\le C (u_i (x_i)^{-2} +\tau_i).
                               \label{eq:a11}
\end{align}
As that in proof of Lemma \ref{lem:estimatetau1}, we write
\[\begin{split}
&\int_{B_1} \nabla K_i(x) H_i(x)^{\tau_i} u_i(x)^{p_i+1}\, \ud x \\&
=\nabla K_i(x_i) \int_{B_1} H_i(x)^{\tau_i} u_i(x)^{p_i+1}\, \ud x +\int_{B_1} (\nabla K_i(x)-\nabla K_i(x_i)) H_i(x)^{\tau_i} u_i(x)^{p_i+1}\, \ud x,
\end{split}
\]
then use the triangle inequality and the $(*)_\beta$ condition (as in \eqref{eq:lem4.5-(*)-1}-\eqref{eq:lem4.5-(*)-2}) to obtain that
\begin{align*}
|\nabla K_i(x_i)| \le & Cu_i(x_i)^{-2}+C \tau_i +\frac{1}{4} |\nabla K_i(x_i)| \\&
+ C(L(\beta, i) +L(\beta, i)^{\beta-1})u_i(x_i)^{-2(\beta-1)/(n-2\sigma)}.
\end{align*}
Together with Lemma \ref{lem:estimatetau1}, the Lemma follows immediately.
\end{proof}

Combining the above two lemmas, we immediately obtain:

\begin{lem}\label{lem4.8} We have
\[
\tau_i\leq Cu_i(x_i)^{-2}+C(L(\beta,i)+L(\beta,i)^{\beta-1})u_i(x_i)^{-2\beta/(n-2\sigma)}.
\]
\end{lem}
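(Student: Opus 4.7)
The plan is to obtain Lemma~\ref{lem4.8} by a direct substitution: feed the upper bound on $|\nabla K_i(x_i)|$ provided by Lemma~\ref{lem:gradient} into the estimate of Lemma~\ref{lem:estimatetau1}, and then absorb the cross terms into the two desired terms on the right-hand side. No new analytic input is needed beyond the two preceding lemmas; the whole argument is bookkeeping of exponents.

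First I would recall the conclusion of Lemma~\ref{lem:estimatetau1}, namely
\[
\tau_i\leq Cu_i(x_i)^{-2}+C|\nabla K_i(x_i)|u_i(x_i)^{-2/(n-2\sigma)}+C(L(\beta,i)+L(\beta,i)^{\beta-1})u_i(x_i)^{-2\beta/(n-2\sigma)},
\]
and plug in the bound from Lemma~\ref{lem:gradient}. The middle term then splits into
\[
Cu_i(x_i)^{-2}\cdot u_i(x_i)^{-2/(n-2\sigma)}+C(L(\beta,i)+L(\beta,i)^{\beta-1})u_i(x_i)^{-2(\beta-1)/(n-2\sigma)}\cdot u_i(x_i)^{-2/(n-2\sigma)}.
\]
The first piece is bounded by $Cu_i(x_i)^{-2}$ since $u_i(x_i)\to\infty$; the second piece combines exponents as
\[
-\tfrac{2(\beta-1)}{n-2\sigma}-\tfrac{2}{n-2\sigma}=-\tfrac{2\beta}{n-2\sigma},
\]
which is exactly the power appearing in the flatness term of Lemma~\ref{lem:estimatetau1}.

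Collecting everything, I obtain
\[
\tau_i\leq Cu_i(x_i)^{-2}+C(L(\beta,i)+L(\beta,i)^{\beta-1})u_i(x_i)^{-2\beta/(n-2\sigma)},
\]
which is the claim. There is essentially no obstacle here: the lemma is a clean corollary of the two previous ones, and the only thing to check is that the exponents of $u_i(x_i)$ line up so that substitution produces no term worse than the two already present. In particular, it is crucial that $\beta>n-2\sigma>0$ so that $u_i(x_i)^{-2\beta/(n-2\sigma)}\to 0$, but this was already built into the hypotheses inherited from Lemma~\ref{lem:estimatetau1} (via the $(*)'_\beta$ condition on $K_i$).
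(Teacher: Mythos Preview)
Your proof is correct and matches the paper's approach exactly: the paper simply states that Lemma~\ref{lem4.8} follows immediately from combining Lemmas~\ref{lem:estimatetau1} and~\ref{lem:gradient}, and you have written out precisely that substitution. One small remark: the condition $\beta>n-2\sigma$ you mention at the end is not actually needed here (only $u_i(x_i)\to\infty$ is used to absorb $u_i(x_i)^{-2-2/(n-2\sigma)}$ into $Cu_i(x_i)^{-2}$), and in fact Lemma~\ref{lem:estimatetau1} only assumes $\beta<n$.
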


\begin{cor}\label{cor4.1}
We further assume that
one of the following two conditions holds:
(i)
 \[
\beta=n-2\sigma \mbox{ and }L(\beta,i)=o(1),
\]
and
(ii)\[
\beta>n-2\sigma \mbox{ and }L(\beta,i)=O(1).
\]
Then for any $0<\delta<1$ we have
\[
\lim_{i\to\infty} u_i(x_i)^{2}\int_{B_{\delta}(x_i)}(x-x_i)\cdot \nabla(K_iH_i^{\tau_i})u_i^{p_i+1}=0.
\]
\end{cor}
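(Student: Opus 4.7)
\medskip

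\noindent\textbf{Proof plan for Corollary \ref{cor4.1}.} The strategy is to distribute the gradient by the product rule
\[
\nabla(K_iH_i^{\tau_i})=H_i^{\tau_i}\nabla K_i+\tau_i K_i H_i^{\tau_i-1}\nabla H_i,
\]
and to estimate the two resulting integrals separately using four already-established ingredients: the energy bounds of Corollary \ref{cor:energy} with $s=1$ and $s=\beta$ (both admissible since $\beta<n$), the $\tau_i$-bound of Lemma \ref{lem4.8}, the $|\nabla K_i(x_i)|$-bound of Lemma \ref{lem:gradient}, and the pointwise $(*)'_\beta$ expansion already used in \eqref{eq:lem4.5-(*)-1}--\eqref{eq:lem4.5-(*)-2}.

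For the summand involving $\tau_i K_iH_i^{\tau_i-1}\nabla H_i$, the uniform bounds \eqref{eq:condH} give $|(x-x_i)\cdot\nabla H_i|\le A_5|x-x_i|$ with $K_i H_i^{\tau_i-1}$ uniformly controlled. Corollary \ref{cor:energy} with $s=1$ gives $\int_{B_\delta(x_i)}|x-x_i|u_i^{p_i+1}=O(u_i(x_i)^{-2/(n-2\sigma)})$, while Lemma \ref{lem4.8} yields
\[
\tau_i u_i(x_i)^2\le C+C\bigl(L(\beta,i)+L(\beta,i)^{\beta-1}\bigr)u_i(x_i)^{2-2\beta/(n-2\sigma)}.
\]
Under either (i) or (ii) the exponent $2-2\beta/(n-2\sigma)\le 0$, so $\tau_i u_i(x_i)^2=O(1)$ and this part of $u_i(x_i)^2\int_{B_\delta(x_i)}\cdots$ is $O(u_i(x_i)^{-2/(n-2\sigma)})\to 0$.

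For the summand $H_i^{\tau_i}\nabla K_i$, I would write $\nabla K_i(x)=\nabla K_i(x_i)+[\nabla K_i(x)-\nabla K_i(x_i)]$. The frozen-gradient contribution is at most
\[
Cu_i(x_i)^2|\nabla K_i(x_i)|\int_{B_\delta(x_i)}|x-x_i|u_i^{p_i+1}\le Cu_i(x_i)^2|\nabla K_i(x_i)|u_i(x_i)^{-2/(n-2\sigma)},
\]
which after invoking Lemma \ref{lem:gradient} becomes $Cu_i(x_i)^{-2/(n-2\sigma)}+C(L+L^{\beta-1})u_i(x_i)^{2-2\beta/(n-2\sigma)}$. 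For the difference, the $(*)'_\beta$ computation \eqref{eq:lem4.5-(*)-1}--\eqref{eq:lem4.5-(*)-2} combined with Cauchy--Schwarz yields the pointwise bound
\[
|x-x_i|\,|\nabla K_i(x)-\nabla K_i(x_i)|\le C|\nabla K_i(x_i)||x-x_i|+C(L+L^{\beta-1})|x-x_i|^{\beta}.
\]
Integrating this against $H_i^{\tau_i}u_i^{p_i+1}$ and applying Corollary \ref{cor:energy} with $s=1$ and $s=\beta$ produces $C|\nabla K_i(x_i)|u_i(x_i)^{-2/(n-2\sigma)}+C(L+L^{\beta-1})u_i(x_i)^{-2\beta/(n-2\sigma)}$; multiplying by $u_i(x_i)^2$ reproduces exactly the same two types of terms that appeared in the frozen-gradient step.

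The only real obstacle is the bookkeeping of exponents: in case (i) the power $2-2\beta/(n-2\sigma)$ equals $0$ and the smallness must be supplied entirely by the hypothesis $L(\beta,i)=o(1)$, whereas in case (ii) that power is strictly negative so that $u_i(x_i)^{2-2\beta/(n-2\sigma)}=o(1)$ and only $L(\beta,i)=O(1)$ is needed. In either case the pure $u_i(x_i)^{-2/(n-2\sigma)}$-terms decay automatically, and the mixed $(L+L^{\beta-1})u_i(x_i)^{2-2\beta/(n-2\sigma)}$-terms are $o(1)$ by matching the two factors. Summing all contributions yields the desired limit.
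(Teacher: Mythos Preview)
Your proof is correct and follows essentially the same approach as the paper's: split via the product rule into the $\tau_i$-term and the $\nabla K_i$-term, freeze $\nabla K_i$ at $x_i$, control the remainder with the $(*)'_\beta$ expansion \eqref{eq:lem4.5-(*)-1}--\eqref{eq:lem4.5-(*)-2}, and then invoke Lemma~\ref{lem:gradient}, Lemma~\ref{lem4.8}, and Corollary~\ref{cor:energy}. Your explicit bookkeeping of the exponent $2-2\beta/(n-2\sigma)$ in cases (i) and (ii) is exactly what the paper leaves implicit.
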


\begin{proof}
By triangle inequalities, we have
\begin{align*}
&\left| \int_{B_{\delta}(x_i)}(x-x_i)\cdot \nabla(K_iH_i^{\tau_i})u_i^{p_i+1}\right|\\ &
\le \left|\int_{B_{\delta}(x_i)}(x-x_i)\cdot \nabla K_i H_i^{\tau_i} u_i^{p_i+1}\right|+\tau_i \left|\int_{B_{\delta}(x_i)}(x-x_i)\cdot \nabla H_i H^{\tau_i-1} K_i u_i^{p_i+1}\right|\\&
\le C|\nabla K_i(x_i)| \int_{B_\delta(x_i)} |x-x_i| u_i^{p_i+1} \\&
\quad +C \int_{B_\delta(x_i)} |x-x_i||\nabla K_i(x)-\nabla K_i(x_i)| u_i^{p_i+1} +C\tau_i \int_{B_\delta(x_i)} |x-x_i| u_i^{p_i+1}.
\end{align*}
Then the corollary follows from Lemma \ref{lem:gradient}, \eqref{eq:lem4.5-(*)-3}, Lemma \ref{lem4.8} and Corollary \ref{cor:energy}.
\end{proof}

\subsection{Isolated blow up points have to be isolated simple}

In this subsection, we will prove that under suitable assumptions on $K_i$, isolated blow up points have to be isolated simple.

\begin{prop}\label{prop5.1} Let $u_i\ge 0$ be a solution of \eqref{eq:general}, where $K_i$ satisfies \eqref{eq:Kca} and $(*)_{n-2\sigma}'$ condition with $\Omega=B_3$, and $H_i$ satisfies \eqref{eq:condH}.  Suppose that $x_i\to 0$ is an
isolated blow up point of $\{u_i\}$ in $B_2$ for some positive constant $A_3$, i.e.,
\[
|x-x_i|^{2\sigma/(p_i-1)} u_i(x) \le A_3\quad \mbox{in }B_2.
\] then $x_i\to 0$ is an
isolated simple blow up point.
\end{prop}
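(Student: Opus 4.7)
The plan is a contradiction argument by rescaling, modeled on the Schoen--Zhang / Li blow-up analysis and assembled from the tools already developed in the excerpt. Suppose that $x_i\to 0$ is an isolated blow-up point that fails to be isolated simple. Then for every $\rho>0$ the spherical average $\bar w_i(r)=r^{2\sigma/(p_i-1)}\bar u_i(r)$ has at least two critical points in $(0,\rho)$ for large $i$; by Proposition~\ref{prop:blow up a bubble} the first is of order $r_i=R_im_i^{-(p_i-1)/2\sigma}$, and I denote by $\mu_i$ the second. Then $\mu_i\to 0$ and $\mu_i/r_i\to\infty$. Rescale by setting
\[
\tilde u_i(y)=\mu_i^{2\sigma/(p_i-1)}u_i(x_i+\mu_i y),\quad \tilde K_i(y)=K_i(x_i+\mu_i y),\quad \tilde H_i(y)=H_i(x_i+\mu_i y).
\]
The equation \eqref{eq:general} is preserved, $\tilde u_i(0)\to\infty$ because $\mu_i/r_i\to\infty$ (and $m_i^{\tau_i}=1+o(1)$ by Lemma~\ref{lem:error}), and by the very choice of $\mu_i$ as the second critical point, the origin is an isolated simple blow-up point for $\{\tilde u_i\}$ with $r=1$ a distinguished critical point of the rescaled spherical average. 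Crucially, the $(*)'_{n-2\sigma}$ constant scales as $\tilde L_i=L\mu_i^{n-2\sigma}\to 0$, placing $\{\tilde K_i\}$ in case (i) of Corollary~\ref{cor4.1} with $\beta=n-2\sigma$.

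By Corollary~\ref{cor:convergence}, after passing to a subsequence,
\[
\tilde u_i(0)\tilde u_i(y)\longrightarrow G(y):=a|y|^{2\sigma-n}+h(y)\quad\text{in }C^2_{loc}(B_1\setminus\{0\}),
\]
with $a>0$ given by \eqref{eq:number a} and $h\in C^2(B_1)$, $h\ge 0$. I then apply the Pohozaev identity of Proposition~\ref{prop:pohozaev} to the truncated representation $\tilde u_i(x)=\int_{B_\delta}\tilde K_i\tilde H_i^{\tau_i}\tilde u_i^{p_i}|x-y|^{2\sigma-n}\,\ud y+\tilde h_i(x)$ on a small fixed ball $B_\delta$, and multiply both sides by $\tilde u_i(0)^2$ before letting $i\to\infty$. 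The LHS tends to $0$: the coefficient $(\tfrac{n-2\sigma}{2}-\tfrac{n}{p_i+1})$ is $O(\tau_i)$ and, together with Lemma~\ref{lem4.8} under $\tilde L_i\to 0$, is absorbed; the gradient term is controlled by Corollary~\ref{cor4.1}(i). On the RHS, the bulk measures $\tilde u_i(0)\tilde K_i\tilde H_i^{\tau_i}\tilde u_i^{p_i}\,\ud y$ converge weak-$*$ to $a\,\delta_0$ (the total mass $a$ being computed in the proof of Corollary~\ref{cor:convergence}), while $\tilde u_i(0)\tilde h_i\to h$ and $\nabla\tilde h_i\cdot\tilde u_i(0)\to\nabla h$ locally uniformly (by the regularity estimates underlying \eqref{eq:a4}--\eqref{eq:a5}); the boundary piece $\tilde u_i(0)^2\int_{\partial B_\delta}\tilde K_i\tilde u_i^{p_i+1}$ vanishes because $\tilde u_i(0)^{1-p_i}\to 0$. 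Passing to the limit yields
\[
0=\tfrac{n-2\sigma}{2}\,K(0)\,a\,h(0),
\]
so $h(0)=0$.

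To close the contradiction, I use that $h$ is the locally uniform $C^2$ limit of $\tilde u_i(0)\tilde h_i(x)$, where $\tilde h_i$ is the Riesz potential on $\R^n\setminus B_1$ of a nonnegative integrand (Corollary~\ref{cor:convergence}). Consequently $h$ is nonnegative and satisfies a fractional-harmonic type property in $B_1$; combined with $h\ge 0$ and $h(0)=0$, the strong maximum principle forces $h\equiv 0$. But then $G(y)=a|y|^{2\sigma-n}$, so the rescaled $\bar w$-function converges to $r^{(n-2\sigma)/2}\bar G(r)=ar^{-(n-2\sigma)/2}$, which is strictly decreasing on $(0,\infty)$ and therefore has no critical point — contradicting the choice of $r=1$ as a critical point of the rescaled spherical average of $\tilde u_i$.

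The main obstacle is the bookkeeping in Step~2, the Pohozaev step: one must verify that each of the three categories of error terms (the $\tau_i$-coefficient, the $\nabla(\tilde K\tilde H^{\tau_i})$ term, and the $\partial B_\delta$ boundary integral) is $o(\tilde u_i(0)^{-2})$ after scaling, which is where the improvement $\tilde L_i=L\mu_i^{n-2\sigma}\to 0$ is essential and Corollary~\ref{cor4.1}(i) is invoked in its sharp form. A secondary delicate point is justifying the strong-maximum-principle step for $h$ in the integral-equation setting, which one establishes by writing $h$ as the limit of Riesz potentials of nonnegative densities supported outside $B_1$.
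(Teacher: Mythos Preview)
Your argument is essentially correct and uses the same three ingredients as the paper (the Pohozaev identity, the critical-point condition at $r=1$, and the kernel-ratio/tail estimate for the exterior Riesz potential), but you assemble them in the reverse order. The paper first proves directly that the limit $h$ is \emph{constant}: splitting the exterior integral into a shell $t\le|y|\le R$ (whose contribution vanishes because $\varphi_i(0)\varphi_i$ is bounded there and $\varphi_i(0)^{1-p_i}\to 0$) and a tail $|y|>R$, the ratio $|x'-y|^{n-2\sigma}/|x''-y|^{n-2\sigma}\to 1$ as $R\to\infty$ forces $h(x')=h(x'')$. The critical-point condition at $r=1$ then gives $h\equiv a>0$, and the Pohozaev identity yields an immediate sign contradiction (RHS $>0\ge$ LHS). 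Your route---Pohozaev $\Rightarrow h(0)=0$, then ``maximum principle'' $\Rightarrow h\equiv 0$, then the $r=1$ condition fails---reaches the same endpoint; your ``strong maximum principle'' is in fact precisely the kernel-ratio estimate the paper uses (if $h$ is a limit of Riesz potentials of nonnegative densities outside $B_t$, then $h(x)\le C\,h(0)$ on compact subsets, so $h(0)=0$ forces $h\equiv 0$).

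Two imprecisions to fix. First, your claim that the LHS of Pohozaev (times $\tilde u_i(0)^2$) \emph{tends to $0$} is not justified: Lemma~\ref{lem4.8} with $\tilde L_i\to 0$ only gives $\tau_i\tilde u_i(0)^2=O(1)$, not $o(1)$, so the first LHS term is merely $\le 0$. The conclusion $h(0)=0$ still follows, but by the correct reasoning: LHS $=$ RHS, $\limsup$ LHS $\le 0$, and $\lim$ RHS $=\tfrac{n-2\sigma}{2}\,a\,h(0)\ge 0$, hence both limits are $0$. Second, your appeal to Lemma~\ref{lem:error} for $m_i^{\tau_i}=1+o(1)$ is premature (that lemma needs an isolated \emph{simple} blow-up, which is what you are proving); fortunately it is unnecessary, since $\tilde u_i(0)=\mu_i^{2\sigma/(p_i-1)}m_i\ge r_i^{2\sigma/(p_i-1)}m_i=R_i^{2\sigma/(p_i-1)}\to\infty$ directly. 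With these corrections your proof goes through; the paper's ordering is slightly cleaner because it extracts the strict positivity $h\equiv a>0$ before invoking Pohozaev, avoiding the need to compute the exact RHS limit.
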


\begin{proof} Due to Proposition \ref{prop:blow up a bubble}, $r^{2\sigma/(p_i-1)}\overline u_i(r)$ has precisely
one critical point in the interval $0<r<r_i$,
where $r_i=R_iu_i(x_i)^{-\frac{p_i-1}{2\sigma}}$ as before.
Suppose $x_i\to 0$ is not an isolated simple blow up point and let $\mu_i$ be the second critical point of $r^{2\sigma/(p_i-1)}\overline u_i(r)$.
Then we see that
\be\label{5.2}
\mu_i\geq r_i,\quad \dlim_{i\to \infty}\mu_i=0.
\ee

Without loss of generality, we assume that $x_i=0$. Set
\[
\varphi_i(x)=\mu_i^{2\sigma/(p_i-1)}u_i(\mu_i x),\quad x\in \R^n.
\]
Clearly, $\varphi_i$ satisfies
\begin{align*}
\varphi_i(x)&=\int_{\R^n}\frac{\tilde K_i(y)\tilde H_i(y)^{\tau_i} \varphi_i(y)^{p_i}}{|x-y|^{n-2\sigma}}\,\ud y\quad\mbox{in } B_{3/\mu_i}
\\[2mm]
|x|^{2\sigma/(p_i-1)}\varphi_i(x)&\leq A_3\quad \mbox{for }|x|<2/\mu_i \to \infty,
\\[2mm]
\lim_{i\to \infty}\varphi_i(0)&=\infty,
\end{align*}
\[
r^{2\sigma/(p_i-1)}\overline \varphi_i(r)\mbox{ has precisely one critical point in } 0<r<1,
\]
and
\[
\frac{\mathrm{d}}{\mathrm{d}r}\left\{ r^{2\sigma/(p_i-1)}\overline \varphi_i(r)\right\}\Big|_{r=1}=0,
\]
where $\tilde K_i(y)=K_i(\mu_i y)$, $\tilde H_i(y)=H_i(\mu_i y)$ and $\overline \varphi_i(r)=|\pa B_r|^{-1}\int_{\pa B_r}\varphi_i$.
Therefore, $0$ is an isolated simple blow up point of $\varphi_i$.

We claim that
\be \label{eq:converg2}
\varphi_i(0) \varphi_i (x) \to \frac{a}{|x|^{n-2\sigma}} + a \quad \mbox{in }C^2_{loc}(\R^n \setminus \{0\}).
\ee
where $a$ is as in \eqref{eq:number a} with $K_i(0)$ replaced by $K_i(0)H_i(0)$.

By the equation of $\varphi_i$, we have for all $|x|\le 1/\mu_i$
\begin{align*}
\varphi_i(0)\varphi_i(x)&= \varphi_i(0)^{1-p_i}\int_{\R^n}
\frac{\tilde K_i(y) \tilde H_i(y)  (\varphi_i(0)\varphi_i(y))^{p_i}}{|x-y|^{n-2\sigma}}\,\ud y \\&
=\varphi_i(0)^{1-p_i}\left(\int_{B_t} + \int_{\R^n\setminus B_t}
\frac{\tilde K_i(y) \tilde H_i(y)  (\varphi_i(0)\varphi_i(y))^{p_i}}{|x-y|^{n-2\sigma}}\,\ud y\right),
\end{align*}
where $t>1$ is an arbitrarily fixed constant.
By the same proof of Corollary \ref{cor:convergence} we have, up to a subsequence,
\[
\varphi_i(0)^{1-p_i}\int_{B_t}
\frac{\tilde K_i(y) \tilde H_i(y)  (\varphi_i(0)\varphi_i(y))^{p_i}}{|x-y|^{n-2\sigma}}\,\ud y \to \frac{a}{|x|^{n-2\sigma}} \quad \mbox{in }C^2_{loc}(B_t \setminus \{0\}),
\]
and
\[
h_i(x):=  \varphi_i(0)^{1-p_i}\int_{\R^n\setminus B_t}
\frac{\tilde K_i(y) \tilde H_i(y)  (\varphi_i(0)\varphi_i(y))^{p_i}}{|x-y|^{n-2\sigma}}\,\ud y \to h(x)\ge 0\quad \mbox{in }C_{loc}^2(B_t).
\]
For any fixed large $R>0$, by Proposition \ref{prop:upbound2} and Lemma \ref{lem:harnack} we conclude that $\varphi_i(0)\varphi_i (x) \le C(t,R)$ for all $t\le |x| \le R$. Hence, for $x\in B_t$, we have
\[
\varphi_i(0)^{1-p_i}\int_{t \le |y|\le R}
\frac{\tilde K_i(y) \tilde H_i(y)  (\varphi_i(0)\varphi_i(y))^{p_i}}{|x-y|^{n-2\sigma}}\,\ud y \to 0\quad\mbox{as } i\to \infty. 
\]
Meanwhile, for any $x', x''\in B_t$, we have
\[
\begin{split}
\int_{\R^n\setminus B_R}
&\frac{\tilde K_i(y) \tilde H_i(y)  (\varphi_i(0)\varphi_i(y))^{p_i}}{|x'-y|^{n-2\sigma}}\,\ud y\\
&\le  \left( \frac{R+t}{R-t} \right)^{n-2\sigma}\int_{\R^n\setminus B_R}
\frac{\tilde K_i(y) \tilde H_i(y)  (\varphi_i(0)\varphi_i(y))^{p_i}}{|x''-y|^{n-2\sigma}}\,\ud y.
\end{split}
\]
Therefore, it follows that
\[
h(x') \le \left( \frac{R+t}{R-t} \right)^{n-2\sigma} h(x'').
\]
By sending $R\to \infty$ and exchanging the roles of $x'$ and $x''$, we have $h(x'') = h(x')$. Thus,
\[
h(x)\equiv h(0)\quad \mbox{for all } x\in B_t.
\]
 Since
\[
\frac{\mathrm{d}}{\mathrm{d}r}\left\{ r^{2\sigma/(p_i-1)}\varphi_i(0)\overline \varphi_i(r)\right\}\Big|_{r=1}=
\varphi_i(0)\frac{\mathrm{d}}{\mathrm{d}r}\left\{ r^{2\sigma/(p_i-1)}\overline \varphi_i(r)\right\} \Big|_{r=1}=0,
\]
we have, by  sending $i$ to $\infty$, that
\[h(0)=a>0.\]
Therefore, \eqref{eq:converg2} follows.

We are going to derive a contradiction to the Pohozaev identity in Proposition \ref{prop:pohozaev}. Since
$\frac{n-2\sigma}{2}-\frac{n}{p_i+1}\le 0$,  by Corollary \ref{cor4.1}, we have
\be \label{eq:b0}
\begin{split}
\lim_{i\to \infty} \varphi_i(0)^2 \Big(&(\frac{n-2\sigma}{2}-\frac{n}{p_i+1}) \int_{B_
\delta} \tilde K_i \tilde H_i^{\tau_i} \varphi_i^{p_i+1}\\& -\frac{1}{p_i+1} \int_{B_\delta} x \nabla(\tilde K_i \tilde H_i^{\tau_i}) \varphi_i ^{p_i+1} \Big) \le 0,
\end{split}
\ee
where $\tilde K_i $ satisfies the condition (i) of Corollary \ref{cor4.1} .

On the other hand, if we let
\[
b_i(x) :=\int_{\R^n\setminus B_\delta}  \frac{\tilde K_i(y) \tilde H_i(y)  \varphi_i(y)^{p_i}}{|x-y|^{n-2\sigma}}\,\ud y,
\]
then $\varphi_i(0) b_i(x)\ge h_i(x)\ge 2^{2\sigma-n}h_i(0)\to 2^{2\sigma-n} a$ for $x\in B_\delta$ provided that $\delta$ is small, and
\[
|\nabla b_i(x)|\le \begin{cases}
C\frac{|\delta^{2\sigma-1}-(\delta-|x-x_i|)^{2\sigma-1}|}{|2\sigma-1|} m_i^{-1}\quad\mbox{if }\sigma\neq 1/2\\
C|\log\delta-\log(\delta-|x-x_i|)|m_i^{-1}\quad\mbox{if }\sigma=1/2.
\end{cases}
\]
Hence, it follows from Proposition \ref{prop:blow up a bubble}, Lemma \ref{lem:error} and Proposition \ref{prop:upbound2} that
\begin{align}
 \int_{B_\delta} \tilde K_i \tilde H^{\tau_i}\varphi_i^{p_i} b_i& \ge C^{-1}a \varphi_i(0)^{-1} \int_{B_{\varphi_i(0)^{-(p_i-1)/2\sigma}}}\varphi_i^{p_i}\nonumber \\&
\ge C^{-1} \varphi_i(0)^{-2}\int_{B_1}(1+|x|^2)^{(2\sigma-n)/2},
 \label{eq:b1}
\end{align}
\be \label{eq:b2}
\left |\int_{B_\delta}x \nabla b_i \tilde K_i \tilde H^{\tau_i} \varphi_i^{p_i}\right| \le C\varphi_i(0)^{-1} \int_{B_\delta} |x| \varphi_i^{p_i}=o(1) \varphi_i(0)^{-2},
\ee
and
\be \label{eq:b3}
\int_{\pa B_\delta} \tilde K_i \tilde H_i^{\tau_i} \varphi_i^{p_i+1} \le C \varphi_i(0)^{-2n/(n-2\sigma)}.
\ee
Therefore,
\be\label{eq:b4}
\begin{split}
\lim_{i\to \infty} \varphi_i(0)^2 \Big(& \frac{n-2\sigma}{2} \int_{B_\delta} \tilde K_i \tilde H^{\tau_i}\varphi_i^{p_i} b_i \\& +
\int_{B_\delta}x \nabla b_i \tilde K_i \tilde H^{\tau_i} \varphi_i^{p_i}-\frac{1}{p_i+1} \int_{\pa B_\delta} \tilde K_i \tilde H_i^{\tau_i} \varphi_i^{p_i+1}  \Big)>0,
\end{split}\ee
which contradicts \eqref{eq:b0} and the Pohozaev identity in Proposition \ref{prop:pohozaev}.
\end{proof}

In Lemma \ref{lem:gradient}, we have obtained a decay estimate for  $|\nabla K_i(x_i)|$ as $i\to \infty$.
If one just wants to show $|\nabla K_i(x_i)|\to 0$, then it does not need $x_i\to 0$ to be simple nor $K_i$ satisfying the flatness condition.

 \begin{prop}\label{prop5.2}
Suppose the assumptions in Proposition \ref{prop5.1} hold except the $(*)'_{n-2\sigma}$ condition for $K_i$. Then
\[
|\nabla K_i(x_i)|\to 0\quad \mbox{as } i \to \infty.
\]
\end{prop}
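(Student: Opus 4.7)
Plan. Arguing by contradiction, I assume $|\nabla K_i(x_i)|\ge c_0>0$ along a subsequence, and aim to reach a contradiction using the integration-by-parts identity employed at the start of the proof of Lemma \ref{lem:gradient}. Crucially, that identity,
\[
\int_{B_1(x_i)}\partial_j(K_iH_i^{\tau_i})u_i^{p_i+1}=\int_{\partial B_1(x_i)}K_iH_i^{\tau_i}u_i^{p_i+1}\nu_j-(p_i+1)\int_{B_1(x_i)}K_iH_i^{\tau_i}u_i^{p_i}\partial_jh_i,
\]
arises purely from the cancellation of a symmetric double integral after differentiating the integral equation for $u_i$; no flatness condition on $K_i$ is used to derive it. Only the final step of Lemma \ref{lem:gradient}---controlling $\int(\nabla K_i-\nabla K_i(x_i))u_i^{p_i+1}$---invoked the $(*)'_{n-2\sigma}$ hypothesis, and this is what must be replaced.

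\emph{Isolated simple case.} If $x_i\to 0$ is isolated simple, Proposition \ref{prop:upbound2} gives $u_i(x)\le Cu_i(x_i)^{-1}|x-x_i|^{2\sigma-n}$ on $B_1(x_i)$, forcing the above right-hand side to be $O(u_i(x_i)^{-2})$, while $\int H_i^{\tau_i}u_i^{p_i+1}$ tends to a positive constant (bubble energy). Writing $\int \partial_jK_i\cdot H_i^{\tau_i}u_i^{p_i+1}=\partial_jK_i(x_i)\int H_i^{\tau_i}u_i^{p_i+1}+\mathrm{Err}_i$, I split $\mathrm{Err}_i$ over $B_{\delta_i}(x_i)\cup(B_1\setminus B_{\delta_i}(x_i))$: the outer piece is $o(1)$ by Corollary \ref{cor:energy} with $s=0$, and the inner piece is bounded by $\omega_i(\delta_i):=\sup_{|y-x_i|\le\delta_i}|\nabla K_i(y)-\nabla K_i(x_i)|$. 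Since each $\nabla K_i$ is continuous (by $K_i\in C^1$), $\omega_i(r)\to 0$ as $r\to 0^+$ for every fixed $i$, so a diagonal choice of $R_i\to\infty$ in Proposition \ref{prop:blow up a bubble} yields $\omega_i(r_i)\to 0$ along the subsequence. Combined with $\tau_i\to 0$ from Lemma \ref{lem:error}, we conclude $|\nabla K_i(x_i)|\to 0$, contradicting the hypothesis.

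\emph{Non-simple case.} If $x_i\to 0$ is isolated but not simple, I rescale exactly as in Proposition \ref{prop5.1}: letting $\mu_i$ be the second critical radius of $r\mapsto r^{2\sigma/(p_i-1)}\bar u_i(r)$ and $\varphi_i(x)=\mu_i^{2\sigma/(p_i-1)}u_i(\mu_ix+x_i)$, $\tilde K_i(y)=K_i(\mu_iy+x_i)$, so that $0$ is isolated simple for $\varphi_i$ and $\|\nabla\tilde K_i\|_{L^\infty}\le\mu_iA_2\to 0$. This built-in smallness substitutes ``for free'' for the flatness condition on the rescaled problem: replacing the use of Corollary \ref{cor4.1} in the Pohozaev analysis of Proposition \ref{prop5.1} by the pointwise bound $|\int x\,\nabla\tilde K_i\,\varphi_i^{p_i+1}|\le\mu_iA_2\int|x|\varphi_i^{p_i+1}$ yields the same contradiction, so Case II cannot occur and we are reduced to the simple case.

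\textbf{Main obstacle.} The most delicate step is the diagonal choice of $R_i$ in the isolated simple case that forces $\omega_i(r_i)\to 0$ without a uniform modulus of continuity for $\{\nabla K_i\}$; in the non-simple case, the key exponent count $\mu_iA_2\int|x|\varphi_i^{p_i+1}=o(\varphi_i(0)^{-2})$ uses $\mu_i\ge r_i=R_im_i^{-(p_i-1)/2\sigma}$ and the relation $\varphi_i(0)=\mu_i^{2\sigma/(p_i-1)}m_i$, and may require a further refinement of $R_i$ when $n-2\sigma>1$.
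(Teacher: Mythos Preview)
Your treatment of the isolated simple case is essentially the paper's Case~1: the same integration-by-parts identity from the start of Lemma~\ref{lem:gradient}, the same splitting of $\int(\nabla K_i-\nabla K_i(x_i))H_i^{\tau_i}u_i^{p_i+1}$ into near and far pieces, and the same conclusion via $\tau_i\to 0$ from Lemma~\ref{lem:error}. Your diagonal choice of $R_i$ to force $\omega_i(r_i)\to 0$ without a uniform modulus is a mild refinement of what the paper does.

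In the non-simple case, however, there is a genuine gap. Your bound
\[
\Big|\int_{B_\delta} x\cdot\nabla\tilde K_i\,\tilde H_i^{\tau_i}\varphi_i^{p_i+1}\Big|\le \mu_iA_2\int_{B_\delta}|x|\,\varphi_i^{p_i+1}\le C\mu_i\,\varphi_i(0)^{-2/(n-2\sigma)}
\]
is correct, but you need this (and the companion term $\tau_i\int\varphi_i^{p_i+1}$) to be $o(\varphi_i(0)^{-2})$ to recover \eqref{eq:b0}. When $n-2\sigma>1$ this requires $\mu_i=o(\varphi_i(0)^{-2+2/(n-2\sigma)})$, with a \emph{negative} exponent, and nothing you have written gives control of $\mu_i$ from above beyond $\mu_i\to 0$. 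The relation $\varphi_i(0)=\mu_i^{2\sigma/(p_i-1)}m_i$ is useless here since $m_i\to\infty$, and no choice of $R_i$ helps: $R_i$ only enters through the lower bound $\mu_i\ge r_i$, while what you need is an \emph{upper} bound on $\mu_i$. Likewise Lemma~\ref{lem:error} applied to $\varphi_i$ gives only $\tau_i=O(\varphi_i(0)^{-2/(n-2\sigma)+o(1)})$ when $n-2\sigma>1$, again not $o(\varphi_i(0)^{-2})$.

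The missing idea is to exploit the contradiction hypothesis $|\nabla K_i(x_i)|\ge d/2$ a second time, on the rescaled problem. Apply the gradient identity of Lemma~\ref{lem:gradient} to $\varphi_i$: since $|\nabla\tilde K_i(y)-\nabla\tilde K_i(0)|=\mu_i|\nabla K_i(\mu_iy)-\nabla K_i(0)|=o(\mu_i)$, one gets
\[
\tfrac{d}{2}\,\mu_i\le|\nabla\tilde K_i(0)|\le o(\mu_i)+C\varphi_i(0)^{-2}+C\tau_i.
\]
Couple this with the Pohozaev-type bound $\tau_i\le C\varphi_i(0)^{-2}+C\mu_i\varphi_i(0)^{-2/(n-2\sigma)}$ (which follows as in \eqref{eq:a10} using only $|\nabla\tilde K_i|\le\mu_iA_2$) to close the loop: first absorb $o(\mu_i)$, then substitute to obtain $\mu_i\le C\varphi_i(0)^{-2}$ and hence $\tau_i\le C\varphi_i(0)^{-2}$. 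With these quantitative bounds in hand, the Pohozaev term is $O(\varphi_i(0)^{-2-2/(n-2\sigma)})=o(\varphi_i(0)^{-2})$ for all $\sigma\in(0,n/2)$, and the contradiction with \eqref{eq:b1}--\eqref{eq:b3} goes through.
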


\begin{proof}
Suppose that contrary that
\be\label{5.6}
|\nabla K_i(x_i)|\to d>0.
\ee
Without loss of generality, we assume $x_i=0$. There are two cases.

\medskip

\noindent \textit{Case 1.} $0$ is an isolated simple blow up point.
\medskip

In this case, we argue as in the proof of Lemma \ref{lem:gradient}.  Similar to \eqref{eq:a11}, we obtain
\[
 \left|\int_{B_1}\nabla K_iH_i^{\tau_i}u_i^{p_i+1}\right|\leq Cu_i^{-2}(0)+C\tau_i.
\]
It follows from the triangle inequality, uniform continuity of $\nabla K_i$ and Corollary \ref{cor:energy} that
\begin{align*}
 |\nabla K_i(0)|& \leq C\int_{B_1}|\nabla K_i(x)-\nabla K_i(0)|H_i^{\tau_i}u_i^{p_i+1}+o(1)=o(1).
\end{align*}

\medskip

\noindent\textit{Case 2.} $0$ is not an isolated simple blow up point.

\medskip

In this case we argue as in the proof of Proposition \ref{prop5.1}. The only difference is that we cannot derive \eqref{eq:b0} from Corollary \ref{cor4.1},
since $(*)'_{n-2\sigma}$ condition for $K_i$ is not assumed. Instead, we will use the condition \eqref{5.6} to show \eqref{eq:b0}.

Let $\mu_i, \varphi_i$, $\tilde{K_i}$ and $\tilde{H_i}$  be as in the proof of Proposition \ref{prop5.1}.
Now we estimate the integral term $\int_{B_\delta}\langle y,\nabla(\tilde K_i\tilde H_i^{\tau_i})\rangle \varphi_i^{p_i+1}$.
Arguing the same as in the proof of Lemma \ref{lem:estimatetau1} and using Corollary \ref{cor:energy}, we have
\[
\begin{split}
 \tau_i&\leq C\varphi_i(0)^{-2}+C\int_{B_\delta}|y||\nabla \tilde{K}_i(y)|H_i^{\tau_i} \varphi_i^{p_i+1}\\&
\leq C\varphi_i(0)^{-2}+C\mu_i\varphi_i(0)^{-2/(n-2\sigma)}.
\end{split}
\]
Similar to \eqref{eq:a11},
\[
 \left|\int_{B_\delta}\nabla \tilde K_i \tilde H_i^{\tau_i}\varphi_i^{p_i+1} \right|\leq C\varphi_{i}(0)^{-2}+C\tau_i.
\]
It follows that
\[
\begin{split}
 |\nabla \tilde K_i(0)|&\leq C\int_{B_\delta}|\nabla \tilde K_i(y)-\nabla \tilde K_i(0)|\varphi_i^{p_i+1}+C\varphi_i(0)^{-2}+C\tau_i\\
&\leq o(\mu_i)+C\varphi_i(0)^{-2}+C\tau_i.
\end{split}
\]
Since $|\nabla \tilde K_i(0)|=\mu_i |\nabla K_i(0)|\geq (d/2)\mu_i$, we have
\[
 \mu_i\leq C\varphi_i(0)^{-2}+C\tau_i.
\]
It follows that
\be\label{eq:tau-mu}
 \tau_i\leq C\varphi_i(0)^{-2}\quad \mbox{and}\quad \mu_i\leq C\varphi_i(0)^{-2}.
\ee
Therefore,
\[
 \left|\int_{B_\delta}\langle y,\nabla(\tilde K_i\tilde H_i^{\tau_i})\rangle \varphi_i^{p_i+1}\right|\leq C\varphi_i(0)^{-2-2/(n-2\sigma)},
\]
from which \eqref{eq:b0} follows.

Clearly, we have \eqref{eq:b1} - \eqref{eq:b3}. So we obtain a contradiction by the Pohozaev identity in Proposition \ref{prop:pohozaev}.

In conclusion, we complete the proof.
\end{proof}

\section{Global analysis on the sphere}

\label{section3}

Recall that the stereographic projection from $\Sn\backslash \{N\}$ to $\R^n$ is the inverse of
\[
F: \R^n\to \Sn\setminus\{N\}, \quad x\mapsto \left(\frac{2x}{1+|x|^2}, \frac{|x|^2-1}{|x|^2+1}\right),
\]
where $N$ is the north pole of $\Sn$, and its Jacobi determinant takes
\[
|J_F|=\Big(\frac{2}{1+|x|^2}\Big)^{n}.
\] Via the stereographic projection, the equation
\be \label{eq:onsphere}
v(\xi)=\int_{\Sn}\frac{K(\eta) v(\eta)^{p}}{|\xi-\eta|^{n-2\sigma}}\,\ud \eta \quad  \mbox{for }\xi \in \Sn
\ee
is translated to
\be \label{eq:inRn}
u(x)=\int_{\R^n} \frac{K(y)H(y)^\tau u(y)^{p}}{|x-y|^{n-2\sigma}}\,\ud y\quad \mbox{for }x\in \R^n,
\ee
where
\be  \label{eq:Hexp}
H(x):=|J_F(x)|^{\frac{n-2\sigma}{2n}}= \Big(\frac{2}{1+|x|^2}\Big)^{\frac{n-2\sigma}{2}},
\ee
 $u(x)=H(x)v(F(x))$, $K(x)=K(F(x))$ and $\tau=\frac{n+2\sigma}{n-2\sigma}-p\ge 0$.

\subsection{Finite blow up points and uniform energy bound}

We assume that $K$ satisfies
\be\label{eq:Kc1}
A_1^{-1}\leq K\leq A_1 \quad \mbox{on } \mathbb{S}^n,
\ee
and
\be\label{eq:Kc2}
\|K\|_{C^{1}(\Sn)}\leq A_2.
\ee
To ensure the solutions of \eqref{eq:onsphere} to be of $C^{2}$, we further assume that
\be \label{eq:Kc3}
\|K\|_{C^{1,1}(\Sn)} \le A_2, \quad \mbox{if }\sigma \le \frac{1}{2}.
\ee

\begin{prop}\label{prop:decomp}
Let $v\in C^2(\Sn)$ be a positive solution of \eqref{eq:onsphere}. For any $0<\va<1$ and $R>1$, there exist large positive constants $C_1^*$, $C_2^*$
depending on
$n,\sigma, A_1,A_2,\va$ and $R$ such that, if
\[
\max_{\mathbb{S}^n} v\geq C_1^*,
\]
then there exists an integer $1\le l=l(v)<\infty $ and a set
\[\Gamma_v=\{P_1,\dots, P_l\}\subset \Sn
\] $(P_j=P_j(v))$ such that\\
(i). $0\le \tau :=\frac{n+2\sigma}{n-2\sigma}-p<\va$, \\
(ii). If $P_1,\dots, P_l$ are local maximums of $v$ and for each $1\le j\le l$, let $\{z_1,\cdots,z_n\}$
be a geodesic normal coordinate system centered at
$P_j$, we have,
\be\label{eq:decomp1}
\|v^{-1}(P_j)v(v^{-\frac{(p-1)}{2\sigma}}(P_j)z)-(1+k|z|^2)^{(2\sigma-n)/2}\|_{C^2(B_{2R})}\leq \va,
\ee
where $k=\Big(\frac{K(P_j)\pi^{n/2}\Gamma(\sigma)}{\Gamma(\frac n2 +\sigma)}\Big)^{1/\sigma}$, and
\[
\{B_{Rv(P_j)^{-(p-1)/2\sigma}}(P_j)\} \quad \mbox{are disjoint balls}.
\]
(iii). $v(P)\leq C_2^*\{\mbox{dist}(P,\Gamma_v)\}^{-2\sigma/(p-1)}$ for all $P\in \mathbb{S}^n$.
\end{prop}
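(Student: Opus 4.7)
The strategy is a standard iterative bubble--selection procedure carried out by contradiction. Suppose the statement fails; then there exist $\varepsilon_0\in(0,1)$ and $R_0>1$ and a sequence of positive $C^2$ solutions $v_i$ of \eqref{eq:onsphere} with $M_i:=\max_{\mathbb{S}^n}v_i\to\infty$ for which no decomposition with the listed properties is possible. By working in geodesic normal coordinates centered at each candidate peak and using the stereographic representation \eqref{eq:inRn}, we reduce locally to the integral equation \eqref{eq:general} studied in Section \ref{section2}, where $H=H_i$ is of the form \eqref{eq:Hexp} and trivially satisfies \eqref{eq:condH} on any fixed coordinate chart.

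\emph{Initial peak.} Let $P_{1,i}$ be a global maximum of $v_i$. Since $v_i\le v_i(P_{1,i})$ everywhere, $P_{1,i}$ is automatically an isolated blow-up point (in the sense of Definition \ref{def4.1}) of the corresponding sequence on $\mathbb{R}^n$ with a universal constant $A_3$ and some fixed $\bar r$. Proposition \ref{prop:blow up a bubble} then gives the $C^2$ bubble approximation on $B_{2R_0}$ after rescaling by $v_i(P_{1,i})^{-(p_i-1)/2\sigma}$, yielding \eqref{eq:decomp1} at $P_{1,i}$; Lemma \ref{lem:error} simultaneously forces $\tau_i\to 0$, producing (i) for large $i$.

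\emph{Iterative step.} Suppose we have selected $\Gamma^j_i=\{P_{1,i},\dots,P_{j,i}\}$ satisfying \eqref{eq:decomp1} at each point, with the associated bubble balls $B_{R_0 v_i(P_{k,i})^{-(p_i-1)/2\sigma}}(P_{k,i})$ pairwise disjoint. Test whether $v_i(P)\le C_2^*\,\mathrm{dist}(P,\Gamma^j_i)^{-2\sigma/(p_i-1)}$ for all $P\in\mathbb{S}^n$; if so, stop. Otherwise let $P^*_i$ be a maximum point of $F^j_i(P):=\mathrm{dist}(P,\Gamma^j_i)^{2\sigma/(p_i-1)}v_i(P)$, so $F^j_i(P^*_i)>C_2^*$. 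For $x$ in a ball of radius $\tfrac12\mathrm{dist}(P^*_i,\Gamma^j_i)$ around $P^*_i$, the maximality $F^j_i(x)\le F^j_i(P^*_i)$ together with $\mathrm{dist}(x,\Gamma^j_i)\ge\tfrac12\mathrm{dist}(P^*_i,\Gamma^j_i)$ translates, after rescaling by $v_i(P^*_i)^{-(p_i-1)/2\sigma}$, to a uniform pointwise bound $|y|^{2\sigma/(p_i-1)}\tilde v_i(y)\le 2^{2\sigma/(p_i-1)}\cdot 2$ on a ball whose radius tends to infinity. Hence the origin is an isolated blow-up point of the rescaled sequence with universal constants, Proposition \ref{prop:blow up a bubble} applies, and we obtain a bubble profile on $B_{2R_0}$; this bubble has a unique local maximum at the center, so by replacing $P^*_i$ with the corresponding local maximum of $v_i$ (which lies within $o(1)\cdot v_i(P^*_i)^{-(p_i-1)/2\sigma}$ of $P^*_i$) we obtain $P_{j+1,i}$ satisfying \eqref{eq:decomp1}. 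Choosing $C_2^*$ with $(C_2^*)^{(p_i-1)/2\sigma}\ge 4R_0$ ensures $\mathrm{dist}(P_{j+1,i},P_{k,i})\ge R_0 v_i(P_{j+1,i})^{-(p_i-1)/2\sigma}+R_0 v_i(P_{k,i})^{-(p_i-1)/2\sigma}$ for every $k\le j$, preserving disjointness of bubble balls.

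\emph{Termination and conclusion.} At every non-terminal step, $v_i(P_{j+1,i})\ge C_2^*\pi^{-2\sigma/(p_i-1)}\ge c(C_2^*,\sigma)>0$ for large $i$ since $\tau_i\to 0$, so each bubble ball has radius bounded below by a positive constant depending only on $n,\sigma,R_0,C_2^*$. Because $\mathbb{S}^n$ has finite volume and the bubble balls are pairwise disjoint, the iteration must stop after $l=l(v_i)\le l_0(n,\sigma,R_0,\varepsilon_0)$ steps, yielding a decomposition satisfying (i), (ii), (iii) and contradicting the choice of the sequence. The main technical obstacle is the verification in the iterative step that $P_{j+1,i}$ gives rise to an isolated blow-up of the rescaled family with constants independent of $j$ and $i$; this is precisely where the maximality of $P_{j+1,i}$ for $F^j_i$ (rather than for $v_i$ alone) is essential, because the global bound $v_i\le v_i(P_{1,i})$ used for the first peak is no longer available once the later peaks are much lower than earlier ones.
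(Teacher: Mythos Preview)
Your overall architecture --- a contradiction argument producing a sequence $v_i$ with $\max v_i\to\infty$, followed by the iterative selection of peaks via the weighted function $F_i^j(P)=\mathrm{dist}(P,\Gamma_i^j)^{2\sigma/(p_i-1)}v_i(P)$ --- is the standard one and is exactly what the paper has in mind when it refers to Proposition~4.1 of \cite{Li95} and Lemma~3.1 of \cite{SZ}.  Two steps of the plan, however, do not work as written.

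\textbf{(a) Part (i) and the citation of Lemma~\ref{lem:error}.}  Lemma~\ref{lem:error} lies under the hypotheses of Lemma~\ref{lem:upbound1}, which requires the blow-up point to be isolated \emph{simple}; at the stage where you need $\tau_i\to0$ you have only an isolated blow-up point.  Moreover, Proposition~\ref{prop:blow up a bubble} itself is stated under the standing assumption $\tau_i\to0$, so invoking it before establishing (i) is circular.  The correct route is the one the paper signals by citing Theorem~1.4 of \cite{Li04}: run the compactness argument in the proof of Proposition~\ref{prop:blow up a bubble} (which needs only the local estimates of \S\ref{sec:linear integral equations}) to extract a limit $\varphi\ge0$ with $\varphi(0)=1$ satisfying the integral equation on $\mathbb{R}^n$ with exponent $p_\infty=\lim p_i$.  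If $\tau_i\not\to0$ along some subsequence, then $p_\infty<\frac{n+2\sigma}{n-2\sigma}$ and the subcritical Liouville theorem forces $\varphi\equiv0$, a contradiction.  This yields (i) and simultaneously justifies the bubble profile in (ii).

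\textbf{(b) Termination.}  From $v_i(P_{j+1,i})\ge C_2^*\pi^{-2\sigma/(p_i-1)}$ you deduce a \emph{lower} bound on $v_i(P_{j+1,i})$, hence only an \emph{upper} bound on the bubble radius $R_0\,v_i(P_{j+1,i})^{-(p_i-1)/2\sigma}$; disjointness of balls with bounded-above radii says nothing about finiteness.  The clean fix is to observe, for each fixed $i$, that every peak satisfies $v_i(P_{j,i})\le M_i:=\max_{\mathbb{S}^n}v_i$, so every bubble ball has radius at least $R_0\,M_i^{-(p_i-1)/2\sigma}>0$; compactness of $\mathbb{S}^n$ then forces $l(v_i)<\infty$.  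Note that you do not need the uniform bound $l(v_i)\le l_0$ you claim: finiteness of $l(v_i)$ for each $i$ already produces, for that $v_i$, a decomposition satisfying (i)--(iii), contradicting the choice of the sequence.
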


\begin{proof}
Given Theorem 1.4 in \cite{Li04} and the proof of Proposition \ref{prop:blow up a bubble}, the proof of Proposition \ref{prop:decomp} is similar to that  of Proposition 4.1 in \cite{Li95}  and Lemma 3.1 in \cite{SZ}, and is omitted here. We refer to \cite{Li95} and \cite{SZ} for details.
\end{proof}

\begin{prop}\label{prop:distlb} Assume the hypotheses in Proposition \ref{prop:decomp}.
Suppose that there exists some constant
$d>0$ such that $K$ satisfies $(*)'_{n-2\sigma}$ for some $L$ in $\om_d=\{P\in \mathbb{S}^n:|\nabla K(P)|<d\}$. Then, for
$\va>0$, $R>1$ and any solution $v$ of \eqref{eq:onsphere} with $\max_{\mathbb{S}^n}v>C_1^*$, we have
\[
 |P_1-P_2|\geq \delta^*>0,\quad \mbox{for any }P_1,P_2\in \Gamma_v\mbox{ and } P_1\neq P_2,
\]
where $\delta^*$ depends only on $n,\sigma,\va, R,A_1,A_2,L_2,d$, and the modulus of  continuity of $\nabla K$ if $\sigma>1/2$.
\end{prop}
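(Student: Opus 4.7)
My plan is to argue by contradiction, colliding two isolated simple blow-up points into a violation of the classification of limiting bubbles on $\mathbb{R}^n$. Suppose instead that there exist solutions $v_i$ of \eqref{eq:onsphere} with $\max v_i > C_1^*$ and points $P_{1,i}, P_{2,i} \in \Gamma_{v_i}$ with $\sigma_i := d_{g_{\mathbb{S}^n}}(P_{1,i}, P_{2,i}) \to 0$, which after relabelling I may take to realize the minimum pairwise distance in $\Gamma_{v_i}$. Working on $\mathbb{R}^n$ via stereographic projection produces an equation of the form \eqref{eq:general}, and Proposition \ref{prop5.2} gives $|\nabla K(P_{j,i})| \to 0$, so that $P_{j,i} \in \omega_d$ eventually; the $(*)'_{n-2\sigma}$ hypothesis is then available in a neighborhood of each blow-up, and Proposition \ref{prop5.1} upgrades each $P_{j,i}$ to an isolated simple blow-up point.

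The quantitative heart of the argument is that the two bubble heights are comparable. The disjoint-ball assertion in Proposition \ref{prop:decomp}(ii) gives $v_i(P_{j,i}) \geq c\,\sigma_i^{-(n-2\sigma)/2}$, while Proposition \ref{prop:upbound2} applied around $P_{1,i}$ and evaluated near $P_{2,i}$ gives the matching upper bound $v_i(P_{1,i})\,v_i(P_{2,i}) \leq C\,\sigma_i^{2\sigma-n}$, whence $v_i(P_{1,i}) \asymp v_i(P_{2,i}) \asymp \sigma_i^{-(n-2\sigma)/2}$. I choose the stereographic projection so that $x_{1,i} := F^{-1}(P_{1,i}) = 0$, set $y_i := \sigma_i^{-1} F^{-1}(P_{2,i})$ (so $|y_i| \to 1$), and define
\[
w_i(y) := \sigma_i^{2\sigma/(p_i-1)}\, u_i(\sigma_i y),
\]
which satisfies the rescaled integral equation with coefficients $\tilde K_i(y) = K(\sigma_i y)$ and $\tilde H_i(y) = H(\sigma_i y)$. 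The height comparability translates into $w_i(0) \asymp w_i(y_i) \asymp 1$.

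The final step is to pass to the limit and invoke the classification of solutions on $\mathbb{R}^n$. Proposition \ref{prop:decomp}(iii) bounds $w_i$ uniformly on compact subsets of $\mathbb{R}^n \setminus \{0, y_i\}$, and the bubble approximation \eqref{eq:decomp1} applied at each $P_{j,i}$ bounds $w_i$ uniformly near $0$ and near $y_i$ by a unit-size bubble profile; thus $\{w_i\}$ is uniformly bounded on all of $\mathbb{R}^n$. Since $\tilde K_i, \tilde H_i$ converge locally uniformly to the positive constants $K(P_{1,\infty})$ and $H(0)$, and $\tau_i \to 0$ by Lemma \ref{lem:error}, Theorem \ref{thm:schauder} together with a dominated-convergence argument for the tail of the integral (as in the proof of Corollary \ref{cor:convergence}) yields, along a subsequence, $w_i \to w$ in $C^2_{\mathrm{loc}}(\mathbb{R}^n)$, where $w$ is a positive $C^2$ solution of the limiting critical integral equation on $\mathbb{R}^n$ with constant positive coefficient. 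By the classification in \cite{CLO, Li04}, $w$ is a standard bubble and hence has a unique critical point. On the other hand, since $P_{j,i}$ are local maxima of $v_i$ and \eqref{eq:decomp1} holds in $C^2$ on $B_{2R}$ in each rescaled chart, $w_i$ carries distinct local maxima converging to $0$ and to $y_\infty := \lim y_i$, with $|y_\infty| = 1$; the $C^2_{\mathrm{loc}}$ convergence therefore produces two distinct critical points of $w$, a contradiction. The main obstacle I anticipate is establishing the uniform bound on $\{w_i\}$ over \emph{all} of $\mathbb{R}^n$ and ensuring that no additional blow-up develops in the rescaled picture; both of these rest on the height comparability $v_i(P_{1,i}) \asymp v_i(P_{2,i})$, on the closest-pair choice of $(P_{1,i},P_{2,i})$, and on Lemma \ref{lem:error} to force $\tau_i \to 0$ so that the limit equation is genuinely the critical one.
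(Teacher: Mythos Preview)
Your argument has a genuine gap at its core. You invoke Propositions \ref{prop5.2}, \ref{prop5.1}, and \ref{prop:upbound2} directly for the original sequence $u_i$ at the points $P_{j,i}$, but each of these requires an isolated blow-up point in the sense of Definition \ref{def4.1}, i.e.\ the bound $u_i(x)\le \bar C|x-x_i|^{-2\sigma/(p_i-1)}$ on a \emph{fixed} ball $B_{\bar r}(x_i)$. The estimate (iii) of Proposition \ref{prop:decomp} only yields this bound on $B_{\sigma_i/2}(P_{1,i})$, a ball shrinking to a point since $P_{2,i}\to P_{1,i}$. Hence your appeal to Proposition \ref{prop:upbound2} to obtain $v_i(P_{1,i})\,v_i(P_{2,i})\le C\sigma_i^{2\sigma-n}$ is not justified, and without it the height comparability $w_i(0)\asymp 1$ fails: there is no a priori upper bound on $w_i(0)$, and in fact one can arrange (by choosing $R_i\to\infty$ as in Proposition \ref{prop:blow up a bubble}) that $w_i(0)\to\infty$. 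So after your rescaling, $0$ and $y_\infty$ remain \emph{isolated blow-up points} of $w_i$, the sequence is unbounded, and the Liouville classification is unavailable.

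The paper instead rescales \emph{first} by $\vartheta_i=|P_{2i}|$, obtaining $\varphi_i$ for which $0$ and $\bar P$ are now isolated blow-up points on a fixed ball; only then does it verify (via a case split on whether $|\nabla K_i(0)|<d/2$ or not, using Proposition \ref{prop5.1} or the argument of Proposition \ref{prop5.2} respectively) that both are isolated simple. The contradiction is then obtained not from the Liouville theorem but from the Pohozaev identity of Proposition \ref{prop:pohozaev}: by Corollary \ref{cor:convergence} one has $\varphi_i(0)\varphi_i(x)\to a(0)|x|^{2\sigma-n}+a'|x-\bar P|^{2\sigma-n}+h(x)$, and the strictly positive contribution from the second singularity forces the right-hand side of the Pohozaev identity to be positive while Corollary \ref{cor4.1} (or the analogue when $|\nabla K_i(0)|\ge d/2$) makes the left-hand side nonpositive, exactly as in \eqref{eq:b0}--\eqref{eq:b4}.
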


\begin{proof} If we suppose the contrary, then there exist sequences of $\{p_i\}$ and $\{K_i\}$ satisfying the above assumptions,
and a sequence of corresponding solutions $\{v_i\}$ such that
\be\label{eq:e1}
\lim_{i\to \infty}|P_{1i}-P_{2i}|=0,
\ee
where $P_{1i},P_{2i}\in \Gamma_{v_i}$, and $|P_{1i}- P_{2i}|=\dmin_{\substack{P_1,P_2\in \Gamma_{v_i}
\\ P_1\neq P_2}}|P_1-P_2|$.

Since $B_{Rv_i(P_{1i})^{-(p_i-1)/2\sigma}}(P_{1i})$ and $B_{Rv_i(P_{2i})^{-(p_i-1)/2\sigma}}(P_{2i})$ have to be disjoint,
we have, because of \eqref{eq:e1}, that
$v_i(P_{1i})\to \infty$ and $v_i(P_{2i})\to \infty$. Therefore, we can pass to a subsequence (still denoted as $v_i$)
with $R_i\to \infty$, $\va_i\to 0$
as in Proposition \ref{prop:blow up a bubble} ($\va_i$ depending on $R_i$ can be chosen as small as we need in the following arguments) such that,
for $z$ being any geodesic normal coordinate system centered at $P_{ji},~j=1,2$, we have
\be\label{eq:e2}
 \|m_i^{-1}v_i(m_i^{-(p_i-1)/2\sigma}z)-(1+k_{ji}|z|^2)^{(2\sigma-n)/2}\|_{C^2(B_{2R_i}(0))}\leq \va_i,
\ee
where $m_i=v_i(0)$, $k_{ji}=\Big(\frac{K(P_{ji})\pi^{n/2}\Gamma(\sigma)}{\Gamma(\frac n2 +\sigma)}\Big)^{1/\sigma}$, $j=1,2; i=1,2,\dots$.

In the stereographic coordinates with $P_{1i}$ being the south pole, the equation \eqref{eq:onsphere} is transformed into
\be \label{eq:e3}
u_i(x)=\int_{\R^n}\frac{K_i(y)H(y)^{\tau_i} u_i(y)^{p_1}}{|x-y|^{n-2\sigma}}\,\ud y \quad \mbox{for all }x\in \R^n,
\ee
where $u_i(x)=H(x) v_i(F(x))$.  Let us still use $P_{2i}\in \R^n$ to denote the stereographic
coordinates of $P_{2i}\in \mathbb{S}^n$ and
set $\vartheta_i:=|P_{2i}|\to 0$. For simplicity, we
assume $P_{2i}$ is a local maximum point of $u_i$, since we can always
reselect a sequence of points to substitute $P_{2i}$.

From (ii) in Proposition \ref{prop:decomp}, there exists some constant $C$ depending only on $n,\sigma,$ such that
\be\label{eq:e4}
 \vartheta_i>\frac{1}{C}\max\{R_iu_i(0)^{-(p_i-1)/2\sigma}, R_iu_i(P_{2i})^{-(p_i-1)/2\sigma}\}.
\ee
Set
\[
 \varphi_i(y)=\vartheta_i^{2\sigma/(p_i-1)}u_i(\vartheta_i y)\quad \mbox{in } \R^n.
\]
It is easy to see that $\varphi_i$ is positive in $\R^n$ and satisfies
\be \label{eq:e5}
\varphi_i(x)=\int_{\R^n}\frac{\tilde K_i(y)\tilde H_i(y)^{\tau_i}\varphi_i(y)^{p_i}}{|x-y|^{n-2\sigma}}\,\ud y
\quad \mbox{for all } x\in\R^n,
\ee
where $\tilde K_i(x)=K_i(\vartheta_ix)$, $\tilde H_i(x)=H(\vartheta_i x)$. By Proposition \ref{prop:decomp},
we have $u_i$ satisfies
\[
\begin{split}
 u_i(x)&\leq C_2|x|^{-2\sigma/(p_i-1)}\quad \mbox{for all }|x|\leq \vartheta_i/2 \\
 u_i(y)&\leq C_2|y-P_{2i}|^{-2\sigma/(p_i-1)}\quad \mbox{for all }|y-P_{2i}|\leq \vartheta_i/2.
\end{split}
\]
In view of \eqref{eq:e4}, we therefore have
\[
\begin{split}
 \dlim_{i\to \infty}\varphi_i(0)=\infty,\quad &\dlim_{i\to \infty}\varphi_i(|P_{2i}|^{-1}P_{2i})=\infty,\\
|x|^{2\sigma/(p_i-1)}\varphi_i(x)\leq C^*_2,\quad &|x|\leq 1/2,\\
|x- |P_{2i}|^{-1}P_{2i}|^{2\sigma/(p_i-1)}\varphi_i(x)\leq C^*_2,\quad &|x-|P_{2i}|^{-1}P_{2i}|\leq 1/2.
\end{split}
\]
After passing a subsequence, if necessary,
there exists a point $\overline P\in \R^n$ with $|\overline P|=1$ such that
$|P_{2i}|^{-1}P_{2i}\to \overline P$ as $i\to \infty$. Hence $0$ and $\overline P$ are both isolated blow up points of
$\varphi_i$.

Actually, $0$ and $ |P_{2i}|^{-1}P_{2i} \to \overline P$  are both isolated simple blow up points of
$\varphi_i$. Indeed, If $|\nabla K_i(0)|< d/2$, then $0$ is an isolated simple blow up point of $\varphi_i$
because the $(*)_{n-2\sigma}$ condition holds in the region $\om_d$ and one can apply Proposition \ref{prop5.1} to conclude it.
If $|\nabla K_i(0)|\geq d/2$, arguing as in the proof of Proposition \ref{prop5.2} one can still conclude that $0$ is an isolated simple blow up point of $\varphi_i$. Similarly, $\overline P$ is also an isolated simple blow up point of $\varphi_i$.

It follows from Corollary \ref{cor:convergence} that for all $x\in \Big(B_{1/2}\cup B_{1/2}(\overline P)\Big)\setminus\{0,\overline P\}$
\be \label{eq:e6}
\varphi_i(0)\int_{B_{1/2}}\frac{\tilde K_i(y)\tilde H_i(y)^{\tau_i}\varphi_i(y)^{p_i}}{|x-y|^{n-2\sigma}}\,\ud y \to a(0)|x|^{2\sigma-n}
\ee
and
\be \label{eq:e7}
\varphi_i(|P_{2i}|^{-1}P_{2i})\int_{B_{1/2}(|P_{2i}|^{-1}P_{2i})}\frac{\tilde K_i(y)\tilde H_i(y)^{\tau_i}\varphi_i(y)^{p_i}}{|x-y|^{n-2\sigma}}\,\ud y \to a(\overline P)|x-\overline P|^{2\sigma-n},
\ee
where $a(0)>0$ and $a(\overline P)>0$ are as in \eqref{eq:number a}. It follows from Proposition \ref{prop:upbound2}, \eqref{eq:e6} and \eqref{eq:e7} that
\[
C^{-1}\le \varphi_i(0)\varphi_i(P_{2i}/(2|P_{2i}|))\le C,\quad C^{-1}\le \varphi_i(P_{2i}/|P_{2i}|)\varphi_i(P_{2i}/(2|P_{2i}|))\le C,\quad
\]
and thus,
\be \label{eq:e8}
C^{-1} \varphi_i(|P_{2i}|^{-1}P_{2i})\le  \varphi_i(0)\le  C \varphi_i(|P_{2i}|^{-1}P_{2i}),
\ee
where $C=C(n,\sigma, A_1, A_2)>1$. Therefore,
\begin{align} \label{eq:e9}
\varphi_i(0)\varphi_i(x)& =\varphi_i(0)\Big( \int_{B_{1/2}}  +\int_{B_{1/2}(|P_{2i}|^{-1}P_{2i})} \nonumber \\&
\quad \quad +\int_{\R^n\setminus (B_{1/2}\cup B_{1/2}(|P_{2i}|^{-1}P_{2i})} \frac{\tilde
K_i(y)\tilde H_i(y)^{\tau_i}\varphi_i(y)^{p_i}}{|x-y|^{n-2\sigma}}\,\ud y\Big ) \nonumber \\&
 \to a(0) |x|^{2\sigma-n}+ a(\overline P)' |x-\overline P|^{2\sigma-n } + h(x),
\end{align}
where $a(\overline P)' \ge C^{-1} a(\overline P)>0$ by \eqref{eq:e8}, and $0\le h(x)\in C^2(B_{1/2}\cup B_{1/2}(\overline P))$.

If we let
\[
b_i(x):=\int_{\R^n\setminus B_{1/2}}   \frac{\tilde
K_i(y)\tilde H_i(y)^{\tau_i}\varphi_i(y)^{p_i}}{|x-y|^{n-2\sigma}}\,\ud y,
\]
then \[\lim_{i\to\infty}\varphi_i(0) b_i(x)=a(\overline P)' |x-\overline P|^{2\sigma-n } + h(x) \ge c_0>0\quad \mbox{in } B_{1/2}.\]

Thus, if $|\nabla K_i(0)|<d/2$, we can obtain
a contradiction by using the Phozaev identity as \eqref{eq:b0}-\eqref{eq:b4} in the proof of Proposition \ref{prop5.1}. In the case that  $|\nabla K_i(0)|\ge d/2$, we still can derive a contradiction since $\vartheta_i\le C\varphi_i(0)^{-2}$, $\tau_i\le C\varphi_i(0)^{-2}$ as in the proof of \eqref{eq:tau-mu} in Proposition \ref{prop5.2}.

Therefore, the proposition is proved.
\end{proof}

\subsection{Compactness}

Consider
\be\label{6.11}
\begin{split}
P_\sigma(v_i)=c(n,\sigma)K_i v_i^{p_i}\quad &\mbox{on } \mathbb{S}^n,\\
v_i>0,\quad &\mbox{on } \mathbb{S}^n,\\
p_i=\frac{n+2\sigma}{n-2\sigma}-\tau_i, \quad &\tau_i\geq 0,\tau_i\to 0.
\end{split}
\ee
By \eqref{P sigma inverse}, \eqref{6.11} is equivalent to
\be \label{eq:onsphere rescaled}
v_i(\xi)=\frac{\Gamma(\frac{n+2\sigma}{2})}{2^{2\sigma}\pi^{n/2}\Gamma (\sigma)}\int_{\Sn}\frac{K_i(\eta) v_i(\eta)^{p}}{|\xi-\eta|^{n-2\sigma}}\,\ud \eta \quad  \mbox{for }\xi \in \Sn.
\ee

\begin{thm}\label{thm:energy bound}
 Suppose $K_i$ satisfies the assumptions of $K$ in Proposition \ref{prop:distlb}. Let $v_i$ be solutions of
\eqref{6.11}. Then we have
\be\label{6.12}
\|v_i\|_{H^\sigma(\Sn)}\leq C,
\ee
where $C>0$ depends only on $n,\sigma,A_1,A_2,L,d$ and  the modulus of  continuity of $\nabla_{g_{\Sn}} K_i$ if $\sigma>1/2$. Furthermore,
after passing to a subsequence, either $\{v_i\}$ stays bounded in $L^\infty(\mathbb{S}^n)$
or $\{v_i\}$ has only isolated simple blow up points and the distance between any two blow up points is bounded blow by some
positive constant depending only on $n,\sigma,A_1,A_2,L,d$ and  the modulus of  continuity of $\nabla K_i$ if $\sigma>1/2$.
\end{thm}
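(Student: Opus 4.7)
The plan is to combine the global blow-up decomposition of Proposition \ref{prop:decomp} with the separation estimate of Proposition \ref{prop:distlb} and the local isolated-simple analysis of Section \ref{section2}, and then to obtain the energy bound by a direct integral split on $\Sn$.

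First I would dispose of the trivial case: if, along a subsequence, $\max_{\Sn} v_i \le C_1^*$, then $v_i$ is uniformly bounded in $L^\infty$ and hence in $H^\sigma(\Sn)$. Otherwise, for all large $i$, Proposition \ref{prop:decomp} (applied with any fixed $\va, R$) produces a finite blow-up set $\Gamma_{v_i} = \{P_1, \ldots, P_{l_i}\}$ enjoying the bubble approximation \eqref{eq:decomp1} and $v_i(P) \le C_2^*\, \mathrm{dist}(P, \Gamma_{v_i})^{-2\sigma/(p_i-1)}$. Proposition \ref{prop:distlb} yields a uniform lower bound $\delta^* > 0$ on pairwise distances in $\Gamma_{v_i}$, whence $l_i \le C/(\delta^*)^n$. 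Passing to stereographic coordinates at each $P_j$ converts \eqref{6.11} locally into the nonlinear integral equation \eqref{eq:general} (with $H$ as in \eqref{eq:Hexp}), so each $P_j$ becomes an isolated blow-up point in the sense of Definition \ref{def4.1}. Proposition \ref{prop5.1}, together with Proposition \ref{prop5.2} to exclude the case $|\nabla_{g_\Sn} K(P_j)| \ge d/2$ for $i$ large, upgrades each $P_j$ to an isolated simple blow-up point. This proves the second half of the theorem.

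For the energy bound, since the eigenvalues $\Gamma(k+\tfrac{n}{2}+\sigma)/\Gamma(k+\tfrac{n}{2}-\sigma)$ of $P_\sigma$ in \eqref{eigenvalue on spherical harmonics} are bounded below by the positive constant $c(n,\sigma)$ and grow like $k^{2\sigma}$, the quadratic form $v \mapsto \int v P_\sigma(v)\,\ud vol_{g_{\Sn}}$ is equivalent to $\|v\|_{H^\sigma(\Sn)}^2$; so by \eqref{6.11} it suffices to bound $\int_{\Sn} K_i v_i^{p_i+1} \,\ud vol_{g_{\Sn}}$ uniformly. I would split this integral into the balls $B_{\delta^*/4}(P_j)$, $j=1,\ldots,l_i$, and their complement. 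On the complement, $\mathrm{dist}(\cdot,\Gamma_{v_i}) \ge \delta^*/4$ gives a uniform pointwise bound on $v_i$ from Proposition \ref{prop:decomp}(iii), so the contribution is trivial. Near each $P_j$, work in stereographic coordinates and split $B_{\delta^*/4}(P_j)$ into the inner ball $B_{r_j}(P_j)$ with $r_j = R_i v_i(P_j)^{-(p_i-1)/2\sigma}$ and the annulus $r_j \le |x-P_j| \le \delta^*/4$. On the inner ball, Proposition \ref{prop:blow up a bubble} and Corollary \ref{cor:energy} (with $s=0$) give an $O(1)$ contribution. On the annulus, the sharp decay $v_i(x) \le C v_i(P_j)^{-1}|x-P_j|^{2\sigma-n}$ from Proposition \ref{prop:upbound2}, combined with Lemma \ref{lem:error} ($v_i(P_j)^{\tau_i} = 1 + o(1)$), yields another uniformly bounded contribution.

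The main obstacle is the annular estimate: the radial integral $\int r^{(2\sigma-n)(p_i+1)+n-1}\,\ud r$ is exactly borderline divergent at $r=0$ when $\tau_i = 0$, so the bound rests entirely on the lower cutoff $r_j$ together with the near-cancellation $v_i(P_j)^{-(p_i+1)} r_j^{-n} = R_i^{-n} v_i(P_j)^{-\tau_i(n-2\sigma)/(2\sigma)}$, which is controlled only because Proposition \ref{prop:upbound2} gives the sharp decay rate $|x - P_j|^{2\sigma-n}$ (rather than the weaker $|x-P_j|^{-2\sigma/(p_i-1)}$ from Proposition \ref{prop:decomp}(iii)) and Lemma \ref{lem:error} pins down $v_i(P_j)^{\tau_i} = 1 + o(1)$. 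Without either ingredient the critical exponent would introduce a diverging factor.
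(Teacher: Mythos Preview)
Your proposal is correct and follows essentially the same route as the paper's proof, which simply cites Proposition~\ref{prop:distlb}, Proposition~\ref{prop5.2}, Proposition~\ref{prop:blow up a bubble}, Proposition~\ref{prop5.1} and Corollary~\ref{cor:energy}; you have merely unpacked how those results assemble. One small redundancy: your closing discussion of the annular integral re-derives what Corollary~\ref{cor:energy} (second estimate, $s=0$) already packages as an $o(1)$ contribution, so once isolated-simple is established you can invoke that corollary directly for both the inner ball and the annulus without revisiting Proposition~\ref{prop:upbound2} and Lemma~\ref{lem:error} by hand.
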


\begin{proof}
The theorem follows immediately from Proposition \ref{prop:distlb}, Proposition \ref{prop5.2}, Proposition \ref{prop:blow up a bubble}, Proposition \ref{prop5.1}
and Corollary \ref{cor:energy}.
\end{proof}

\begin{proof}[Proof of Theorem \ref{main thm B}] It follows immediately from Theorem \ref{thm:energy bound}.
\end{proof}

In the next theorem, we impose a stronger condition on $K_i$ such that $\{u_i\}$ has at most one blow up point.

\begin{thm}\label{thm:blow up one pt}
Suppose the assumptions in Theorem \ref{thm:energy bound}.
Suppose further that $\{K_i\}$ satisfies $(*)'_{n-2\sigma}$
condition for some sequences $L(n-2\sigma,i)=o(1)$
in $\om_{d,i}=\{q\in \mathbb{S}^n: |\nabla_{g_0}K_i|<d\}$ or $\{K_i\}$ satisfies $(*)'_{\beta}$
condition with $\beta\in (n-2\sigma,n)$ in
$\om_{d,i}$. Then, after passing to a subsequence, either $\{v_i\}$ stays bounded in $L^\infty(\mathbb{S}^n)$
or $\{v_i\}$ has precisely one isolated simple blow up point.
\end{thm}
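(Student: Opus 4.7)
The strategy is to argue by contradiction via a Pohozaev-type identity applied at the highest blow-up point, mimicking the argument of Proposition \ref{prop:distlb} but now with two blow-up points uniformly separated. Assume by passing to a subsequence that $\{v_i\}$ has $\ell \geq 2$ blow-up points $P_{1,i},\ldots,P_{\ell,i}$; by Theorem \ref{thm:energy bound}, these are all isolated simple and pairwise separated by some $\delta^* > 0$. Reorder so that $u_i(P_{1,i}) = \max_j u_i(P_{j,i})$, and let $P_{2,i}$ be any other blow-up point. Proposition \ref{prop5.2} gives $|\nabla_{g_{\Sn}} K_i(P_{1,i})| \to 0$, so eventually $P_{1,i} \in \om_{d,i}$ and the $(*)'$ assumption holds on a fixed small geodesic ball around $P_{1,i}$. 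Pass to stereographic coordinates so that \eqref{eq:onsphere rescaled} becomes \eqref{eq:inRn} with $\tilde K_i := K_i H^{\tau_i}$; let $x_i, y_i \in \R^n$ be the images of $P_{1,i}, P_{2,i}$, fix some $0 < \delta < \delta^*/4$, and split $u_i(x) = \int_{B_\delta(x_i)} \tilde K_i(z) u_i(z)^{p_i}|x-z|^{2\sigma-n}\,\ud z + h_i(x)$.

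Apply Proposition \ref{prop:pohozaev} on $B_\delta(x_i)$ and multiply through by $u_i(x_i)^2$. For the left-hand side: the first term carries the nonpositive coefficient $\frac{n-2\sigma}{2}-\frac{n}{p_i+1} = -\frac{(n-2\sigma)^2 \tau_i}{4n}+O(\tau_i^2) \leq 0$, so its contribution is nonpositive; the second term, $\frac{1}{p_i+1}\int_{B_\delta(x_i)}(x-x_i)\cdot\nabla\tilde K_i\,u_i^{p_i+1}$, is exactly the object handled by Corollary \ref{cor4.1}, which applies under either of the two hypotheses ($\beta=n-2\sigma$ with $L(n-2\sigma,i)=o(1)$, or $\beta\in(n-2\sigma,n)$ with $L(\beta,i)=O(1)$), yielding $o(1)$ after multiplication by $u_i(x_i)^2$. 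Hence the entire left-hand side is $\leq o(1)$. Of the right-hand side terms, the boundary integral $\int_{\pa B_\delta(x_i)}\tilde K_i u_i^{p_i+1}$ is $O(u_i(x_i)^{-(p_i+1)})$ by Proposition \ref{prop:upbound2}, and the cross term $\int(x-x_i)\cdot\nabla h_i\,\tilde K_i u_i^{p_i}$ is controlled by $|\nabla h_i|\le C u_i(x_i)^{-1}$ (from Corollary \ref{cor:convergence}) together with Corollary \ref{cor:energy}, both negligible after multiplication by $u_i(x_i)^2$.

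The main obstacle, and the crux of the argument, is therefore to show that the leading right-hand side term satisfies
\[
u_i(x_i)^2 \int_{B_\delta(x_i)} \tilde K_i u_i^{p_i} h_i \,\ud x \geq c_0 > 0
\]
in the limit. Splitting $h_i(x) = h_i(x_i) + (h_i(x)-h_i(x_i))$ and using $|h_i(x)-h_i(x_i)| \leq C u_i(x_i)^{-1}|x-x_i|$ together with Corollary \ref{cor:energy}, the above quantity equals $\bigl(u_i(x_i)h_i(x_i)\bigr)\bigl(u_i(x_i)\int_{B_\delta(x_i)} \tilde K_i u_i^{p_i}\bigr)+o(1)$. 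The second factor tends to the positive constant $a$ of Corollary \ref{cor:convergence}, so it remains to bound the first factor below. This is where the second blow-up point enters: restricting $h_i(x_i)$ to a small ball around $y_i$,
\[
u_i(x_i)\,h_i(x_i) \geq c\,(\delta^*)^{2\sigma-n}\,u_i(x_i)\int_{B_{r_{2,i}}(y_i)} u_i^{p_i}\,\ud z \geq c'\,\frac{u_i(x_i)}{u_i(y_i)} \geq c' > 0,
\]
where I used the bubble expansion of Proposition \ref{prop:blow up a bubble} at $y_i$ (computed as in \eqref{eq:a7}) to evaluate $\int_{B_{r_{2,i}}(y_i)} u_i^{p_i} \asymp u_i(y_i)^{-1}$, along with Lemma \ref{lem:error} and the ordering $u_i(x_i)\geq u_i(y_i)$. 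This contradicts the upper bound for the left-hand side, ruling out two or more blow-up points and completing the proof.
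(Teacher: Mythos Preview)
Your argument is essentially the same as the paper's: assume two isolated simple blow-up points (guaranteed by Theorem \ref{thm:energy bound}), invoke Corollary \ref{cor4.1} under the stated $(*)'$ hypotheses to kill the left-hand side of the Pohozaev identity, and use the mass contributed by the second blow-up point to force the right-hand side to be strictly positive, exactly as in \eqref{eq:b0}--\eqref{eq:b4}. The paper's proof is a two-line sketch referring back to Proposition \ref{prop:distlb}; you have written out the mechanism more explicitly, which is fine.

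Two small points to tighten. First, you should say a word about choosing the stereographic projection so that both $P_{1,i}$ and $P_{2,i}$ land in a fixed bounded region of $\R^n$ (the paper phrases this as ``after some transformation, we can assume that they are in the same half sphere''); otherwise $y_i$ could run off to infinity and the kernel comparison $|x_i-z|^{2\sigma-n}\ge c(\delta^*)^{2\sigma-n}$ is not available. Second, the bound $|\nabla h_i|\le C u_i(x_i)^{-1}$ is not uniform up to $\partial B_\delta(x_i)$ when $\sigma\le 1/2$ (cf.\ \eqref{eq:a6-1}); the paper handles the cross term $\int (x-x_i)\cdot\nabla h_i\,\tilde K_i u_i^{p_i}$ as in \eqref{eq:b2}, using the integrable boundary singularity of $\nabla h_i$ against the decay of $u_i^{p_i}$ rather than a uniform pointwise bound. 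Your splitting $h_i(x)=h_i(x_i)+(h_i(x)-h_i(x_i))$ works once you treat the annulus $\{\delta/2<|x-x_i|<\delta\}$ separately via $u_i^{p_i}\le C u_i(x_i)^{-p_i}$ and $|h_i|\le C u_i(x_i)^{-1}$. Neither issue is a real gap.
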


\begin{proof}
The strategy is the same as the proof of Proposition \ref{prop:distlb}. We assume there
are two isolated blow up points. After some transformation,
we can assume that they are in the same half sphere.
The condition of $\{K_i\}$ guarantees that Corollary \ref{cor4.1} holds for $u_i$, where $u_i$ is as below \eqref{eq:Hexp}. Hence \eqref{eq:b0} holds for $u_i$. Meanwhile, \eqref{eq:b4} for $u_i$ is also valid, since the distance between these blow up points is uniformly lower bounded which is due to Proposition \ref{prop:distlb}. This reaches a contradiction.
\end{proof}

\begin{proof}[Proof of Theorem \ref{main thm C}]
It follows from Theorem \ref{thm:blow up one pt}.
\end{proof}

\begin{thm}\label{thm: 1-2 pts} Let $v_i$ be positive solutions of \eqref{6.11}.
Suppose that $\{K_i\}\subset C^{1}(\Sn)$ satisfies \eqref{eq:Kc2}-\eqref{eq:Kc3},
and for some point $P_0\in \Sn$, $\va_0>0$, $A_1>0$ independent of $i$ and $1<\beta<n$, that
\[
\{K_i\} \text{ is bounded in } C^{[\beta],\beta-[\beta]}(B_{\va_0}(q_0)),\quad \quad K_i(P_0)\geq A_1
\]
and
\[
K_i(y)=K_i(0)+Q_i^{(\beta)}(y)+R_i(y), \quad |y|\leq \va_0,
\]
where $y$ is a geodesic normal coordinates system centered at $P_0$,
$Q_i^{(\beta)}(y)$ satisfies $Q_i^{(\beta)}(\lda y)=\lda^{\beta} Q_i^{(\beta)}(y)$, $\forall\lda>0,\ y\in\R^n$,  and $R_i(y)$ satisfies
\[\sum_{s=0}^{[\beta]} |\nabla^s R_i(y)||y|^{-\beta+s}\to 0\] uniformly in $i$ as $y\to 0$.

Suppose also that $Q_i^{(\beta)}\to Q^{(\beta)}$ in $C^1(\mathbb{S}^{n-1})$ and for some positive constant $A_6$ that
\be\label{6.13}
A_6|y|^{\beta-1}\leq |\nabla Q^{(\beta)}(y)|,\quad |y|\leq \va_0,
\ee
and
\be\label{6.14}
\left(
\begin{array}{l}
 \int_{\R^n}\nabla Q^{(\beta)}(y+y_0)(1+|y|^2)^{-n}\,\ud y\\[2mm]
\int_{\R^n}Q^{(\beta)}(y+y_0)(1+|y|^2)^{-n}\,\ud y
\end{array} \right)\neq 0, \quad \forall\  y_0\in \R^n.
\ee
If $P_0$ is an isolated simple blow up point of $v_i$, then
$v_i$ has to have at least another blow up point.
\end{thm}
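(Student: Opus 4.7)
The plan is a proof by contradiction: suppose that $P_0$ is the only blow up point of $\{v_i\}$ on $\Sn$. Via the stereographic projection at the beginning of Section \ref{section3} (sending $P_0\mapsto 0$), the problem becomes \eqref{eq:inRn} on $\R^n$ with $u_i:=H\cdot(v_i\circ F)$ having $x_i \to 0$ as the unique isolated simple blow up point. Set $m_i:=u_i(x_i)\to\infty$ and $\lambda_i:=m_i^{(p_i-1)/2\sigma}$. Lemma \ref{lem:gradient} yields $|\nabla K_i(x_i)|=O(m_i^{-2(\beta-1)/(n-2\sigma)})$, while the non-degeneracy \eqref{6.13}, combined with $R_i=o(|y|^{\beta})$, forces $|\nabla K_i(x_i)|\gtrsim |x_i|^{\beta-1}$; hence $|\lambda_i x_i|$ is uniformly bounded and, after passing to a subsequence, $y_0:=\lim \lambda_i x_i\in\R^n$ exists. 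Corollary \ref{cor:convergence} gives $m_i u_i(x)\to a|x|^{2\sigma-n}+h(x)$ in $C^2_{loc}(B_\delta\setminus\{0\})$ with $a>0$ from \eqref{eq:number a} and $h\in C^2(B_\delta)$, $h\ge 0$. Because $P_0$ is the sole blow up point, Theorem \ref{thm:harnack} and Theorem \ref{thm:schauder} extend this to $C^2_{loc}(\R^n\setminus\{0\})$, the global limit being a positive Riesz-potential-type object on $\Sn\setminus\{P_0\}$; a standard positivity argument then gives $h(0)>0$.

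The crux is to apply the scalar Pohozaev identity of Proposition \ref{prop:pohozaev} to $u_i$ on $B_\delta(x_i)$, writing
\[
u_i(x)=\int_{B_\delta(x_i)}\frac{K_i(y)H(y)^{\tau_i}u_i(y)^{p_i}}{|x-y|^{n-2\sigma}}\,\ud y+\tilde h_i(x),
\]
where $\tilde h_i$ is the far-field remainder and $m_i\tilde h_i\to h$ in $C^2(B_{\delta/2})$ by the global convergence. Multiplying the identity by $m_i^2$ and letting $i\to\infty$, the boundary term and the $\int(x-x_i)\cdot\nabla\tilde h_i\,K_i H^{\tau_i}u_i^{p_i}$ term are $o(1)$, while $\int K_i H^{\tau_i}u_i^{p_i}\tilde h_i$ converges to a strictly positive multiple of $h(0)$. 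On the LHS, the $O(\tau_i)$ prefactor term is controlled by Lemma \ref{lem4.8}, and the dominant $\int(x-x_i)\cdot\nabla K_i\, H^{\tau_i}u_i^{p_i+1}$ term, after substituting $K_i(x)=K_i(0)+Q_i^{(\beta)}(x)+R_i(x)$ and rescaling $z:=\lambda_i(x-x_i)$, reduces via homogeneity of $\nabla Q^{(\beta)}$ and Proposition \ref{prop:blow up a bubble} to
\[
m_i^{2-2\beta/(n-2\sigma)}\cdot c_n\int_{\R^n} z\cdot\nabla Q^{(\beta)}(y_0+z)(1+k|z|^2)^{-n}\,\ud z+o(1),
\]
where Euler's identity supplies $z\cdot\nabla Q^{(\beta)}(y_0+z)=\beta Q^{(\beta)}(y_0+z)-y_0\cdot\nabla Q^{(\beta)}(y_0+z)$.

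The contradiction is then drawn case by case. If $\beta>n-2\sigma$, then $m_i^{2-2\beta/(n-2\sigma)}\to 0$, so the LHS vanishes while the RHS tends to a positive multiple of $h(0)>0$, which is impossible. If $\beta\le n-2\sigma$, the scalar identity is supplemented by the ``vector'' Pohozaev (as in the proof of Lemma \ref{lem:gradient}), whose limit analogously relates $\int\nabla Q^{(\beta)}(y_0+z)(1+k|z|^2)^{-n}\,\ud z$ to $\nabla h(0)$. The two limiting identities together pin down both components of the pair in \eqref{6.14} at the parameter $\sqrt{k}\,y_0$ (after the obvious rescaling that absorbs $k$); checking signs and orders shows that consistency with the $h(0)>0$ produced by the one-blow-up Green function forces this pair to vanish, directly contradicting hypothesis \eqref{6.14}.

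The principal obstacle is the careful bookkeeping of asymptotic orders: demonstrating that the remainder $R_i$ and the conformal correction $H^{\tau_i}$ contribute strictly lower-order terms than the $Q^{(\beta)}$ piece, rigorously establishing $h(0)>0$ from the global one-blow-up structure on $\Sn$, and, in the case $\beta\le n-2\sigma$, coupling the scalar and vector Pohozaev limits to extract the precise constraint that contradicts \eqref{6.14}.
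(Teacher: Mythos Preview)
There is a genuine gap at the step ``a standard positivity argument then gives $h(0)>0$''. When $P_0$ is the \emph{only} blow up point on $\Sn$, this claim is false. Indeed, once $x_i\to 0$ is the unique isolated simple blow up point, Proposition~\ref{prop:upbound2} together with Lemma~\ref{lem:harnack} gives $m_i v_i\to w$ in $C^2_{loc}(\Sn\setminus\{P_0\})$ for some positive $w$. Passing to the limit in \eqref{eq:onsphere rescaled} multiplied by $m_i$, the contribution of any region bounded away from $P_0$ is $O(m_i^{1-p_i})\to 0$, so the entire mass comes from the bubble and (exactly as in the computation of Corollary~\ref{cor:convergence}) one obtains $w(\xi)=a\,c_{n,\sigma}|\xi-P_0|^{2\sigma-n}$, a pure multiple of the Green's function of $P_\sigma$. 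Translating back through $u_i=H\cdot(v_i\circ F)$ yields $m_i u_i(x)\to a|x|^{2\sigma-n}$, i.e.\ $h\equiv 0$. Thus in your case $\beta>n-2\sigma$ both sides of the limiting Pohozaev identity vanish and no contradiction follows; and your case $\beta\le n-2\sigma$ still invokes $h(0)>0$ in the final ``consistency'' step, so it fails for the same reason. (Contrast this with Proposition~\ref{prop5.1} and Proposition~\ref{prop:distlb}, where the \emph{second} blow up point is precisely what furnishes a strictly positive constant in $h$.)

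The paper's proof sidesteps this by using the global Kazdan--Warner identity \eqref{eq:KZ} rather than the local Pohozaev identity. On $\R^n$ the relevant exact identities (for translation and dilation conformal Killing fields) read, up to $\tau_i$-corrections controlled by Lemma~\ref{lem4.8},
\[
\int_{\R^n}\nabla K_i\,H^{\tau_i}u_i^{p_i+1}=0,\qquad
\int_{\R^n}\langle y,\nabla K_i\rangle\,H^{\tau_i}u_i^{p_i+1}=0.
\]
These have no boundary or far-field term tied to $h$. Since $P_0$ is the only blow up point, the contribution from $|y-x_i|\ge\va$ is $O(m_i^{-2n/(n-2\sigma)})$; substituting $K_i=K_i(0)+Q_i^{(\beta)}+R_i$, using \eqref{6.13} to first pin down $\lambda_i x_i\to z_0$ (as you outlined), then rescaling $z=\lambda_i(y-x_i)$, applying Proposition~\ref{prop:blow up a bubble} and the homogeneity of $Q^{(\beta)}$, and finally multiplying by $m_i^{2(\beta-1)/(n-2\sigma)}$ (resp.\ $m_i^{2\beta/(n-2\sigma)}$), one passes to the limit and obtains
\[
\int_{\R^n}\nabla Q^{(\beta)}(z+z_0)(1+k|z|^2)^{-n}\,\ud z=0
\quad\text{and}\quad
\int_{\R^n} Q^{(\beta)}(z+z_0)(1+k|z|^2)^{-n}\,\ud z=0,
\]
the second via Euler's relation. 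After the rescaling $y_0=\sqrt{k}\,z_0$ this directly contradicts \eqref{6.14}. The moral: for the one-point situation you must feed in a \emph{global} balance law, not a local one.
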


\begin{proof}
The proof is by checking the ``balance" condition of Kazdan-Warner type \eqref{eq:KZ}. It is the same as that of Theorem 5.3 in \cite{JLX}.
\end{proof}

\begin{thm}\label{thm:maximum bound}
Suppose that $K\in C^{1}(\Sn)$ ($K\in C^{1,1}(\Sn)$ if $0<\sigma\le 1/2$), for some constant $A_1>0$,
\[
 1/A_1\leq K(\xi)\leq A_1\quad  \mbox{for all }\xi\in \Sn.
\]
Suppose also that for any critical point $\xi_0$ of $K$ and 
a geodesic normal coordinates system $\{y_1, \cdots, y_n\}$ centered at $\xi_0$, there exist
some small neighborhood $\mathscr{O}$ of $0$, a positive constant $L$, and $\beta=\beta(\xi_0)\in(n-2\sigma, n)$ such that
\[
 \|\nabla^{[\beta]}K\|_{C^{\beta-[\beta]}(\mathscr{O})}\leq L
\]
and
\[
 K(y)=K(0)+Q_{(\xi_0)}^{(\beta)}(y)+R_{(\xi_0)}(y)\quad \mbox{in }\mathscr{O},
\]
where $Q_{\xi_0}^{(\beta)}(y)\in C^{[\beta]-1,1}(\mathbb{S}^{n-1})$ satisfies
$Q_{\xi_0}^{(\beta)}(\lda y)=\lda^{\beta}Q_{\xi_0}^{(\beta)}(y)$, $\forall \lda >0$, $y\in \R^n$, and for some positive constant $A_6$
\[
A_6|y|^{\beta-1}\leq |\nabla Q_{\xi_0}^{(\beta)}(y)|,\quad y\in \mathscr{O},
\]
and
\[
\left(
\begin{array}{l}
 \int_{\R^n}\nabla Q_{\xi_0}^{(\beta)}(y+y_0)(1+|y|^2)^{-n}\,\ud y\\[2mm]
\int_{\R^n}Q_{\xi_0}^{(\beta)}(y+y_0)(1+|y|^2)^{-n}\,\ud y
\end{array} \right)\neq 0, \quad \forall\  y_0\in \R^n,
\]
and $R_{\xi_0}(y)\in C^{[\beta]-1,1}(\mathscr{O})$ satisfies $\lim_{y\to 0}\sum_{s=0}^{[\beta]}|\nabla^sR_{\xi_0}(y)||y|^{-\beta+s}=0$.

Then there exists a positive constant $C\geq 1$ depending on $n,\sigma, K$ such that for every solution $v$ of \eqref{main equ} there holds
\[
 1/C\leq v\leq C,\quad \mbox{on }\Sn.
\]
\end{thm}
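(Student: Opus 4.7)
The plan is to argue by contradiction: assume a sequence of positive $C^2$ solutions $\{v_i\}$ of \eqref{main equ} has $\max_{\Sn} v_i \to \infty$ and derive a contradiction from the blow up analysis of Section \ref{section3}. Once the upper bound $v\le C$ is in hand, the lower bound and $C^2$ estimate follow from standard integral-equation arguments on $\Sn$.

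The non-degeneracy condition $|\nabla Q^{(\beta)}_{\xi_0}(y)|\ge A_6|y|^{\beta-1}$ combined with $|\nabla R_{\xi_0}(y)|=o(|y|^{\beta-1})$ ensures $|\nabla_{g_{\Sn}}K(y)|\sim |y|^{\beta(\xi_0)-1}$ near each critical point $\xi_0$ of $K$; hence critical points are isolated, and the critical set $\mathcal{C}(K)$ is finite. The homogeneity of $Q^{(\beta)}_{\xi_0}$ and the $o(|y|^{\beta-s})$ decay of $\nabla^s R_{\xi_0}$ give $|\nabla^s K(y)|=O(|y|^{\beta(\xi_0)-s})$ for $2\le s\le[\beta(\xi_0)]$, from which the $(*)'_{\beta(\xi_0)}$ condition of Definition \ref{def*} follows in a neighborhood $U_{\xi_0}$ of $\xi_0$ with some uniform constant $L$. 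Choosing $d>0$ small enough so that $\om_d=\{|\nabla_{g_{\Sn}}K|<d\}\subset\bigcup_{\xi_0\in\mathcal{C}(K)}U_{\xi_0}$, the function $K$ satisfies $(*)'_{\beta_*}$ on $\om_d$ with $\beta_*=\min_{\xi_0}\beta(\xi_0)\in(n-2\sigma,n)$. I may therefore invoke Theorem \ref{thm:blow up one pt} with $K_i\equiv K$ and $\tau_i\equiv 0$: passing to a subsequence, $\{v_i\}$ blows up at exactly one isolated simple blow up point $\xi^*\in\Sn$, and by (the proof of) Proposition \ref{prop5.2} one obtains $\nabla_{g_{\Sn}}K(\xi^*)=0$, so $\xi^*\in\mathcal{C}(K)$ and the full structural expansion of $K$ holds at $\xi^*$ with parameters $(\beta,Q^{(\beta)}_{\xi^*},R_{\xi^*})$.

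I would then apply Theorem \ref{thm: 1-2 pts} with $P_0=\xi^*$ and $K_i\equiv K$: its hypotheses are matched directly by the assumptions of the present theorem, namely the uniform $C^{[\beta],\beta-[\beta]}$ bound near $\xi^*$, the positive homogeneity of $Q^{(\beta)}_{\xi^*}$ of degree $\beta\in(n-2\sigma,n)$, the decay of $R_{\xi^*}$, the gradient lower bound \eqref{6.13}, and the two-component non-vanishing condition \eqref{6.14}. The conclusion forces $\{v_i\}$ to have at least one additional blow up point besides $\xi^*$, contradicting the single-point conclusion of the previous paragraph. This rules out $\max v_i\to\infty$ and yields the uniform upper bound $v\le C$.

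For the lower bound, integrating \eqref{main equ} against $1$ on $\Sn$ gives $\int v=\int Kv^p$, while the sharp Sobolev inequality \eqref{pe1} applied to \eqref{main equ} yields $\dashint v^{p+1}\ge A_1^{-n/(2\sigma)}$; combined with $v\le C$ this forces $\int v\ge c_0>0$, hence $\int v^p\ge A_1^{-1}c_0$. Substituting into
\[
v(\xi)=c(n,\sigma)c_{n,\sigma}\int_{\Sn}\frac{K(\zeta)v(\zeta)^{p}}{|\xi-\zeta|^{n-2\sigma}}\,\ud vol_{g_{\Sn}}(\zeta)
\]
and using $|\xi-\zeta|\le 2$ on $\Sn$ together with $K\ge A_1^{-1}$ gives $v\ge c>0$ uniformly; the $C^2(\Sn)$ bound follows from Theorem \ref{thm:schauder} applied in stereographic charts. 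The conceptual main obstacle is the invocation of Theorem \ref{thm: 1-2 pts}, which encodes a Kazdan-Warner type obstruction \eqref{eq:KZ}: its non-vanishing assumption \eqref{6.14}, valid at every concentration location $y_0\in\R^n$, is precisely what forbids single-point concentration at a critical point of $K$; the remainder of the argument is a careful verification that the flatness-plus-non-degeneracy hypotheses translate into the technical conditions of Section \ref{section2}--\ref{section3}.
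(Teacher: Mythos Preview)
Your proposal is correct and follows essentially the same approach as the paper, which simply states that the result follows directly from Theorem \ref{thm:blow up one pt} and Theorem \ref{thm: 1-2 pts}. You have filled in the details the paper leaves implicit: the verification that the structural hypotheses on $K$ near each critical point imply the $(*)'_{\beta}$ condition on $\om_d$ (so that Theorem \ref{thm:blow up one pt} applies), the observation via Proposition \ref{prop5.2} that any isolated blow up point must be a critical point of $K$ (so that the expansion needed in Theorem \ref{thm: 1-2 pts} is available there), and the elementary lower-bound argument from the integral representation, which the paper does not write out.
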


\begin{proof}
It follows directly from Theorem \ref{thm:blow up one pt} and Theorem \ref{thm: 1-2 pts}.
\end{proof}

\begin{proof}[Proof of the compactness part of Theorem \ref{main thm A}]
It is easy to check that, if $K$ satisfies the condition in Theorem \ref{main thm A}, then it must satisfy the condition in the above theorem.
Therefore, we have the lower and upper bounds of $v$. The $C^2$ norm bound of $v$ follows immediately.
\end{proof}

\section{Existence}
\label{section4}

\subsection{Positivity of minimizers}

Let $f\in L^p(\Sn)$ for $p\in (1,\infty)$.  We say $v\in L^1(\Sn)$ is a weak solution of
\be\label{eq:lineareq}
P_\sigma v=f\quad \mbox{on }\Sn,
\ee
if for any $\varphi\in C^\infty(\Sn)$ there holds
\be\label{eq:weaksol}
\int_{\Sn} v P_\sigma(\varphi)=\int_{\Sn} f \varphi.
\ee
On the other hand, for any $g\in C^\infty(\Sn)$ if we let
\[
\varphi(\xi)= I^{\sigma}(g)= c_{n,\sigma}\int_{\Sn} \frac{g}{|\xi-\eta|^{n-2\sigma}}\,\ud \eta,
\]
then by \eqref{P sigma inverse} and  Fubini theorem, we have
\[
\int_{\Sn} vg= \int_{\Sn} I^{\sigma} (f) g,
\]
This implies
\be\label{eq:lineareq2}
v(\xi)=I^{\sigma}(f)(\xi)= c_{n,\sigma}\int_{\Sn} \frac{f}{|\xi-\eta|^{n-2\sigma}}\,\ud \eta\quad a.e. ~\xi \in \Sn.
\ee
Consequently, if $f\ge 0$ on $\Sn$ then $v\ge 0$ on $\Sn$, and if in addition that $f\not\equiv 0$ then $v>0$ on $\Sn$. By Riesz potential theory, $v\in H^{\sigma, p}(\Sn)$ if $p>1$.

\begin{prop} \label{prop:subcrit} Let $K$ satisfy \eqref{eq:Kc1}-\eqref{eq:Kc3}, $1<p<\frac{n+2\sigma}{n-2\sigma} $ and
\[
Q_p[v]:=\frac{\int_{\Sn}v P_\sigma v}{(\int_{\Sn}K |v|^{p+1})^{2/(p+1)}}\quad \mbox{for } v\in H^\sigma(\Sn) \setminus \{0\}.
\] Then the minimum
\[
Q_p:=\inf_{v\in H^\sigma(\Sn)} Q_p[v]
\]
is achieved by some positive function $v_p$ in $H^\sigma(\Sn)$.
\end{prop}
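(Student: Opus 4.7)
The plan is to apply the direct method of the calculus of variations on the constraint manifold $\{v\in H^{\sigma}(\Sn):\int_{\Sn}K|v|^{p+1}=1\}$, and then to upgrade a possibly sign-changing minimizer to a strictly positive one by means of the Green representation \eqref{P sigma inverse}.

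First I would fix a minimizing sequence $\{v_j\}\subset H^{\sigma}(\Sn)$ with $\int_{\Sn}K|v_j|^{p+1}=1$ and $\int_{\Sn}v_j P_\sigma v_j\to Q_p$. By the spectral expansion \eqref{eigenvalue on spherical harmonics} the quadratic form $v\mapsto\int v P_\sigma v$ has eigenvalues $\Gamma(k+n/2+\sigma)/\Gamma(k+n/2-\sigma)$, all strictly positive and comparable to $(1+k)^{2\sigma}$; hence it is equivalent to $\|\cdot\|_{H^\sigma(\Sn)}^{2}$, so $\{v_j\}$ is bounded in $H^\sigma(\Sn)$. Pass to a subsequence with $v_j\rightharpoonup v_p$ weakly in $H^\sigma(\Sn)$. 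Since $p+1<\frac{2n}{n-2\sigma}$, the Rellich--Kondrachov embedding $H^\sigma(\Sn)\hookrightarrow L^{p+1}(\Sn)$ is compact; hence along a further subsequence $v_j\to v_p$ strongly in $L^{p+1}(\Sn)$. Combined with $K\in L^\infty(\Sn)$ this preserves the constraint $\int_{\Sn}K|v_p|^{p+1}=1$, so in particular $v_p\not\equiv 0$. Weak lower semicontinuity of the $H^\sigma$ quadratic form gives $\int v_p P_\sigma v_p\le\liminf\int v_j P_\sigma v_j=Q_p$, so $v_p$ achieves the infimum.

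It remains to replace $v_p$ by a strictly positive minimizer. The Euler--Lagrange equation reads
\[
P_\sigma v_p=\lambda K|v_p|^{p-1}v_p \quad\text{on } \Sn,
\]
for the Lagrange multiplier $\lambda=\frac{p+1}{2}Q_p>0$. By the Green representation \eqref{P sigma inverse},
\[
v_p(\xi)=\lambda c_{n,\sigma}\int_{\Sn}\frac{K(\eta)|v_p(\eta)|^{p-1}v_p(\eta)}{|\xi-\eta|^{n-2\sigma}}\,\ud vol_{g_{\Sn}}(\eta).
\]
Define
\[
V(\xi):=\lambda c_{n,\sigma}\int_{\Sn}\frac{K(\eta)|v_p(\eta)|^{p}}{|\xi-\eta|^{n-2\sigma}}\,\ud vol_{g_{\Sn}}(\eta),
\]
so that $V\ge |v_p|\ge 0$ pointwise, $V\not\equiv 0$, and $P_\sigma V=\lambda K|v_p|^p$. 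Standard Riesz potential estimates, together with $|v_p|^p\in L^{(p+1)/p}(\Sn)$, give $V\in H^\sigma(\Sn)$, so $Q_p[V]$ is well defined. Compute
\[
\int_{\Sn} V P_\sigma V=\lambda\int_{\Sn} K V|v_p|^{p},
\]
and apply H\"older's inequality with exponents $p+1$ and $(p+1)/p$ to the product $K^{1/(p+1)}V\cdot K^{p/(p+1)}|v_p|^p$ to obtain
\[
\int_{\Sn}K V|v_p|^{p}\le\Bigl(\int_{\Sn}K V^{p+1}\Bigr)^{\!1/(p+1)}\Bigl(\int_{\Sn}K|v_p|^{p+1}\Bigr)^{\!p/(p+1)}.
\]
Since $V\ge|v_p|$, $\int_{\Sn}K V^{p+1}\ge\int_{\Sn}K|v_p|^{p+1}$, and hence
\[
Q_p[V]=\frac{\int V P_\sigma V}{(\int K V^{p+1})^{2/(p+1)}}\le \lambda\Bigl(\int_{\Sn}K|v_p|^{p+1}\Bigr)^{\!(p-1)/(p+1)}=Q_p[v_p]=Q_p.
\]
Thus $V$ is a nonnegative minimizer, and since the kernel $|\xi-\eta|^{2\sigma-n}$ is strictly positive and $K|v_p|^p\not\equiv0$, $V>0$ on all of $\Sn$. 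Renaming $v_p:=V$ yields the desired positive minimizer.

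I expect the main technical point to be the positivity step, since the naive substitution $v\mapsto|v|$ does not obviously decrease $\int v P_\sigma v$ for general $\sigma\in(0,n/2)$; the argument above circumvents this by exploiting the positivity of the Green kernel for $P_\sigma$ and an H\"older estimate, which is robust for all $\sigma\in(0,n/2)$. The remaining steps (coercivity, compact embedding, weak lower semicontinuity) are standard once the subcritical exponent $p<(n+2\sigma)/(n-2\sigma)$ is used.
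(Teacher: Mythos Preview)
Your proof is correct and follows essentially the same route as the paper: both use compactness of the subcritical embedding for existence, then define the auxiliary function $V$ (the paper's $w_p$) by $P_\sigma V=\lambda K|v_p|^p$ via the positive Green kernel \eqref{P sigma inverse}, observe $V\ge|v_p|$, and show $Q_p[V]\le Q_p[v_p]$ by the H\"older inequality. One harmless slip: the Lagrange multiplier is $\lambda=Q_p$, not $\frac{p+1}{2}Q_p$ (test the Euler--Lagrange equation against $v_p$), but your chain of inequalities never uses the specific value of $\lambda$, only that $\lambda>0$ and that $\int v_p P_\sigma v_p=\lambda\int K|v_p|^{p+1}$, so the argument is unaffected. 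The paper additionally tracks equality cases to conclude that the \emph{original} minimizer $v_p$ already has constant sign; you instead take $V$ as the positive minimizer, which is equally valid for the proposition as stated.
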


\begin{proof} Since the Sobolev embedding $H^\sigma \hookrightarrow L^{p+1}$ is compact, the existence of minimizer $v_p$ follows from standard variational arguments. Also, it satisfies that
\[
P_{\sigma}(v_p)=Q_pK |v_p|^{p-1}v_p\quad\mbox{and}\quad \int_{\Sn}K|v_p|^{p+1}=1.
\]
We only need to show the positivity of $v_p$, which follows from the proof of Proposition 5 in \cite{Ro10}.

Indeed, let $w_p$ be such that $P_\sigma(w_p)=|P_\sigma(v_p)|=Q_pK|v_p|^p$. This implies that $P_{\sigma}(w_p\pm v_p)\ge 0$, and thus, $w_p\ge \pm v_p$, i.e., $w_p\ge |v_p|$. Since $v_p\not\equiv 0$, we have $w_p>0$. Since $Q_p$ is the minimum,
\[
\begin{split}
Q_p\le \frac{\int_{\Sn}w_p P_\sigma (w_p)}{(\int_{\Sn}K |w_p|^{p+1})^{2/(p+1)}}&=\frac{\int_{\Sn}w_p Q_pK|v_p|^p}{(\int_{\Sn}K |w_p|^{p+1})^{2/(p+1)}}\\
&\le Q_p\frac{\left(\int_{\Sn} K|v_p|^{p+1}\right)^{\frac{p}{p+1}}\left(\int_{\Sn} K|w_p|^{p+1}\right)^{\frac{1}{p+1}} }{(\int_{\Sn}K |w_p|^{p+1})^{2/(p+1)}}\\
&\le Q_p.
\end{split}
\]
It follows that all these inequalities are actually equalities. Thus, $|v_p|=w_p>0$, which means that $v_p$ does not change sign. So we may assume that $v_p$ is positive.
\end{proof}

\begin{prop} Let $v\in H^\sigma(\Sn)$ be a solutions of
\[
P_\sigma v= K |v|^{\frac{4\sigma}{n-2\sigma}} v \quad \mbox{on }\Sn,
\]
where $0\le K\le A_1$ on $\Sn$.
There exists $\va_0=\va_0(n,\sigma, A_1)>0$ such that if the negative part of $v$ satisfies $\int_{\Sn} |v^{-}|^{\frac{2n}{n-2\sigma}}\le \va_0$, then $v\ge0$.
\end{prop}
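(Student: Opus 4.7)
The plan is to derive the pointwise integral inequality $v^-(\xi)\le c_{n,\sigma}A_1\int_{\mathbb{S}^n}(v^-(\eta))^{p}|\xi-\eta|^{2\sigma-n}\,\ud\mathrm{vol}_{g_{\Sn}}(\eta)$ (with $p=\frac{n+2\sigma}{n-2\sigma}$) and to close the argument by Hardy--Littlewood--Sobolev, choosing $\varepsilon_0$ small enough that the only possibility is $v^-\equiv 0$.

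First, I would pass from the PDE to the integral equation. Since $K|v|^{p-1}v\in L^{\frac{2n}{n+2\sigma}}(\Sn)$ by $v\in H^\sigma(\Sn)\hookrightarrow L^{\frac{2n}{n-2\sigma}}(\Sn)$, the discussion just preceding the proposition (equations \eqref{eq:lineareq}--\eqref{eq:lineareq2}) applies and gives the pointwise a.e.\ representation
\[
v(\xi)=c_{n,\sigma}\int_{\Sn}\frac{K(\eta)\bigl[(v^+(\eta))^{p}-(v^-(\eta))^{p}\bigr]}{|\xi-\eta|^{n-2\sigma}}\,\ud\mathrm{vol}_{g_{\Sn}}(\eta).
\]
Dropping the nonnegative contribution of $(v^+)^p$ yields $-v(\xi)\le c_{n,\sigma}\int_{\Sn}K(\eta)(v^-(\eta))^{p}|\xi-\eta|^{2\sigma-n}\,\ud\mathrm{vol}_{g_{\Sn}}(\eta)$, and since $v^-(\xi)=\max(-v(\xi),0)$ is bounded above by the nonnegative right-hand side, I obtain
\[
v^-(\xi)\le c_{n,\sigma}A_1\int_{\Sn}\frac{(v^-(\eta))^{p}}{|\xi-\eta|^{n-2\sigma}}\,\ud\mathrm{vol}_{g_{\Sn}}(\eta)\qquad\text{a.e. }\xi\in\Sn.
\]

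Next, I apply the Hardy--Littlewood--Sobolev inequality (on $\Sn$ via stereographic projection, or directly) with target exponent $q=\frac{2n}{n-2\sigma}$. With $s=\frac{2n}{n+2\sigma}$ one checks $\tfrac{1}{s}-\tfrac{2\sigma}{n}=\tfrac{1}{q}$ and $ps=q$, so
\[
\Bigl\|\int_{\Sn}\frac{(v^-)^{p}}{|\cdot-\eta|^{n-2\sigma}}\,\ud\mathrm{vol}_{g_{\Sn}}(\eta)\Bigr\|_{L^{q}(\Sn)}\le C\,\|(v^-)^{p}\|_{L^{s}(\Sn)}=C\,\|v^-\|_{L^{q}(\Sn)}^{p},
\]
where $C=C(n,\sigma)$. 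Combining with the previous pointwise inequality gives
\[
\|v^-\|_{L^{q}(\Sn)}\le C'\,\|v^-\|_{L^{q}(\Sn)}^{p},\qquad C'=Cc_{n,\sigma}A_1.
\]

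Finally, if $v^-\not\equiv 0$ then $\|v^-\|_{L^{q}(\Sn)}>0$ and the inequality forces $\|v^-\|_{L^{q}(\Sn)}^{p-1}\ge 1/C'$, i.e.\ $\int_{\Sn}|v^-|^{\frac{2n}{n-2\sigma}}\ge (1/C')^{q/(p-1)}$. Choosing $\varepsilon_0=\varepsilon_0(n,\sigma,A_1)$ strictly smaller than this threshold contradicts the hypothesis, so $v^-\equiv 0$ and $v\ge 0$. There is no serious obstacle here beyond justifying the passage to the integral equation; the only mildly delicate point is that $v^-$ need not lie in $H^\sigma(\Sn)$ when $\sigma>1$, which is precisely why the argument is conducted purely through the Green's function representation rather than by testing the PDE against $v^-$.
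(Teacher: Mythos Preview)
Your proof is correct and follows essentially the same approach as the paper: pass to the Green's function representation and close with Hardy--Littlewood--Sobolev. The only cosmetic difference is that the paper multiplies the integral equation by $-(v^-)^{\frac{n+2\sigma}{n-2\sigma}}$ and integrates (using the bilinear form of HLS), whereas you first extract the pointwise bound on $v^-$ and then take the $L^{\frac{2n}{n-2\sigma}}$ norm (using the mapping form of HLS); these are equivalent packagings of the same estimate.
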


\begin{proof} It follows that $v$ satisfies
\[
v(\xi)= c_{n,\sigma}\int_{\Sn} \frac{K(\eta) |v|^{\frac{4\sigma}{n-2\sigma} }v}{|\xi-\eta|^{n-2\sigma}}\,\ud \eta.
\]
Multiplying both sides by $-(v^-)^{\frac{n+2\sigma}{n-2\sigma}}$ and integrating over $\Sn$, we see that
\[
\begin{split}
\int_{\Sn} (v^{-})^{\frac{2n}{n-2\sigma}} & =-c_{n,\sigma}\int_{\Sn} \int_{\Sn} \frac{K(\eta) |v(\eta)|^{\frac{4\sigma}{n-2\sigma}} v(\eta) (v^-(\xi))^{\frac{n+2\sigma}{n-2\sigma}} }{|\xi-\eta|^{n-2\sigma}}\,\ud \eta\ud \xi \\&
\le
c_{n,\sigma}\int_{\Sn} \int_{\Sn} \frac{K(\eta) (v^-(\eta))^{\frac{n+2\sigma}{n-2\sigma}}  (v^-(\xi))^{\frac{n+2\sigma}{n-2\sigma}} }{|\xi-\eta|^{n-2\sigma}}\,\ud \eta\ud \xi\\&
\le C(n,\sigma,A_1) \|(v^-)^{\frac{n+2\sigma}{n-2\sigma}}\|_{L^\frac{2n}{n+2\sigma}}^2  \\&
= C(n,\sigma,A_1) \left(\int_{\Sn} (v^{-})^{\frac{2n}{n-2\sigma}} \right)^{\frac{n+2\sigma}{n}},
\end{split}
\]
where in the second inequality we used Hardy-Littlewood-Sobolev inequality on $\Sn$ (see, e.g., \cite{Lie83}). Therefore, if $\va_0< C(n,\sigma,A_1)^{-n/2\sigma} $, then $v^- \equiv 0$.
\end{proof}

\subsection{Existence for antipodally symmetric functions}

In this section,  we shall prove Theorem \ref{K-M-E-S}, which is for antipodally symmetric functions $K$.

Let $H^{\sigma}_{as}$ be the set of antipodally symmetric functions in $H^{\sigma}(\mathbb{S}^n)$, and let
\[
\lambda_{as}(K)=\inf_{v\in H^{\sigma}_{as}}\left\{\int_{\mathbb{S}^n}vP_{\sigma}(v): \int_{\mathbb{S}^n}K|v|^{\frac{2n}{n-2\sigma}}=1\right\}.
\]
We also denote $\omega_n$ as the volume of $\mathbb{S}^n$.
The proof of Theorem \ref{K-M-E-S} is divided into two steps.

\begin{prop}\label{less then exist1}
Let $0\le K\in C^0(\Sn)$ be antipodally symmetric. If
\begin{equation}\label{eq:less1}
\lambda_{as}(K)<\frac{P_{\sigma}(1)\omega_n^{\frac{2\sigma}{n}}2^{\frac{2\sigma}{n}}}{(\max_{\Sn}{K})^{\frac{n-2\sigma}{n}}},
\end{equation}
then there exists a positive and antipodally symmetric $C^{2\sigma^*}(\mathbb{S}^n)$ solution of \eqref{main equ}, where $\sigma^*=\sigma$ if $2\sigma \notin \mathbb{N}^+$ and otherwise $0<\sigma^*<\sigma$.
\end{prop}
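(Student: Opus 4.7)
The plan is to approximate the critical equation by subcritical ones and use the strict inequality \eqref{eq:less1} to rule out concentration of the minimizers. For each $p\in(1,p_*)$ with $p_*=\tfrac{n+2\sigma}{n-2\sigma}$, set
\[
\lambda_{as,p}(K):=\inf_{v\in H^{\sigma}_{as}\setminus\{0\}}\frac{\int_{\Sn} v\,P_\sigma v}{\bigl(\int_{\Sn}K|v|^{p+1}\bigr)^{2/(p+1)}}.
\]
Since every spherical harmonic has definite antipodal parity, $P_\sigma$ preserves $H^{\sigma}_{as}$, and the argument of Proposition \ref{prop:subcrit} restricted to the closed subspace $H^{\sigma}_{as}$ produces a positive, antipodally symmetric minimizer $v_p$. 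After normalizing so that $\int_{\Sn}Kv_p^{p+1}=1$, $v_p$ satisfies
\[
P_\sigma v_p=\lambda_{as,p}(K)\,K\,v_p^{p}\quad\text{on }\Sn,
\]
and Theorem \ref{thm:schauder} gives $v_p\in C^{2\sigma^*}(\Sn)$.

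Testing a near-minimizer of $\lambda_{as}(K)$ in the subcritical quotient and using dominated convergence yields $\limsup_{p\uparrow p_*}\lambda_{as,p}(K)\le\lambda_{as}(K)$, so hypothesis \eqref{eq:less1} forces
\be\label{eq:threshold-below}
\lambda_{as,p}(K)<\frac{P_\sigma(1)\,\omega_n^{2\sigma/n}\,2^{2\sigma/n}}{(\max_{\Sn}K)^{(n-2\sigma)/n}}
\ee
for all $p$ sufficiently close to $p_*$. If $\{v_p\}$ is uniformly bounded in $L^\infty(\Sn)$ along some $p_j\uparrow p_*$, Theorem \ref{thm:schauder} provides uniform $C^{2\sigma^*}$ bounds, a subsequence converges in $C^0$ to a positive antipodally symmetric $v_*\in C^{2\sigma^*}$ satisfying $P_\sigma v_*=\lambda_{as}(K)Kv_*^{p_*}$, and a multiplicative rescaling $v=cv_*$ produces the desired solution of \eqref{main equ}.

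It remains to rule out the blow-up alternative $\max_{\Sn}v_{p_j}\to\infty$. In that case, one applies the profile analysis of Section \ref{section2} (after composing with a stereographic projection away from the blow-up set and invoking Propositions \ref{prop:blow up a bubble}, \ref{prop:upbound2}, and \ref{prop5.1}): the blow-up set $\Gamma\subset\Sn$ is finite, nonempty, and each blow-up is isolated simple with local profile a standard bubble $v_{\xi,\lambda}$, which saturates the sharp Sobolev inequality \eqref{pe1}. The antipodal symmetry forces $\Gamma=-\Gamma$, hence $\Gamma$ consists of $k\ge 1$ antipodal pairs $\{\xi_j,-\xi_j\}$ with equal bubble $L^{p+1}$-masses $m_j$. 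Localizing the energies and passing to the limit $p\uparrow p_*$ yields
\[
1=\int_{\Sn}Kv_p^{p+1}\longrightarrow 2\sum_{j=1}^{k}K(\xi_j)m_j\le 2(\max_{\Sn}K)\sum_{j=1}^{k}m_j,
\]
\[
\lambda_{as,p}(K)=\int_{\Sn}v_p P_\sigma v_p\longrightarrow 2P_\sigma(1)\omega_n^{2\sigma/n}\sum_{j=1}^{k}m_j^{(n-2\sigma)/n}.
\]
Since $t\mapsto t^{(n-2\sigma)/n}$ is concave on $[0,\infty)$ and vanishes at $0$, it is subadditive, so $\sum_{j}m_j^{(n-2\sigma)/n}\ge(\sum_j m_j)^{(n-2\sigma)/n}\ge(2\max_{\Sn}K)^{-(n-2\sigma)/n}$, whence
\[
\lim_{p\uparrow p_*}\lambda_{as,p}(K)\ge\frac{2^{2\sigma/n}P_\sigma(1)\omega_n^{2\sigma/n}}{(\max_{\Sn}K)^{(n-2\sigma)/n}},
\]
contradicting \eqref{eq:threshold-below}. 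The main obstacle is the precise energy localization around each isolated simple blow-up point in the nonlocal setting of $P_\sigma$, which is where the isolated-simple-blow-up machinery of Section \ref{section2} plays the essential role.
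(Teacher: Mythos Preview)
Your overall strategy is the right one --- subcritical approximation followed by a concentration-compactness dichotomy --- and matches the paper's approach in outline.  However, the blow-up step as written has a genuine gap.

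You invoke Propositions \ref{prop:upbound2} and \ref{prop5.1} to conclude that the blow-up set is finite and that every blow-up point is isolated simple.  But Proposition \ref{prop5.1} (isolated $\Rightarrow$ isolated simple) requires $K_i$ to satisfy condition \eqref{eq:Kca} --- in particular $K\in C^{1}$ (or $C^{1,1}$ if $\sigma\le 1/2$) and $K\ge 1/A_1>0$ --- together with the flatness condition $(*)'_{n-2\sigma}$.  Likewise, the finiteness of the blow-up set (Proposition \ref{prop:distlb}) needs the same hypotheses.  Here you are only given $0\le K\in C^0(\Sn)$, so none of this machinery is available, and your precise two-sided energy localization (the claimed limits for both $\int Kv_p^{p+1}$ and $\int v_pP_\sigma v_p$ as exact sums over bubbles) cannot be justified.

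The paper avoids all of this by a more elementary one-sided argument.  One simply picks a global maximum point $x_i$ of $v_i$, rescales around it, and observes that the rescaled functions $\tilde u_i$ are globally bounded by $2^{(n-2\sigma)/2}$ (since $x_i$ is the max).  The local estimates of Section \ref{sec:linear integral equations} and the argument of Proposition \ref{prop:blow up a bubble} then yield $\tilde u_i\to u$ in $C^2_{loc}$, where $u$ is a standard bubble.  By antipodal symmetry and Fatou, for small $\delta$,
\[
1=\int_{\Sn}Kv_i^{p_i+1}\ge 2\int_{\mathcal B(\bar x,\delta)}Kv_i^{p_i+1}\ge 2K(\bar x)\int_{\R^n}u^{\frac{2n}{n-2\sigma}}+o(1),
\]
and combining this single inequality with the sharp Sobolev inequality \eqref{eq:sharp on rn} already yields $\lambda_{as}(K)\ge 2^{2\sigma/n}P_\sigma(1)\omega_n^{2\sigma/n}(\max K)^{-(n-2\sigma)/n}$, contradicting \eqref{eq:less1}.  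No classification of the full blow-up set, no isolated-simple structure, and no $C^1$ regularity of $K$ are needed.
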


\begin{prop}\label{less by test function1}
Let $0\le K\in C^0(\Sn)$ be antipodally symmetric. If there exists a maximum point of $K$ at which $K$ has flatness order greater than $n-2\sigma$, then
\begin{equation}\label{less than1}
\lambda_{as}(K)<\frac{P_{\sigma}(1)\omega_n^{\frac{2\sigma}{n}}2^{\frac{2\sigma}{n}}}{(\max_{\Sn}{K})^{\frac{n-2\sigma}{n}}}.
\end{equation}
\end{prop}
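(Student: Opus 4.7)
The plan is a test function argument. I take $\xi_0\in\Sn$ to be a maximum point of $K$ at which the flatness hypothesis $K(y)=K(\xi_0)+o(|y|^{n-2\sigma})$ holds in normal coordinates, and set
\[
\phi_\lambda:=v_{\xi_0,\lambda}+v_{-\xi_0,\lambda},\qquad \lambda\gg 1,
\]
with $v_{\xi_0,\lambda}$ as in \eqref{standard bubble on sn}. By the antipodal symmetry $K(\xi)=K(-\xi)$, $\phi_\lambda\in H^\sigma_{as}$, so $\lambda_{as}(K)\le Q[\phi_\lambda]$ where $Q[v]:=\int vP_\sigma v /(\int K|v|^{2n/(n-2\sigma)})^{(n-2\sigma)/n}$. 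The goal is to show $Q[\phi_\lambda]$ is strictly below the right-hand side of \eqref{less than1} for large $\lambda$. Recall that each bubble attains equality in \eqref{pe1}, which gives the normalization $\int_{\Sn}v_{\xi_0,\lambda}^{2n/(n-2\sigma)}=\omega_n$ and $\int_{\Sn}v_{\xi_0,\lambda}P_\sigma v_{\xi_0,\lambda}=P_\sigma(1)\,\omega_n$.

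For the numerator, by bilinearity and antipodal symmetry,
\[
\int_{\Sn}\phi_\lambda P_\sigma\phi_\lambda = 2P_\sigma(1)\omega_n + 2I(\lambda),\qquad I(\lambda):=\int_{\Sn}v_{\xi_0,\lambda}\,P_\sigma v_{-\xi_0,\lambda}=c(n,\sigma)\int_{\Sn}v_{\xi_0,\lambda}\,v_{-\xi_0,\lambda}^{(n+2\sigma)/(n-2\sigma)}.
\]
The key estimate is $I(\lambda)=J\lambda^{-(n-2\sigma)}(1+o(1))$ with an explicit $J>0$. This follows from the identities $v_{\xi_0,\lambda}(\xi)=\bigl(2/(1-\cos d(\xi,\xi_0))\bigr)^{(n-2\sigma)/2}\lambda^{-(n-2\sigma)/2}(1+o(1))$ for $\xi$ away from $\xi_0$ (in particular $v_{\xi_0,\lambda}(-\xi_0+z)\to \lambda^{-(n-2\sigma)/2}$ as $z\to 0$), together with a direct scaling calculation for $\int_{\Sn}v_{-\xi_0,\lambda}^{(n+2\sigma)/(n-2\sigma)}=C_0\,\lambda^{-(n-2\sigma)/2}(1+o(1))$.

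For the denominator, set $p=2n/(n-2\sigma)$. In a small fixed neighborhood of $\xi_0$, $v_{-\xi_0,\lambda}/v_{\xi_0,\lambda}\ll 1$, so
\[
(v_{\xi_0,\lambda}+v_{-\xi_0,\lambda})^{p}=v_{\xi_0,\lambda}^{p}+p\,v_{\xi_0,\lambda}^{p-1}v_{-\xi_0,\lambda}+O(v_{\xi_0,\lambda}^{p-2}v_{-\xi_0,\lambda}^{2}),
\]
and symmetrically near $-\xi_0$, with the middle region contributing lower-order error. Using $K(\xi)=K(-\xi)$, $\max K=K(\xi_0)$, $p-1=(n+2\sigma)/(n-2\sigma)$, and $c(n,\sigma)\int v_{\xi_0,\lambda}^{(n+2\sigma)/(n-2\sigma)}v_{-\xi_0,\lambda}=I(\lambda)$, I obtain
\[
\int_{\Sn}K\phi_\lambda^{p}=2K(\xi_0)\omega_n-2\delta_\lambda+\frac{2p\,K(\xi_0)}{c(n,\sigma)}I(\lambda)+o\!\left(\lambda^{-(n-2\sigma)}\right),
\]
where the flatness hypothesis and the scaling of $v_{\xi_0,\lambda}^{p}$ give $\delta_\lambda=\int(K(\xi_0)-K)v_{\xi_0,\lambda}^{p}\ge 0$ and $\delta_\lambda=o(\lambda^{-(n-2\sigma)})$.

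The decisive step is the first-order expansion of $Q[\phi_\lambda]$. Writing $Q_{\mathrm{th}}$ for the right-hand side of \eqref{less than1} and using $P_\sigma(1)=c(n,\sigma)$,
\[
\frac{Q[\phi_\lambda]}{Q_{\mathrm{th}}}=1+\frac{I(\lambda)}{c(n,\sigma)\omega_n}\!\left(1-\frac{p(n-2\sigma)}{n}\right)+\frac{n-2\sigma}{n}\cdot\frac{\delta_\lambda}{K(\xi_0)\omega_n}+o\!\left(\lambda^{-(n-2\sigma)}\right).
\]
The algebraic identity $p(n-2\sigma)/n=2$ turns the coefficient of $I(\lambda)$ into $-1$, so the interaction term contributes a strictly negative correction of size $\lambda^{-(n-2\sigma)}$, while $\delta_\lambda=o(\lambda^{-(n-2\sigma)})$ by flatness. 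Consequently $Q[\phi_\lambda]<Q_{\mathrm{th}}$ for $\lambda$ large, which yields \eqref{less than1}. The main technical hurdle is producing a clean, rigorous asymptotic $I(\lambda)\sim J\lambda^{-(n-2\sigma)}$ and controlling the cross/remainder terms in the denominator uniformly in $\lambda$; once that bookkeeping is in place, the identity $p(n-2\sigma)/n=2$ makes the inequality nearly automatic, and it is precisely the requirement that $\delta_\lambda=o(\lambda^{-(n-2\sigma)})$ that forces the flatness order to exceed $n-2\sigma$.
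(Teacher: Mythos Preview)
Your argument is correct and follows essentially the same approach as the paper: both use the antipodally symmetric test function given by the sum of two standard bubbles centered at a maximum point of $K$ and its antipode (the paper's $v_\beta=v_{1,\beta}+v_{2,\beta}$ with $\beta\to 1^+$ is exactly your $\phi_\lambda$ under the reparametrization $\beta=(\lambda^2+1)/(\lambda^2-1)$, so that $(\beta-1)^{(n-2\sigma)/2}\sim c\,\lambda^{-(n-2\sigma)}$). The paper obtains the expansion by citing \cite{JLX2}, while you spell out the mechanism---the interaction integral $I(\lambda)\sim J\lambda^{-(n-2\sigma)}$, the identity $p(n-2\sigma)/n=2$, and the flatness bound $\delta_\lambda=o(\lambda^{-(n-2\sigma)})$---which is precisely the content of that reference.
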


\begin{proof}[Proof of Theorem \ref{K-M-E-S}]
It follows from Proposition \ref{less then exist1} and Proposition \ref{less by test function1}.
\end{proof}

The proof of Proposition \ref{less then exist1} uses subcritical approximations. For $1<p<\frac{n+2\sigma}{n-2\sigma}$, we define
\[
\lambda_{as,p}(K)=\inf_{v\in H^{\sigma}_{as}}\left\{\int_{\mathbb{S}^n}vP_{\sigma}(v): \int_{\mathbb{S}^n}K|v|^{p+1}=1\right\}.
\]
We begin with a lemma
\begin{lem}\label{lem of existence subcritical}
Assume $K$ as that in Proposition \ref{less then exist1}.  Then $\lambda_{as,p}(K)$ is achieved by
a positive and antipodally symmetric $C^{2\sigma^*}(\Sn)$ function $v_{p}$, which satisfies
\begin{equation}\label{eq:subcritical1}
P_{\sigma}(v_p)=\lambda_{as,p}(K) K v_p^p\ \ \ \text{and}\ \ \ \int_{\mathbb{S}^n}Kv_p^{p+1}=1.
\end{equation}
\end{lem}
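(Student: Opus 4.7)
The plan is to prove this by the direct method in the calculus of variations, followed by an Euler--Lagrange argument, a positivity argument modeled on the proof of Proposition \ref{prop:subcrit}, and a bootstrap regularity step based on the local theory of Section \ref{sec:linear integral equations}.

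First, I take a minimizing sequence $\{v_k\}\subset H^{\sigma}_{as}$ with $\int_{\Sn}K|v_k|^{p+1}=1$ and $\int_{\Sn}v_k P_\sigma(v_k)\to\lambda_{as,p}(K)$. The eigenvalue formula \eqref{eigenvalue on spherical harmonics} shows that $P_\sigma$ is a positive self-adjoint operator whose quadratic form controls $\|v_k\|_{H^\sigma}^2$ modulo lower order terms; hence $\{v_k\}$ is bounded in $H^\sigma(\Sn)$. Since $p+1<\tfrac{2n}{n-2\sigma}$, the embedding $H^\sigma(\Sn)\hookrightarrow L^{p+1}(\Sn)$ is compact, so along a subsequence $v_k\rightharpoonup v_p$ weakly in $H^\sigma$ and strongly in $L^{p+1}$. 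Strong convergence and the uniform bound on $K$ yield $\int_{\Sn}K|v_p|^{p+1}=1$, so $v_p\not\equiv 0$; weak lower semicontinuity of the quadratic form gives $\int_{\Sn}v_p P_\sigma(v_p)\leq \lambda_{as,p}(K)$, which must be equality. Antipodal symmetry is preserved under weak limits, so $v_p\in H^{\sigma}_{as}$.

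Next, I derive the Euler--Lagrange equation. Varying $v_p$ within $H^\sigma_{as}$ under the constraint gives
\[
\int_{\Sn} v_p P_\sigma(\varphi)\,\ud vol_{g_{\Sn}}=\lambda_{as,p}(K)\int_{\Sn}K|v_p|^{p-1}v_p\,\varphi\,\ud vol_{g_{\Sn}}\quad\forall\,\varphi\in H^\sigma_{as}.
\]
To extend this to all $\varphi\in H^\sigma(\Sn)$, write $\varphi=\varphi_++\varphi_-$ with $\varphi_\pm(\xi)=\tfrac12(\varphi(\xi)\pm\varphi(-\xi))$. Since $P_\sigma$ commutes with the antipodal map (both sides of \eqref{P sigma} are invariant), $P_\sigma(\varphi_-)$ is antipodally antisymmetric, and pairing it with the antipodally symmetric functions $v_p$ and $K|v_p|^{p-1}v_p$ gives zero. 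Hence $v_p$ is a weak solution of \eqref{eq:subcritical1} on all of $\Sn$.

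For positivity, I follow Proposition \ref{prop:subcrit}: let $w_p$ be defined by $P_\sigma(w_p)=|P_\sigma(v_p)|=\lambda_{as,p}(K)K|v_p|^p$. Formula \eqref{P sigma inverse} shows that $w_p$ is the spherical Riesz potential of an antipodally symmetric function, hence $w_p\in H^\sigma_{as}$. Then $P_\sigma(w_p\pm v_p)\geq 0$ forces (via \eqref{eq:lineareq2}) $w_p\geq |v_p|\geq 0$, so $w_p$ is an admissible competitor. The same chain of inequalities used in the proof of Proposition \ref{prop:subcrit} (H\"older on $\int w_p K|v_p|^p$ together with minimality of $\lambda_{as,p}(K)$) collapses to equalities, giving $w_p=|v_p|$; hence $v_p$ does not change sign, and I take $v_p\geq 0$. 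Since $Kv_p^p\not\equiv 0$, representation \eqref{eq:lineareq2} gives $v_p>0$ on $\Sn$.

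Finally, regularity: pulling back via stereographic projection as in Section \ref{section3}, $v_p$ satisfies a local integral equation of the form \eqref{eq:lineareq1} with potential $V=\lambda_{as,p}(K)K v_p^{p-1}H^{\tau}$. Starting from $v_p\in L^{p+1}$, I iterate Theorem \ref{thm:li} finitely many times to improve $V$ to $L^q$ with $q>n/(2\sigma)$, then apply Corollary \ref{cor:li} for $L^\infty$, Proposition \ref{prop:holder} for H\"older continuity, and finally Theorem \ref{thm:schauder} to reach $C^{2\sigma^*}$ with $\sigma^*=\sigma$ when $2\sigma\notin \mathbb{N}^+$ and otherwise $\sigma^*<\sigma$. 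The only mildly delicate point I anticipate is justifying the Euler--Lagrange equation on the full space $H^\sigma(\Sn)$ rather than only on $H^\sigma_{as}$, but the symmetric/antisymmetric splitting above handles this cleanly.
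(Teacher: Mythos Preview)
Your proof is correct and follows essentially the same route as the paper's own argument: existence via the direct method and compact embedding (subcritical exponent), positivity by the comparison $w_p\ge|v_p|$ exactly as in Proposition~\ref{prop:subcrit}, and regularity by bootstrapping the local linear theory of Section~\ref{sec:linear integral equations}. You have in fact written out more detail than the paper does (the paper's proof is two sentences of references), in particular the symmetric/antisymmetric splitting to pass from test functions in $H^\sigma_{as}$ to all of $H^\sigma(\Sn)$ and the observation that $w_p$ is again antipodally symmetric so that it is an admissible competitor; these points are implicit in the paper but worth making explicit. One tiny remark: for the bootstrap start, use $v_p\in L^{2n/(n-2\sigma)}$ (from $H^\sigma\hookrightarrow L^{2n/(n-2\sigma)}$) rather than $v_p\in L^{p+1}$, since the hypothesis $r>n/(n-2\sigma)$ in Theorem~\ref{thm:li} is not guaranteed by $p+1$ alone when $\sigma$ is large.
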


\begin{proof}
The existence of a positive solution $v_p$ in $H^\sigma(\Sn)$ can be found by considering the minimizing problem as that in Proposition \ref{prop:subcrit}.
The regularity of $v_p$ follows from Theorem \ref{thm:li}, Theorem \ref{thm:harnack} and a standard result of Riesz potential.
\end{proof}

\begin{proof}[Proof of Proposition \ref{less then exist1}]The proof is similar to that of Proposition 2.1 in \cite{JLX2}, and here we just sketch the proof.
First of all, we have
\[
\limsup_{p\to\frac{n+2\sigma}{n-2\sigma}}\lambda_{as,p}(K)\leq \lda_{as}(K).
\]
Hence, we may assume that there exists a sequence $\{p_i\}\to\frac{n+2\sigma}{n-2\sigma}$ such
that $\lambda_{as,p_i}(K)\to \lambda$ for some $\lambda\leq\lambda_{as}(K)$. Since $\{v_{i}\}$, which is a sequence of minimizers in Lemma \ref{lem of existence subcritical} for $p=p_i$, is bounded in
$H^{\sigma}(\mathbb{S}^n)$, then there exists $v\in H^{\sigma}(\Sn)$ such that $v_{i}\rightharpoonup v$ weakly in  $H^{\sigma}(\mathbb{S}^n)$ and $v$ is nonnegative. If $v\not\equiv 0$, it follows from the integral equation of $v$ that $v>0$ on $\Sn$, and we are done. Now we suppose that $v\equiv 0$. If $\{\|v_{i}\|_{L^{\infty}(\mathbb{S}^n)}\}$ is bounded, by Theorem \ref{thm:harnack} we have $\{\|v_{i}\|_{C^{2\sigma^*}(\mathbb{S}^n)}\}$ is bounded, too. Therefore, $v_{i}\to 0$ in $C^0(\Sn)$ which leads to $1=\int_{\mathbb{S}^n}K|v_{i}|^{p_i+1}\to 0$. This is a contradiction. Thus we may assume that $v_{i}(x_{i}):=\max_{\mathbb{S}^n}v_{i}\to\infty.$ Since $\Sn$ is compact, there exists a
subsequence of $\{x_{i}\}$, which will be still denoted as $\{x_{i}\}$, and $\bar x$ such that $x_{i}\to\bar x$.
Without loss of generality we assume that $\bar x$ is the south pole. Via the stereographic projection $F^{-1}$, \eqref{eq:subcritical1} becomes
\begin{equation}\label{eq:subcritical in plane1}
u_{i}(y)=c_{n,\sigma}\lambda_{as,p_i}(K) \int_{\R^n}\frac{\hat K_i(z) u_{i}^{p_i}(z)}{|y-z|^{n-2\sigma}}\,\ud z \quad y\in\mathbb{R}^n,
\end{equation}
where $v_{i}\circ F(y)=(\frac{1+|y|^2}{2})^{\frac{n-2\sigma}{2}}u_{i}(y)$, $\hat K_i(y)= H^{\frac{n+2\sigma}{n-2\sigma}-p_i}(y) K(F(y))$, and $H$ is as in \eqref{eq:Hexp}.

Thus for any $y\in\mathbb{R}^n$, $u_{i}(y)\leq 2^{\frac{n-2\sigma}{2}}u_{i}(y_{i})$
where $y_{i}:=F^{-1}(x_{i})\to 0.$ For simplicity, we denote $m_{i}:=u_{i}(y_{i})$. By our assumption on $v_{i}$ we have $m_{i}\to\infty.$
Define
\[
\tilde u_{i}(y)=m_{i}^{-1}u_{i}\big(m_{i}^{\frac{1-p_i}{2\sigma}}y+y_{i}\big).
\]
From \eqref{eq:subcritical in plane1} we see that $\tilde u_{i}(y)$ satisfies
\begin{equation}\label{eq:scaled subcritical in plane}
\tilde u_{i}(y)=c_{n,\sigma}\lambda_{as,p_i}(K) \int_{\R^n}\frac{\hat K_i(m_i^{-(p_i-1/2\sigma)}z+y_i) \tilde u_{i}^{p_i}(z)}{|y-z|^{n-2\sigma}}\,\ud z, \quad y\in\mathbb{R}^n.
\end{equation}
Since $0<\tilde u_{i}\leq 2^{\frac{n-2\sigma}{2}}$ and $K\ge 0$, arguing exactly the same as the proof of Proposition \ref{prop:blow up a bubble} we have  $\tilde u_{i}\to u$ in
$C^{2\sigma}_{loc}(\mathbb{R}^n)$, where $u(0)=1$ and $u$ satisfies
\begin{equation}\label{eq:critical in plane1}
 u(y)=c_{n,\sigma}\lambda \int_{\R^n} \frac{K(\bar x)u(z)^{\frac{n+2\sigma}{n-2\sigma}}}{|y-z|^{n-2\sigma}}\,\ud z.
\end{equation}
Hence, $\lambda>0$, $K(\bar x)>0$, and the solutions of \eqref{eq:critical in plane1} are classified in \cite{CLO} and \cite{Li04}.

Similar to the proof of Proposition 2.1 in \cite{JLX2},
since $K\ge 0$ and $v_{i}$ are antipodally symmetric,
we have, by taking $\delta$ small,
\begin{equation}\label{norm less1}
1=\int_{\mathbb{S}^n}Kv_{i}^{p_i+1}\ge 2\int_{\mathcal{B}(\bar x,\delta)}Kv_{i}^{p_i+1}\ge 2K(\bar x)\int_{\mathbb{R}^n}u^{\frac{2n}{n-2\sigma}}+o(1),
\end{equation}
where $\mathcal{B}(\bar x,\delta)$ the geodesic ball of radius $\delta$ centered at $\bar x$ on $\Sn$. 
By the sharp Sobolev inequality \eqref{pe1} and the estimate \eqref{eq:critical in plane1}, we have
\begin{equation*}
\begin{split}
P_{\sigma}(1)\omega_n^{\frac{2\sigma}{n}}\leq \frac{\int_{\mathbb{R}^n}u(-\Delta)^{\sigma}u}{\left(\int_{\mathbb{R}^n}
u^{\frac{2n}{n-2\sigma}}\right)^{\frac{n-2\sigma}{n}}}&=\lambda K(\bar x)\left(\int_{\mathbb{R}^n}u^{\frac{2n}{n-2\sigma}}\right)^{\frac{2\sigma}{n}}\leq \lambda_{as}(K)2^{-\frac{2\sigma}{n}}(\max_{\Sn} K)^{\frac{n-2\sigma}{n}},
\end{split}
\end{equation*}
which contradicts with \eqref{eq:less1}.
\end{proof}

Next we shall prove Proposition \ref{less by test function1} using some test functions, which are inspired by \cite{Hebey, Ro}.

\begin{proof}[Proof of Proposition \ref{less by test function1}]
Let $\xi_1$ be a maximum point of $K$ at which $K$ has flatness order greater than $n-2\sigma$. Suppose $\xi_2$ is the antipodal point of $\xi_1$.
For $\beta>1$ and $i=1,2$ we define
\be\label{eq:bubble}
v_{i,\beta}(x)=\left(\frac{\sqrt{\beta^2-1}}{\beta-\cos r_i}\right)^{\frac{n-2\sigma}{2}},
\ee
where $r_i=d(x,\xi_i)$ is the geodesic distance between $x$ and $\xi_i$ on the sphere. It is clear that
\[
P_{\sigma}(v_{i,\beta})=P_{\sigma}(1)v_{i,\beta}^{\frac{n+2\sigma}{n-2\sigma}}\quad\mbox{and}\quad\int_{\mathbb{S}^n}v_{i,\beta}^{\frac{2n}{n-2\beta}}=\omega_n.
\]
Let
\[
v_{\beta}=v_{1,\beta}+v_{2,\beta},
\]
which is antipodally symmetric. Then as in the proof of Proposition 2.2 in \cite{JLX2}, we have
\[
\frac{\int_{\mathbb{S}^n}v_{\beta}P_{\sigma}(v_{\beta})}{\left(\int_{\mathbb{S}^n}Kv_{\beta}^{\frac{2n}{n-2\sigma}}\right)^{\frac{n-2\sigma}{n}}}\leq
\frac{P_{\sigma}(1)\omega_n^{\frac{2\sigma}{n}}2^{\frac{2\sigma}{n}}}{K(\xi_1)^{\frac{n-2\sigma}{n}}}
\left(1-\frac{A}{\omega_n}(\beta-1)^{\frac{n-2\sigma}{2}}+o\big((\beta-1)^{\frac{n-2\sigma}{2}}\big)\right),
\]
for $\beta$ close to $1$, \[
A=2^{-\frac{n-2\sigma}{2}}\omega_{n-1}\int_0^{+\infty}\frac{2^nr^{n-1}}{(1+r^2)^{\frac{n+2\sigma}{2}}}dr>0.
\]
This implies that \eqref{less than1} holds.
\end{proof}

Theorem \ref{K-M-E-S} can be extended to positive functions $K$ which are invariant under some isometry group acting without fixed points (see Hebey \cite{Hebey} and Robert \cite{Ro}). Denote $Isom(\mathbb{S}^n)$ as the isometry group of the standard sphere $(\mathbb{S}^n, g_{\Sn})$.
Let $G$ be a subgroup of $Isom(\mathbb{S}^n)$. We say that $G$ acts without fixed points if for
each $x\in\mathbb{S}^n$, the orbit $O_{G}(x):=\{g(x)| g\in G\}$ has at least two elements.
We denote $|O_{G}(x)|$ be the number of elements in $O_{G}(x)$. A function $K$ is called $G$-invariant
if $K\circ g\equiv K$ for all $g\in G$.

\begin{thm}\label{invariance nirenberg}
Let $n\ge 2$, $0<\sigma<n/2$,  $G$ be a finite subgroup of $Isom(\mathbb{S}^n)$ acting without fixed points.
Let $K\in C^0(\Sn)$ be a positive and G-invariant function.
If there exists $\xi_0\in\mathbb{S}^n$ such that  $K$ has flatness order greater than $n-2\sigma$ at $\xi_0$, and for any $x\in\mathbb{S}^n$
\begin{equation}\label{eq:condition 1}
\frac{K(\xi_0)}{|O_{G}(\xi_0)|^{\frac{2\sigma}{n-2\sigma}}}\geq \frac{K(x)}{|O_{G}(x)|^{\frac{2\sigma}{n-2\sigma}}},
\end{equation}
then \eqref{main equ} possesses a positive and G-invariant $C^{2\sigma^*}(\mathbb{S}^n)$ solution.
\end{thm}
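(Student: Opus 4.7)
The plan is to follow exactly the two-step structure used to prove Theorem \ref{K-M-E-S}, replacing the antipodal group $\mathbb{Z}/2$ by the finite group $G$ throughout. Let $H^\sigma_G\subset H^\sigma(\Sn)$ denote the closed subspace of $G$-invariant functions, and set
\[
\lambda_G(K)=\inf_{v\in H^\sigma_G}\Big\{\int_{\Sn} vP_\sigma(v):\int_{\Sn}K|v|^{\frac{2n}{n-2\sigma}}=1\Big\}.
\]
Since $G$ is finite and acts without fixed points, $\delta_G:=\min_{x\in\Sn,\,g\in G\setminus\{e\}} d(x,g(x))>0$, and for every $x$ all $|O_G(x)|$ orbit points are separated by at least $\delta_G$. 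The critical threshold I want to beat is
\[
\Lambda_G(K):=\frac{P_\sigma(1)\,\omega_n^{2\sigma/n}\,|O_G(\xi_0)|^{2\sigma/n}}{K(\xi_0)^{(n-2\sigma)/n}}.
\]

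For the first step, I would show that if $\lambda_G(K)<\Lambda_G(K)$ then a positive $G$-invariant $C^{2\sigma^*}$ solution exists. Take subcritical minimizers $v_{p_i}\in H^\sigma_G$ of the functional $Q_p$ restricted to $H^\sigma_G$, existing by the compact subcritical Sobolev embedding and regular as in Lemma \ref{lem of existence subcritical}; minimization on a $G$-invariant convex cone preserves $G$-invariance, and the positivity argument of Proposition \ref{prop:subcrit} (via $P_\sigma^{-1}$ being positivity-preserving) applies verbatim. If $\{v_{p_i}\}$ stays bounded in $L^\infty$ one obtains a positive solution by passing to the limit; otherwise after stereographic projection based at a blow-up point $\bar x$ the rescaled sequence converges in $C^{2\sigma}_{loc}$ to a standard bubble $u$ solving the critical equation with coefficient $\lambda K(\bar x)$, exactly as in the proof of Proposition \ref{less then exist1}. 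The key new ingredient is that $G$-invariance of each $v_{p_i}$ forces identical blow-up profiles centered at every point of $O_G(\bar x)$; by the separation $\delta_G$ these bubbles are supported in disjoint geodesic balls, so
\[
1=\int_{\Sn}Kv_{p_i}^{p_i+1}\geq |O_G(\bar x)|\,K(\bar x)\int_{\R^n}u^{\frac{2n}{n-2\sigma}}+o(1).
\]
Combining this with the sharp Sobolev inequality \eqref{eq:sharp on rn} applied to $u$ and using \eqref{eq:condition 1} yields
\[
P_\sigma(1)\omega_n^{2\sigma/n}\le \lambda_G(K)\,\frac{K(\bar x)^{(n-2\sigma)/n}}{|O_G(\bar x)|^{2\sigma/n}}\le \lambda_G(K)\,\frac{K(\xi_0)^{(n-2\sigma)/n}}{|O_G(\xi_0)|^{2\sigma/n}},
\]
contradicting $\lambda_G(K)<\Lambda_G(K)$.

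For the second step, I would construct a $G$-invariant test function to establish $\lambda_G(K)<\Lambda_G(K)$. Writing $O_G(\xi_0)=\{g_1(\xi_0),\dots,g_N(\xi_0)\}$ with $N=|O_G(\xi_0)|$, set
\[
v_\beta=\sum_{j=1}^N v_{g_j(\xi_0),\beta},
\]
where $v_{\xi,\beta}$ is the bubble in \eqref{eq:bubble}. This is $G$-invariant because $G$ permutes the orbit. For $\beta$ close to $1$, each bubble concentrates on a scale that is much smaller than $\delta_G$, so cross-terms in both $\int v_\beta P_\sigma v_\beta$ and $\int K v_\beta^{2n/(n-2\sigma)}$ are of order $o((\beta-1)^{(n-2\sigma)/2})$. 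Since $K$ is $G$-invariant and has flatness order $>n-2\sigma$ at $\xi_0$ (hence at every orbit point), the computation in the proof of Proposition \ref{less by test function1} applied at each orbit point and summed gives
\[
\frac{\int_{\Sn} v_\beta P_\sigma v_\beta}{\big(\int_{\Sn}Kv_\beta^{2n/(n-2\sigma)}\big)^{(n-2\sigma)/n}}\le \Lambda_G(K)\Big(1-\frac{A}{N\omega_n}(\beta-1)^{(n-2\sigma)/2}+o((\beta-1)^{(n-2\sigma)/2})\Big),
\]
with $A>0$ the same positive constant as in Proposition \ref{less by test function1}. Choosing $\beta$ sufficiently close to $1$ gives the strict inequality $\lambda_G(K)<\Lambda_G(K)$ and concludes the proof.

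The main obstacle I anticipate is the bookkeeping for the concentration compactness in the subcritical limit: one must carefully justify that the rescaled sequence around \emph{one} chosen blow-up point $\bar x$ captures a whole bubble $u$ without picking up interaction terms from the other orbit points, which requires the blow-up scale $m_i^{-(p_i-1)/2\sigma}$ to be much smaller than $\delta_G$. This is automatic as $m_i\to\infty$, but one must be careful to track the contribution of each orbit point separately in the inequality $1\ge|O_G(\bar x)|K(\bar x)\int u^{2n/(n-2\sigma)}+o(1)$, which is where the $G$-symmetry of $v_{p_i}$ is essential. Once this is done, the rest is a straightforward adaptation of the antipodal case.
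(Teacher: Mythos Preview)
Your proposal is correct and follows exactly the paper's route: the paper splits the theorem into Proposition \ref{less then exist} (subcritical approximation plus blow-up, requiring the threshold condition at every $x$) and Proposition \ref{less by test function} (the $G$-invariant test function $v_\beta=\sum_j v_{g_j(\xi_0),\beta}$), and your two steps reproduce these verbatim, with condition \eqref{eq:condition 1} used precisely to pass from the unknown blow-up point $\bar x$ back to $\xi_0$.

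One small correction: your claim that $\delta_G=\min_{x\in\Sn,\,g\neq e}d(x,g(x))>0$ is not true under the paper's definition of ``acting without fixed points'' (which only requires every orbit to have at least two elements, not that the action be free; individual $g\neq e$ may well fix some points). Fortunately you never actually need a uniform $\delta_G$: in Step~1 you only need that the finitely many distinct points of $O_G(\bar x)$ are at positive pairwise distance, which is automatic, and likewise for $O_G(\xi_0)$ in Step~2. Replace the global $\delta_G$ by the orbit-specific separation and the argument goes through unchanged.
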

The above theorem was proved by Hebey \cite{Hebey} for $\sigma=1$ and Robert \cite{Ro, Ro10} for $\sigma$ being other integers.

Let $H^{\sigma}_{G}$ be the set of G-invariant functions in $H^{\sigma}(\mathbb{S}^n)$. Let
\[
\lambda_{G}(K)=\inf_{v\in H^{\sigma}_{G}}\left\{\int_{\mathbb{S}^n}vP_{\sigma}(v): \int_{\mathbb{S}^n}K|v|^{\frac{2n}{n-2\sigma}}=1\right\}.
\]
Similar to Theorem \ref{K-M-E-S}, the proof of Theorem \ref{invariance nirenberg} is again divided into two steps.

\begin{prop}\label{less then exist}
Let $G$ be a finite subgroup of $Isom(\mathbb{S}^n)$. Let $K\in C^0(\Sn)$ be a positive and G-invariant function. If for all $x\in\mathbb{S}^n$,
\begin{equation}\label{eq:less}
\lambda_{G}(K)<\frac{P_{\sigma}(1)\omega_n^{\frac{2\sigma}{n}}|O_{G}(x)|^{\frac{2\sigma}{n}}}{K(x)^{\frac{n-2\sigma}{n}}},
\end{equation}
then there exists a positive G-invariant $C^{2\sigma^*}(\mathbb{S}^n)$ solution of \eqref{main equ}.
\end{prop}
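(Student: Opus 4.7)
The plan is to mimic the subcritical approximation argument used for Proposition \ref{less then exist1}, adapted to the $G$-invariant setting. First I would introduce, for every $p\in(1,\frac{n+2\sigma}{n-2\sigma})$, the restricted infimum
\[
\lambda_{G,p}(K)=\inf_{v\in H^{\sigma}_{G}}\Bigl\{\int_{\mathbb{S}^n}v\,P_{\sigma}(v):\int_{\mathbb{S}^n}K|v|^{p+1}=1\Bigr\}.
\]
Since the embedding $H^\sigma(\mathbb{S}^n)\hookrightarrow L^{p+1}(\mathbb{S}^n)$ is compact for subcritical $p$ and the restriction to the $G$-invariant subspace $H^\sigma_G$ is weakly closed, $\lambda_{G,p}(K)$ is attained by some $v_p\in H^\sigma_G$. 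The same positivity argument used in Proposition \ref{prop:subcrit} (replacing $v_p$ by the unique $w_p\ge 0$ with $P_\sigma(w_p)=|P_\sigma(v_p)|$, which is automatically $G$-invariant since $K$ is) shows that $v_p$ can be taken positive, and Theorem \ref{thm:li}, Theorem \ref{thm:schauder} and Corollary \ref{cor:li} upgrade it to a positive $G$-invariant $C^{2\sigma^*}$ solution of $P_\sigma(v_p)=\lambda_{G,p}(K)Kv_p^{p}$ with $\int K v_p^{p+1}=1$.

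Next I would let $p_i\to\frac{n+2\sigma}{n-2\sigma}$ along a sequence realizing $\limsup\lambda_{G,p_i}(K)\le\lambda_G(K)$, set $\lambda=\lim\lambda_{G,p_i}(K)$, and pass to a weak limit $v_{p_i}\rightharpoonup v$ in $H^\sigma_G$. If $v\not\equiv 0$, then since $v$ satisfies the integral equation with a nonnegative right-hand side it is strictly positive on $\mathbb{S}^n$, and after rescaling it provides the desired $G$-invariant solution of \eqref{main equ}. It therefore remains to rule out the vanishing case $v\equiv 0$. A uniform $L^\infty$ bound on $\{v_{p_i}\}$ would by Harnack (Theorem \ref{thm:harnack}) and $C^{2\sigma^*}$ regularity force $v_{p_i}\to 0$ in $C^0$, contradicting $\int K v_{p_i}^{p_i+1}=1$, so we may assume $v_{p_i}(\xi_i):=\max_{\mathbb{S}^n} v_{p_i}\to\infty$. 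Choose a subsequence with $\xi_i\to\bar\xi$.

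The main step, and the essential place where the group $G$ enters, is the concentration-compactness computation at $\bar\xi$. By $G$-invariance of $v_{p_i}$ and of $K$, the sequence also concentrates at every point of the $G$-orbit $O_G(\bar\xi)$, and the bubbles at the $|O_G(\bar\xi)|$ distinct points of the orbit are mutually disjoint for $i$ large. Exactly as in the proof of Proposition \ref{less then exist1}, after stereographic projection and rescaling by $m_i=v_{p_i}(\xi_i)$, the blow-up profile $\tilde u_i$ converges in $C^{2\sigma}_{\rm loc}(\R^n)$ to a positive solution $u$ of
\[
u(y)=c_{n,\sigma}\lambda K(\bar\xi)\int_{\R^n}\frac{u(z)^{\frac{n+2\sigma}{n-2\sigma}}}{|y-z|^{n-2\sigma}}\,\ud z,
\]
with $u(0)=1$. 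The main obstacle is to convert the $|O_G(\bar\xi)|$-fold concentration into a sharp lower bound; using disjoint geodesic neighborhoods of the orbit together with $K$-invariance yields
\[
1=\int_{\mathbb{S}^n}K v_{p_i}^{p_i+1}\ge |O_G(\bar\xi)|\int_{\mathcal{B}(\bar\xi,\delta)}K v_{p_i}^{p_i+1}
\ge |O_G(\bar\xi)|\,K(\bar\xi)\int_{\R^n}u^{\frac{2n}{n-2\sigma}}+o(1).
\]
Combining this with the sharp Sobolev inequality on $\R^n$ applied to $u$ (via \eqref{eq:sharp on rn} and the equation for $u$) gives
\[
P_\sigma(1)\,\omega_n^{\frac{2\sigma}{n}}\le\lambda\,K(\bar\xi)\Bigl(\int_{\R^n}u^{\frac{2n}{n-2\sigma}}\Bigr)^{\frac{2\sigma}{n}}
\le\lambda\,\frac{K(\bar\xi)^{\frac{n-2\sigma}{n}}}{|O_G(\bar\xi)|^{\frac{2\sigma}{n}}},
\]
i.e. $\lambda_G(K)\ge\lambda\ge P_\sigma(1)\omega_n^{\frac{2\sigma}{n}}|O_G(\bar\xi)|^{\frac{2\sigma}{n}}/K(\bar\xi)^{\frac{n-2\sigma}{n}}$, contradicting the hypothesis \eqref{eq:less} applied at $x=\bar\xi$. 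Hence the vanishing case cannot occur, and $v=\lim v_{p_i}$ is the required positive $G$-invariant $C^{2\sigma^*}$ solution.
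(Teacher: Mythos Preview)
Your proposal is correct and follows essentially the same approach the paper intends: subcritical approximation within $H^\sigma_G$, followed by blow-up analysis where the $G$-invariance yields $|O_G(\bar\xi)|$ disjoint bubbles and hence the factor $|O_G(\bar\xi)|^{2\sigma/n}$ in the threshold, exactly paralleling the factor $2^{2\sigma/n}$ in Proposition~\ref{less then exist1}. The paper in fact omits this proof, saying only that it is ``similar to that of Proposition~\ref{less then exist1}''; your write-up fills in precisely the expected details (one minor quibble: the $G$-invariance of $w_p$ follows from that of $v_p$ and the isometry-invariance of $P_\sigma$, not directly from $K$ being $G$-invariant).
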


\begin{prop}\label{less by test function}
Let $G$ be a finite subgroup of $Isom(\mathbb{S}^n)$ and act without fixed points.
Let $K\in C^0(\Sn)$ be a positive and G-invariant function. If $K$ has flatness order greater than $n-2\sigma$ at $\xi_1$ for some $\xi_1\in\mathbb{S}^n$, then
\begin{equation}\label{less than}
\lambda_{G}(K)<\frac{P_{\sigma}(1)\omega_n^{\frac{2\sigma}{n}}|O_{G}(\xi_1)|^{\frac{2\sigma}{n}}}{K(\xi_1)^{\frac{n-2\sigma}{n}}}.
\end{equation}
\end{prop}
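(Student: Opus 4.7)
The plan is to mimic the proof of Proposition \ref{less by test function1}, replacing the antipodal pair $\{\xi_1,\xi_2\}$ by the full orbit $O_G(\xi_1)$. Write $O_G(\xi_1)=\{\xi_1,\ldots,\xi_m\}$ with $m=|O_G(\xi_1)|\ge 2$, and take as test function
\[
v_\beta(x)=\sum_{i=1}^m v_{i,\beta}(x),\qquad v_{i,\beta}(x)=\left(\frac{\sqrt{\beta^2-1}}{\beta-\cos d(x,\xi_i)}\right)^{(n-2\sigma)/2},
\]
as in \eqref{eq:bubble}. Since $G$ permutes $\{\xi_1,\ldots,\xi_m\}$ and acts by isometries, $v_\beta$ is $G$-invariant and hence an admissible competitor for $\lambda_G(K)$. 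By the $G$-invariance of $K$, one has $K(\xi_i)=K(\xi_1)$ for all $i$ and $K$ has flatness order greater than $n-2\sigma$ at every $\xi_i$; this reduces all local analyses near the orbit points to the single one at $\xi_1$.

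For the numerator, since $P_\sigma(v_{i,\beta})=P_\sigma(1)\,v_{i,\beta}^{(n+2\sigma)/(n-2\sigma)}$ and $\int_{\Sn}v_{i,\beta}^{2n/(n-2\sigma)}=\omega_n$, I expand
\[
\int_{\Sn} v_\beta P_\sigma v_\beta=P_\sigma(1)\sum_{i,j}\int_{\Sn} v_{i,\beta}\,v_{j,\beta}^{(n+2\sigma)/(n-2\sigma)}=m\,P_\sigma(1)\,\omega_n+P_\sigma(1)\,\mathcal{N}(\beta),
\]
where the cross interaction $\mathcal{N}(\beta)$ is estimated, by concentration of each bubble at its center and a change of variables in geodesic normal coordinates, to be $O((\beta-1)^{(n-2\sigma)/2})$ as in \cite{JLX2}.

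For the denominator, the pointwise inequality
\[
\Bigl(\sum_{i} v_{i,\beta}\Bigr)^{2n/(n-2\sigma)}\ge \sum_i v_{i,\beta}^{2n/(n-2\sigma)}+\frac{2n}{n-2\sigma}\sum_{i\ne j} v_{i,\beta}^{(n+2\sigma)/(n-2\sigma)}v_{j,\beta}
\]
produces a strictly positive cross-term gain. The diagonal contribution is $\sum_i\int_{\Sn}K\,v_{i,\beta}^{2n/(n-2\sigma)}=m\,K(\xi_1)\omega_n+o((\beta-1)^{(n-2\sigma)/2})$, where the flatness-of-order-$>n-2\sigma$ hypothesis at each $\xi_i$ is invoked (change of variable $y=\sqrt{\beta-1}\,z$ in normal coordinates shows $\int(K-K(\xi_i))v_{i,\beta}^{2n/(n-2\sigma)}=o((\beta-1)^{(n-2\sigma)/2})$). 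Combined with the cross-term bound
\[
\int_{\Sn}K\,v_\beta^{2n/(n-2\sigma)}\ge m\,K(\xi_1)\omega_n+c(\beta-1)^{(n-2\sigma)/2}+o((\beta-1)^{(n-2\sigma)/2}),\qquad c>0,
\]
the quotient becomes, after factoring the leading constants,
\[
\frac{\int v_\beta P_\sigma v_\beta}{\bigl(\int K v_\beta^{2n/(n-2\sigma)}\bigr)^{(n-2\sigma)/n}}\le \frac{P_\sigma(1)\omega_n^{2\sigma/n}m^{2\sigma/n}}{K(\xi_1)^{(n-2\sigma)/n}}\Bigl(1-c'(\beta-1)^{(n-2\sigma)/2}+o((\beta-1)^{(n-2\sigma)/2})\Bigr)
\]
with $c'>0$. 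Taking $\beta$ sufficiently close to $1$ yields the strict inequality \eqref{less than}.

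The main obstacle is the bookkeeping for the cross interactions: one has to verify that the positive cross-term gain $c(\beta-1)^{(n-2\sigma)/2}$ in the denominator is the \emph{only} contribution at that order and that the numerator's cross-interaction loss, together with the error $\int(K-K(\xi_i))v_{i,\beta}^{2n/(n-2\sigma)}$, are both $o((\beta-1)^{(n-2\sigma)/2})$. The latter is exactly where flatness order \emph{strictly} greater than $n-2\sigma$ is indispensable: flatness order equal to $n-2\sigma$ would make the error comparable to the gain and destroy the strict sign.
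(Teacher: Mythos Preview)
Your approach is exactly the one the paper indicates: the paper proposes the same $G$-invariant sum of bubbles $v_\beta=\sum_{i=1}^m v_{i,\beta}$ over the orbit $O_G(\xi_1)$ and refers back to the computation for Proposition~\ref{less by test function1} (and hence to \cite{JLX2}) for the details. So at the level of strategy your proposal matches the paper.

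There is, however, an error in your bookkeeping of the cross terms that you should fix. You first (correctly) say $\mathcal N(\beta)=\sum_{i\ne j}\int v_{i,\beta}\,v_{j,\beta}^{(n+2\sigma)/(n-2\sigma)}=O((\beta-1)^{(n-2\sigma)/2})$, but in the last paragraph you assert it is $o((\beta-1)^{(n-2\sigma)/2})$. It is not: the numerator cross term and the denominator cross term are of \emph{the same} order $(\beta-1)^{(n-2\sigma)/2}$, with the \emph{same} integral $S:=\sum_{i\ne j}\int v_{i,\beta}^{(n+2\sigma)/(n-2\sigma)}v_{j,\beta}$ appearing in both (by symmetry in $i,j$). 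The strict inequality comes from comparing coefficients: the numerator is $P_\sigma(1)(m\omega_n+S)$ while the denominator is at least $K(\xi_1)\bigl(m\omega_n+\tfrac{2n}{n-2\sigma}S\bigr)$ up to $o(S)$, and after taking the $(n-2\sigma)/n$ power the denominator contributes $1+2\,S/(m\omega_n)$ against $1+S/(m\omega_n)$ in the numerator, yielding the net factor $1-\tfrac{S}{m\omega_n}+o((\beta-1)^{(n-2\sigma)/2})$. This is precisely where the constant $A>0$ in the paper's formula for the antipodal case comes from. Your use of the flatness hypothesis to kill $\int(K-K(\xi_i))v_{i,\beta}^{2n/(n-2\sigma)}$ at order $(\beta-1)^{(n-2\sigma)/2}$ is correct and is indeed the only place strict flatness order $>n-2\sigma$ is needed.
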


Theorem \ref{invariance nirenberg} follows from Proposition \ref{less then exist} and Proposition \ref{less by test function} immediately. The proof of Proposition \ref{less then exist} uses subcritical approximations and blow up analysis, which is similar to that of Proposition \ref{less then exist1}. Proposition \ref{less by test function} can be verified by the following G-invariant test function
\[
v_{\beta}=\sum\limits_{i=1}^m v_{i,\beta},
\]
where $m=|O_G(\xi_1)|$, $O_{G}(\xi_1)=\{\xi_1,\dots,\xi_m\}$, $\xi_i=g_i(\xi_1)$ for some $g_i\in G$, $g_1=Id$, $v_{j,\beta}:=v_{1,\beta}\circ g_i^{-1}$ and $v_{1,\beta}$ is as in \eqref{eq:bubble}. We omit the detailed proofs of Propositions \ref{less then exist} and \ref{less by test function}, and leave them to the readers.

\subsection{Proof of the existence part of Theorem \ref{main thm A}}

Note that the interval $(n-2\sigma,n)$ is getting broader as $\sigma$ increases. We split the problem into three cases.

\medskip

\noindent \emph{Case 1.} Let $\sigma_0\in (0,1]$ and $K$ satisfy the assumptions in Theorem \ref{main thm A} for $\sigma=\sigma_0$. 

\medskip

Then $K$ also satisfies the assumptions of Theorem \ref{main thm A} for every $\sigma\in [\sigma_0,1]$. It follows from the compactness part of Theorem \ref{main thm A} that for every $\sigma\in [\sigma_0,1]$ and all positive solution of \eqref{main equ}, we have
\be\label{eq:universal bound}
C^{-1} \le v\le C,
\ee
where $C\ge 1$ depends only on $n, \sigma_0, K$ but independent of $\sigma$. It was proved by Li \cite{Li95} that
\be \label{eq:degree counting}
\deg\left(v-(P_1)^{-1}Kv^{\frac{n+2}{n-2}}, C^{2,\al}(\Sn) \cap \{C^{-1} \le v\le C\},0\right)\neq 0
\ee
for any $0<\al<1$. By Theorem \ref{thm:schauder} and a homotopy argument,  we have for all $\sigma\in [\sigma_0,1]$
\[
\deg\left(v-(P_{\sigma})^{-1}Kv^{\frac{n+2\sigma}{n-2\sigma}}, C^{2,\al'}(\Sn) \cap \{C^{-1} \le v\le C\},0\right)\neq 0
\]
for some $0<\al'<1$. Therefore, \eqref{main equ} has a positive solution for $\sigma=\sigma_0$. Since $\sigma_0$ is arbitrary, \eqref{main equ} has a positive solution for all $\sigma\in (0,1)$.

\medskip

\noindent \emph{Case 2.}  Let $\sigma_0\in [1, n/2)$ and $K$ satisfy the assumptions in Theorem \ref{main thm A} for $\sigma=\sigma_0$, and with $\beta>n-2$. 

\medskip

Then $K$ also satisfies the assumptions of Theorem \ref{main thm A} for every $\sigma\in [1, \sigma_0]$. Therefore, \eqref{eq:universal bound} still holds for all positive solution of \eqref{main equ} and for every $\sigma\in [1, \sigma_0]$, where $C\ge 1$ depends only on $n, \sigma_0, K$ but independent of $\sigma$. Moreover, \eqref{eq:degree counting} still holds, and the arguments in Case 1 still apply. Therefore, we can also conclude that \eqref{main equ} has a positive solution in the case of $\sigma>1$ and $\beta>n-2$.

\medskip

\noindent \emph{Case 3.} Let $\sigma_0\in [1, n/2)$ and $K$ satisfy the assumptions in Theorem \ref{main thm A} for $\sigma=\sigma_0$, but with $\beta\in (n-2\sigma_0, n-2]$. 

\medskip

In this case, $K$ does not satisfy the assumptions of Theorem \ref{main thm A} for all $\sigma\in [1, \sigma_0]$. If we still want to use the degree counting results of Li \cite{Li95}, we have to deform $K$ properly.

Let $\eta:[0,\infty)\to [0,1]$ be a smooth cutoff function satisfying $\eta(t)=1$ for $t\in [0,1]$, $\eta\equiv 0$ for $t\ge 2$ and  $|\eta'|\le 5$.

\begin{lem} \label{lem:deform1}
 Let $f\in C^{1}(B_1)$ satisfy
\[
f(x)=f(0)+\sum_{i=1}^n a_i|x_i|^\beta+R(x),
\]
where $a_i\neq 0$, $\beta>1$, $R(x)\in C^{[\beta]-1,1}(B_1)$ satisfies
$\sum_{s=0}^{[\beta]}|\nabla^s R(x)||x|^{-\beta
+s}\to 0$ as $x\to 0$.  For $\tau \in [0,1]$ and $\va\in [0,1/4]$, let
\[
f_{\va,\tau}(x)
:=f(x)-\tau R(x) \eta(|x|/\va).
\]
Then there exists a constant $\va_0>0$ such that $0$ is the unique critical point of  $f_{\va_0,\tau}$ in $B_{2\va_0}$ for every $\tau\in [0,1]$.
\end{lem}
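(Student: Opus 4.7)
The plan is to decompose
\[
f_{\varepsilon,\tau}(x)= f(0)+P(x)+g_\tau(x),
\]
where $P(x)=\sum_{i=1}^n a_i|x_i|^\beta$ is the dominant homogeneous part and $g_\tau(x)=R(x)\bigl(1-\tau\eta(|x|/\varepsilon)\bigr)$ is the perturbation, and then to show that $|\nabla P|$ strictly dominates $|\nabla g_\tau|$ on a sufficiently small punctured ball $B_{2\varepsilon_0}\setminus\{0\}$, uniformly in $\tau\in[0,1]$.

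First I would establish the pointwise lower bound
\[
|\nabla P(x)|=\beta\Bigl(\sum_{i=1}^n a_i^2|x_i|^{2(\beta-1)}\Bigr)^{1/2}\ge C_1|x|^{\beta-1}\quad\text{for all }x\ne 0,
\]
with $C_1=C_1(n,\beta,\min_i|a_i|)>0$. The argument is just homogeneity: on the unit sphere the continuous function $\sum_i|x_i|^{2(\beta-1)}$ is strictly positive (the coordinates cannot all vanish simultaneously) and hence attains a positive minimum; scaling extends the bound to all of $\mathbb{R}^n\setminus\{0\}$. The hypothesis $a_i\ne 0$ for every $i$ is essential here.

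Next I would show $|\nabla g_\tau(x)|=o(|x|^{\beta-1})$ as $x\to 0$, uniformly in $\tau\in[0,1]$ and in $\varepsilon$. On the inner region $|x|\le\varepsilon$, $g_\tau=(1-\tau)R$, so $|\nabla g_\tau|\le|\nabla R|=o(|x|^{\beta-1})$ directly from the hypothesis on $R$. On the transition annulus $\varepsilon\le|x|\le 2\varepsilon$ I would differentiate to obtain
\[
\nabla g_\tau(x)=\bigl(1-\tau\eta(|x|/\varepsilon)\bigr)\nabla R(x)-\tau R(x)\eta'(|x|/\varepsilon)\frac{x}{\varepsilon|x|}.
\]
The first term is again $o(|x|^{\beta-1})$; for the second I would use $|\eta'|\le 5$, $|R(x)|=o(|x|^\beta)$, and $1/\varepsilon\le 1/|x|$ (since $|x|\ge\varepsilon$), which trades the singular $1/\varepsilon$ for the extra power of $|x|$ coming from $R$, again yielding $o(|x|^{\beta-1})$.

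Combining the two estimates, for $\varepsilon_0$ small enough one has $|\nabla g_\tau(x)|\le\tfrac{C_1}{2}|x|^{\beta-1}$ throughout $B_{2\varepsilon_0}\setminus\{0\}$ for every $\tau\in[0,1]$, so
\[
|\nabla f_{\varepsilon_0,\tau}(x)|\ge|\nabla P(x)|-|\nabla g_\tau(x)|\ge\tfrac{C_1}{2}|x|^{\beta-1}>0,
\]
which forces $0$ to be the unique critical point in $B_{2\varepsilon_0}$ (that $0$ is a critical point is immediate, since $\beta>1$ gives $\nabla P(0)=0$ and the decay conditions on $R$ and the cutoff force $\nabla g_\tau(0)=0$). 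The only real subtlety is the uniformity in $\tau$ on the transition annulus, and this is resolved by the observation above that the extra factor $1/\varepsilon$ introduced by $\eta'$ is exactly compensated by the higher-order vanishing of $R$ versus $\nabla R$.
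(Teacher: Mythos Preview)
Your argument is correct and is essentially the same as the paper's: split off the homogeneous principal part $P$, show $|\nabla P(x)|\ge c|x|^{\beta-1}$ by compactness on the sphere, and kill the remainder $g_\tau$ by the decay hypothesis on $R$ in the inner ball and by the product rule in the cutoff annulus. One small slip: in the annulus you justify $|R(x)|/\varepsilon=o(|x|^{\beta-1})$ via ``$1/\varepsilon\le 1/|x|$ since $|x|\ge\varepsilon$'', but that inequality is backwards; what you need (and what the paper uses) is $1/\varepsilon\le 2/|x|$, which follows from the \emph{upper} bound $|x|\le 2\varepsilon$ on the annulus.
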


\begin{proof} For $x\in \overline B_\va$, we have $f_{\va,\tau}(x)=f(0)+\sum a_i|x_i|^\beta+(1-\tau)R(x)$. It follows
from the assumption of $R$ that
$0$ is the only critical point in $\overline B_\va$ provided $\va$ is sufficiently small. For $x\in B_{2\va}\setminus B_{\va}$, we have
\[
\frac{1}{|x|^{\beta-1}}|\nabla (R(x)\eta(|x|/\va))|\le \frac{1}{|x|^{\beta-1}} |\nabla R(x)|+\frac{5}{|x|^{\beta-1}\va }|R(x)|\to 0\quad\mbox{as }\va \to 0.
\]
On the other hand, $
\frac{1}{|x|^{\beta-1}} \beta \sum_{i} |a_i| |x_i|^{\beta-1}  >c>0.
$
This completes the proof.
\end{proof}

Consequently, by Lemma \ref{lem:deform1}, for those $K$ satisfying the assumptions in Theorem \ref{main thm A}, one can make a homotopy and deform $K$ continuously to a function $K_1$ of the form \eqref{eq:K1} near each of its critical points, and along the homotopy it does not introduce new critical points and the hypotheses in Theorem \ref{main thm A} are still satisfied.

\begin{cor}\label{cor:deform1}  Let $K$ satisfy the assumptions in Theorem \ref{main thm A} for $\sigma=\sigma_0$. Then for every (small) $d>0$ there exists a continuous  $1$-parameter family of functions  $\{K_\tau\} \subset C^1(\Sn)$, $ \tau\in [0,1]$, and $\va_0>0$ such that
\begin{itemize}
\item[i).] $K_0=K$, and  $K_\tau=K$ in  $\{|\nabla K|\ge d>0\}$ for every $ \tau\in [0,1]$;
\item[ii).] For every $ \tau\in [0,1]$, $K_\tau$ has the same critical points as $K$ does, and $K_\tau$ also satisfies the assumptions for $K$ in Theorem \ref{main thm A};
\item[iii).] For every critical point of $K_1$, there exists a geodesic normal coordinates centered at it such that
\be\label{eq:K1}
K_1(y)=K_1(0)+\sum_{i=1}^n a_i|y_i|^{\beta} \quad \mbox{in }B_{\va_0},
\ee
where $\beta$ and $a_i$ are the same as those of $K$;

\item[iv)]  $K_\tau$ has uniform lower bound and $C^1$ bound in $\tau$, and $\nabla K_\tau$ has uniform modulus of continuity in $\tau$. 
\end{itemize}
\end{cor}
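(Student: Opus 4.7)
The plan is to apply Lemma \ref{lem:deform1} locally at each critical point of $K$ and then patch the resulting deformations together using disjoint cutoff balls.

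First, I would verify that $K$ has only finitely many critical points. Near any critical point $\xi_0$, working in the geodesic normal coordinates furnished by the hypotheses of Theorem \ref{main thm A}, one has
\[
\nabla K(y) = \beta \sum_{j=1}^n a_j \operatorname{sgn}(y_j)|y_j|^{\beta-1} + \nabla R(y),
\]
with $\nabla R(y) = o(|y|^{\beta-1})$ by the hypothesis on $R$, $\beta > 1$, and $a_j\neq 0$. Hence $\nabla K(y)\neq 0$ in a punctured neighborhood of $0$, so each critical point is isolated, and by compactness of $\Sn$ there are finitely many, say $\xi_1,\ldots,\xi_N$.

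Next, I would choose $\va_0>0$ small enough that the geodesic balls $B_{2\va_0}(\xi_j)$ are pairwise disjoint, each is contained in $\{|\nabla K|<d\}$ (possible since each $\xi_j$ is a critical point), and Lemma \ref{lem:deform1} applies inside each coordinate ball (using the expansion \eqref{eq:form of K}). Writing $R_j$ for the remainder at $\xi_j$ in the coordinates $y^{(j)}$, define
\[
K_\tau(x) := K(x) - \tau \sum_{j=1}^N R_j(y^{(j)}(x))\,\eta(|y^{(j)}(x)|/\va_0),
\]
extended as $K$ outside $\bigcup_j B_{2\va_0}(\xi_j)$. Then $K_0=K$, property i) is automatic from the support of the cutoffs, and in $B_{\va_0}(\xi_j)$ one has $K_1(y) = K_1(0) + \sum a_i|y_i|^\beta$ exactly, giving iii). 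Lemma \ref{lem:deform1} applied in each $B_{2\va_0}(\xi_j)$ shows that $\xi_j$ remains the unique critical point in that ball for every $\tau\in[0,1]$, so the critical set is unchanged, giving the first part of ii).

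The main (minor) obstacle is verifying that $K_\tau$ still satisfies the flatness hypotheses of Theorem \ref{main thm A} uniformly in $\tau$, and that iv) holds. For the flatness: near each $\xi_j$, $K_\tau(y) = K(0) + \sum a_i|y_i|^\beta + R_\tau^{(j)}(y)$ where $R_\tau^{(j)}(y) = (1-\tau\eta(|y|/\va_0))R_j(y)$. Since $|1-\tau\eta|\le 1$ and its derivatives up to order $[\beta]$ are bounded independently of $\tau$ (while the $\eta$-factor contributes only through $|y|\sim\va_0$, where $R_j$ and its derivatives are bounded), the decay condition $\sum_{s=0}^{[\beta]}|\nabla^s R_\tau^{(j)}(y)|\,|y|^{-\beta+s}\to 0$ as $y\to 0$ holds uniformly in $\tau$. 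This gives ii). Property iv) follows from the same uniform bounds on the deformation, together with continuity in $\tau$ coming from the linear dependence of $K_\tau$ on $\tau$.
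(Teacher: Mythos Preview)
Your proposal is correct and follows essentially the same approach as the paper: the paper's argument for Corollary~\ref{cor:deform1} is just the one-sentence remark that Lemma~\ref{lem:deform1}, applied locally at each (necessarily isolated, hence finitely many) critical point, yields the desired homotopy without introducing new critical points while preserving the hypotheses of Theorem~\ref{main thm A}. You have simply written out the details of that remark --- the finiteness of critical points, the choice of disjoint balls inside $\{|\nabla K|<d\}$, the explicit formula $K_\tau = K - \tau\sum_j R_j\,\eta(|\cdot|/\va_0)$, and the verification of i)--iv) --- exactly as intended.
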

Recall that in \emph{Case 3}, we have $\sigma_0\in [1, n/2)$ and $\beta\in (n-2\sigma_0,n-2]$. Now we can do another homotopy of the form, roughly speaking, 
\[
K_1^{(t)}(x)=K_1(0)+\sum_{i=1}^n a_j|x_j|^{(n-1-\beta)t+\beta},\quad t\in [0,1],
\]
near each critical point of $K$, where $K_1^{(0)}=K_1$. Once again we can make such a homotopy without introducing new critical points of $K$ and keeping the hypothesis of Theorem \ref{thm:maximum bound} satisfied along the homotopy. This homotopy can be made as follows.

We let $b_1, b_2 \in C^{\infty}(\R^+)$ satisfy $b_1'\ge 0$, $b_2'\ge 0$ and
\[
b_1(t)=\begin{cases}
t,\quad & 0<t\le \va_0/8,\\
1, \quad & t \ge \va_0/4.
\end{cases}, \quad
b_2(t)=\begin{cases}
0,\quad & 0<t\le \va_0/2,\\
1, \quad & t \ge 7\va_0/8.
\end{cases}
\]
For $z\in\R^{n-1}$, we define
\[
F(z_1,\dots, z_{n-1})=\prod_{j=1}^{n-1}(1-b_2(|z_j|)).
\]
and
\[
F_i(x)=F(x_1,\cdots, x_{i-1}, x_{i+1}, \cdots, x_n).
\]
Let $t\in [0, 1]$. By Corollary \ref{cor:deform1}, near every critical point of $K$, we deform $K_1$ (recall that $\beta\le n-2$) as
\[
K_1^{(t)}(y)= K_1(0)+\sum_{i}^n \left(a_i b_1(|y_i|)^{(n-1-\beta)t}|y_i|^{\beta} F_i(y)+a_i |y_i|^\beta (1-F_i(y))\right).
\]
It is clear that $K_1^{(0)}\equiv K_1$. Moreover, it follows that 
\begin{itemize}
\item if $|y_i|\le \va_0/8$ for every $i=1,\cdots, n$, then we have $F_i(y)=1$ and $b_1(|y_i|)=1$. Therefore,
\[
K_1^{(t)}(y)= K_1(0)+\sum_{i}^n a_i |y_i|^{(n-1-\beta)t+\beta},
\]
and for every $t$, $K_1^{(t)}(y)$ does not have any critical point other than the origin.
\item if there exists $k$ such that $|y_k|\ge 7\va_0/8$, then we have $F_j(y)=0$ when $j\neq k$, and $b_1(|y_k|)=1$. Therefore, we have
\[
K_1^{(t)}(y)= K_1(0)+\sum_{i}^n a_i |y_i|^{\beta},
\]
and for every $t$, $K_1^{(t)}(y)$ does not have an critical point.

\item if $3\va_0/8< |y_i|<7\va_0/8$ for all $i=1,\cdots, n$, we have $b_1(|y_i|)=1$ for all $i$. Therefore
\[
K_1^{(t)}(y)= K_1(0)+\sum_{i}^n a_i |y_i|^{\beta},
\]
and for every $t$, $K_1^{(t)}(y)$ does not have any critical point.

\item if there exists $k$ such that $|y_k|\le 3\va_0/8$ (which is less than $\va_0/2$), then $F_j(y)$ is independent of $y_k$ for all $j=1,\cdots, n$. Therefore, 
\[
\begin{split}
\partial_{y_k} K_1^{(t)}(y)&=\partial_{y_k} \left(a_k b_1(|y_k|)^{(n-1-\beta)t}|y_k|^{\beta} \right) F_k(y)+\partial_{y_k} \left(a_k |y_k|^\beta\right) (1-F_k(y)).
\end{split}
\]
Therefore, since $a_k\neq 0$ and $b_1'\ge 0$,  the only possible points for $\partial_{y_k} K_1^{(t)}(y)=0$ are for those $y_k=0$. Now we can consider the function $K_1^{(t)}(y)$ for $y_k=0$, where the structure of $F_i$ will stay unchanged. If we run through the above four cases $n-1$ more times, we can conclude that for all $t\in [0,1]$, the only critical point for $K_1^{(t)}(y)$ is $y=0$.

\end{itemize}

On one hand, it follows from the compactness result Theorem \ref{thm:maximum bound} that for all $\sigma\in [1,\sigma_0]$ and all solutions of 
\[
P_{\sigma}(v)=c(n,\sigma)K_1^{(1)} v^{\frac{n+2\sigma}{n-2\sigma}},\quad v>0 \quad \mbox{on }\Sn,
\]
there holds
\[
C^{-1} \le v\le C,
\]
where $C\ge 1$ depends only on $n, \sigma_0, K$ but independent of $\sigma$. Therefore, it follows from \eqref{eq:degree counting} that
\be\label{eq:degree2}
\deg\left(v-c(n,\sigma_0)(P_{\sigma_0})^{-1}K_1^{(1)}v^{\frac{n+2\sigma_0}{n-2\sigma_0}}, C^{2,\al'}(\Sn) \cap \{C^{-1} \le v\le C\},0\right)\neq 0
\ee
for some $0<\al'<1$. 

On the other hand, for all $t\in[0,1]$, and all the solutions of 
\[
P_{\sigma_0}(v)=c(n,\sigma_0)K_1^{(t)} v^{\frac{n+2\sigma_0}{n-2\sigma_0}},\quad v>0 \quad \mbox{on }\Sn,
\]
there holds
\[
C^{-1} \le v\le C,
\]
where $C\ge 1$ depends only on $n, \sigma_0, K$ but independent of $t$. Therefore, it follows from \eqref{eq:degree2} that
\[
\deg\left(v-c(n,\sigma_0)(P_{\sigma_0})^{-1}K_1^{(0)}v^{\frac{n+2\sigma_0}{n-2\sigma_0}}, C^{2,\al'}(\Sn) \cap \{C^{-1} \le v\le C\},0\right)\neq 0
\]
for some $0<\al'<1$. Consequently, making use of the fact that $K_1^{(0)}\equiv K_1$, Corollary \ref{cor:deform1} and Theorem \ref{thm:maximum bound}, we have
\[
\deg\left(v-c(n,\sigma_0)(P_{\sigma_0})^{-1}Kv^{\frac{n+2\sigma_0}{n-2\sigma_0}}, C^{2,\al'}(\Sn) \cap \{C^{-1} \le v\le C\},0\right)\neq 0
\]
Therefore, we can also conclude that \eqref{main equ} has a positive solution in this last case. This finishes the proof of the existence part of Theorem \ref{main thm A}\qed


\small

\bigskip

\noindent T. Jin

\noindent Department of Mathematics, The University of Chicago\\
5734 S. University Avenue, Chicago, IL 60637, USA\\[1mm]
Email: \textsf{tj@math.uchicago.edu}

\medskip

\noindent Y.Y. Li

\noindent Department of Mathematics, Rutgers University\\
110 Frelinghuysen Road, Piscataway, NJ 08854, USA\\
Email: \textsf{yyli@math.rutgers.edu}

\medskip

\noindent J. Xiong

\noindent Beijing International Center for Mathematical Research, Peking University\\
Beijing 100871, China\\[1mm]
Email: \textsf{jxiong@math.pku.edu.cn}


\begin{thebibliography}{99}

\bibitem{AC} Abdelhedi, W., Chtioui, H., Hajaiej, H.:
             A complete study of the lack of compactness and existence results of a Fractional Nirenberg Equation via a flatness hypothesis: Part I. 
             \url{arXiv:1409.5884}.



\bibitem{AB1}  Aubin, T., Bahri, A.:
                M\'ethodes de topologie alg\'ebrique pour le probl\`me de la courbure scalaire prescrite. 
                J. Math. Pures Appl.  \textbf{76}  (1997),  525--549.

\bibitem{AB2}  Aubin, T., Bahri, A.:
                 Une hypoth\`ese topologique pour le probl\`eme de la courbure scalaire prescrite.
                J. Math. Pures Appl.  \textbf{76}  (1997),  843--850.

\bibitem{AH}   Aubin, T., Hebey, E.:
                 Courbure scalaire prescrite.
                 Bull. Sci. Math. \textbf{115} (1991),  125--131.


\bibitem{B1} Bahoura, S.S.:
             Harnack inequalities for Yamabe type equations.
             Bull. Sci. Math. \textbf{133}  (2009),  875--892.

\bibitem{BC} Bahri, A., Coron, J.-M.:
             The scalar curvature problem on the standard three-dimensional sphere.
             J. Funct. Anal. \textbf{255} (1991), 106--172.

\bibitem{BLR} Bahri, A., Li, Y.Y., Rey, O.:
            On a variational problem with lack of compactness: the topological effect of the critical points at infinity.
             Calc. Var. Partial Differential Equations \textbf{3}  (1995), 67--93.

\bibitem{BFR1} Baird, P., Fardoun, A., Regbaoui, R.:
             $Q$-curvature flow on $4$-manifolds. 
             Calc. Var. Partial Differential Equations \textbf{27} (2006), no. 1, 75--104.

\bibitem{BFR2} ------:
             Prescribed $Q$-curvature on manifolds of even dimension. 
             J. Geom. Phys. \textbf{59} (2009), no. 2, 221--233.

\bibitem{Be} Beckner, W.:
            Sharp Sobolev inequalities on the sphere and the Moser-Trudinger inequality.
             Ann. of Math. (2) \textbf{138} (1993), 213--242.

\bibitem{BA} Ben Ayed, M., Ould Ahmedou, M.O.:
             On the prescribed scalar curvature on $3$-half spheres: multiplicity results and Morse inequalities at infinity.
             Discrete Contin. Dyn. Syst. \textbf{23}  (2009),  655--683.

\bibitem{BE} Bourguignon, J.P., Ezin, J.P.:
           Scalar curvature functions in a conformal class of metrics and conformal transformations.
           Trans. Amer. Math. Soc. \textbf{301} (1987), 723--736.

\bibitem{Br} Branson, T.P.:
            Sharp inequalities, the functional determinant, and the complementary series.
            Trans. Amer. Math. Soc., \textbf{347} (1995), 367--3742.

\bibitem{B}  Brendle, S.:
           Prescribing a higher order conformal invariant on $\Sn$.
            Comm. Anal. Geom. \textbf{11} (2003), 837--858.

\bibitem{B'}  ------:
             Global existence and convergence for a higher order flow in conformal geometry.
              Ann. of Math. (2) \textbf{158} (2003), no. 1, 323--343.

\bibitem{BK} Brezis, H., Kato, T.:
           Remarks on the Schr\"odinger operator with singular complex potentials.
           J. Math. Pures Appl. \textbf{58} (1979), 137--151
           
\bibitem{BrN} Brezis, H., Nirenberg, L.:
            Positive solutions of nonlinear elliptic equations  involving critical Sobolev exponents.
            Comm. Pure Appl. Math. \textbf{36} (1983),  437--477.

\bibitem{By} Byeon, J.:
            Effect of symmetry to the structure of positive solutions in nonlinear elliptic problems. III.
           Asymptot. Anal. \textbf{30} (2002),  249--272.

\bibitem{CJSX} Caffarelli, L.A., Jin, T.; Sire, Y., Xiong, J.:
            Local analysis of solutions of fractional semi-linear elliptic equations with isolated singularities.
            Arch. Ration. Mech. Anal. \textbf{213} (2014), no. 1, 245--268.

\bibitem{CaffS} Caffarelli, L.A., Silvestre, L.:
              An extension problem related to the fractional Laplacian.
              Comm. Partial. Diff. Equ.,  \textbf{32} (2007), 1245--1260
              
\bibitem{CNY} Cao, D., Noussair, E.S., Yan, S.:
              On the scalar curvature equation $-\Delta u=(1+\epsilon K)u^{(N+2)/(N-2)}$ in $R^N$.
             Calc. Var. Partial Differential Equations \textbf{15} (2002),  403--419.

\bibitem{CaC} Case, J.; Chang, S.-Y.:
             On fractional GJMS operators. 
             \url{arXiv:1406.1846}.

\bibitem{CZ} Catrina, F., Wang, Z.-Q.:
               Symmetric solutions for the prescribed scalar curvature problem.
               Indiana Univ. Math. J. \textbf{49} (2000),  779--813.

\bibitem{CL} Chang, K., Liu, J.:
             On Nirenberg's problem.
             Internat. J. Math. \textbf{4} (1993), 35--58.

\bibitem{CG} Chang, S.-Y., Gonz\'alez, M.:
            Fractional Laplacian in conformal geometry.
            Adv. Math. \textbf{226} (2011), 1410--1432.

\bibitem{CGY} Chang, S.-Y., Gursky, M.J., Yang, P.:
            The scalar curvature equation on $2$- and $3$-spheres.
            Calc. Var. Partial Differential Equations \textbf{1} (1993),  205--229.

\bibitem{CGY2} ------:
             An equation of Monge-Amp\`ere type in conformal geometry, and four-manifolds of positive Ricci curvature. 
             Ann. of Math. (2) \textbf{155} (2002), no. 3, 709--787. 

\bibitem{CXY} Chang, S.-Y., Xu, X., Yang, P.:
            A perturbation result for prescribing mean curvature. 
            Math. Ann. \textbf{310} (1998), 473--496.
            
\bibitem{CY87} Chang, S.-Y., Yang, P.:
              Prescribing Gaussian curvature on $\mathbb{S}^2$.
              Acta Math. \textbf{159} (1987), 215--259.

\bibitem{CY88} ------:
           Conformal deformation of metrics on $\mathbb{S}^2$.
           J. Differential Geom. \textbf{27}  (1988), 259--296.

\bibitem{CY91} ------:
           A perturbation result in prescribing scalar curvature on $\mathbb{S}^n$.
           Duke Math. J. \textbf{64} (1991), 27--69.

\bibitem{CY95}  ------:
             Extremal metrics of zeta function determinants on $4$-manifolds.  
             Ann. of Math. (2) \textbf{142} (1995), no. 1, 171--212.

           
\bibitem{CL2} Chen, C.C.,  Lin, C.S. :
           Estimate of the conformal scalar curvature equation via the method of moving planes. II.
          J. Differential Geom. \textbf{49} (1998),  115--178.

\bibitem{CLZ}Chen, Y.-H.,  Liu, C., Zheng, Y.:
          Existence results for the fractional Nirenberg problem, 
          \url{arXiv:1406.2770}.

\bibitem{CD} Chen, W., Ding, W.:
           Scalar curvatures on $S^2$.
            Trans. Amer. Math. Soc. \textbf{303} (1987),  365--382.

\bibitem{ChLi} Chen, W., Li, C.:
           A priori estimates for prescribing scalar curvature equations.
          Ann. of Math.  (2) \textbf{145} (1997),  547--564.

\bibitem{CLO} Chen, W., Li, C., Ou, B.:
             Classification of solutions for an integral equation.
             Comm. Pure Appl. Math.  \textbf{59} (2006), 330--343.

\bibitem{CX} Chen, X., Xu, X.:
          The scalar curvature flow on $\Sn$ --- perturbation theorem revisited.
          Invent. Math. \textbf{187} (2012), no. 2, 395--506.

\bibitem{CX'} ------:
              $Q$-curvature flow on the standard sphere of even dimension. 
              J. Funct. Anal. \textbf{261} (2011), no. 4, 934--980.

\bibitem{CZhu} Chen, W., Zhu, J.:
               Indefinite fractional elliptic problem and Liouville theorems. 
               \url{arXiv:1404.1640}.
               
               
\bibitem{DHL}Djadli, Z.,  Hebey, E., Ledoux, M,:
               Paneitz-type operators and applications. 
               Duke Math. J. \textbf{104} (2000), no. 1, 129--169. 

\bibitem{DM} Djadli, Z., Malchiodi, A.:
             Existence of conformal metrics with constant $Q$-curvature.
             Ann. of Math. (2) \textbf{168} (2008), 813--858.

\bibitem{D1} Djadli, Z., Malchiodi, A., Ahmedou, M.O.:
             Prescribing a fourth order conformal invariant on the standard sphere. I. A perturbation result.
              Commun. Contemp. Math. \textbf{4} (2002), 375--408.

\bibitem{D2} ------:
             Prescribing a fourth order conformal invariant on the standard sphere. II. Blow up analysis and applications.
              Ann. Sc. Norm. Super. Pisa Cl. Sci. \textbf{5}  (2002), 387--434.

\bibitem{D3} ------:
            The prescribed boundary mean curvature problem on $\mathbb{B}^4$.
            J. Differential Equations \textbf{206} (2004), 373--398.
            
\bibitem{D} Druet, O.:
            Elliptic equations with critical Sobolev exponents in dimension $3$.
             Ann. Inst. H. Poincar\'e Anal. Non Lin\'eaire \textbf{19}  (2002),  125--142.

\bibitem{Escb}Escobar, J. F.: 
            Conformal metrics with prescribed mean curvature on the boundary. 
            Calc. Var. Partial Differential Equations \textbf{4} (1996), 559--592.

\bibitem{EG} Escobar, J.F., Garcia, G.:
             Conformal metrics on the ball with zero scalar curvature and prescribed mean curvature on the
             boundary. J. Funct. Anal. \textbf{211} (2004), 71--152.
             


\bibitem{ES}  Escobar, J.F., Schoen, R.
            Conformal metrics with prescribed scalar curvature.
            Invent. Math. \textbf{86} (1986), 243--254.

\bibitem{EM} Esposito, P., Mancini, G.:
           A prescribed scalar curvature-type equation: almost critical manifolds and multiple solutions.
           J. Funct. Anal. \textbf{205} (2003), 306--356.

\bibitem{ER}Esposito, P., Robert, F.: 
           Mountain pass critical points for Paneitz-Branson operators.
             Calc. Var. Partial Differential Equations \textbf{15} (2002), no. 4, 493--517. 

\bibitem{FangFei} Fang, F.:
           Infinitely many non-radial sign-changing solutions for a Fractional Laplacian equation with critical nonlinearity.
           \url{arXiv:1408.3187}.


\bibitem{FG} Fefferman, C., Graham, C. R.:
             Conformal invariants.
              The mathematical heritage of \'Elie Cartan (Lyon, 1984).  Ast\'erisque, Numero Hors Serie (1985), 95--116.

\bibitem{FG13}  ------:
             Juhl's formulae for GJMS operators and $Q$-curvatures,
             J. Amer. Math. Soc. \textbf{26} (2013), 1191--1207.
             
\bibitem{Felli} Felli, V.:
               Existence of conformal metrics on $\Sn$ with prescribed fourth-order invariant.
               Adv. Differential Equations \textbf{7} (2002), 47--76.

\bibitem{GMS} Gonz\'alez, M., Mazzeo, R., Sire, Y.:
             Singular solutions of fractional order conformal Laplacians.
             J. Geom. Anal.  \textbf{22} (2012), no. 3, 845--863.

\bibitem{GQ} Gonz\'alez, M., Qing, J.:
              Fractional conformal Laplacians and fractional Yamabe problems.
              Anal. PDE. \textbf{6} (2013), no. 7, 1535--1576.

\bibitem{GJMS} Graham, C.R., Jenne, R., Mason, L.J., Sparling, G.A.J.:
              Conformally invariant powers of the Laplacian I. Existence.
              J. London Math. Soc. \textbf{46} (1992), 557--565.

\bibitem{GZ} Graham, C.R., Zworski, M.:
             Scattering matrix in conformal geometry.
              Invent. Math. \textbf{152} (2003), 89--118.

\bibitem{GM} Gursky, M., Malchiodi, A.:
            A strong maximum principle for the Paneitz operator and a non-local flow for the $Q$-curvature. 
            \url{arXiv: 1401.3216}.

\bibitem{H1} Han,  Z.-C.:
             Prescribing Gaussian curvature on $\mathbb S^2$.
              Duke Math. J. \textbf{61} (1990),  679--703.

\bibitem{HL1} Han, Z.-C., Li, Y.Y. :
             A note on the Kazdan-Warner type condition.
            Ann. Inst. H. Poincar\'e Anal. Non Lin\'eaire \textbf{13} (1996),  283--292.  

\bibitem{HL99} ------:
             The Yamabe problem on manifolds with boundary: existence and compactness results.
             Duke Math. J. \textbf{99} (1999), 489--542.

\bibitem{HL2} ------:
             On the local solvability of the Nirenberg problem on $\mathbb S^2$.
             Discrete Contin. Dyn. Syst. \textbf{28} (2010),  607--615.


\bibitem{HY1}Hang, F.B., Yang, P.C. 
           $Q$-curvature on a class of $3$ manifolds. 
           To appear in Comm. Pure Appl. Math.

\bibitem{HY2} ------:
            $Q$ curvature on a class of manifolds of dimension at least $5$. \url{arXiv:1411.3926}.

\bibitem{HY3} ------:
         Sign of Green's function of Panetiz operators and the $Q$ curvature. \url{arXiv:1411.3924}.


\bibitem{Hebey} Hebey, E.:
             Changements de m\'etriques conformes sur la sph\`ere. Le probl\`eme de Nirenberg.
             Bull. Sci. Math. \textbf{114} (1990), 215--242.

\bibitem{HebeyRobert} Hebey, E., Robert, F.:
             Asymptotic analysis for fourth order Paneitz equations with critical growth.
             Adv. Calc. Var. \textbf{4} (2011), no. 3, 229--275. 

\bibitem{HRW} Hebey, E., Robert, F., Wen, Y.: 
              Compactness and global estimates for a fourth order equation of critical Sobolev growth arising from conformal geometry. Commun. Contemp. Math. \textbf{8} (2006), no. 1, 9--65. 



\bibitem{J1} Ji, M.:
             Scalar curvature equation on $\mathbb S^n$. I. Topological conditions.
            J. Differential Equations \textbf{246} (2009),  749--787.

\bibitem{J2} ------:
              Scalar curvature equation on $\mathbb S^n$. II. Analytic characterizations.
              J. Differential Equations \textbf{246} (2009),  788--818.

\bibitem{JLX} Jin, T., Li,Y.Y., Xiong, J.:
            On a fractional Nirenberg problem, part I: blow up analysis and compactness of solutions.
            J. Eur. Math. Soc. (JEMS), \textbf{16} (2014), no. 6, 1111--1171.

\bibitem{JLX2} ------:
            On a fractional Nirenberg problem, part II: existence of solutions.
            To appear in IMRN. \url{arXiv:1309.4666}.

\bibitem{JX1}   Jin, T.; Xiong, J.:
              A fractional Yamabe flow and some applications.
              J. Reine Angew. Math. \textbf{696} (2014), 187--223.

 \bibitem{JX2}  ------:
              Sharp constants in weighted trace inequalities on Riemannian manifolds.
              Calc. Var. Partial Differential Equations \textbf{48} (2013), no. 3-4, 555--585.

\bibitem{Juhl1} Juhl, A.:
               On the recursive structure of Branson's $Q$-curvature.
               \url{arXiv:1004.1784}.

\bibitem{Juhl2}   ------:
                Explicit Formulas for GJMS-Operators and $Q$-Curvatures.
               Geom. Funct. Anal. \textbf{23} (2013), no. 4, 1278--1370. 
                
\bibitem{K}   Koutroufiotis, D.:
              Gaussian curvature and conformal mapping.
               J. Differential Geom. \textbf{7} (1972), 479--488.

\bibitem{LLL}  Li, J., Li, Y., Liu, P.:
              The $Q$-curvature on a 4-dimensional Riemannian manifold $(M,g)$ with $\int_{M}Q\ud V_g=8\pi^2$.
              Adv. Math. \textbf{231} (2012), no. 3-4, 2194--2223.

\bibitem{Li93} Li, Y.Y.:
            Prescribing scalar curvature on $\mathbb S^3,\ \mathbb S^4$ and related problems.
           J. Funct. Anal. \textbf{118} (1993),  43--118.

\bibitem{Li95} ------:
              Prescribing scalar curvature on $\Sn$ and related problems. I.
               J. Differential Equations \textbf{120} (1995),  319--410.

\bibitem{Li96} ------:
              Prescribing scalar curvature on $\Sn$ and related problems. II. Existence and compactness.
               Comm. Pure Appl. Math. \textbf{49} (1996),  541--597.

\bibitem{Li04} ------:
              Remark on some conformally invariant integral equations: the method of moving spheres.
              J. Eur. Math. Soc. (JEMS) \textbf{6} (2004), 153--180.

\bibitem{Lie83} Lieb, E.H.:
               Sharp constants in the Hardy-Littlewood-Sobolev and related inequalities.
               Ann. of Math. (2) \textbf{118} (1983), 349--374.

\bibitem{LL} Lin, L., Liu, Z.:
               Multi-bump solutions and multi-tower solutions for equations on $\R^N$.
               J. Funct. Anal. \textbf{257} (2009),  485--505.

\bibitem{M}  Malchiodi, A.:
               The scalar curvature problem on $\mathbb S^n$: an approach via Morse theory.
               Calc. Var. Partial Differential Equations \textbf{14} (2002),  429--445.

\bibitem{MSt} Malchiodi, A., Struwe, M.:
               $Q$-curvature flow on $\mathbb{S}^4$. J. Differential Geom. \textbf{73} (2006), no. 1, 1--44.


\bibitem{Ms} Moser, J.:
             On a nonlinear problem in differential geometry.
             Dynamical systems (Proc. Sympos., Univ. Bahia, Salvador, 1971), pp. 273--280.
             Academic Press, New York, 1973.

\bibitem{NCB} Ndiaye, C.B.: Constant $Q$-curvature metrics in arbitrary dimension. 
            J. Funct. Anal. \textbf{251} (2007), no. 1, 1--58.

\bibitem{QR}  Qing, J., Raske, D.:
              On positive solutions to semilinear conformally invariant equations on locally conformally flat manifolds.
               Int. Math. Res. Not. (2006),  Art. ID 94172, 20 pp. 

\bibitem{QR2}  ------:
              Compactness for conformal metrics with constant $Q$ curvature on locally conformally flat manifolds.
               Calc. Var. Partial Differential Equations \textbf{26} 2006, no. 3, 343--356.


\bibitem{Ro} Robert, F.:
               Positive solutions for a fourth order equation invariant under isometries.
               Proc. Amer. Math. Soc. \textbf{131}, no. 5 (2003), 1423--1431.

\bibitem{Ro10} Robert, F.:
              Admissible $Q$-curvatures under isometries for the conformal GJMS operators. Nonlinear elliptic partial differential equations.
             Contemp. Math., 540, Amer. Math. Soc., Providence, RI, 2011.

\bibitem{RS1} Ros-Oton, X., Serra, J.:
                The Pohozaev identity for the fractional Laplacian.
                Arch. Rat. Mech. Anal. \textbf{213} (2014), 587-628.
                
\bibitem{RS2}  ------:
               Local integration by parts and Pohozaev identities for higher order fractional Laplacians.
               \url{arXiv:1406.1107}.


\bibitem{S1} Schneider, M.:
              Prescribing scalar curvature on $\mathbb  S^3$.
              Ann. Inst. H. Poincar\'e Anal. Non Lin\'eaire \textbf{24}  (2007),  563--587.

\bibitem{S2}  ------:
             The scalar curvature equation on $\mathbb S^3$.
             J. Funct. Anal. \textbf{257}  (2009),  2840--2863.

\bibitem{Schoen} Schoen, R.:
              Courses at Stanford University, 1988, and New York University,1989.

\bibitem{SZ} Schoen, R., Zhang, D.:
             Prescribed scalar curvature on the $n$-sphere.
             Calc. Var. Partial Differential Equations \textbf{4}  (1996), 1--25.

\bibitem{S} Stein, E.:
             ``Singular integrals and differentiability properties of function."
             Princeton Mathematical Series, No. 30 Princeton University Press, Princeton, N.J. 1970.


\bibitem{WX} Wei, J., Xu, X.:
            Prescribing $Q$-curvature problem on $\Sn$.
            J. Funct. Anal. \textbf{257}  (2009), 1995--2023.

\bibitem{WX2} ------:
            On conformal deformations of metrics on $\Sn$.
            J. Funct. Anal. \textbf{157} (1998), 292--325.

\bibitem{WY} Wei, J., Yan, S.:
            Infinitely many solutions for the prescribed scalar curvature problem on $\Sn$.
            J. Funct. Anal. \textbf{258}  (2010),  3048--3081.

\bibitem{WZhang} Weinstein, G.; Zhang, L.: 
            The profile of bubbling solutions of a class of fourth order geometric equations on $4$-manifolds. 
            J. Funct. Anal. \textbf{257} (2009), no. 12, 3895--3929.

\bibitem{Xu}  Xu, X.:
            Uniqueness theorem for integral equations and its application.
            J. Funct. Anal. \textbf{247} (2007), no. 1, 95--109.
            
\bibitem{Y} Yang, R.:
            On higher order extensions for the fractional Laplacian. \url{arXiv:1302.4413}.

\bibitem{Zhang} Zhang, L.:
            Harnack type inequalities for conformal scalar curvature equation.
             Math. Ann. \textbf{339}  (2007),  195--220.

\bibitem{Zhu}  Zhu, M.:
             Liouville theorems on some indefinite equations.
             Proc. Roy. Soc. Edinburgh Sect. A \textbf{129} (1999),  649--661.

\bibitem{Zhu14} ------:
           Prescribing integral curvature equation. \url{arxiv:1407.2967}.



\end{thebibliography}
\end{document}